\newtheorem{thm}{Theorem}[section]
\newtheorem{prop}[thm]{Proposition}
\newtheorem{lem}[thm]{Lemma}
\newtheorem{dfn}[thm]{Definition}
\newtheorem{corollary}[thm]{Corollary}
\newtheorem{remark}[thm]{Remark}
\newcommand{\C}{{\mathbb C}}
\newcommand{\PP}{{\mathcal P}}
\newcommand{\K}{{\mathcal K}}
\newcommand{\V}{{\mathcal V}}
\newcommand{\X}{{\mathcal X}}
\newcommand{\W}{{\mathcal W}}
\newcommand{\Z}{{\mathbb Z}}
\numberwithin{equation}{section}
\newcommand\restr[2]{{
  \left.\kern-\nulldelimiterspace 
  #1 
  \vphantom{\big|} 
  \right|_{#2} 
  }}
\title{Fusion rules for the triplet $W$-algebra $\mathcal{W}_{p_+,p_-}$}
\author{Hiromu Nakano}
\date{}
\begin{document}
\maketitle
\begin{abstract}
We study the structure of fusion rules for the triplet $W$-algebra $\mathcal{W}_{p_+,p_-}$. 
By using the vertex tensor category theory developed by Huang, Lepowsky and Zhang, we rederive certain non-semisimple fusion rules given by Gaberdiel-Runkel-Wood and Rasmussen.
We further show that certain rank two and three indecomposable modules are self-dual. 
\end{abstract}

\section{Introduction}

Let $p_+$ and $p_-$ be coprime integers satisfying $p_->p_+\geq 2$, and
\begin{align*}
c_{p_+,p_-}=1-6\frac{(p_+-p_-)^2}{p_+p_-}
\end{align*}
the minimal central charge for the Virasoro minimal model $L(c_{p_+,p_-},0)$. 
In the paper \cite{FF2}, Feigin, Gainutdinov, Semikhatov and Tipunin define an important irrational vertex operator algebra $\W_{p_+,p_-}$, which is called the triplet $W$-algebra, as an extension of the Virasoro minimal model $L(c_{p_+,p_-},0)$. 
They study the structure of the simple $\W_{p_+,p_-}$-modules and the Grothendieck fusion ring, and show that the representation theory of $\W_{p_+,p_-}$ is closely related to representations of the quantum group $\mathfrak{g}_{p_+,p_-}$ at roots of unity \cite{Arike,FF2,FF22}. 

Representation theoretical results such as the structure theorem of the simple modules and the classification of the simple modules are given by Adamovi\'{c}-Milas \cite{AMW2p,AMW3p} and Tsuchiya-Wood \cite{TW}.
They also show that the triplet $W$-algebra $\W_{p_+,p_-}$ satisfies the $C_2$-cofinite condition.
An important consequence that follows from the $C_2$-cofinite condition is that the $\W_{p_+,p_-}$-module category $\mathcal{W}_{p_+,p_-}\mathchar`-{\rm Mod}$ has the structure of the vertex tensor category developed by Huang, Lepowsky and Zhang \cite{HLZ1,HLZ2,HLZ3,HLZ4,HLZ5,HLZ6,HLZ7,HLZ8}.

The more detailed tensor structure of $\mathcal{W}_{p_+,p_-}\mathchar`-{\rm Mod}$ is studied from the directions of boundary conformal field theories and character theories.
In \cite{R}, Rasmussen gives the fusion rules between certain rank two and rank three indecomposable modules by using the method of 
certain scaling limits of solvable lattice models. In \cite{GRW0,GRW,W}, Gaberdiel, Runkel and Wood give the detailed tensor structure by using the Nahm-Gaberdiel-Kausch fusion algorithm \cite{Kanade,GK} and show that the tensor category on $\mathcal{W}_{p_+,p_-}\mathchar`-{\rm Mod}$ is not rigid. This non-rigidity makes it much more difficult to determine the tensor structure of $\mathcal{W}_{p_+,p_-}\mathchar`-{\rm Mod}$ compared to the triplet algebras $\W_{p}$ \cite{FF4,McRae,TWFusion}. In \cite{RW}, Ridout and Wood determine the structure of the Grothendieck ring of $\W_{p_+,p_-}$ by using the Verlinde ring of the singlet $W$-algebra consisting of the characters of the singlet $W$-algebra.

The main goal of this paper is to rederive certain fusion rules given in \cite{GRW0,R,W}, by using the theory of Huang-Lepowsky-Zhang tensor category, and to show that certain rank two and rank three indecomposable modules are rigid and self-dual. 
In this paper we do not consider the projective covers of the Virasoro minimal simple modules. The socle structure of these projective modules are given by \cite{GRW,Nakano}. According to \cite{GRW}, these projective modules are known to have no duals, which complicates the tensor structure of $\mathcal{W}_{p_+,p_-}\mathchar`-{\rm Mod}$.  

This paper is organized as follows.

In Section \ref{Basic}, we briefly review free field theory and the structure theorem of Fock modules in accordance with \cite{FF,FF2,IK,TW}.

In Section \ref{secWabel}, we review the structure of the abelian category $\mathcal{W}_{p_+,p_-}\mathchar`-{\rm Mod}$. 
In Subsections \ref{Walgebra} and \ref{IrreducibleWalgebra}, we introduce the triplet $W$-algebra $\W_{p_+,p_-}$, all simple $\W_{p_+,p_-}$-modules in accordance with \cite{AMW2p,AMW3p,FF2,TW}. The simple modules are finite are broadly classified into $2p_+p_-$ $\mathcal{X}^\pm_{r,s}$ and $\frac{(p_+-1)(p_--1)}{2}$ Virasoro minimal simple modules $L(h_{r,s})$. In Subsection \ref{BlockWalgebra}, we introduce the block decomposition of the ablelian category $\mathcal{W}_{p_+,p_-}\mathchar`-{\rm Mod}$.
Each block is assigned to one of three groups: $\frac{(p_+-1)(p_--1)}{2}$ thick blocks $C^{thick}_{r,s}=C^{thick}_{p_+-r,p_--s}$, $p_++p_--2$ thin blocks $C^{thin}_{r,p_-},C^{thin}_{p_+,s}$ and two semisimple blocks $C^\pm_{p_+,p_-}$. The most complex groups are the thick blocks and each thick block $C^{thick}_{r,s}$ contains five simple modules $\X^+_{r,s},\X^+_{p_+-r,p_--s},\X^-_{p_+-r,s},\X^-_{r,p_--s}$ and $L(h_{r,s})$.
In Subsection \ref{LogarithmicWalgebra}, we introduce logarithmic $\W_{p_+,p_-}$-modules $\mathcal{Q}(\X^\pm_{r,s})_{\bullet,\bullet}$ and $\mathcal{P}^\pm_{r,s}$ whose $L_0$-nilpotent rank are two and three, in accordance with \cite{AK,GRW0,GRW,Nakano}.

In Section \ref{FusionRules}, we study the structure of the tensor category on $\mathcal{W}_{p_+,p_-}\mathchar`-{\rm Mod}$. We introduce indecomposable modules $\K_{r,s}(0<r<p_+,0<s<p_-)$ satisfying the exact sequence
\begin{align*}
0\rightarrow \mathcal{X}^+_{r,s}\rightarrow \mathcal{K}_{r,s}\rightarrow L(h_{r,s})\rightarrow 0,
\end{align*}
and following \cite{CMY,MS,McRae,TWFusion}, prove the self-duality of $\K_{1,2}$ and $\K_{2,1}$ in Theorems \ref{rigid12ori} and \ref{rigid21}. By using ${\rm Ext}^1$-groups in each block of $\mathcal{W}_{p_+,p_-}\mathchar`-{\rm Mod}$ and the self-duality for $\K_{1,2}$ and $\K_{2,1}$, we show that the indecomposable modules $\K_{r,s}$, $\mathcal{Q}(\X^\pm_{r,s})_{\bullet,\bullet}$ and $\mathcal{P}^\pm_{r,s}$ appear in the repeated tensor products of $\K_{1,2}$ and $\K_{2,1}$. As a result we see that all indecomposable modules of $\K_{r,s}$, $\mathcal{Q}(\X^\pm_{r,s})_{\bullet,\bullet}$ and $\mathcal{P}^\pm_{r,s}$ are self-dual objects. In Subsection \ref{simple0817}, we also determine the tensor products between all simple modules.

In Section \ref{FusionRing}, following \cite{TWFusion}, we introduce a certain non-semisimple fusion ring $P(\mathbb{I}_{p_+,p_-})$ generated by the indecomposable modules $\mathcal{X}^-_{1,1}$, $\mathcal{K}_{1,2}$ and $\mathcal{K}_{2,1}$. Based on the results in Section \ref{FusionRules}, we determine the ring structure of $P(\mathbb{I}_{p_+,p_-})$. Similar to the case of the triplet algebra $\mathcal{W}_{p}$ \cite{TWFusion}, by using the structure of $P(\mathbb{I}_{p_+,p_-})$, we can compute the tensor products between the indecomposable modules $\mathcal{X}^\pm_{r,s}$, $\K_{r,s}$, $\K^*_{r,s}$, $\mathcal{Q}(\X^\pm_{r,s})_{\bullet,\bullet}$ and $\mathcal{P}^\pm_{r,s}$. We only state the explicit formulas of the tensor products between the simple modules $\mathcal{X}^\pm_{r,s}$.
In Subsection \ref{secK}, we review the structure of the Grothendieck fusion ring determined by \cite{RW}.

\section{Bosonic Fock modules}
\label{Basic}
Let $\mathcal{L}$ be the Virasoro algebra generated by ${L}_n(n\in \mathbb{Z})$ and $C$ (the central charge) with the relation
\begin{align*}
&[{L}_m,{L}_n]=(m-n){L}_{m+n}+\frac{m^3-m}{12}C\delta_{m+n,0},
&[{L}_n,C]=0,
\end{align*}
and let $U(\mathcal{L})$ be the universal enveloping algebra of $\mathcal{L}$.
Fix two coprime integers $p_+,p_-$ such that $p_->p_+\geq 2$, and let
\begin{align*}
c_{p_+,p_-}=1-6\frac{(p_+-p_-)^2}{p_+p_-}
\end{align*}
be the central charge of the Virasoro minimal model $L(c_{p_+,p_-},0)$.
In this paper, we consider only Virasoro modules whose central charge are $C=c_{p_+,p_-}\cdot{\rm id}$. Henceforth we fix $C$ as $c_{p_+,p_-}$.
In this section, we briefly review theory of Fock modules following \cite{TW}.
As for the representation theory of the Virasoro algebra, see \cite{FF,IK}. 
\subsection{Free field theory}
The Heisenberg Lie algebra 
\begin{align*}
\mathcal{H}=\bigoplus_{n\in\mathbb{Z}}\mathbb{C} a_{n}\oplus \mathbb{C} K_{\mathcal{H}}
\end{align*}
is the Lie algebra whose commutator relations are given by
\begin{align*} 
&[a_m,a_n]=m\delta_{m+n,0}K_{\mathcal{H}},
&[K_{\mathcal{H}},\mathcal{H}]=0.
\end{align*}
Let
\begin{align*}
\mathcal{H}^\pm=\bigoplus_{n>0}\mathbb{C} a_{\pm n},\ \ \ \ \ 
\mathcal{H}^0=\mathbb{C} a_0\oplus \mathbb{C} K_{\mathcal{H}},\ \ \ \ \ 
\mathcal{H}^{\geq}=\mathcal{H}^+\oplus \mathcal{H}^0.
\end{align*}
For any $\alpha\in\mathbb{C}$, let $\mathbb{C}{\mid}\alpha\rangle$ be the one dimensional $\mathcal{H}^\geq$-module defined by
\begin{align*}
&a_n{\mid}\alpha\rangle=\delta_{n,0}\alpha{\mid}\alpha\rangle\ (n\geq 0),
&K_{\mathcal{H}}{\mid}\alpha\rangle={\mid}\alpha\rangle. 
\end{align*}
For any $\alpha\in \mathbb{C}$, the bosonic Fock module is defined by
\begin{align*}
F_{\alpha}={\rm Ind}_{\mathcal{H}^\geq}^{\mathcal{H}}\mathbb{C}{\mid}\alpha\rangle.
\end{align*} 
Let 
\begin{equation*}
a(z)=\sum_{n\in\mathbb{Z}}a_nz^{-n-1}
\end{equation*}
be the bosonic current. Then we have the following operator expansion
\begin{equation*}
 a(z)  a(w)=\frac{1}{(z-w)^2}+\cdots,
\end{equation*}
where $\cdots$ denotes the regular part in $z=w$.
We define the energy-momentum tensor
\begin{equation*}
T(z):=\frac{1}{2}:a(z)a(z):+\frac{\alpha_0}{2}\partial a(z),\ \ \ \ \ \alpha_0:=\sqrt{\frac{2p_-}{p_+}}-\sqrt{\frac{2p_+}{p_-}}.
\end{equation*}
where $:\ :$ is the normal ordered product. The energy-momentum tensor satisfies the following operator expansion
\begin{align*}
T(z)T(w)= \frac{c_{p_+,p_-}}{2(z-w)^4}+\frac{2T(w)}{(z-w)^2}+\frac{\partial T(w)}{z-w}+\cdots.
\end{align*}
The Fourier modes of 
$
T(z)=\sum_{n\in\mathbb{Z}}L_{n}z^{-n-2}
$
generate the Virasoro algebra whose central charge is $c_{p_+,p_-}$.
Thus, by the energy-momentum tensor $T(z)$, each Fock module $F_{\alpha}$ has the structure of a Virasoro module whose central charge is $c_{p_+,p_-}$.

For any $\alpha\in \mathbb{C}$, the Fock module $F_{\alpha}$ has the following $L_0$ weight decomposition
\begin{align*}
&F_{\alpha}=\bigoplus_{n\in \mathbb{Z}_{\geq 0}}F_{\alpha}[n],
&F_{\alpha}[n]:=\{v\in F_{\alpha}\setminus\{0\}\mid L_0v=(h_{\alpha}+n)v\},
\end{align*}
where 
\begin{align}
\label{alphu}
h_{\alpha}:=\frac{1}{2}\alpha(\alpha-\alpha_0).
\end{align}
We define the following conformal vector in $F[2]$
\begin{align*}
T=\frac{1}{2}(a^2_{-1}+\alpha_0 a_{-2})\ket{0}.
\end{align*}
\begin{dfn}
The Fock module ${F}_0$ carries the structure of a $\Z_{\geq 0}$-graded vertex operator algebra, with
\begin{align*}
&Y(\ket{0},z)={\rm id},\ \ \ \ \ \ Y(a_{-1}\ket{0},z)=a(z),\ \ \ \ \ \ Y(T,z)=T(z).
\end{align*}
We denote this vertex operator algebra by $\mathcal{F}^{}_{\alpha_0}$.
\end{dfn}

\subsection{The structure of Fock modules}
We set
\begin{align*}
&\alpha_+=\sqrt{\frac{2p_-}{p_+}},
&\alpha_-=-\sqrt{\frac{2p_+}{p_-}}
\end{align*}
For $r,s,n\in\Z$ we introduce the following symbols
\begin{align}
&\alpha_{r,s;n}=\frac{1-r}{2}\alpha_++\frac{1-s}{2}\alpha_-+\frac{\sqrt{2p_+p_-}}{2}n,
&\alpha_{r,s}=\alpha_{r,s;0}.
\label{alpha_{r,s;n}}
\end{align}
and
\begin{align}
&h_{r,s;n}:=h_{\alpha_{r,s;n}},
&h_{r,s}:=h_{\alpha_{r,s}},
\label{h_alphars}
\end{align}
where the symbol $h_{\alpha}$ is defined by (\ref{alphu}).
Note that 
\begin{align*}
&\alpha_{r,s;n}=\alpha_{r-np_+,s}=\alpha_{r,s+np_-},\\
&h_{r,s;n}=h_{r-np_+,s}=h_{r,s+np_-},
&h_{r,s;n}&=h_{-r,-s;-n}.
\end{align*}
For $r,s,n\in\Z$, we set
\begin{align*}
&F_{r,s;n}=F_{\alpha_{r,s;n}},
&F_{r,s}=F_{\alpha_{r,s}},
\end{align*}
and let $L(h_{r,s;n})$ be the simple Virasoro module whose highest weight is $h_{r,s;n}$ and whose central charge $C=c_{p_+,p_-}\cdot{\rm id}$. 

Before describing the structure of Fock modules, let us introduce the notion of socle series. 
\begin{dfn}
Let $V$ be a vertex operator algebra or the Virasoro algebra.
We say that $M\in V\mathchar`-{\rm Mod}$ has a {\rm socle series} when there exists a finite length sequence of submodules of $M$ such that
\begin{align*}
{\rm Soc}_1(M)\subsetneq {\rm Soc}_2(M)\subsetneq \cdots \subsetneq {\rm Soc}_n(M)=M
\end{align*}
with ${\rm Soc}_1(M)={\rm Soc}(M)$ and ${\rm Soc}_{i+1}(M)/{\rm Soc}_{i}(M)={\rm Soc}(M/{\rm Soc}_{i}(M))$, where ${\rm Soc}(M)$ is the socle of $M$, that is ${\rm Soc}(M)$ is the maximal semisimple submodule of $M$. 
\end{dfn}

The following proposition is due to \cite{FF}.

\begin{prop}
\label{FockSocle}
As the $U(\mathcal{L})$-module, every $F_{r,s;n}\in \mathcal{F}_{\alpha_0}\mathchar`-{\rm Mod}$ has a socle series.
There are four cases of socle series for the Fock modules $F_{r,s;n}\in \mathcal{F}_{\alpha_0}\mathchar`-{\rm Mod}$:
\begin{enumerate}
\item For each $1\leq r\leq p_+-1,\ 1\leq s\leq p_--1,\ n\in\Z$, we have
\begin{align*}
{\rm Soc}_1(F_{r,s;n})\subsetneq {\rm Soc}_2(F_{r,s;n})\subsetneq {\rm Soc}_3(F_{r,s;n})=F_{r,s;n}
\end{align*} 
such that
\begin{align*}
&{\rm Soc}_1=\bigoplus_{k\geq 0}L(h_{r,p_--s;|n|+2k+1}),\\
&{\rm Soc}_2/{\rm Soc}_1=\bigoplus_{k\geq a}L(h_{r,s;|n|+2k})\oplus \bigoplus_{k\geq 1-a}L(h_{p_+-r,p_--s;|n|+2k}),\\
&{\rm Soc}_3/{\rm Soc}_2=\bigoplus_{k\geq 0}L(h_{p_+-r,s;|n|+2k+1}),
\end{align*}
where $a=0$ if $n\geq 0$ and $a=1$ if $n<0$.

\item For each $1\leq s\leq p_--1,\ n\in\Z$, we have
\begin{align*}
{\rm Soc}_1(F_{p_+,s;n})\subsetneq {\rm Soc}_2(F_{p_+,s;n})=F_{p_+,s;n}
\end{align*}
such that
\begin{align*}
&{\rm Soc}_1(F_{p_+,s;n})=\bigoplus_{k\geq 0}L(h_{p_+,p_--s;|n|+2k+1}),\\
&{\rm Soc}_2(F_{p_+,s;n})/{\rm Soc}_1(F_{p_+,s;n})=\bigoplus_{k\geq a}L(h_{p_+,s;|n|+2k})
\end{align*}
where $a=0$ if $n\geq 1$ and $a=1$ if $n<1$.

\item For each $1\leq r \leq p_+-1,\ n\in\Z$, we have
\begin{align*}
{\rm Soc}_1(F_{r,p_-;n})\subsetneq {\rm Soc}_2(F_{r,p_-;n})=F_{r,p_-;n}
\end{align*}
such that
\begin{align*}
&{\rm Soc}_1(F_{r,p_-;n})=\bigoplus_{k\geq 0}L(h_{r,p_-;|n|+2k}),\\
&{\rm Soc}_2(F_{r,p_-;n})/{\rm Soc}_1(F_{r,p_-;n})=\bigoplus_{k\geq a}L(h_{p_+-r,p_-;|n|+2k-1})
\end{align*}
where $a=1$ if $n\geq 0$ and $a=0$ if $n<0$.

\item For each $n \in\Z$, the Fock module $F_{p_+,p_-;n}$  is semi-simple as a Virasoro module:
\begin{align*}
F_{p_+,p_-;n}={\rm Soc}(F_{p_+,p_-;n})=\bigoplus_{k\geq 0}L(h_{p_+,p_-;|n|+2k}).
\end{align*}
\end{enumerate}
\end{prop}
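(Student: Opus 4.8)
The plan is to reduce the statement about socle series of Fock modules to the now-classical structure theory of Feigin--Fuchs for Virasoro modules at minimal-model central charge, as presented in \cite{FF,IK}. First I would recall that the key geometric input is the pattern of singular vectors (and cosingular/subsingular vectors) inside a Fock module $F_\alpha$, which is governed by the position of $\alpha$ relative to the set of ``degenerate'' weights $\alpha_{r,s;n}$. For $\alpha_0=\alpha_++\alpha_-$ irrational multiple considerations with $p_+,p_-$ coprime, $p_->p_+\geq 2$, the weights $\alpha_{r,s;n}$ organize into the familiar braided/linked chains. The BGG-type resolution and the Jantzen filtration computations of \cite{FF} yield the full submodule lattice of each $F_{r,s;n}$; from that lattice the socle series is read off directly by taking maximal semisimple submodules iteratively.

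The concrete steps I would carry out are: (i) fix $r,s,n$ and identify which of the four cases applies according to whether $r\in\{1,\dots,p_+-1\}$ or $r=p_+$ and similarly for $s$; (ii) in each case write down the embedding diagram of Verma-type subquotients, using the screening operators $Q_+,Q_-$ (the long and short screenings) to produce the explicit maps between Fock modules $F_{r,s;n}\to F_{r',s';n'}$; (iii) determine which composition factors $L(h_{r',s';n'})$ occur and with what multiplicity — here the index shifts $n\mapsto |n|+2k+1$, $|n|+2k$, etc., come from iterating the two screenings, and the parameter $a$ distinguishing $n\geq 0$ from $n<0$ records the asymmetry of the chain (the ``turning point'' at $n=0$, or $n=1$ in case~2 and the dual shift in case~3); (iv) verify that the proposed $\mathrm{Soc}_i$ are genuine submodules (closed under the action) and that successive quotients are semisimple and maximal, which amounts to checking that no composition factor of $\mathrm{Soc}_{i+1}/\mathrm{Soc}_i$ embeds already into a lower layer — i.e. that the embedding diagram is ``two-sided linear'' with the stated layers. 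Finally (v) treat case~4 separately: when $r=p_+$ and $s=p_-$ the weight $\alpha_{p_+,p_-;n}$ lies on no further degenerate line beyond the obvious ones, so the only singular vectors generate direct summands and $F_{p_+,p_-;n}$ is completely reducible, giving $\bigoplus_{k\geq 0}L(h_{p_+,p_-;|n|+2k})$.

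The main obstacle I expect is bookkeeping rather than conceptual: correctly tracking the reflection/shift combinatorics so that the union of all four layers' composition factors exactly reproduces the character of $F_{r,s;n}$, and in particular getting the boundary parameter $a$ right in each case (it flips because the socle is built from one ``half'' of a bidirectional chain of singular vectors, and the half that lies inside $F_{r,s;n}$ depends on the sign of $n$). A secondary subtlety is that $F_{r,s;n}$ need not be generated by its highest-weight vector — there are cosingular vectors — so one must argue at the level of the full submodule lattice and not just via Verma module quotients; this is exactly where one invokes the detailed results of \cite{FF} (equivalently the determinant/Jantzen-filtration computation) rather than reproving them. Once the lattice is in hand, extracting the socle series is formal: $\mathrm{Soc}_1$ is the sum of all simple submodules, and each successive layer is obtained by passing to the quotient and repeating, which terminates after three steps in case~1 and two steps in cases~2 and~3 because the embedding diagram has that length.

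Since the proposition is attributed to \cite{FF}, in the actual writeup I would present this as a translation of loc.\ cit.\ into the socle-series language adapted to the normalization $\alpha_{r,s;n}$ of this paper, spelling out the identification of cases and the role of $a$, and refer to \cite{FF,IK,TW} for the underlying singular-vector analysis rather than reproducing it.
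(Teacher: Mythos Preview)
Your proposal is correct and matches the paper's treatment: the paper does not give an independent proof but attributes the result directly to Feigin--Fuchs \cite{FF}, so your plan to invoke the classical singular-vector/embedding-diagram analysis from \cite{FF,IK} and translate it into the socle-series language with the paper's normalization $\alpha_{r,s;n}$ is exactly right. Your identification of the bookkeeping subtleties (the parameter $a$, cosingular vectors) is accurate, and presenting this as a citation with notational translation is precisely what the paper does.
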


\subsection{Screening operators}

We define the following free scalar field $\phi(z)$
\begin{equation*}
\phi(z)=\hat{a}+a_0{\rm log}z-\sum_{n\neq 0}\frac{a_n}{n}z^{-n}
\end{equation*}
where $\hat{a}$ is defined by
\begin{equation*}
[a_m,\hat{a}]=\delta_{m,0}{\rm id}.
\end{equation*}
The scalar field $\phi(z)$ satisfies the operator product expansion
\begin{equation*}
\phi(z)\phi(w)={\rm log}(z-w)+\cdots.
\end{equation*} 
For any $\alpha\in\C$ we introduce the field $V_{\alpha}(z)$
\begin{align*}
&V_{\alpha}(z)=:e^{\alpha\phi(z)}:=e^{\alpha\hat{a}}z^{\alpha a_{0}}\overline{V}_{\alpha}(z),\ z^{\alpha a_{0}}=e^{\alpha a_{0}{\rm log}z}\ ,\\
&\overline{V}_{\alpha}(z)=e^{\alpha\sum_{n\geq 1}\frac{a_{-n}}{n}z^{n}}e^{-\alpha\sum_{n \geq 1}\frac{a_n}{n}z^{-n}}.
\end{align*}
The fields $V_{\alpha}(z)$ satisfy the following operator product expansion
\begin{align*}
V_{\alpha}(z)V_{\beta}(w)= (z-w)^{\alpha\beta}:{V}_{\alpha}(z){V}_{\beta}(w):.
\end{align*}
We introduce the following two screening currents ${Q}_{+}(z),{Q}_-(z)$
\begin{align*}
{Q}_{\pm}(z)=V_{\alpha_\pm}(z)
\end{align*} 
whose conformal weights are $h_{\alpha_{\pm}}=1$ : 
\begin{align*}
T(z){Q}_{\pm}(w)&=\frac{{Q}_{\pm}(w)}{(z-w)^2}+\frac{\partial_{w}{Q}_{\pm}(w)}{z-w}+\cdots\\
&=\partial_{w}\bigl(\frac{{Q}_{\pm}(w)}{z-w}\bigr)+\cdots.
\end{align*}
Therefore the zero modes of the fields ${Q}_{\pm}(z)$
\begin{align*}
&{\rm Res}_{z=0}{Q}_{+}(z){\rm d}z={Q}_{+}\ :\ F_{1,k}\rightarrow F_{-1,k},\ \ k\in\Z\\
&{\rm Res}_{z=0}{Q}_{-}(z){\rm d}z={Q}_{-}\ :\ F_{k,1}\rightarrow F_{k,-1},\ \ k\in\Z
\end{align*}
commute with every Virasoro mode. These zero modes are called screening operators. 

For $r,s\geq 1$, we introduce more complicated screening currents 
\begin{align*}
&{Q}^{[r]}_{+}(z)\in {\rm Hom}_{\C}(F_{r,k},F_{-r,k})[[z,z^{-1}]],\ \ r\geq 1,k\in\Z,\\
&{Q}^{[s]}_{-}(z)\in {\rm Hom}_{\C}(F_{k,s},F_{k,-s})[[z,z^{-1}]],\ \ s\geq 1,k\in\Z,
\end{align*}
constructed by \cite{TK} as follows
\begin{equation}
\label{Tsuchiya-Kanie}
\begin{split}
&{Q}^{[r]}_{+}(z)=\int_{\overline{\Gamma}_{r}(\kappa_+)}Q_+(z)Q_+(zx_{1})Q_+(zx_2)\cdots Q_+(zx_{r-1})z^{r-1}{\rm d}x_1\cdots{\rm d}x_{r-1},\\
&{Q}^{[s]}_{-}(z)=\int_{\overline{\Gamma}_{s}(\kappa_-)}Q_-(z)Q_-(zx_{1})Q_-(zx_2)\cdots Q_-(zx_{s-1})z^{s-1}{\rm d}x_1\cdots{\rm d}x_{s-1},
\end{split}
\end{equation}
where $\overline{\Gamma}_n(\kappa_\pm)$ is a certain regularized cycle constructed from the simplex
\begin{align*}
\Delta_{n-1}=\{\ (x_1,\dots,x_{n-1})\in \mathbb{R}^{n-1}\ |\ 1>x_1>\cdots>x_{n-1}>0\ \}.
\end{align*}
These fields satisfy the following operator product expansion \cite{FF,TK,TW}
\begin{align*}
&T(z){Q}^{[r]}_{+}(w)=\frac{{Q}^{[r]}_{+}(w)}{(z-w)^2}+\frac{\partial_{w}{Q}^{[r]}_{+}(w)}{z-w}+\cdots,\\
&T(z){Q}^{[s]}_{-}(w)=\frac{{Q}^{[s]}_{-}(w)}{(z-w)^2}+\frac{\partial_{w}{Q}^{[s]}_{-}(w)}{z-w}+\cdots.
\end{align*}
In particular the zero modes
\begin{align*}
&{\rm Res}_{z=0}{Q}^{[r]}_{+}(z){\rm d}z=Q^{[r]}_+ \in {\rm Hom}_{\C}(F_{r,k},F_{-r,k}),\ \ r\geq 2,k\in\Z,\\
&{\rm Res}_{z=0}{Q}^{[s]}_{-}(z){\rm d}z=Q^{[s]}_- \in {\rm Hom}_{\C}(F_{k,s},F_{k,-s}),\ \ s\geq 2,k\in\Z
\end{align*}
commute with every Virasoro mode of $\mathcal{F}_{\alpha_0}\mathchar`-{\rm Mod}$. These zero modes are also called screening operators.

See \cite{FF,Felder,IK,TW} for the detailed structure of the images and kernels of the screening operators $Q^{[\bullet]}_\pm$.

\section{The triplet $W$-algebra $\W_{p_+,p_-}$ and the abelian category $\mathcal{C}_{p_+,p_-}$}
\label{secWabel}
In this section we introduce some basic properties of the abelian category $\mathcal{C}_{p_+,p_-}$ of $\W_{p_+,p_-}$-modules.
In Subsections \ref{Walgebra} and \ref{IrreducibleWalgebra}, we introduce the lattice vertex operator algebra $\mathcal{V}_{[p_+,p_-]}$ and the triplet $W$-algebra $\W_{p_+,p_-}$, and review the structure of simple $\W_{p_+,p_-}$-modules in accordance with \cite{AMW2p,AMW3p,TW}. In Subsections \ref{BlockWalgebra} and \ref{LogarithmicWalgebra}, we introduce the block decomposition of the abelian category of $\W_{p_+,p_-}$-modules and logarithmic $\W_{p_+,p_-}$-modules $\mathcal{Q}(\X^\pm_{r,s})_{\bullet,\bullet}$, $\mathcal{P}^\pm_{r,s}$ with $L_0$-nilpotent rank two or three, in accordance with \cite{AK, GRW0,GRW,Nakano,R}. From this subsection, we identify any $\W_{p_+,p_-}$-modules that are isomorphic among each other.

\subsection{The lattice vertex operator algebra and the vertex operator algebra $\W_{p_+,p_-}$}
\label{Walgebra}
In this subsection, we introduce the triplet $W$-algebra $\W_{p_+,p_-}$ given by \cite{FF2}. This vertex operator algebra is a generalization of the triplet $W$-algebra $\W_p$ \cite{AM,FF3,FF4,Ka}.

Let us define a lattice vertex operator algebra associated to the integer lattice $\mathbb{Z}\sqrt{2p_+p_-}$. 
\begin{dfn}
The {\rm lattice vertex operator algebra} $\mathcal{V}_{[p_+,p_-]}$ is the tuple 
\begin{align*}
(\V^{+}_{1,1}, \ket{0}, \frac{1}{2}(a^2_{-1}-\alpha_0a_{-2})\ket{0}, Y),
\end{align*}
where the underlying vector space of $\mathcal{V}_{[p_+,p_-]}$ is given by
\begin{align*}
\V^{+}_{1,1}=\bigoplus_{n\in\Z}F_{1,1;2n}=\bigoplus_{n\in\Z}F_{n\sqrt{2p_+p_-}},
\end{align*}
and the vertex operator $Y$ satisfies $Y(\ket{\alpha_{1,1;2n}},z)=V_{\alpha_{1,1;2n}}(z)$ for $n\in\Z$.
\end{dfn}
It is a known fact that simple $\mathcal{V}_{[p_+,p_-]}$-modules are given by the following $2p_+p_-$ direct sum of Fock modules
\begin{align*}
&\V^{+}_{r,s}=\bigoplus_{n\in\Z}F_{r,s;2n},
&\V^{-}_{r,s}=\bigoplus_{n\in\Z}F_{r,s;2n+1},
\end{align*}
where $1\leq r\leq p_+,1\leq s\leq p_-$.

Note that the two screening operators $Q_+$ and $Q_-$ act on $\V^+_{1,1}$. We define the following vector subspace of $\V^+_{1,1}$:
\begin{align*}
\K_{1,1}={\rm ker}Q_+\cap{\rm ker}Q_-\subset \V^+_{1,1}.
\end{align*}
\begin{dfn}
The {\rm triplet $W$-algebra} is defined by the following vertex operator algebra
\begin{align*}
\W_{p_+,p_-}:=(\K_{1,1},\ket{0},T,Y),
\end{align*}
where the vacuum vector, conformal vector and vertex operator map are those of $\mathcal{V}_{[p_+,p_-]}$.
\end{dfn}
We define
\begin{align*}
&W^+:=Q^{[p_--1]}_-\ket{\alpha_{1,p_--1;3}},
&W^-&:=Q^{[p_+-1]}_+\ket{\alpha_{p_+-1,1;-3}},\\ 
&W^0:=Q^{[2p_+-1]}_+\ket{\alpha_{p_+-1,1;-3}}.
\end{align*}
From the results in \cite{FF,Felder,TW}, we see that these vectors are non-trivial Virasoro singular vectors whose $L_0$ weights are $h_{4p_+-1,1}$.
\begin{prop}[\cite{AMW2p,AMW3p,TW}]
\label{genW}
$\W_{p_+,p_-}$ is strongly  generated by the fields $T(z),Y(W^{\pm},z),Y(W^0,z)$.
\end{prop}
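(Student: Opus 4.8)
The plan is to show that the subalgebra $\mathcal{A}\subseteq\W_{p_+,p_-}$ strongly generated by $T(z)$, $Y(W^{\pm},z)$, $Y(W^0,z)$ exhausts all of $\W_{p_+,p_-}=\mathrm{ker}\,Q_+\cap\mathrm{ker}\,Q_-\subset\V^+_{1,1}$. Since both $\mathcal{A}$ and $\W_{p_+,p_-}$ are $\Z_{\geq 0}$-graded by the $L_0$-eigenvalue with finite-dimensional graded pieces, it suffices to prove the equality of graded characters, or equivalently to identify $\W_{p_+,p_-}$ as a Virasoro module and check that $T$ together with the three distinguished singular vectors $W^{\pm},W^0$ generate it over $U(\mathcal{L})$ (and hence, by the reconstruction/strong-generation formalism, over the vertex algebra). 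First I would invoke Proposition \ref{FockSocle} in the case $r=s=1$, $n=0$ to write down the Virasoro socle series of $F_{1,1}$ and, more importantly, of each $F_{1,1;2n}$ appearing in $\V^+_{1,1}=\bigoplus_{n\in\Z}F_{1,1;2n}$. The key input is Felder's theory (cited as \cite{FF,Felder,TW}): the screening operators $Q_+$ and $Q_-$ fit into BRST-type complexes on the lattice module, and $\mathrm{ker}\,Q_+\cap\mathrm{ker}\,Q_-$ can be computed from the known images and kernels of the maps $Q^{[r]}_+$, $Q^{[s]}_-$ between Fock spaces.

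The main structural step is then to exhibit the Virasoro-module decomposition of $\W_{p_+,p_-}$. Using Felder's resolution one finds that $\W_{p_+,p_-}$ is a direct sum of the Virasoro simple modules $L(h_{1,1;2n})=L(h_{2np_+,1})$ for $n\in\Z$, i.e. $\W_{p_+,p_-}\cong\bigoplus_{n\in\Z}L(h_{2np_+,1})$ as a Virasoro module (the $n=0$ summand being the vacuum module $L(0)$, the minimal model vacuum). The highest-weight vectors of the $n=\pm 1$ summands have conformal weight $h_{2p_+,1}=h_{4p_+-1,1}$ after shifting by the appropriate reflection; these are precisely the three Virasoro singular vectors $W^+,W^-,W^0$ identified just before the statement (two of weight $h_{4p_+-1,1}$ coming from $n=\pm 1$ via the $\alpha_+$- and $\alpha_-$-screenings, plus $W^0$ from the iterated $\alpha_+$-screening). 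The claim that their $L_0$-weights are $h_{4p_+-1,1}$ is supplied by the preceding paragraph in the excerpt, so I would not redo that computation.

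With this decomposition in hand, the proof reduces to showing that repeated operator-product (normally-ordered) expansions of $T$, $W^{\pm}$, $W^0$ generate every summand $L(h_{2np_+,1})$ for all $n\in\Z$. This is where I expect the real work to lie. The idea is that $W^+$ (resp. $W^-$) is a weight-$h_{4p_+-1,1}$ field whose modes shift the lattice label by $+2$ (resp. $-2$) units of $\sqrt{2p_+p_-}$ — more precisely, the zero-mode of $Y(W^\pm,z)$ maps the $n$-th summand into the $(n\pm 1)$-st — so by induction on $|n|$ one reaches all summands from the vacuum, and the Virasoro action (via $T$) then fills out each $L(h_{2np_+,1})$ since these are irreducible Virasoro modules. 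One must also check that $W^0$ is genuinely needed and independent (it lands in the same $n=1$ "slot" as $W^+$ but is not a Virasoro descendant of it, reflecting the multiplicity with which $L(h_{2p_+,1})$ sits inside the $n=1$ part of $\V^+_{1,1}$); this multiplicity statement again follows from Proposition \ref{FockSocle}(1). The main obstacle is the nonvanishing/surjectivity argument: one needs that the relevant matrix coefficients $\langle\,\cdot\,,W^\pm_{(k)}\,\cdot\,\rangle$ are nonzero so that the modes of $W^\pm$ actually move between lattice sectors rather than annihilating them, and that the three fields together with $T$ close up without generating anything outside $\mathrm{ker}\,Q_+\cap\mathrm{ker}\,Q_-$. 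The latter containment is automatic because $Q_\pm$ commute with all Virasoro modes and annihilate the generators by construction (the generators lie in the triplet algebra by definition of $W^{\pm},W^0$ as elements killed by both screenings), so the only thing to verify is generation, i.e. the "$\supseteq$" direction, which is precisely the inductive lattice-shifting argument sketched above; for this I would follow the strategy used in the $p_+=1$ case \cite{AM,FF3,FF4,Ka} and its $W_{p_+,p_-}$ refinement in \cite{AMW2p,AMW3p,TW}, adapting the explicit singular-vector formulas for $W^\pm,W^0$ given above.
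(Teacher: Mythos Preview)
The paper does not give its own proof of this proposition; it is stated with attribution to \cite{AMW2p,AMW3p,TW}. So there is no in-paper argument to compare against, and your sketch should be judged on its own.

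Your general strategy --- identify the Virasoro-module decomposition of $\W_{p_+,p_-}$ and then show that modes of $W^{\pm},W^0$ move transitively between the summands --- is the right one, and it is essentially what the cited references do. But the specific decomposition you write down is incorrect, and the error is not cosmetic. You claim $\W_{p_+,p_-}\cong\bigoplus_{n\in\Z}L(h_{2np_+,1})$ with each summand appearing once, and that $W^{+}$ and $W^{-}$ live in the $n=\pm 1$ slots respectively. In fact, from the paper's own equation (\ref{decomp}) applied to $\X^+_{1,1}\subset\W_{p_+,p_-}$, the maximal ideal decomposes as $\X^+_{1,1}=\bigoplus_{n\geq 0}(2n+1)L(\Delta^+_{1,1;n})$ with $\Delta^+_{1,1;n}=h_{(2n+2)p_+-1,1}$, and $\W_{p_+,p_-}$ is the extension of $L(h_{1,1})$ by this. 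The multiplicities $(2n+1)$ are the whole point: they carry an $sl_2$-type structure (hence ``triplet''), and all three vectors $W^{+},W^{-},W^{0}$ sit in the $n=1$ level with multiplicity $3$, not in separate lattice sectors. Your identity $h_{2p_+,1}=h_{4p_+-1,1}$ is also false in general.

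Because of this, your inductive mechanism (``$W^{\pm}$ shift the lattice label by $\pm 2$ units of $\sqrt{2p_+p_-}$'') does not match what actually happens. The correct transitive action is recorded in Proposition~\ref{sl2action2}: the modes $W^{\pm}[0]$ shift the index $i$ within the $(2n+1)$-dimensional multiplicity space at fixed $n$, while the modes $W^{\bullet}[\Delta^+_{1,1;n}-\Delta^+_{1,1;n+1}]$ raise $n$ by one. It is this two-parameter transitive action --- horizontally across the multiplicity space and vertically up the tower of Virasoro highest weights --- together with the irreducibility of each $L(\Delta^+_{1,1;n})$ under $U(\mathcal{L})$, that yields strong generation. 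Your sketch would become correct if you replaced your decomposition by the one in (\ref{decomp}) and replaced the lattice-shift picture by the $sl_2$-style action of Proposition~\ref{sl2action2}; the nonvanishing statements you flag as ``the main obstacle'' are exactly the $\C^{\times}$ assertions there.
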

By the following theorem, any $\W_{p_+,p_-}$-module has finite length and the abelian category of $\mathcal{W}_{p_+,p_-}$-modules has the structure of braided tensor category (cf.\cite{HLZ1,HLZ2,HLZ3,HLZ4,HLZ5,HLZ6,HLZ7,HLZ8}).
\begin{thm}[\cite{AMW2p,AMW3p,TW}]
$\W_{p_+,p_-}$ is $C_2$-cofinite.
\end{thm}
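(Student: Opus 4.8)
The plan is to verify Zhu's $C_2$-condition by hand, i.e.\ to show that the commutative algebra $R:=\W_{p_+,p_-}/C_2(\W_{p_+,p_-})$ is finite-dimensional, where $C_2(\W_{p_+,p_-})$ is the span of the vectors $u_{-2}v$ with $u,v\in\W_{p_+,p_-}$. Recall that $R$ carries the commutative associative product induced by $u\cdot v:=u_{-1}v$, and that whenever a vertex operator algebra is strongly generated by some set of fields, $R$ is generated as a commutative algebra by the images of the corresponding states. Hence, by Proposition~\ref{genW}, $R$ is generated by the four classes $\overline{T},\overline{W}^{+},\overline{W}^{-},\overline{W}^{0}$, and it suffices to produce enough polynomial relations among them to force $\dim R<\infty$.

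The first ingredient is nilpotency of $\overline{T}$. The vertex subalgebra of $\W_{p_+,p_-}$ generated by the conformal vector $T$ is the Virasoro minimal model $L(c_{p_+,p_-},0)$: in the free-field realization $\W_{p_+,p_-}=\ker Q_{+}\cap\ker Q_{-}\subset\V^{+}_{1,1}$, the degree-zero summand $F_{1,1}$ contains the vacuum, and the submodule it generates under the Virasoro action is, by the Feigin--Fuchs/Felder analysis underlying Proposition~\ref{FockSocle} (the $r=s=1$, $n=0$ case), the irreducible vacuum module $L(c_{p_+,p_-},0)$, whose defining relation is the Virasoro singular vector of level $(p_+-1)(p_--1)$. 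Since minimal models are rational and hence $C_2$-cofinite, the image of $L(c_{p_+,p_-},0)$ in $R$ is a finite-dimensional quotient of $\C[\overline{T}]$; because $C_2$ of a subalgebra is contained in $C_2$ of the ambient algebra, $\overline{T}$ satisfies that same relation in $R$, so $\overline{T}^{\,N}=0$ for a suitable $N$.

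The second, decisive, ingredient is a set of quadratic relations expressing the products of $\overline{W}^{\pm},\overline{W}^{0}$ through $\overline{T}$. Here one uses the lattice description: the fields $W^{\pm},W^{0}$ are the explicit screening-built singular vectors of weight $h_{4p_+-1,1}$, and one analyzes the states $W^{a}_{-1}W^{b}$ (and the singular parts of the operator product expansions $Y(W^{a},z)Y(W^{b},w)$) for $a,b\in\{+,-,0\}$ via the lattice operator products $V_{\alpha}(z)V_{\beta}(w)=(z-w)^{\alpha\beta}{:}V_{\alpha}(z)V_{\beta}(w){:}$. The point is that, modulo $C_2(\W_{p_+,p_-})$, each such state lies in the vertex subalgebra generated by $T$ — that is, $\overline{W}^{a}\,\overline{W}^{b}=p_{ab}(\overline{T})$ for suitable polynomials $p_{ab}$ — because $W^{a}_{-1}W^{b}$ lies in $\ker Q_{+}\cap\ker Q_{-}$, carries the prescribed lattice degree and conformal weight, and the space of such states is, after killing $C_2$, spanned by normally ordered products of $T$. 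Granting this, commutativity of $R$ lets one replace any monomial of length $\geq2$ in $\overline{W}^{+},\overline{W}^{-},\overline{W}^{0}$ by an element of $\C[\overline{T}]$ times a monomial of length $\leq1$, so that
\begin{align*}
R=\C[\overline{T}]+\C[\overline{T}]\,\overline{W}^{+}+\C[\overline{T}]\,\overline{W}^{-}+\C[\overline{T}]\,\overline{W}^{0};
\end{align*}
combined with $\overline{T}^{\,N}=0$ this is finite-dimensional, which is the assertion.

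The main obstacle is this last ingredient: extracting from the Tsuchiya--Kanie screening operators $Q^{[\bullet]}_{\pm}$ a sufficiently explicit form of the null-vector relations satisfied by $W^{+},W^{-},W^{0}$ — in particular checking that no further strong generator is produced at weights below $2h_{4p_+-1,1}$ and that the relevant states genuinely collapse onto the Virasoro subalgebra modulo $C_2$, rather than merely onto $\W_{p_+,p_-}$. This is precisely the computation carried out, with somewhat different bookkeeping, in \cite{AMW2p,AMW3p,TW}; equivalently one may phrase it as the construction of a good increasing filtration on $\W_{p_+,p_-}$ whose associated graded is a finitely generated commutative algebra with nilpotent generators. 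By contrast the nilpotency of $\overline{T}$ is soft, depending only on the embedding $L(c_{p_+,p_-},0)\hookrightarrow\W_{p_+,p_-}$ and on rationality of the Virasoro minimal model.
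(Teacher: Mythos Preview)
The paper does not supply its own proof of this theorem: it is stated with attribution to \cite{AMW2p,AMW3p,TW} and used as input for the tensor-categorical arguments that follow. There is therefore no in-paper argument to compare your proposal against.

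As for the proposal itself, your outline is the standard strategy and the soft half is correct: strong generation by $T,W^{\pm},W^{0}$ (Proposition~\ref{genW}) reduces the question to finite generation of $R=\W_{p_+,p_-}/C_2$, and the embedding $L(c_{p_+,p_-},0)\hookrightarrow\W_{p_+,p_-}$ together with $C_2$-cofiniteness of the minimal model does force $\overline{T}$ nilpotent. But the decisive step---showing that $\overline{W}^{a}\cdot\overline{W}^{b}\in\C[\overline{T}]$ in $R$---is not actually carried out. You assert that $W^{a}_{-1}W^{b}$, after killing $C_2$, must land in the span of normally ordered powers of $T$ ``because'' of its lattice degree and conformal weight, but this is exactly what requires proof: a priori the relevant weight space of $\ker Q_{+}\cap\ker Q_{-}$ modulo $C_2$ could contain vectors outside the Virasoro subalgebra. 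You then concede this is ``the main obstacle'' and defer to \cite{AMW2p,AMW3p,TW} for the computation. That is circular relative to what is being claimed: the proposal reduces to citing the same references the theorem already cites, so it is a correct sketch of the architecture of the argument but not an independent proof.
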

\subsection{Simple $\W_{p_+,p_-}$-modules}
\label{IrreducibleWalgebra}
In this subsection, we introduce the structure of the simple $\W_{p_+,p_-}$-modules which was first given by \cite{FF2} and proved mathmatically by \cite{AMW2p,AMW3p,TW}.
For $1\leq r\leq p_+-1$, $1\leq s\leq p_--1$, we introduce the following symbols
\begin{align*}
&r^{\vee}(p_+):=p_+-r,
&s^{\vee}(p_-):=p_--s
\end{align*}
and use the abbreviations 
\begin{align}
\label{notrs}
&r^\vee=r^{\vee}(p_+),
&s^\vee=s^{\vee}(p_-).
\end{align}
For $1\leq r\leq p_+,\ 1\leq s\leq p_-$, let $\X^{\pm}_{r,s}$ be the following vector subspace of $\V^\pm_{r,s}$: 
\begin{enumerate}
\item For $1\leq r\leq p_+-1,\ 1\leq s\leq p_--1$,
\begin{align*}
&\X^+_{r,s}=Q^{[r^\vee]}_+(\V^-_{r^\vee,s})\cap Q^{[s^\vee]}_-(\V^-_{r,s^\vee}),
&\X^-_{r,s}=Q^{[r^\vee]}_+(\V^+_{r^\vee,s})\cap Q^{[s^\vee]}_-(\V^+_{r,s^\vee}).
\end{align*}
\item For $1\leq r\leq p_+-1,\ s=p_-$, 
\begin{align*}
&\X^+_{r,p_-}=Q^{[r^\vee]}_+(\V^-_{r^\vee,p_-}),
&\X^-_{r,p_-}=Q^{[r^\vee]}_+(\V^+_{r^\vee,p_-}).
\end{align*}
\item For $r=p_+,\ 1\leq s\leq p_--1$,
\begin{align*}
&\X^+_{p_+,s}=Q^{[s^\vee]}_-(\V^-_{p_+,s^\vee}),
&\X^-_{p_+,s}=Q^{[s^\vee]}_-(\V^+_{p_+,s^\vee}).
\end{align*}
\item For $r=p_+,\ s=p_-$, 
\begin{align*}
&\X^+_{p_+,p_-}=\V^+_{p_+,p_-},
&\X^-_{p_+,p_-}=\V^-_{p_+,p_-}.
\end{align*}
\end{enumerate}

We define the interior Kac table $\mathcal{T}$ as the following quotient set
\begin{align*}
\mathcal{T}=\{(r,s)|\ 1\leq r<p_+,1\leq s<p_-\}/\sim
\end{align*}
where $(r,s)\sim (r',s')$ if and only if $r'=r^\vee, s'=s^\vee$. 

\begin{thm}[\cite{AMW2p,AMW3p,TW}]
\label{simpleclass}
Each of the $\frac{(p_+-1)(p_--1)}{2}+2p_+p_-$ vector spaces
\begin{align*}
L(h_{k,l}),\ (k,l)\in\mathcal{T},\ \ \ \ \X^{\pm}_{r,s},\ 1\leq r\leq p_+,\ 1\leq s\leq p_-
\end{align*} 
becomes a simple $\W_{p_+,p_-}$-module. Furthermore, all simple modules are given by these $\frac{(p_+-1)(p_--1)}{2}+2p_+p_-$ simple modules. 
\end{thm}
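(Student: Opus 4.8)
## Proof Proposal for Theorem \ref{simpleclass}

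The plan is to establish two things: first, that each of the listed vector spaces $L(h_{k,l})$ for $(k,l)\in\mathcal{T}$ and $\X^\pm_{r,s}$ for $1\le r\le p_+$, $1\le s\le p_-$ carries the structure of a simple $\W_{p_+,p_-}$-module; second, that this list is complete. Since the $\W_{p_+,p_-}$-action on each of these spaces is inherited from the Heisenberg/lattice action on the ambient Fock modules (via the identification $\W_{p_+,p_-}=\K_{1,1}\subset\V^+_{1,1}$ and Proposition \ref{genW}, which tells us the algebra is strongly generated by $T(z)$ and the three fields $Y(W^\pm,z)$, $Y(W^0,z)$), the first task reduces to showing that each space is closed under these generating fields and is irreducible as a module over the subalgebra they generate.

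First I would verify the $\W_{p_+,p_-}$-module structure. The spaces $\X^\pm_{r,s}$ are defined as intersections of images of the screening operators $Q^{[\bullet]}_\pm$ inside the simple lattice modules $\V^\pm_{r,s}$. Because the screening currents $Q_\pm(z)$ and their iterates $Q^{[r]}_\pm(z)$ have conformal weight one and their zero modes $Q^{[\bullet]}_\pm$ commute with every Virasoro mode (as recorded in Section \ref{Basic}), the images and intersections are Virasoro-submodules; one must further check they are preserved by the three higher generating fields $Y(W^\pm,z)$ and $Y(W^0,z)$. Here the key point is that $W^\pm,W^0$ are themselves built from screening operators applied to Fock vacua, so their action commutes appropriately with the defining screenings — this is exactly the kind of compatibility established in \cite{TW}, which I would invoke. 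Irreducibility of each $\X^\pm_{r,s}$ then follows by analyzing the $L_0$-graded structure: using the Felder-type resolution and the socle series of the Fock modules from Proposition \ref{FockSocle}, one identifies the image/kernel of each screening operator explicitly, shows the bottom $L_0$-weight space of $\X^\pm_{r,s}$ is one-dimensional, and deduces that no proper nonzero $\W_{p_+,p_-}$-submodule can exist. For the Virasoro minimal modules $L(h_{k,l})$, $(k,l)\in\mathcal{T}$: these are simple Virasoro modules contained in $\W_{p_+,p_-}$ (they sit inside $\K_{1,1}$ as the images of the composed screenings on the thick-block Fock modules), and simplicity as $\W_{p_+,p_-}$-modules is immediate once one checks the higher generators $W^\pm,W^0$ act as zero on them — which holds because these generators have $L_0$-weight $h_{4p_+-1,1}$ lying outside the weight support of the minimal modules in the relevant grading, or more precisely because the minimal model $L(c_{p_+,p_-},0)$ is the maximal subVOA on which the $W$-currents act trivially.

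For completeness, I would follow the classification argument of \cite{AMW2p,AMW3p,TW}. The strategy is: (1) any simple $\W_{p_+,p_-}$-module is $\Z_{\ge0}$-graded with finite-dimensional weight spaces (using $C_2$-cofiniteness), hence is determined by its lowest weight space, which is a module over Zhu's algebra $A(\W_{p_+,p_-})$; (2) compute $A(\W_{p_+,p_-})$ — this is the main structural input, and it is shown in the cited works to be a finite-dimensional algebra whose simple modules are in bijection with the listed family; (3) match each simple $A(\W_{p_+,p_-})$-module to one of the $\X^\pm_{r,s}$ or $L(h_{k,l})$ by comparing lowest conformal weights $h_{r,s}$ and the action of the zero modes of $W^\pm,W^0$. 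The counting $\frac{(p_+-1)(p_--1)}{2}+2p_+p_-$ then comes out by combining the $2p_+p_-$ characters $\alpha_{r,s}$ (modulo the lattice shift built into the $\V^\pm$) with the $\frac{(p_+-1)(p_--1)}{2}$ interior Kac-table entries.

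The main obstacle is step (2)–(3) of the completeness argument: determining the structure of Zhu's algebra $A(\W_{p_+,p_-})$ precisely enough to see that its simple modules are exhausted by the stated list, and in particular distinguishing the five simple modules $\X^+_{r,s},\X^+_{r^\vee,s^\vee},\X^-_{r^\vee,s},\X^-_{r,s^\vee},L(h_{r,s})$ that share a thick block — they have related conformal weights, so the separation must be done via the eigenvalues of the zero modes of the generating $W$-fields. Since this is genuinely involved, I would not reprove it here but cite \cite{AMW2p,AMW3p,TW} for the classification, giving at most a sketch of how the free-field realization pins down the lowest-weight data of each module in the list.
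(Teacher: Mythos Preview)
The paper does not give its own proof of Theorem~\ref{simpleclass}: the statement is quoted verbatim from \cite{AMW2p,AMW3p,TW} with no accompanying argument, and the exposition moves directly to Proposition~\ref{socleV}. So there is nothing in the paper to compare against; your sketch is essentially an outline of what the cited references do, and in that sense it is appropriate --- the Zhu-algebra strategy you describe (classify simple $A(\W_{p_+,p_-})$-modules, then match lowest-weight data) is indeed the route taken in those works.

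Two small inaccuracies in your sketch are worth fixing. First, the lowest $L_0$-weight space of $\X^-_{r,s}$ is \emph{two}-dimensional, not one-dimensional: by the Virasoro decomposition $\X^-_{r,s}=\bigoplus_{n\geq 0}(2n+2)L(\Delta^-_{r,s;n})$ recorded in the paper (equation~(\ref{decomp})), the $n=0$ summand already has multiplicity two. So irreducibility for $\X^-_{r,s}$ cannot be read off from a one-dimensional bottom space; one genuinely needs the action of the zero modes $W^\pm[0]$ on that two-dimensional space (Proposition~\ref{sl2action2}) to see that no proper submodule exists. Second, the minimal simple modules $L(h_{k,l})$ for general $(k,l)\in\mathcal{T}$ do not sit inside $\K_{1,1}$; only $L(h_{1,1})$ does. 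They arise as composition factors of $\V^+_{r,s}$ (see Proposition~\ref{socleV}), and the correct statement about the $W$-action is that the maximal ideal $\X^+_{1,1}\subset\W_{p_+,p_-}$ annihilates each $L(h_{k,l})$ (this is Proposition~\ref{prop0830} in the paper), so that they inherit their $\W_{p_+,p_-}$-structure from the quotient $L(c_{p_+,p_-},0)$.
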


\begin{prop}[\cite{AMW2p,AMW3p,TW}]
\label{socleV}
Each $2p_+p_-$ simple $\mathcal{V}_{[p_+,p_-]}$-module becomes a $\W_{p_+,p_-}$-module by restriction, and has the following socle series:
\begin{enumerate}
\item For $1\leq r<p_+,\ 1\leq s<p_-$, $\V^+_{r,s}$ has the following socle series
\begin{align*}
{\rm Soc}_1(\V^+_{r,s})\subsetneq {\rm Soc}_2(\V^{+}_{r,s})\subsetneq {\rm Soc}_3(\V^{+}_{r,s})=\V^{+}_{r,s}
\end{align*}
with
\begin{align*}
{\rm Soc}_1=\X^+_{r,s},
&&{\rm Soc}_2/{\rm Soc}_1=\X^-_{r^{\vee},s}\oplus \X^{-}_{r,s^{\vee}}\oplus L(h_{r,s}),
&&&{\rm Soc}_3/{\rm Soc}_2=\X^+_{r^{\vee},s^{\vee}}.
\end{align*}
\item
For $1\leq r<p_+,\ 1\leq s<p_-$, $\V^-_{r,s}$ has the following socle series
\begin{align*}
{\rm Soc}_1(\V^-_{r,s})\subsetneq {\rm Soc}_2(\V^{-}_{r,s})\subsetneq {\rm Soc}_3(\V^{-}_{r,s})=\V^{-}_{r,s}
\end{align*}
with
\begin{align*}
{\rm Soc}_1=\X^-_{r,s},
&&{\rm Soc}_2/{\rm Soc}_1=\X^+_{r^{\vee},s}\oplus \X^{+}_{r,s^{\vee}},
&&&{\rm Soc}_3/{\rm Soc}_2=\X^-_{r^{\vee},s^{\vee}}.
\end{align*}
\item 
For $1\leq r<p_+$, $\V^+_{r,p_-}$ and $\V^-_{r^\vee,p_-}$ have the following socle series
\begin{align*}
\V^+_{r,p_-}/\X^+_{r,p_-}\simeq \X^-_{r^\vee,p_-},\ \ \ \ \V^-_{r^\vee,p_-}/\X^-_{r^\vee,p_-}\simeq \X^+_{r,p_-}.
\end{align*}
\item
For $1\leq s<p_-$, $\V^+_{p_+,s}$ and $\V^-_{p_-,s^\vee}$ have the following socle series
\begin{align*}
\V^+_{p_+,s}/\X^+_{p_+,s}\simeq \X^-_{p_+,s^\vee},\ \ \ \ \V^-_{p_+,s^\vee}/\X^-_{p_+,s^\vee}\simeq \X^+_{p_+,s}.
\end{align*}
\item For $r=p_+$, $s=p_-$,
\begin{align*}
\V^+_{p_+,p_-}=\X^+_{p_+,p_-},\ \ \ \ \V^-_{p_+,p_-}=\X^-_{p_+,p_-}.
\end{align*}

\end{enumerate}
\end{prop}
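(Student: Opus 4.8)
The plan is to transport everything through the screening operators, using the already-known socle structure of the Fock modules (Proposition \ref{FockSocle}). Recall $\V^{\pm}_{r,s}$ is a direct sum $\bigoplus_{n}F_{r,s;2n}$ (resp.\ $2n+1$), and the subspaces $\X^{\pm}_{r,s}$ were defined exactly as intersections of images of the long screenings $Q^{[r^\vee]}_+$, $Q^{[s^\vee]}_-$. So the strategy is: (i) identify the Virasoro composition factors of $\V^\pm_{r,s}$ by summing the Fock data in Proposition \ref{FockSocle} over the relevant arithmetic progression of $n$; (ii) reorganize these Virasoro factors into the simple $\W_{p_+,p_-}$-modules of Theorem \ref{simpleclass} — each $\X^\pm_{r,s}$ is a specific infinite direct sum of Virasoro simples $L(h_{\bullet,\bullet;\bullet})$, whereas $L(h_{r,s})$ occurs with multiplicity one; (iii) verify which extensions among these $\W$-modules are actually non-split inside $\V^\pm_{r,s}$, and that they assemble into a socle series of the stated length.

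First I would treat case (1), $\V^+_{r,s}$ with $1\le r<p_+$, $1\le s<p_-$, which is the generic and hardest case. Using Proposition \ref{FockSocle}(1) for each $F_{r,s;2n}$ and collecting terms, the three-step Fock socle filtration produces, after summation over $n\in\Z$, exactly the Virasoro factors appearing in $\X^+_{r,s}$, in $\X^-_{r^\vee,s}\oplus\X^-_{r,s^\vee}$, in the single module $L(h_{r,s})$ (this is the $k=0$, $n=0$ term of the middle layer, the unique factor that is \emph{not} absorbed into a $\W$-simple of type $\X$), and in $\X^+_{r^\vee,s^\vee}$. The identification ${\rm Soc}_1(\V^+_{r,s})=\X^+_{r,s}$ follows because $\X^+_{r,s}=\ker Q^{[r^\vee]}_+\cap\ker Q^{[s^\vee]}_-$ restricted appropriately — it is a $\W_{p_+,p_-}$-submodule, it is simple by Theorem \ref{simpleclass}, and nothing else in $\V^+_{r,s}$ is annihilated by both long screenings (here I invoke the image/kernel description of the $Q^{[\bullet]}_\pm$ cited from \cite{FF,Felder,IK,TW}). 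Then ${\rm Soc}_2/{\rm Soc}_1$ is the socle of the quotient; the quotient $\V^+_{r,s}/\X^+_{r,s}$ maps onto $\X^-_{r^\vee,s}$ and $\X^-_{r,s^\vee}$ via the two screenings and contains $L(h_{r,s})$ as the image of the Virasoro singular vector of that weight, and one checks these three are simple submodules with semisimple sum and that their sum is all of the socle. Finally ${\rm Soc}_3/{\rm Soc}_2\cong\X^+_{r^\vee,s^\vee}$ is what remains, and since by Proposition \ref{FockSocle} the Fock filtration has length three and is non-split at each step, the induced $\W$-filtration is a genuine socle series of length three. Case (2) is the mirror image using Proposition \ref{FockSocle}(1) with the parity shift, noting that for $\V^-_{r,s}$ the middle Fock layer contributes only the two $\X^+$ pieces and no extra $L(h_{r,s})$ (the relevant Virasoro factor $L(h_{r,s})$ does not survive into the odd-$n$ sum as a separate $\W$-module). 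Cases (3) and (4) are the "thin" situations: here one of $r,s$ equals $p_+$ or $p_-$, the corresponding long screening is trivial or absent, $\X^\pm$ is defined by a single screening, and Proposition \ref{FockSocle}(2)–(3) give a length-two Fock filtration, yielding directly the two-step extensions $0\to\X^+_{r,p_-}\to\V^+_{r,p_-}\to\X^-_{r^\vee,p_-}\to 0$ and its partners; one must also check $\V^-_{r^\vee,p_-}/\X^-_{r^\vee,p_-}\cong\X^+_{r,p_-}$, which is symmetric. Case (5) is immediate from Proposition \ref{FockSocle}(4): $F_{p_+,p_-;n}$ is Virasoro-semisimple, so $\V^\pm_{p_+,p_-}=\X^\pm_{p_+,p_-}$ is already simple as a $\W$-module by Theorem \ref{simpleclass}.

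The main obstacle I expect is step (iii): passing from the Virasoro-level composition data to genuine $\W_{p_+,p_-}$-module statements — in particular showing that the socle of $\V^+_{r,s}$ is \emph{exactly} $\X^+_{r,s}$ and not larger, and that the three middle factors $\X^-_{r^\vee,s}$, $\X^-_{r,s^\vee}$, $L(h_{r,s})$ really form a semisimple layer with no hidden extensions among them inside $\V^+_{r,s}$. This requires knowing that the images $Q^{[r^\vee]}_+(\V^-_{r^\vee,s})$ and $Q^{[s^\vee]}_-(\V^-_{r,s^\vee})$ intersect precisely in $\X^+_{r,s}$ and that their sum, modulo $\X^+_{r,s}$, is semisimple — facts which I would extract from the detailed kernel/image analysis of the screening operators referenced after Proposition \ref{FockSocle} (namely \cite{FF,Felder,IK,TW}), combined with Proposition \ref{genW} to argue $\W_{p_+,p_-}$-module structure rather than mere Virasoro structure. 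A secondary, bookkeeping-heavy point is the careful summation of the shifted indices $h_{r,s;|n|+2k+\bullet}$ over $n\in\Z$ to see that the $a=0/a=1$ alternation in Proposition \ref{FockSocle} reassembles cleanly into the $n$-graded pieces of $\X^\pm$ with the right multiplicities; this is routine but must be done consistently across all four Fock cases.
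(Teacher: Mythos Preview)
The paper does not prove this proposition at all: it is stated with the citation \cite{AMW2p,AMW3p,TW} and no proof is given, as it is a background structural result imported from the literature. Your sketch is a reasonable outline of how those references actually establish the result---namely, summing the Fock socle data of Proposition~\ref{FockSocle} over the arithmetic progressions defining $\V^\pm_{r,s}$, and then using the kernel/image description of the long screenings $Q^{[\bullet]}_\pm$ together with the $\W_{p_+,p_-}$-action of Proposition~\ref{genW} to upgrade the Virasoro filtration to a $\W$-module socle series.

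Your identification of the main obstacle is accurate: the delicate point is precisely that the Virasoro socle filtration of $\V^+_{r,s}$ is a $\W_{p_+,p_-}$-module filtration whose layers are the claimed $\W$-simples, and in particular that ${\rm Soc}_1(\V^+_{r,s})=\X^+_{r,s}$ as a $\W$-module rather than merely containing it. This is exactly what the screening-operator analysis in \cite{TW} (and the Felder complex arguments of \cite{Felder,FF}) is designed to deliver, so your plan to invoke those results is the right move. The bookkeeping you flag in the last sentence is genuinely routine once one notes that the decomposition~(\ref{decomp}) of $\X^\pm_{r,s}$ into Virasoro simples matches the summed Fock layers term by term.
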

\begin{dfn}
\label{dfn0830}
For $1\leq r\leq p_+$, $1\leq s\leq p_-$ and $n\geq 0$, we define the following vectors $\{w^{(n)}_{i}(r,s)\}_{i=-n}^{n}$ in $\mathcal{X}^+_{r,s}$:
\begin{equation*}
w^{(n)}_{i}(r,s):=
\begin{cases}
Q^{[(n+i)p_+]}_+\ket{\alpha_{p_+,p_-;-2n}}& r=p_+,\ s=p_-\\
Q^{[(n+i)p_+]}_+\ket{\alpha_{p_+,s;-2n}} & r=p_+,\ s\neq p_-\\
Q^{[(n+i+1)p_+-r]}_+\ket{\alpha_{r^\vee,p_-;-2n-1}} & r\neq p_+,\ s=p_-\\
Q^{[(n+i+1)p_+-r]}_+\ket{\alpha_{r^\vee,s;-2n-1}} & r\neq p_+,\ s\neq p_-
\end{cases}
\end{equation*}
where $Q^{[0]}_+={\rm id}$.
For $1\leq r\leq p_+$, $1\leq s\leq p_-$ and $n\geq 0$, we define the following vectors $\{v^{(n)}_{\frac{2i-1}{2}}(r,s)\}_{i=-n}^{n+1}$ in $\mathcal{X}^-_{r,s}$:
\begin{equation*}
v^{(n)}_{\frac{2i-1}{2}}(r,s):=
\begin{cases}
Q^{[(n+i)p_+]}_+\ket{\alpha_{p_+,p_-;-2n-1}}& r=p_+,\ s=p_-\\
Q^{[(n+i)p_+]}_+\ket{\alpha_{p_+,s;-2n-1}} & r=p_+,\ s\neq p_-\\
Q^{[(n+i+1)p_+-r]}_+\ket{\alpha_{r^\vee,p_-;-2n-2}} & r\neq p_+,\ s=p_-\\
Q^{[(n+i+1)p_+-r]}_+\ket{\alpha_{r^\vee,s;-2n-2}} & r\neq p_+,\ s\neq p_-
\end{cases}
.
\end{equation*}
\end{dfn}
In the following, we use the abbreviations $w^{(n)}_{i}=w^{(n)}_{i}(r,s)$ and $v^{(n)}_{\frac{2i-1}{2}}=v^{(n)}_{\frac{2i-1}{2}}(r,s)$.
Following \cite{TW}, for $1\leq r\leq p_+$, $1\leq s\leq p_-$ and $n\geq 0$, we define the following symbols
\begin{equation*}
\Delta^+_{r,s;n}=
\begin{cases}
h_{r^\vee,s;-2n-1}&r\neq p_+,s\neq p_-\\
h_{p_+,s;-2n}&r=p_+,s\neq p_-\\
h_{r,p_-;2n}&r\neq p_+,s=p_-\\
h_{p_+,p_-;-2n}&r=p_+,s=p_-
\end{cases}
,\ \ 
\Delta^-_{r,s;n}=
\begin{cases}
h_{r^\vee,s;-2n-2}&r\neq p_+,s\neq p_-\\
h_{p_+,s;-2n-1}&r=p_+,s\neq p_-\\
h_{r,p_-;2n+1}&r\neq p_+,s=p_-\\
h_{p_+,p_-;-2n-1}&r=p_+,s=p_-
\end{cases}
,
\end{equation*}
where the symbol $h_{r,s;n}$ is defined by (\ref{h_alphars}).
From the structure of the images and kernels of the screening operators $Q^{[\bullet]}_\pm$ \cite{FF,Felder,TW}, we can see that each $\X^\pm_{r,s}$ decomposes as the direct sum of  the simple Virasoro modules
\begin{align}
&\X^+_{r,s}=\bigoplus_{n\geq 0}(2n+1)L(\Delta^+_{r,s;n}),
&\X^-_{r,s}=\bigoplus_{n\geq 0}(2n+2)L(\Delta^-_{r,s;n}),
\label{decomp}
\end{align}
and the sets of vectors $\{w^{(n)}_{i}\}_{i=-n}^{n}$ and $\{v^{(n)}_{\frac{2i-1}{2}}\}_{i=-n}^{n+1}$ give a basis of the highest weight space of $(2n+1)L(\Delta^+_{r,s;n})\subset \X^+_{r,s}$ and $(2n+2)L(\Delta^-_{r,s;n})\subset \X^-_{r,s}$, respectively. 
For $W=W^\pm,W^0$ and $n\in \mathbb{Z}$, we define the $n$-th mode of the field $Y(W,z)$ as follows
\begin{align*}
W[n]=\oint_{z=0}Y(W,z)z^{h_{4p_+-1,1}+n-1}{\rm d}z.
\end{align*}
The transitive $\W_{p_+,p_-}$-actions on the Virasoro highest weight spaces of (\ref{decomp}) are given by the following proposition.
\begin{prop}[\cite{AMW2p,AMW3p,TW}]
\mbox{}
\begin{enumerate}
\item The Virasoro highest weight vectors $w^{(n)}_{i}$ $(n\geq 0,-n\leq i\leq n)$ satisfy
\begin{align*}
W^\epsilon[-h]w^{(0)}_{0}=0,\ \ \ h<\Delta^+_{r,s;1}-\Delta^+_{r,s;0},\ \epsilon=\pm,
\end{align*}
and
\begin{align*}
W^\pm[0]w^{(n)}_{i}&\in \C^\times w^{(n)}_{i\pm1}+\sum_{k=0}^{n-1}U(\mathcal{L}).w^{(k)}_{i\pm 1},\\
w^{(n+1)}_{i\pm 1}&\in \C^\times W^\pm[\Delta^+_{r,s;n}-\Delta^+_{r,s;n+1}]w^{(n)}_{i} +\sum_{k=0}^{n}U(\mathcal{L}).w^{(k)}_{i\pm 1},\\
w^{(n+1)}_{i}&\in \C^\times W^0[\Delta^+_{r,s;n}-\Delta^+_{r,s;n+1}]w^{(n)}_{i}+\sum_{k=0}^{n}U(\mathcal{L}).w^{(k)}_{i},
\end{align*}
where $w^{(n)}_{n+1}=w^{(n)}_{-n-1}=w^{(-1)}_{i}=0$.
\item  The Virasoro highest weight vectors $v^{(n)}_{\frac{2i+1}{2}}$ $(n\geq 0,-n-1\leq i\leq n)$ satisfy
\begin{equation*}
W^{\epsilon}[-h]v^{(0)}_{\frac{\pm1}{2}}
\begin{cases}
=0 &\epsilon=\pm \\
\in  U(\mathcal{L}).v^{(0)}_{\frac{\pm1}{2}} &\epsilon=0\\
\in U(\mathcal{L}).v^{(0)}_{\frac{\mp1}{2}}& \epsilon=\mp
\end{cases}
\ \ \ h<\Delta^-_{r,s;1}-\Delta^-_{r,s;0},
\end{equation*}
and
\begin{align*}
W^\pm[0]v^{(n)}_{\frac{2i+1}{2}}&\in \C^\times v^{(n)}_{\frac{2i+1\pm 2}{2}}+\sum_{k=0}^{n-1}U(\mathcal{L}).v^{(k)}_{\frac{2i+1}{2}\pm 1},\\
v^{(n+1)}_{\frac{2i+1}{2}\pm 1}&\in \C^\times W^\pm[\Delta^-_{r,s;n}-\Delta^-_{r,s;n+1}]v^{(n)}_{\frac{2i+1}{2}}+\sum_{k=0}^{n}U(\mathcal{L}).v^{(k)}_{\frac{2i+1}{2}\pm 1},\\
v^{(n+1)}_{\frac{2i+1}{2}}&\in \C^\times W^0[\Delta^-_{r,s;n}-\Delta^-_{r,s;n+1}]v^{(n)}_{\frac{2i+1}{2}}+\sum_{k=0}^{n}U(\mathcal{L}).v^{(k)}_{\frac{2i+1}{2}},
\end{align*}
where $v^{(n)}_{\frac{-2n-3}{2}}=v^{(n)}_{\frac{2n+3}{2}}=v^{(-1)}_{\frac{2i+1}{2}}=0$.
\end{enumerate}
\label{sl2action2}
\end{prop}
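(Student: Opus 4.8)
We must show that the Virasoro highest-weight vectors $w^{(n)}_i$ and $v^{(n)}_{\frac{2i+1}{2}}$ spanning the highest-weight spaces of the Virasoro decomposition (\ref{decomp}) of $\X^\pm_{r,s}$ transform under the zero and leading modes of $Y(W^\pm,z),Y(W^0,z)$ according to an $\mathfrak{sl}_2$-type pattern: the zero modes $W^\pm[0]$ shift the lower index $i$ by $\pm1$ within the same Virasoro isotypic piece (modulo lower ones), while the leading negative modes $W^\pm[\Delta^\pm_{r,s;n}-\Delta^\pm_{r,s;n+1}]$ and $W^0[\cdots]$ raise $n$ by one, again with nonzero scalar and lower-order corrections.

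\begin{proof}[Proof strategy]
The plan is to work entirely inside the free-field realization. By Definition \ref{dfn0830} every vector $w^{(n)}_i$ (resp.\ $v^{(n)}_{\frac{2i+1}{2}}$) is an explicit image of a Fock highest-weight vector $\ket{\alpha_{\bullet,\bullet;\bullet}}$ under an iterated screening operator $Q^{[m]}_+$ for a suitable $m$; likewise $W^\pm,W^0$ are the screening images $Q^{[p_--1]}_-\ket{\alpha_{1,p_--1;3}}$, $Q^{[p_+-1]}_+\ket{\alpha_{p_+-1,1;-3}}$, $Q^{[2p_+-1]}_+\ket{\alpha_{p_+-1,1;-3}}$ defined just before Proposition \ref{genW}. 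The first step is therefore to reduce everything to the computation of an operator-product expansion $Y(W^\epsilon,z)\,Q^{[m]}_+\ket{\alpha}$ and to extract the modes of interest. Since the $Q^{[\bullet]}_\pm$ commute with all Virasoro modes, and since $W^\epsilon$ is itself built from $Q^{[\bullet]}_\pm$ applied to a Fock vector, the key is a \emph{commutation/contour-deformation lemma}: moving the field $Y(W^\epsilon,z)$ past the screening integrals defining $w^{(n)}_i$ produces (up to signs and $q$-binomial-type coefficients coming from the regularized cycles $\overline\Gamma_n(\kappa_\pm)$) a new iterated screening applied to a product of vertex operators $V_{\alpha_\epsilon}(w)V_\alpha(0)$, which by the OPE $V_{\alpha}(z)V_\beta(w)=(z-w)^{\alpha\beta}{:}V_\alpha(z)V_\beta(w){:}$ is again (a finite sum of) screening images of Fock vectors. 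Comparing $L_0$-weights and the count $(2n+1)$ versus $(2n+3)$ of the isotypic multiplicities then forces the target to lie in the claimed piece.

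Concretely I would proceed as follows. (1) Record the charge bookkeeping: $W^\pm$ carries $\alpha$-charge $\mp\alpha_+$ (resp.\ the appropriate combination), so $W^\epsilon[k]$ maps $F_{r,s;n}$ into a Fock module of shifted indices; matching this against the list (\ref{decomp}) and the weights $\Delta^\pm_{r,s;n}$ pins down which $L(\Delta^\pm_{r,s;n'})$ can be hit. This already yields the vanishing statements $W^\epsilon[-h]w^{(0)}_0=0$ for $h<\Delta^+_{r,s;1}-\Delta^+_{r,s;0}$, since no intermediate Fock weight is available. (2) For the zero-mode action $W^\pm[0]w^{(n)}_i$, deform the contour of $Y(W^\pm,z)$ around the screening contours defining $w^{(n)}_i$; the residue at $z=0$ picks out a configuration where one more $Q_+$-current is inserted, i.e.\ $w^{(n)}_{i\pm1}$ plus terms with fewer current insertions, which land in $\sum_{k<n}U(\mathcal L).w^{(k)}_{i\pm1}$. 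The nonvanishing of the leading scalar is a Selberg-type integral (or $q$-Selberg, because of the regularization) computation; I would cite \cite{TK,TW} for non-degeneracy rather than evaluate it. (3) For the raising relations, the leading mode $W^\pm[\Delta^+_{r,s;n}-\Delta^+_{r,s;n+1}]$ is exactly the mode whose weight shift matches $\Delta^+_{r,s;n}\to\Delta^+_{r,s;n+1}$; the same contour manipulation shows its action on $w^{(n)}_i$ equals $w^{(n+1)}_{i\pm1}$ up to a nonzero scalar and lower-$k$ corrections, and symmetrically for $W^0$ raising $w^{(n)}_i$ to $w^{(n+1)}_i$. The $v$-vector statements are identical in structure, with the only new feature the half-integer labeling and the three-case behavior of $W^0[-h]v^{(0)}_{\pm1/2}$, which again is read off from charge conservation: $W^0$ has zero total $a_0$-charge shift on the relevant sector so it can act within $v^{(0)}_{\pm1/2}$ or exchange $v^{(0)}_{1/2}\leftrightarrow v^{(0)}_{-1/2}$.

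The main obstacle is step (2)--(3): proving that the \emph{leading} coefficients are genuinely nonzero rather than accidentally vanishing. This is the point where one cannot avoid the regularized hypergeometric/Selberg integrals $\int_{\overline\Gamma_n(\kappa_\pm)}\cdots$, and where the coprimality of $p_+,p_-$ (so that the relevant $\kappa_\pm$ are irrational and the $q$-binomial coefficients appearing are nonzero) is essential. I expect that the cleanest route is not a direct evaluation but to invoke the non-degeneracy of the Shapovalov-type pairing between the screening images and the contragredient Fock modules established in \cite{TK,TW}: if the leading coefficient vanished, the image of $w^{(n)}_i$ under the relevant mode would lie in a strictly smaller submodule, contradicting the irreducibility of $\X^\pm_{r,s}$ (Theorem \ref{simpleclass}) together with the fact that $\W_{p_+,p_-}$ is strongly generated by $T,W^\pm,W^0$ (Proposition \ref{genW}) --- the $W$-modes must act transitively on the Virasoro highest-weight vectors of a simple module, so each claimed arrow is forced to be nonzero. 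I would phrase the final argument this way, so that the delicate integral computations are quarantined inside citations and the structural consequences (the $\mathfrak{sl}_2$-pattern) follow from simplicity plus weight bookkeeping.
\end{proof}
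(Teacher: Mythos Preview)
The paper does not prove this proposition at all: it is stated with the attribution \cite{AMW2p,AMW3p,TW} and no proof environment follows. So there is nothing in the paper to compare your argument against; the author is simply importing a known structural result about the $\W_{p_+,p_-}$-action on the Virasoro highest-weight spaces of $\X^\pm_{r,s}$.

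Your proof strategy is a reasonable sketch of how the cited references establish the result, and in particular your final paragraph identifies the cleanest route to the nonvanishing of the leading coefficients: once one knows that $\X^\pm_{r,s}$ is simple (Theorem \ref{simpleclass}) and that $\W_{p_+,p_-}$ is strongly generated by $T,W^\pm,W^0$ (Proposition \ref{genW}), the $W$-modes are forced to act transitively on the Virasoro highest-weight spaces, so the arrows cannot vanish. That argument, combined with $L_0$- and $a_0$-charge bookkeeping to pin down \emph{which} highest-weight space is hit, is exactly what the cited papers do. One caution on your step (1): your charge assignments for $W^\pm$ are stated loosely (``$W^\pm$ carries $\alpha$-charge $\mp\alpha_+$''), and in fact $W^+=Q^{[p_--1]}_-\ket{\alpha_{1,p_--1;3}}$, $W^-=Q^{[p_+-1]}_+\ket{\alpha_{p_+-1,1;-3}}$, $W^0=Q^{[2p_+-1]}_+\ket{\alpha_{p_+-1,1;-3}}$ have $a_0$-charges $\alpha_{1,1;2}$, $\alpha_{1,1;-2}$, $\alpha_{1,1;0}$ respectively (i.e.\ they shift the Fock index $n$ by $\pm2,0$), which is what makes the $i\mapsto i\pm1$ and $i\mapsto i$ selection rules come out right in Definition \ref{dfn0830}. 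If you tighten that bookkeeping, your outline is correct.
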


\subsection{The block decomposition of $\mathcal{C}_{p_+,p_-}$}
\label{BlockWalgebra}
Let $\mathcal{C}_{p_+,p_-}$ be the abelian category of grading restricted generalised $\W_{p_+,p_-}$-modules. Since $\W_{p_+,p_-}$ is $C_2$-cofinite, any object $M$ in $\mathcal{C}_{p_+,p_-}$ has finite length. For any $M$ in $\mathcal{C}_{p_+,p_-}$, let $M^*$ be the contragredient of $M$. Note that $\mathcal{C}_{p_+,p_-}$ is closed under contragredient.

\begin{dfn}
We define the following $\frac{(p_+-1)(p_--1)}{2}$ thick blocks, $p_++p_--2$ thin blocks and two semi-simple blocks:
\begin{enumerate}
\item For each $(r,s)\in\mathcal{T}$, we denote by $C^{thick}_{r,s}=C^{thick}_{p_+-r,p_--s}$ the full abelian subcategory of $\mathcal{C}_{p_+,p_-}$ such that
\begin{align*}
&M\in C^{thick}_{r,s}\\ 
&\Leftrightarrow\ {\rm all\ composition\ factors\ of}\ M\ {\rm are\ given\ by}\ \mathcal{X}^{+}_{r,s},\mathcal{X}^{+}_{r^{\vee},s^{\vee}},\\
&\ \ \ \ \X^-_{r^{\vee},s},\X^-_{r,s^{\vee}}\ {\rm and}\ L(h_{r,s}),
\end{align*}
where we use the notation $r^\vee=p_+-r$, $s^\vee=p_--s$ defined by (\ref{notrs}).
\item For each $1\leq s\leq p_--1$, we denote by $C^{thin}_{p_+,s}$ the full abelian subcategory of $\mathcal{C}_{p_+,p_-}$ such that
\begin{align*}
&M\in C^{thin}_{p_+,s}\\ 
&\Leftrightarrow\ {\rm all\ composition\ factors\ of}\ M\ {\rm are\ given\ by}\ \mathcal{X}^{+}_{p_+,s}\ {\rm and }\ \mathcal{X}^{-}_{p_+,s^{\vee}}.
\end{align*}
\item For each $1\leq r\leq p_+-1$, we denote by $C^{thin}_{r,p_-}$ the full abelian subcategory of $\mathcal{C}_{p_+,p_-}$ such that
\begin{align*}
&M\in C^{thin}_{r,p_-}\\ 
&\Leftrightarrow\ {\rm all\ composition\ factors\ of}\ M\ {\rm are\ given\ by}\ \mathcal{X}^{+}_{r,p_-}\ {\rm and }\ \mathcal{X}^{-}_{r^{\vee},p_-}.
\end{align*}
\item We denote by $C^{\pm}_{p_+,p_-}$ the full abelian subcategory of $\mathcal{C}_{p_+,p_-}$ such that
\begin{align*}
&M\in C^{\pm}_{p_+,p_-}\\ 
&\Leftrightarrow\ {\rm all\ composition\ factors\ of}\ M\ {\rm are\ given\ by}\ \mathcal{X}^{\pm}_{p_+,p_-}.
\end{align*}
\end{enumerate}
\end{dfn}
The following proposition can be proved by the same argument as \cite[Theorem 4.4]{AM}, using Theorem \ref{simpleclass} and the theory of Virasoro semisimple category $\mathcal{O}$ (cf. \cite{BNW,IK}). We omit the proof.
\begin{prop}
The abelian category $\mathcal{C}_{p_+,p_-}$ has the following block decomposition
\begin{align*}
\mathcal{C}_{p_+,p_-}=\bigoplus_{(r,s)\in\mathcal{T}} C^{thick}_{r,s}\oplus \bigoplus_{r=1}^{p_+-1}C^{thin}_{r,p_-}\oplus\bigoplus_{s=1}^{p_--1}C^{thin}_{p_+,s}\oplus C^+_{p_+,p_-}\oplus C^-_{p_+,p_-}.
\end{align*}
\end{prop}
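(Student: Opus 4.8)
The plan is to identify the blocks of $\mathcal{C}_{p_+,p_-}$ with the connected components of the ${\rm Ext}^1$-quiver on its simple objects, following the argument of \cite[Theorem 4.4]{AM}. Recall the general fact that a finite-length abelian category decomposes as the direct sum, over the connected components of the quiver whose vertices are the isomorphism classes of simples and which has an edge between $S$ and $S'$ whenever ${\rm Ext}^1(S,S')\neq 0$ or ${\rm Ext}^1(S',S)\neq 0$, of the full subcategories of objects all of whose composition factors are simples in the given component. By Theorem \ref{simpleclass} the vertices are the $\X^{\pm}_{r,s}$ $(1\leq r\leq p_+,\ 1\leq s\leq p_-)$ and the $L(h_{k,l})$ $((k,l)\in\mathcal{T})$, so it suffices to show: (i) inside each of $C^{thick}_{r,s}$, $C^{thin}_{r,p_-}$, $C^{thin}_{p_+,s}$, $C^{\pm}_{p_+,p_-}$ the listed simples span a connected subquiver; and (ii) there is no edge joining two simples lying in different ones of these families. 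A useful preliminary reduction for (ii): since every vertex operator acts on a module by operators shifting the $L_0$-weight by integers, $\mathcal{C}_{p_+,p_-}$ already splits as $\bigoplus_{[\mu]\in\C/\Z}\mathcal{C}^{[\mu]}_{p_+,p_-}$ according to the conformal weight modulo $\Z$; this disposes of every pair of simples with incongruent conformal weights, leaving only finitely many pairs to examine (for instance $\X^+_{p_+,p_-}$ versus $\X^-_{p_+,p_-}$ when $4\mid p_+p_-$).

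Step (i) uses only the socle filtrations of the lattice modules $\V^{\pm}_{r,s}$ in Proposition \ref{socleV}. For $(r,s)\in\mathcal{T}$, pulling back $\V^+_{r,s}$ along the inclusions of the simple summands of $\V^+_{r,s}/\X^+_{r,s}$ produces non-split extensions witnessing ${\rm Ext}^1(\X^-_{r^\vee,s},\X^+_{r,s})\neq0$, ${\rm Ext}^1(\X^-_{r,s^\vee},\X^+_{r,s})\neq0$ and ${\rm Ext}^1(L(h_{r,s}),\X^+_{r,s})\neq0$ (non-split since ${\rm Soc}(\V^+_{r,s})=\X^+_{r,s}$); applying the same construction to $\V^+_{r^\vee,s^\vee}$, and using $h_{r^\vee,s^\vee}=h_{r,s}$, joins $\X^+_{r^\vee,s^\vee}$ to each of $\X^-_{r^\vee,s}$, $\X^-_{r,s^\vee}$ and $L(h_{r,s})$ as well, so the five simples of $C^{thick}_{r,s}$ form a connected subquiver. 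For the thin blocks, parts (3)--(4) of Proposition \ref{socleV} give non-split extensions $0\to\X^+_{r,p_-}\to\V^+_{r,p_-}\to\X^-_{r^\vee,p_-}\to0$ and $0\to\X^+_{p_+,s}\to\V^+_{p_+,s}\to\X^-_{p_+,s^\vee}\to0$ — non-split because the Fock modules $F_{r,p_-;2n}$ and $F_{p_+,s;2n}$ are not Virasoro-semisimple (Proposition \ref{FockSocle}(3)--(4)) while simple $\W_{p_+,p_-}$-modules are, by \eqref{decomp} — so the two simples of each thin block are joined, while each of $C^{\pm}_{p_+,p_-}$ has a single simple.

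Step (ii) is the main obstacle. I would restrict a hypothetical non-split extension $0\to S'\to M\to S\to0$ (with $S,S'$ in different families) to the Virasoro subalgebra generated by $T$. By \eqref{decomp} and Theorem \ref{simpleclass} each of $S,S'$ is an explicit direct sum of simple Virasoro modules $L(h_{a,b;m})$; comparing these Kac labels with the socle filtrations of the Fock modules in Proposition \ref{FockSocle}, and using the block (linkage) structure of Virasoro category $\mathcal{O}$ at central charge $c_{p_+,p_-}$ (cf. \cite{BNW,IK}), one checks that no Virasoro constituent of $S$ is Virasoro-linked to any Virasoro constituent of $S'$; in particular there is no non-split Virasoro extension between them. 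It follows first that $L_0$ cannot act non-semisimply on $M$, since that would produce exactly such a Virasoro extension, and second — treating the remaining case where $M$ is Virasoro-semisimple but non-split over $\W_{p_+,p_-}$ — that one may lift a generating Virasoro highest-weight vector of $S$ (one of $w^{(0)}_0$, $v^{(0)}_{\frac{\pm1}{2}}$, or the highest-weight vector of $L(h_{k,l})$) to $M$ and, using the transitivity of the $W^{\pm},W^0$-action recorded in Propositions \ref{genW} and \ref{sl2action2}, obtain a $\W_{p_+,p_-}$-submodule of $M$ mapping isomorphically onto $S$, so that the sequence splits. Either way we reach a contradiction, hence every indecomposable object of $\mathcal{C}_{p_+,p_-}$ has all composition factors in one family; together with (i) this gives the asserted block decomposition. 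The genuinely delicate input — and the place where the Virasoro category-$\mathcal{O}$ results of \cite{BNW,IK} are needed — is the verification that the Virasoro linkage relation on the weights $h_{a,b;m}$ occurring in \eqref{decomp} refines the stated partition and is merged, under Step (i), precisely into it.
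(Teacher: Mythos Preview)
The paper omits this proof entirely, stating only that it ``can be proved by the same argument as \cite[Theorem 4.4]{AM}, using Theorem \ref{simpleclass} and the theory of Virasoro semisimple category $\mathcal{O}$ (cf.\ \cite{BNW,IK}).'' Your proposal is exactly that argument spelled out: the ${\rm Ext}^1$-quiver decomposition, Step (i) via the socle filtrations of the $\V^{\pm}_{r,s}$ in Proposition \ref{socleV}, and Step (ii) via Virasoro linkage from \cite{BNW,IK}. So you are doing precisely what the paper intends.

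One point in Step (ii) could be tightened. In the Virasoro-semisimple case you lift a generating highest-weight vector of $S$ to $M$ and claim the $\W_{p_+,p_-}$-submodule it generates maps isomorphically onto $S$. Transitivity of the $W^{\pm},W^0$-action (Proposition \ref{sl2action2}) gives surjectivity onto $S$, but injectivity needs the additional observation that the $W$-modes cannot send vectors of the $S$-isotypic Virasoro component into the $S'$-isotypic one. This follows because, by Proposition \ref{sl2action2}, the $W$-modes carry a Virasoro highest-weight vector of weight $\Delta^{\pm}_{r,s;n}$ to Virasoro highest-weight vectors of weights $\Delta^{\pm}_{r,s;n\pm1}$ modulo $U(\mathcal{L})$-descendants of lower $\Delta^{\pm}_{r,s;m}$, and the sets $\{\Delta^{\pm}_{r,s;n}:n\geq0\}$ appearing in \eqref{decomp} for simples in distinct proposed blocks are disjoint (this is the same weight computation underlying your Virasoro-linkage check). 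With that said, your sketch is sound and matches the paper's intended route.
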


The ${\rm Ext}^1$-groups between all simple modules are given by the following proposition.
\begin{prop}[\cite{Nakano}]
\mbox{}
\begin{enumerate}
\item In each thick block $C^{thick}_{r,s}$, we have 
\begin{align*}
{\rm Ext}^1(\X^{\pm},\X^{\mp})=\C^2,\ \ \ \ \ {\rm Ext}^1(L(h_{r,s}),\X^+)={\rm Ext}^1(\X^+,L(h_{r,s}))=\C,
\end{align*}
where $\X^+=\X^+_{r,s}$ or $\X^+_{r^\vee,s^\vee}$ and $\X^-=\X^-_{r^\vee,s}$ or $\X^-_{r,s^\vee}$. The other extensions between the simple modules in this block are trivial.

\item In each thin block $C^{thin}_{r,p_-}$, we have 
\begin{align*}
{\rm Ext}^1(\X^+_{r,p_-},\X^-_{r^{\vee},p_-})={\rm Ext}^1(\X^-_{r^{\vee},p_-},\X^+_{r,p_-})=\C^2.
\end{align*}
The other extensions between the simple modules in this block are trivial.
\item In each thin block $C^{thin}_{p_+,s}$, we have 
\begin{align*}
{\rm Ext}^1(\X^+_{p_+,s},\X^-_{p_+,s^\vee})={\rm Ext}^1(\X^-_{p_+,s^\vee},\X^+_{p_+,s})=\C^2.
\end{align*}
The other extensions between the simple modules in this block are trivial.
\end{enumerate}
\label{Ext}
\end{prop}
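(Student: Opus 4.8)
The plan is to compute each $\mathrm{Ext}^1$-group as a space of extension classes, using the known socle structure of the simple $\mathcal{V}_{[p_+,p_-]}$-modules (Proposition \ref{socleV}) together with the decomposition of the $\mathcal{X}^\pm_{r,s}$ into Virasoro modules (\ref{decomp}) and the transitive $\mathcal{W}_{p_+,p_-}$-action of Proposition \ref{sl2action2}. First I would fix a block and set up the standard dictionary: a non-split extension $0\to\mathcal{A}\to E\to\mathcal{B}\to 0$ in $\mathcal{C}_{p_+,p_-}$ corresponds to a nonzero class in $\mathrm{Ext}^1(\mathcal{B},\mathcal{A})$, and I would produce candidate extensions concretely as submodules or subquotients of the Fock-type modules $\mathcal{V}^\pm_{r,s}$ (and their contragredients), using that $\mathcal{C}_{p_+,p_-}$ is closed under contragredient. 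The socle series in Proposition \ref{socleV} immediately exhibits, inside $\mathcal{V}^+_{r,s}$ and $\mathcal{V}^-_{r,s}$, length-two subquotients realizing nonzero classes in $\mathrm{Ext}^1(\mathcal{X}^-_{r^\vee,s},\mathcal{X}^+_{r,s})$, $\mathrm{Ext}^1(\mathcal{X}^-_{r,s^\vee},\mathcal{X}^+_{r,s})$, $\mathrm{Ext}^1(L(h_{r,s}),\mathcal{X}^+_{r,s})$ and, via $\mathrm{Soc}_3/\mathrm{Soc}_1$, classes mixing the $\mathcal{X}^+$'s with the $\mathcal{X}^-$'s; taking contragredients doubles these up in the opposite direction, and the symmetry $C^{thick}_{r,s}=C^{thick}_{r^\vee,s^\vee}$ takes care of the $\mathcal{X}^+_{r^\vee,s^\vee}$ entries. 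This gives the lower bounds $\mathrm{Ext}^1(\mathcal{X}^\pm,\mathcal{X}^\mp)\supseteq\C^2$, $\mathrm{Ext}^1(L(h_{r,s}),\mathcal{X}^+)\supseteq\C$ and $\mathrm{Ext}^1(\mathcal{X}^+,L(h_{r,s}))\supseteq\C$, and similarly $\C^2$ in each thin block.

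The harder direction is the upper bounds, i.e.\ showing these spaces are exactly $\C^2$ (resp.\ $\C$) and that all remaining $\mathrm{Ext}^1$'s vanish. For the vanishing statements ($\mathrm{Ext}^1(\mathcal{X}^\pm,\mathcal{X}^\pm)=0$, $\mathrm{Ext}^1(L(h_{r,s}),L(h_{r,s}))=0$, $\mathrm{Ext}^1(L(h_{r,s}),\mathcal{X}^-)=0$, etc.) the cleanest route is to restrict a hypothetical extension to the Virasoro subalgebra: by (\ref{decomp}) every $\mathcal{X}^\pm_{r,s}$ is a semisimple Virasoro module and $L(h_{r,s})$ is a simple Virasoro module with conformal weight $h_{r,s}$, and the relevant pairs of conformal weights either coincide with multiplicity forcing a $\mathfrak{sl}_2$-type obstruction, or differ by positive integers in a pattern controlled by the $\Delta^\pm_{r,s;n}$; Proposition \ref{sl2action2} then pins down how $W^{\pm}[0], W^0[\cdot]$ must act on any self-extension and forces splitting. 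To get the precise dimension $2$ (not more), I would argue that a generalised module $E$ with semisimple length-two composition series and the given socle is a quotient of an induced (or ``universal'') object built from the Fock realization, and count: the multiplicity space of the relevant Virasoro weight in $\mathcal{V}^\pm$ is two-dimensional (the $(2n+1)$ or $(2n+2)$ multiplicities in (\ref{decomp}) specialise, at the bottom weight, to the two generators coming from $Q^{[\bullet]}_+$ acting on the two relevant Fock generators), and this bounds the extension space.

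The main obstacle I anticipate is the bookkeeping needed to rule out spurious extensions in the thick block between $\mathcal{X}^+_{r,s}$ and $\mathcal{X}^+_{r^\vee,s^\vee}$ (and between $L(h_{r,s})$ and the two $\mathcal{X}^-$'s): these pairs of simple modules have composition factors living in the same block and could a priori admit length-two extensions, so one cannot dismiss them on central-character grounds alone. Here I would use the detailed $\mathcal{W}_{p_+,p_-}$-module structure from Proposition \ref{sl2action2} — specifically that the transitive action connects $w^{(n)}_i$ to $w^{(n\pm1)}_j$ and $v^{(n)}_{(2i\pm1)/2}$ — to show that any extension of $\mathcal{X}^+_{r^\vee,s^\vee}$ by $\mathcal{X}^+_{r,s}$ (which would have to glue a ``higher'' $\mathcal{X}^+$ on top of a ``lower'' one) is incompatible with the $W^0[\cdot]$-action constraints, hence splits, and similarly that $\mathrm{Ext}^1(\mathcal{X}^+,\mathcal{X}^+)=0$ forces $\mathrm{Ext}^1(L(h_{r,s}),\mathcal{X}^-)$ and the other listed groups to vanish. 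Assembling the lower and upper bounds block by block then yields the stated equalities, and since $\mathcal{C}_{p_+,p_-}$ decomposes as a direct sum of these blocks there are no extensions across distinct blocks, completing the proof.
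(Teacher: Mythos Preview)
The paper does not prove this proposition: it is stated with a citation to \cite{Nakano} (the author's earlier paper on projective covers for $\mathcal{W}_{p_+,p_-}$) and no argument is given here. So there is no in-paper proof to compare your proposal against.

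That said, a brief comment on your sketch. The lower-bound direction is fine in outline: length-two subquotients of the $\mathcal{V}^\pm_{r,s}$ from Proposition~\ref{socleV}, together with contragredients, do realise nontrivial classes, though you still need to check that the two extensions you produce in each $\mathrm{Ext}^1(\mathcal{X}^\pm,\mathcal{X}^\mp)$ are actually linearly independent (this is not automatic from socle data alone). The upper-bound direction is where the proposal is thin. Restricting a putative extension to the Virasoro subalgebra does not by itself force splitting: each $\mathcal{X}^\pm_{r,s}$ is an \emph{infinite} direct sum of Virasoro simples by (\ref{decomp}), so Virasoro-level arguments control only the $L_0$-grading structure, not the $\mathcal{W}_{p_+,p_-}$-module structure. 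Your appeal to Proposition~\ref{sl2action2} is the right instinct, but what is actually needed (and what \cite{Nakano} carries out) is an analysis via the Zhu algebra or, equivalently, a careful study of how the modes $W^\pm[n],W^0[n]$ can act on the top weight spaces of a generalised module with prescribed composition factors; the ``counting multiplicity spaces in $\mathcal{V}^\pm$'' heuristic does not by itself bound $\dim\mathrm{Ext}^1$, since not every extension need embed in a lattice module. If you want to make your approach rigorous, the cleanest route is to compute $\mathrm{Ext}^1$ via $\mathrm{Hom}$ from projective covers (whose structure is the main content of \cite{Nakano}), rather than trying to classify extensions directly.
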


\subsection{Logarithmic $\W_{p_+,p_-}$-modules}
\label{LogarithmicWalgebra}
In this subsection, we introduce certain rank two and rank three logarithmic $\W_{p_+,p_-}$-modules.
The detailed structure of these rank two and rank three logarithmic $\W_{p_+,p_-}$-modules is studied from the boundary conformal field theory in \cite{GRW0,GRW,R} and the lattice construction in \cite{AK}. 
In our paper \cite{Nakano}, we investigated ${\rm Ext}^1$-group properties of these logarithmic $\W_{p_+,p_-}$-modules, and determined the structure of the projective covers of all simple $\W_{p_+,p_-}$-modules $\X^\pm_{r,s}$. In this subsection, we briefly review the structure of certain logarithmic modules in accordance with the notation in \cite{Nakano}. 

First let us consider the thick blocks. In each thick block $C^{thick}_{r,s}$, let $\PP^+_{r^\vee,s^\vee}$, $\PP^-_{r^\vee,s}$ and $\PP^-_{r,s^\vee}$ be the projective covers of the simple modules $\X^+_{r,s}$, $\X^+_{r^\vee,s^\vee}$, $\X^-_{r^\vee,s}$ and $\X^-_{r,s^\vee}$, respectively,
where we use the notation $r^\vee=p_+-r$, $s^\vee=p_--s$ defined by (\ref{notrs}). 
These $\W_{p_+,p_-}$-modules are logarithmic modules that have $L_0$ nilpotent rank three. The structure of socle series of these projective covers are given by the following proposition.

\begin{prop}[\cite{Nakano}]
\label{logarithmic2}
Fix any thick block $C^{thick}_{r,s}$ and let $(a,b)=(r,s)$ or $(r^\vee,s^\vee)$.
Then each of the logarithmic modules $\mathcal{P}^+_{a,b},\mathcal{P}^-_{a,b^\vee}\in C^{thick}_{r,s}$ is generated from the top composition factor and has the following length five socle series:
\mbox{}
\begin{enumerate}
\item
For the socle series of $\mathcal{P}^+_{a,b}$, we have
\begin{align*}
&S_1=\X^+_{a,b},\\
&S_2/S_1=\X^{-}_{a,b^\vee}\oplus\X^{-}_{a,b^\vee}\oplus L(h_{r,s})\oplus\X^-_{a^\vee,b}\oplus\X^-_{a^\vee,b},\\
&S_3/S_2=\X^+_{a,b}\oplus\X^+_{a^{\vee},b^{\vee}}\oplus\X^{+}_{a^{\vee},b^{\vee}}\oplus\X^+_{a^\vee,b^\vee}\oplus\X^+_{a^\vee,b^\vee}\oplus\X^+_{a,b},\\
&S_4/S_3=\X^-_{a^{\vee},b}\oplus\X^-_{a^\vee,b}\oplus L(h_{r,s})\oplus\X^{-}_{a,b^{\vee}}\oplus\X^{-}_{a,b^{\vee}},\\
&\mathcal{P}^+_{a,b}/S_4=\X^+_{a,b},
\end{align*}
where ${S}_i={\rm Soc}_i$.
\item
For the socle series of $\mathcal{P}^-_{a,b^{\vee}}$, we have
\begin{align*}
&S_1=\X^-_{a,b^\vee},\\
&S_2/S_1=\X^{+}_{a,b}\oplus\X^{+}_{a,b}\oplus\X^+_{a^\vee,b^\vee}\oplus\X^+_{a^\vee,b^\vee},\\
&S_3/S_2=\X^-_{a,b^\vee}\oplus\X^-_{a^\vee,b}\oplus\X^{-}_{a^\vee,b}\oplus L(h_{r,s})\oplus L(h_{r,s})\oplus\X^-_{a^\vee,b}\oplus\X^-_{a^\vee,b}\oplus\X^-_{a,b^\vee},\\
&S_4/S_3=\X^+_{a^\vee,b^\vee}\oplus\X^+_{a^\vee,b^\vee}\oplus \X^{+}_{a,b}\oplus\X^{+}_{a,b},\\
&\mathcal{P}^-_{a,b^{\vee}}/S_4=\X^-_{a,b^\vee}.
\end{align*}
\end{enumerate}
\end{prop}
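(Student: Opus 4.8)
The plan is to realize each logarithmic module $\mathcal{P}^+_{a,b}$ and $\mathcal{P}^-_{a,b^\vee}$ explicitly inside the free-field framework of Section~\ref{Basic} (following the lattice construction of \cite{AK}) and then to certify that the module so obtained is genuinely the projective cover by an ${\rm Ext}^1$-vanishing argument, after which the socle series is read off. Since $\W_{p_+,p_-}$ is $C_2$-cofinite, $\mathcal{C}_{p_+,p_-}$ is a finite abelian category with enough projectives, so for each simple $\X$ the projective cover $P(\X)$ exists and is unique with ${\rm top}(P(\X))\cong\X$; the entire task is to identify it. I would fix one thick block $C^{thick}_{r,s}$ and, exploiting the $(r,s)\leftrightarrow(r^\vee,s^\vee)$ symmetry, handle the $\X^+$-type covers ($(a,b)=(r,s)$ or $(r^\vee,s^\vee)$) and the $\X^-$-type covers in parallel.

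The outer layers come almost for free. The second radical layer ${\rm rad}(P(\X))/{\rm rad}^2(P(\X))$ is canonically $\bigoplus_{\mathcal{Y}}{\rm Ext}^1(\X,\mathcal{Y})\otimes\mathcal{Y}$, the sum over simple $\mathcal{Y}$, so Proposition~\ref{Ext} forces the layer just below the top of $P(\X^+_{a,b})$ to be $(\X^-_{a,b^\vee})^{\oplus2}\oplus L(h_{r,s})\oplus(\X^-_{a^\vee,b})^{\oplus2}$, and that of $P(\X^-_{a,b^\vee})$ to be $(\X^+_{a,b})^{\oplus2}\oplus(\X^+_{a^\vee,b^\vee})^{\oplus2}$. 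Next I would prove self-duality, $(\mathcal{P}^+_{a,b})^*\cong\mathcal{P}^+_{a,b}$ and $(\mathcal{P}^-_{a,b^\vee})^*\cong\mathcal{P}^-_{a,b^\vee}$: since $\mathcal{C}_{p_+,p_-}$ is closed under contragredient, $P(\X)^*$ is the injective hull of $\X^*$, the Fock realizations give $(\X^\pm_{r,s})^*\cong\X^\pm_{r,s}$, and it then suffices to know that in a thick block the injective hull of a simple coincides with its projective cover, which I would check against the socle series of $\V^\pm_{r,s}$ in Proposition~\ref{socleV} and the rank-two modules $\mathcal{Q}(\X^\pm_{r,s})_{\bullet,\bullet}$. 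Self-duality makes the socle series palindromic, so ${\rm Soc}(\mathcal{P})\cong{\rm top}(\mathcal{P})\cong\X$ (this gives $S_1$ and $\mathcal{P}/S_4$) and $S_2/S_1=S_4/S_3$ both equal the layer just computed.

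It remains to pin down the middle layer $S_3/S_2$, i.e. the total composition multiplicities. Here I would build a candidate $\widetilde{\mathcal{P}}$ by amalgamation: starting from the length-five module $\V^+_{a,b}$ (respectively the length-four $\V^-_{a,b^\vee}$) of Proposition~\ref{socleV}, its contragredient, and the rank-two logarithmic modules $\mathcal{Q}(\X^\pm_{r,s})_{\bullet,\bullet}$, glued along their common composition factors in the pattern dictated by the two outer layers, one obtains a module with the predicted socle filtration whose length is then confirmed by a graded-character comparison using the Virasoro decomposition~(\ref{decomp}). One certifies $\widetilde{\mathcal{P}}=P(\X)$ by verifying ${\rm Ext}^1(\widetilde{\mathcal{P}},\mathcal{Y})=0$ for every simple $\mathcal{Y}$, after which $S_3/S_2$ is whatever is left once $S_1$, $S_2/S_1$, $S_4/S_3$ and $\mathcal{P}/S_4$ are removed. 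The main obstacle is precisely this projectivity check — showing the glued candidate admits no further non-trivial self-extension on top. Because the thick-block ${\rm Ext}$-quiver has loops (the two-dimensional spaces ${\rm Ext}^1(\X^+,\X^-)$ of Proposition~\ref{Ext}), projectivity cannot be detected by dimension counts alone, and one must control, via the relations of the block's basic algebra together with an explicit screening-operator computation in the Fock realization, exactly how many copies of each arrow survive in $P(\X)$; this bookkeeping is what produces the six factors of $S_3/S_2$ for $\mathcal{P}^+_{a,b}$ and the eight for $\mathcal{P}^-_{a,b^\vee}$, and it is where the argument of \cite{Nakano} does the real work.
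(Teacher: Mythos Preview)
The paper does not prove this proposition: it is stated as a result quoted from \cite{Nakano} and carries no argument in the present text beyond the citation. So there is no ``paper's own proof'' to compare your proposal against; the proposition functions here purely as background input for the later tensor-product computations.

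That said, your outline is a faithful sketch of the strategy actually used in \cite{Nakano}: build an explicit candidate inside the lattice/Fock realization (the Adamovi\'c--Milas construction \cite{AK}), read off the outer socle layers from the ${\rm Ext}^1$-groups of Proposition~\ref{Ext}, invoke self-contragredience to get the palindromic shape, and then certify projectivity by an ${\rm Ext}^1$-vanishing check. Your identification of the hard step is also correct: the genuine content lies in showing the glued candidate has no non-trivial extensions on top, which in \cite{Nakano} is done by analyzing the basic algebra of the thick block together with explicit screening-operator calculations. One small caution: your claim that projective equals injective in the thick block (needed for $(\mathcal{P}^\pm)^*\cong\mathcal{P}^\pm$) is true but is itself a nontrivial consequence of $C_2$-cofiniteness plus the self-contragredience of the simples, not something you can simply read off Proposition~\ref{socleV}; in \cite{Nakano} this is handled as part of the same ${\rm Ext}$-analysis rather than as a separate preliminary.
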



For $1\leq r\leq p_+-1$ and $1\leq s\leq p_--1$, we set
\begin{equation}
\label{Qthick}
\begin{split}
{Q}^{thick}_{r,s}=&\bigl\{(+,r,s,r^\vee,s),(+,r,s,r,s^\vee),(+,r^\vee,s^\vee,r^\vee,s),(+,r^\vee,s^\vee,r,s^\vee),\\
&(-,r^\vee,s,r,s),(-,r^\vee,s,r^\vee,s^\vee),(-,r,s^\vee,r,s),(-,r,s^\vee,r^\vee,s^\vee)\bigr\}.
\end{split}
\end{equation}

\begin{prop}[\cite{Nakano}]
\label{logarithmic3}
Fix any thick block $C^{thick}_{r,s}$. By taking quotients of $\mathcal{P}^+_{r,s}$, $\mathcal{P}^+_{r^\vee,s^\vee}$, $\mathcal{P}^-_{r^\vee,s}$ and $\mathcal{P}^-_{r,s^\vee}$, we obtain eight indecomposable modules 
\begin{align*}
\mathcal{Q}(\X^\epsilon_{a,b})_{b,c},\ \ (\epsilon,a,b,c,d)\in {Q}^{thick}_{r,s}
\end{align*}
whose socle length are three and the socle series are given by:
\begin{enumerate}
\item For $\mathcal{Q}(\X^+_{a,b})_{c,d}$,
\begin{align*}
&S_1=\X^+_{a,b},&&S_2/S_1=\X^{-}_{c,d}\oplus L(h_{a,b})\oplus\X^-_{c,d},&&&\mathcal{Q}(\X^+_{a,b})_{c,d}/S_2=\X^+_{a,b}.
\end{align*}
\item For $\mathcal{Q}(\X^-_{a,b})_{c,d}$,  
\begin{align*}
&S_1=\X^-_{a,b},&&S_2/S_1=\X^{+}_{c,d}\oplus\X^+_{c,d},&&&\mathcal{Q}(\X^-_{a,b})_{c,d}/S_2=\X^-_{a,b}.
\end{align*}
\end{enumerate}
\end{prop}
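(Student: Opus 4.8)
The plan is to obtain the modules $\mathcal{Q}(\X^\epsilon_{a,b})_{c,d}$ as quotients of the projective covers $\mathcal{P}^\pm_{r,s}$, $\mathcal{P}^\pm_{r^\vee,s^\vee}$ whose socle filtrations are already recorded in Proposition \ref{logarithmic2}, and to check that the resulting quotients are indecomposable with the claimed three-step socle series. First I would fix a thick block $C^{thick}_{r,s}$ and one of the projectives, say $\mathcal{P}^+_{a,b}$ with $(a,b)=(r,s)$ or $(r^\vee,s^\vee)$. Using Proposition \ref{logarithmic2}(1), which describes $\mathcal{P}^+_{a,b}$ as generated by its top composition factor $\X^+_{a,b}$ with a length-five socle series, I would locate, inside the layer $S_3/S_2$, the two copies of $\X^+_{a,b}$ that pair with the two copies of $\X^-_{c,d}$ in $S_2/S_1$ (for the appropriate choice $(c,d)\in\{(a,b^\vee),(a^\vee,b)\}$). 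The claim is that there is a submodule $N\subset \mathcal{P}^+_{a,b}$, living between $S_2$ and $S_4$, which ``cuts off'' all of $S_4/S_2$ together with the unwanted summands of $S_3/S_2$ and of $S_2/S_1$, so that $\mathcal{P}^+_{a,b}/N$ has composition series $\X^+_{a,b}\mid \X^-_{c,d}\oplus L(h_{a,b})\oplus \X^-_{c,d}\mid \X^+_{a,b}$. The existence of such an $N$ follows from the self-duality of the top/bottom structure of $\mathcal{P}^+_{a,b}$ (it is its own contragredient, since projectives equal injectives in $\mathcal{C}_{p_+,p_-}$ by $C_2$-cofiniteness) and from the fact that $\mathcal{P}^+_{a,b}$ is generated by $\X^+_{a,b}$: the submodule generated by the chosen preimage of $L(h_{a,b})\subset S_2/S_1$ and one copy each of the relevant $\X^-_{c,d}$ already contains a full sub-diagram of the required shape, and $N$ is taken to be a complement to it within $S_4$. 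For $\mathcal{P}^-_{a,b^\vee}$ the argument is the same using Proposition \ref{logarithmic2}(2), except that the target socle series is $\X^-_{a,b}\mid \X^+_{c,d}\oplus \X^+_{c,d}\mid \X^-_{a,b}$ and no $L(h_{r,s})$ appears in the middle.

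Next I would verify indecomposability of each quotient $\mathcal{Q}(\X^\epsilon_{a,b})_{c,d}$. Since its socle is the simple module $\X^\epsilon_{a,b}$ (the bottom of the filtration), any direct summand must contain that socle, hence must be the whole module; so it suffices to check that $\X^\epsilon_{a,b}$ is indeed the entire socle, i.e. that neither the middle layer $S_2/S_1$ nor the top factor splits off. This is where Proposition \ref{Ext} enters: in a thick block the only nontrivial $\mathrm{Ext}^1$'s are $\mathrm{Ext}^1(\X^\pm,\X^\mp)=\C^2$ and $\mathrm{Ext}^1(L(h_{r,s}),\X^+)=\mathrm{Ext}^1(\X^+,L(h_{r,s}))=\C$. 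In particular $\mathrm{Ext}^1(\X^+_{a,b},\X^+_{a,b})=0$ and $\mathrm{Ext}^1(L(h_{a,b}),\X^-_{c,d})=0$, so the nonsplit extension data in the diagram is forced to flow through the middle layer; a short diagram chase then shows the top $\X^\epsilon_{a,b}$ cannot detach, and likewise the middle layer sits nonsplit over the socle because the extension of $\X^-_{c,d}\oplus L(h_{a,b})\oplus\X^-_{c,d}$ by $\X^+_{a,b}$ realized inside $\mathcal{P}^+_{a,b}$ is nonzero in each coordinate by construction.

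The main obstacle I anticipate is the bookkeeping in the first step: the layers $S_2/S_2$ and $S_3/S_2$ of $\mathcal{P}^+_{a,b}$ each contain several isomorphic copies of $\X^\pm$, and I must pin down which submodule $N$ yields a quotient whose socle series is exactly three steps long with the stated multiplicities — in particular that the quotient has socle length three and not two (i.e. the top factor really is a third socle layer, not part of the second). Establishing this cleanly will likely require identifying $N$ as the kernel of a specific map $\mathcal{P}^+_{a,b}\to \mathcal{Q}(\X^+_{a,b})_{c,d}$ obtained by composing the self-duality $\mathcal{P}^+_{a,b}\cong (\mathcal{P}^+_{a,b})^*$ with a surjection onto the contragredient of a ``half'' of the diagram, or alternatively appealing directly to the explicit screening-operator description of $\mathcal{P}^+_{a,b}$ from \cite{Nakano}; I would cite \cite{Nakano} for the precise submodule lattice of the projectives and present here only the extraction of the eight quotients and the $\mathrm{Ext}^1$-based indecomposability check. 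The count of eight follows from the two choices $(a,b)\in\{(r,s),(r^\vee,s^\vee)\}$ for each of $\mathcal{P}^+$ and $\mathcal{P}^-$ combined with the two choices of $(c,d)$, matching the eight elements of $Q^{thick}_{r,s}$ in (\ref{Qthick}).
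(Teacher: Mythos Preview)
The paper does not prove this proposition at all: it is stated with a bare citation to \cite{Nakano} and no argument is given here. So there is no ``paper's own proof'' to compare against; the result is imported wholesale from the reference.

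Your sketch is a reasonable outline of how such a proof would go, and you correctly identify that the real work lies in exhibiting the submodule $N$ inside $\mathcal{P}^\epsilon_{\bullet,\bullet}$ with the required properties --- which is precisely the point at which you, too, say you would defer to \cite{Nakano} for the submodule lattice. In that sense your proposal and the paper agree: both treat the construction as input from \cite{Nakano}. Your additional indecomposability argument (simple socle forces indecomposability) is correct and a nice observation, though strictly speaking it is also part of what is being cited. One small caution: your appeal to self-duality of $\mathcal{P}^+_{a,b}$ to produce $N$ is plausible but not quite an argument --- self-contragredience tells you the Loewy and socle filtrations are mirror images, but it does not by itself single out the particular ``half-diagram'' quotient you want among the several submodules with the same composition factors; that still requires the explicit lattice from \cite{Nakano}.
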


The ${\rm Ext}^1$-groups between the indecomposable modules $\mathcal{Q}(\X^\pm_{\bullet,\bullet})_{\bullet,\bullet}\in C^{thick}_{r,s}$ and the simple modules are given by the following proposition.
\begin{prop}[\cite{Nakano}]
\label{ExtQ}
Fix any thick block $C^{thick}_{r,s}$. For any $(\epsilon,a,b,c,d)\in {Q}^{thick}_{r,s}$, we have
\begin{align*}
&{\rm Ext}^1(\mathcal{Q}(\X^\epsilon_{a,b})_{c,d},L(h_{r,s}))=0,\ \ \ {\rm Ext}^1(\mathcal{Q}(\X^\epsilon_{a,b})_{c,d},\X^{\epsilon}_{a,b})=0,\\
&{\rm Ext}^1(\mathcal{Q}(\X^\epsilon_{a,b})_{c,d},\X^{\epsilon}_{a^\vee,b^\vee})=0,\ \ \ \ {\rm Ext}^1(\mathcal{Q}(\X^\epsilon_{a,b})_{c,d},\X^{-\epsilon}_{c,d})=0.\\
\end{align*}
\end{prop}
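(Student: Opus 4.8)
The plan is to compute each of the four $\mathrm{Ext}^1$-groups by applying the long exact sequence in $\mathrm{Ext}^\bullet(-,N)$ to the three-step socle filtration of $\mathcal{Q}(\X^\epsilon_{a,b})_{c,d}$ given in Proposition \ref{logarithmic3}, and then reducing everything to the $\mathrm{Ext}^1$-groups between simple modules recorded in Proposition \ref{Ext}. Concretely, for $\mathcal{Q}(\X^+_{a,b})_{c,d}$ set $A=\X^+_{a,b}$ (top and socle), $B=S_2/S_1=\X^-_{c,d}\oplus L(h_{a,b})\oplus\X^-_{c,d}$, so that there are short exact sequences $0\to A\to S_2\to B\to 0$ and $0\to S_2\to \mathcal{Q}(\X^+_{a,b})_{c,d}\to A\to 0$. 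I would first assemble $\mathrm{Ext}^i(S_2,N)$ for each target $N\in\{L(h_{r,s}),\X^+_{a,b},\X^+_{a^\vee,b^\vee},\X^-_{c,d}\}$ from the first sequence, using that $\mathrm{Hom}$ and $\mathrm{Ext}^1$ between non-isomorphic simples in different "slots" of a thick block are controlled by Proposition \ref{Ext} (and the identification $L(h_{r,s})=L(h_{a,b})$ since $(a,b)$ is $(r,s)$ or $(r^\vee,s^\vee)$). Then feed this into the second sequence to pin down $\mathrm{Ext}^1(\mathcal{Q}(\X^+_{a,b})_{c,d},N)$. The case of $\mathcal{Q}(\X^-_{a,b})_{c,d}$, where $B=\X^+_{c,d}\oplus\X^+_{c,d}$, is entirely analogous.

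The key mechanism making the four claimed vanishings work is the following: in each of the long exact sequences, the relevant $\mathrm{Ext}^1$-term is squeezed between a connecting map out of $\mathrm{Hom}(S_2,N)$ (or $\mathrm{Hom}(A,N)$) and a map into $\mathrm{Ext}^1$ of a piece that, by Proposition \ref{Ext}, either vanishes or is matched by the image of the connecting homomorphism. For instance, to get $\mathrm{Ext}^1(\mathcal{Q}(\X^\epsilon_{a,b})_{c,d},\X^\epsilon_{a,b})=0$ one uses that $\mathrm{Ext}^1(\X^\epsilon_{a,b},\X^\epsilon_{a,b})=0$ and $\mathrm{Ext}^1(\X^{-\epsilon}_{c,d},\X^\epsilon_{a,b})=0$ by Proposition \ref{Ext} (these are extensions within the same parity, or between the wrong slots), so only the boundary contribution from $\mathrm{Hom}(S_2,\X^\epsilon_{a,b})$ survives, and one checks that boundary map is surjective onto the potential obstruction by exhibiting it explicitly from the non-split structure of $\mathcal{Q}$. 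The vanishing $\mathrm{Ext}^1(\mathcal{Q}(\X^\epsilon_{a,b})_{c,d},\X^{-\epsilon}_{c,d})=0$ is the subtlest: here $\mathrm{Ext}^1(\X^\epsilon_{a,b},\X^{-\epsilon}_{c,d})=\C^2\neq 0$, so one genuinely needs the connecting homomorphism $\mathrm{Hom}(S_1,\X^{-\epsilon}_{c,d})\to\mathrm{Ext}^1(A,\X^{-\epsilon}_{c,d})$ coming from the sequence $0\to S_1\to S_2\to B\to 0$ to hit a full complement, i.e. one must use that $\mathcal{Q}(\X^\epsilon_{a,b})_{c,d}$ is built as a quotient of the projective cover $\mathcal{P}^\epsilon$ (Proposition \ref{logarithmic3}) so that the two copies of $\X^{-\epsilon}_{c,d}$ in the middle layer are "used up" by non-split extensions on both sides.

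Alternatively — and this may be cleaner — I would exploit that each $\mathcal{Q}(\X^\epsilon_{a,b})_{c,d}$ is by construction a quotient of one of the projective covers $\mathcal{P}^\pm$ of Proposition \ref{logarithmic2}, and that $\mathrm{Ext}^1(\mathcal{P},-)=0$ for a projective. Writing $0\to R\to \mathcal{P}\to \mathcal{Q}\to 0$ with $R$ the kernel, the long exact sequence gives $\mathrm{Ext}^1(\mathcal{Q},N)\hookrightarrow \mathrm{Hom}(R,N)/\mathrm{im}(\mathrm{Hom}(\mathcal{P},N))$, an injection into (a subquotient of) $\mathrm{Hom}(R,N)$ determined purely by the head of $R$. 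One then reads off the head of $R$ from the socle series of $\mathcal{P}^\pm$ in Proposition \ref{logarithmic2} versus that of $\mathcal{Q}$ in Proposition \ref{logarithmic3}, checks that for each of the four targets $N$ the composition factor $N$ does not occur in $\mathrm{head}(R)$, or occurs but is killed by the restriction map from $\mathcal{P}$, and concludes. The main obstacle in either route is bookkeeping: keeping straight which of the several isomorphic copies of $\X^{\pm}_{c,d}$ and $L(h_{r,s})$ in the middle layers sit in the sub- versus quotient position, and verifying that the connecting maps are surjective rather than merely nonzero; this is where the explicit realization of $\mathcal{Q}$ as a screening-operator-defined subquotient (Definition \ref{dfn0830}, Proposition \ref{sl2action2}) and the precise $\mathrm{Ext}^1=\C^2$ statements of Proposition \ref{Ext} must be combined carefully. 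I expect the argument for $\mathcal{Q}(\X^-_{a,b})_{c,d}$ to be lighter, since its middle layer has no $L(h_{r,s})$ and fewer slots are involved.
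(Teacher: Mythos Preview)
The paper does not prove this proposition: it is quoted from \cite{Nakano} with no argument given, so there is no proof here to compare your proposal against.

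That said, your first route contains a concrete error. You write that to obtain ${\rm Ext}^1(\mathcal{Q}(\X^\epsilon_{a,b})_{c,d},\X^\epsilon_{a,b})=0$ one uses ${\rm Ext}^1(\X^{-\epsilon}_{c,d},\X^\epsilon_{a,b})=0$. But Proposition~\ref{Ext} says exactly the opposite: in a thick block every ${\rm Ext}^1(\X^{\mp},\X^{\pm})$ between an $\X^-$-type and an $\X^+$-type simple is $\C^2$, not zero. Concretely, for $\epsilon=+$ the middle layer $B=\X^-_{c,d}\oplus L(h_{r,s})\oplus\X^-_{c,d}$ has ${\rm Ext}^1(B,\X^+_{a,b})\simeq\C^2\oplus\C\oplus\C^2$, so the long exact sequence for $0\to S_2\to\mathcal{Q}\to A\to 0$ does not simply collapse. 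You would need to show that the connecting map ${\rm Hom}(S_2,\X^+_{a,b})\to{\rm Ext}^1(A,\X^+_{a,b})$ and the subsequent maps absorb all of this, which cannot be read off from Proposition~\ref{Ext} alone and requires detailed knowledge of how the layers are glued --- precisely the information that lives in \cite{Nakano}.

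Your second route, via $0\to R\to\mathcal{P}^\epsilon\to\mathcal{Q}\to 0$ and ${\rm Ext}^1(\mathcal{P}^\epsilon,-)=0$, is structurally sound and is the natural way to proceed, but to finish it you must determine ${\rm top}(R)$ from the socle series in Proposition~\ref{logarithmic2} and then verify, for each of the four targets $N$, that every map $R\to N$ extends to $\mathcal{P}^\epsilon$. This is not automatic from the composition factors alone (there are multiple copies of $\X^{-\epsilon}_{c,d}$ and of $\X^\epsilon_{a^\vee,b^\vee}$ in $\mathcal{P}^\epsilon$), and again hinges on the explicit construction of $\mathcal{Q}$ inside $\mathcal{P}^\epsilon$ carried out in \cite{Nakano}. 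So your outline is reasonable as a strategy, but the actual proof cannot be completed using only what is stated in the present paper; it genuinely relies on the cited reference.
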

\vspace{2mm}
Next let us consider the thin blocks. In each of $C^{thin}_{r,p_-}$ and $C^{thin}_{p_+,s}$, let $\mathcal{Q}(\X^+_{r,p_-})_{r^\vee,p_-},\mathcal{Q}(\X^-_{r^\vee,p_-})_{r,p_-}\in C^{thin}_{r,p_-}$ and $\mathcal{Q}(\X^+_{p_+,s})_{p_+,s^\vee},\mathcal{Q}(\X^-_{p_+,s^\vee})_{p_+,s}\in C^{thin}_{p_+,s}$ be the projective covers of $\X^+_{r,p_-}$, $\X^-_{r^\vee,p_-}$, $\X^+_{p_+,s}$ and $\X^-_{p_+,s^\vee}$, respectively. These $\W_{p_+,p_-}$-modules have $L_0$ nilpotent rank two.
For $1\leq r\leq p_+-1$ and $1\leq s\leq p_--1$, we set
\begin{equation}
\label{Qthin}
\begin{split}
&{Q}^{thin}_{r,p_-}=\bigl\{(+,r,p_-,r^\vee,p_-),(-,r^\vee,p_-,r,p_-)\bigr\},\\
&{Q}^{thin}_{p_+,s}=\bigl\{(+,p_+,s,p_+,s^\vee),(-,p_+,s^\vee,p_+,s)\bigr\}.
\end{split}
\end{equation}

\begin{prop}[\cite{Nakano}]
\label{logarithmic4}
Fix any $(r,s)\in \mathbb{Z}^2_{\geq 1}$ such that $1\leq r<p_+$ and $1\leq s<p_-$. For any $(\epsilon,a,b,c,d)\in {Q}^{thin}_{r,p_-}\sqcup{Q}^{thin}_{p_+,s}$, the socle length of $\mathcal{Q}(\mathcal{X}^\epsilon_{a,b})_{c,d}$ is three and the structure of socle series of $\mathcal{Q}(\mathcal{X}^\epsilon_{a,b})_{c,d}$ is given by
\begin{align*}
&S_1(\mathcal{Q}(\mathcal{X}^\epsilon_{a,b})_{c,d})=\mathcal{X}^\epsilon_{a,b},\\
&S_2(\mathcal{Q}(\mathcal{X}^\epsilon_{a,b})_{c,d})/S_1(\mathcal{Q}(\mathcal{X}^\epsilon_{a,b})_{c,d})=\mathcal{X}^{-\epsilon}_{c,d}\oplus\mathcal{X}^{-\epsilon}_{c,d},\\
&\mathcal{Q}(\mathcal{X}^\epsilon_{a,b})_{c,d}/S_2(\mathcal{Q}(\mathcal{X}^\epsilon_{a,b})_{c,d})=\mathcal{X}^\epsilon_{a,b}.
\end{align*}
\end{prop}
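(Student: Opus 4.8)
The plan is to deduce the structure of each $\mathcal{Q}(\mathcal{X}^\epsilon_{a,b})_{c,d}$ with $(\epsilon,a,b,c,d)\in Q^{thin}_{r,p_-}\sqcup Q^{thin}_{p_+,s}$ directly from the fact that it is the projective cover of the corresponding simple module in a thin block, together with the $\mathrm{Ext}^1$-computation of Proposition \ref{Ext} and the self-duality of projective covers in $\mathcal{C}_{p_+,p_-}$. First I would recall that in a thin block $C^{thin}_{r,p_-}$ (resp. $C^{thin}_{p_+,s}$) there are exactly two simple objects, $\mathcal{X}^+_{r,p_-}$ and $\mathcal{X}^-_{r^\vee,p_-}$ (resp. $\mathcal{X}^+_{p_+,s}$ and $\mathcal{X}^-_{p_+,s^\vee}$), and by Proposition \ref{Ext}(2)--(3) the only non-trivial $\mathrm{Ext}^1$ between them is two-dimensional in each direction, with $\mathrm{Ext}^1$ of each simple with itself vanishing. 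Since $\W_{p_+,p_-}$ is $C_2$-cofinite, $\mathcal{C}_{p_+,p_-}$ has enough projectives and every simple has a projective cover of finite length; restricting to the thin block, the projective cover $P$ of $\mathcal{X}^\epsilon_{a,b}$ has top $\mathcal{X}^\epsilon_{a,b}$ and, by the standard argument identifying $\mathrm{Soc}_2/\mathrm{Soc}_1$ of a projective cover with $\bigoplus_T \mathrm{Ext}^1(\mathcal{X}^\epsilon_{a,b},T)\otimes T$ over simples $T$ once one knows the radical layers, the second layer from the top is $\mathcal{X}^{-\epsilon}_{c,d}\oplus\mathcal{X}^{-\epsilon}_{c,d}$.

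Next I would pin down the length. The key input is self-duality of projective covers: in $\mathcal{C}_{p_+,p_-}$ the contragredient functor is exact, fixes all simples (each $\mathcal{X}^\pm_{r,s}$ and $L(h_{r,s})$ is self-dual as a module over the self-dual VOA $\W_{p_+,p_-}$), and sends projectives to injectives; since the category has a duality fixing simples, projective covers coincide with injective envelopes and are therefore self-dual. Hence $P\cong P^*$, so the socle of $P$ equals the top, namely $\mathcal{X}^\epsilon_{a,b}$, and the socle filtration is the reverse of the radical filtration. Combined with the previous paragraph this forces a length-three Loewy series with layers $\mathcal{X}^\epsilon_{a,b}$, $\mathcal{X}^{-\epsilon}_{c,d}\oplus\mathcal{X}^{-\epsilon}_{c,d}$, $\mathcal{X}^\epsilon_{a,b}$, which is exactly the asserted socle series once one checks it is genuinely length three and not longer. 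For the latter I would argue that a fourth layer would have to be built from further non-split extensions off $\mathcal{X}^{-\epsilon}_{c,d}$, but the only such extension is by $\mathcal{X}^\epsilon_{a,b}$ again, and a projectivity/dimension count — e.g. matching the graded character of $P$ against that of the logarithmic module constructed in \cite{AK,GRW0} whose existence is already recorded in this subsection — shows no room for it; alternatively one invokes that the rank-two logarithmic module of Proposition content was shown in \cite{Nakano} to be projective, so $P$ is no larger than it.

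Finally I would remark that the four cases in $Q^{thin}_{r,p_-}\sqcup Q^{thin}_{p_+,s}$ are handled uniformly: the argument only used that the block is thin with the $\mathrm{Ext}^1$-pattern of Proposition \ref{Ext}(2)--(3), which is symmetric under swapping the $+$ and $-$ simple, so the statement for $\mathcal{Q}(\mathcal{X}^+_{a,b})_{c,d}$ and for $\mathcal{Q}(\mathcal{X}^-_{c,d})_{a,b}$ follow by the same reasoning with the roles of the two simples exchanged, and the thin block $C^{thin}_{r,p_-}$ versus $C^{thin}_{p_+,s}$ distinction is immaterial. The main obstacle I anticipate is the rigorous exclusion of a longer Loewy series: establishing that $\dim_{\mathbb C}\mathrm{Ext}^1$ between the relevant modules does not produce an extra radical layer requires either a careful projective-cover computation in $\mathcal{C}_{p_+,p_-}$ or an appeal to the explicit lattice/boundary-CFT construction of these modules in \cite{AK,GRW0,GRW}; I would lean on the latter, citing \cite{Nakano} where the projectivity of these rank-two modules was already established, so that the whole argument reduces to identifying $P$ with that known module and reading off its socle series.
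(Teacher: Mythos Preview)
The paper does not prove this proposition at all; it is quoted verbatim from \cite{Nakano} with no argument given here. So there is no ``paper's own proof'' to compare against, and your sketch is an attempt to supply what the paper simply imports.

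Your outline is sound through the first two Loewy layers. For the projective cover $P$ of $\mathcal{X}^\epsilon_{a,b}$ in a thin block one has $P/\mathrm{rad}\,P\cong\mathcal{X}^\epsilon_{a,b}$ by definition, and $\mathrm{rad}\,P/\mathrm{rad}^2 P\cong\bigoplus_T\mathrm{Ext}^1(\mathcal{X}^\epsilon_{a,b},T)^*\otimes T\cong(\mathcal{X}^{-\epsilon}_{c,d})^{\oplus 2}$ by Proposition~\ref{Ext}. The contragredient fixes simples in $\mathcal{C}_{p_+,p_-}$, so projectives are injective and $P\cong P^*$, forcing $\mathrm{Soc}\,P\cong\mathcal{X}^\epsilon_{a,b}$ and making the socle and radical filtrations mirror each other. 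All of this is correct.

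The gap you yourself identify is real and not closed by your proposal. Knowing only $\mathrm{Ext}^1$ and self-duality, nothing rules out a length-five self-dual projective with Loewy layers $\mathcal{X}^\epsilon_{a,b},\ (\mathcal{X}^{-\epsilon}_{c,d})^2,\ (\mathcal{X}^\epsilon_{a,b})^n,\ (\mathcal{X}^{-\epsilon}_{c,d})^2,\ \mathcal{X}^\epsilon_{a,b}$ for some $n\geq 1$; the alternation is forced by $\mathrm{Ext}^1(\mathcal{X}^{\pm},\mathcal{X}^{\pm})=0$, but the length is not. Your fix---match $P$ against the rank-two module of \cite{AK,GRW0} and invoke its projectivity from \cite{Nakano}---is circular: the statement you are proving \emph{is} the content cited from \cite{Nakano}. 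A self-contained argument would need an independent determination of the Cartan matrix of the thin block (equivalently $[P:\mathcal{X}^{-\epsilon}_{c,d}]$ or $\dim\mathrm{End}(P)$), or an $\mathrm{Ext}^2$ computation, or a direct Zhu-algebra/character bound on $P$; none of these is available in the present paper, which is precisely why it cites \cite{Nakano} instead of proving the proposition.
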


\section{Non-semisimple fusion rules}
\label{FusionRules}
Since the triplet $W$ algebra $\W_{p_+,p_-}$ is $C_2$-cofinite, Theorem 4.13 in \cite{H} show that $\mathcal{C}_{p_+,p_-}$ has braided tensor category structure as developed in the series of papers \cite{HLZ1,HLZ2,HLZ3,HLZ4,HLZ5,HLZ6,HLZ7,HLZ8}. We denote by ($\mathcal{C}_{p_+,p_-},\boxtimes,\mathcal{K}_{1,1})$ this vertex tensor category, where $\boxtimes$ is the tensor product (see Definition \ref{ten0819}) and $\K_{1,1}:=\W_{p_+,p_-}.\ket{\alpha_{1,1}}$ is the unit object. 
Note that the tensor product $\boxtimes$ of $(\mathcal{C}_{p_+,p_-},\boxtimes,\mathcal{K}_{1,1})$ is right exact (see \cite[Proposition 4.26]{HLZ3}).
The tensor category ($\mathcal{C}_{p_+,p_-},\boxtimes,\mathcal{K}_{1,1}$) is not rigid as shown in \cite{GRW,GRW0,W}. 
This makes it more difficult to study the structure of the tensor category ($\mathcal{C}_{p_+,p_-},\boxtimes,\mathcal{K}_{1,1}$) compared to the triplet $W$-algebra $\W_{p}$ (cf.\cite{McRae},\cite{TWFusion}). 

In this section, 
we will introduce certain indecomposable modules $\K_{r,s}$ following \cite{FF2,MS,W}, and will show the rigidity for $\K_{1,2}$ and $\K_{2,1}$ in Theorems \ref{rigid12ori} and \ref{rigid21}, using the vertex tensor categorical methods detailed in \cite{CMY,MS,McRae,TWFusion}. 
Using the rigidity for $\K_{1,2}$ and $\K_{2,1}$, we show that all indecomposable modules $\K_{r,s}$, $\mathcal{Q}(\X^\pm_{r,s})_{\bullet,\bullet}$ and $\mathcal{P}^\pm_{r,s}$ appear in the repeated tensor products of $\K_{1,2}$ and $\K_{2,1}$. As a result we see that all simple modules in the thin blocks, all indecomposable modules $\K_{r,s}$, $\mathcal{Q}(\X^\pm_{r,s})_{\bullet,\bullet}$ and $\mathcal{P}^\pm_{r,s}$ are rigid and self-dual objects. The self-duality for these indecomposable modules was conjectured in \cite{GRW0}. 
In Subsection \ref{simple0817}, we also determine the tensor product between all simple modules. 

For basic properties of general tensor categories, see \cite{Etingof,JS} and \cite[Appendix A]{KL}.

\subsection{Tensor product $\boxtimes$ and $P(w)$-intertwining operators}
\label{Pint2023}

In this section, we review the definition of the tensor product $\boxtimes$ and $P(w)$-intertwining operators in accordance with \cite{AW,HLZ3,Kanade} and derive some identities known as the Nahm-Gaberdiel-Kausch fusion algorithm(cf. \cite{GK}). 
\begin{dfn}
\label{ten0819}
Let $V$ be a vertex operator algebra and let $\mathcal{C}$ be a category of grading-restricted generalized $V$-modules. A {\rm tensor product} $($or {\rm fusion product}$)$ of $M_1$ and $M_2$ in $\mathcal{C}$ is a pair $(M_1\boxtimes M_2,\mathcal{Y}_{\boxtimes})$, with $M_1\boxtimes M_2$ and $\mathcal{Y}_{\boxtimes}$ an intertwining operator of type 
{\scriptsize{$\begin{pmatrix}
   M_1\boxtimes M_2  \\
   M_1\ M_2
\end{pmatrix}
$}}, 
which satisfies the following universal property: For any $M_3\in\mathcal{C}$ and intertwining operator $\mathcal{Y}$ of type 
{\scriptsize{$\begin{pmatrix}
   M_3  \\
   M_1\ M_2
\end{pmatrix}
$}}, there is a unique $V$-module homomorphism $f:M_1\boxtimes M_2\rightarrow M_3$ such that $\mathcal{Y}=f\circ \mathcal{Y}_{\boxtimes}$.
\end{dfn}
In the paper \cite{HLZ3}, the notion of $P(w)$-intertwining operators and the $P(w)$-tensor product are introduced. The definitions are as follows. 
\begin{dfn}[\cite{HLZ3}]
\label{P(w)}
Fix $w\in \C^\times$. Let $V$ be a vertex operator algebra and let $\mathcal{C}$ be a category of grading-restricted generalized $V$-modules. Given $M_1$, $M_2$ and $M_3$ in $\mathcal{C}$, a $P(w)$-{\rm intertwining operator} $I$ {\rm of type}
{\scriptsize{$\begin{pmatrix}
   \ M_3  \\
   M_1\ M_2
\end{pmatrix}
$}}
is a bilinear map $I$ $:$ $M_1\otimes M_2\rightarrow M_3$ that satisfies the following properties:
\begin{enumerate}
\item For any $\psi_1\in M_1$ and $\psi_2\in M_2$, $\pi_h(I[\psi_1\otimes \psi_2])=0$ for all $h\ll0$, where $\pi_h$ denotes the projection onto the generalized eigenspace $M_3[h]$ of $L_0$-eigenvalue $h$.
\item For any $\psi_1\in M_1$, $\psi_2\in M_2$, $\psi^*_3\in M^*_3$ and $A\in V$, the three point functions
\begin{align*}
\langle \psi^*_3, Y_3(A,z)I[\psi_1\otimes \psi_2]\rangle,\ 
\langle \psi^*_3, I[Y_1(A,z-w)\psi_1\otimes \psi_2]\rangle,\  
\langle \psi^*_3, I[\psi_1\otimes Y_2(A,z)\psi_2]\rangle
\end{align*}
are absolutely convergent in the regions $|z|>|w|>0$, $|w|>|z-w|>0$, $|w|>|z|>0$, respectively, where $Y_i$ is the action of $V$-module.
\item Given any $f(t)\in R_{P(w)}:=\C[t,t^{-1},(t-w)^{-1}]$, we have the following identity
\begin{equation}
\label{Pw-com}
\begin{split}
&\oint_{0,w}f(z)\langle \psi^*_3, Y_3(A,z)I[\psi_1\otimes \psi_2]\rangle\frac{{\rm d}z}{2\pi i}\\
&=\oint_{w}f(z)\langle\psi^*_3, I[Y_1(A,z-w)\psi_1\otimes \psi_2]\rangle\frac{{\rm d}z}{2\pi i}\\
&\ \ \ +\oint_{0}f(z)\langle \psi^*_3, I[\psi_1\otimes Y_2(A,z)\psi_2]\rangle\frac{{\rm d}z}{2\pi i}.
\end{split}
\end{equation}
\end{enumerate}
\end{dfn}

\begin{dfn}[\cite{HLZ3}]
Let $V$ be a vertex operator algebra and let $\mathcal{C}$ be a category of grading-restricted generalized $V$-modules. A $P(w)$-{\rm tensor product} of $M_1$ and $M_2$ in $\mathcal{C}$ is a pair $(M_1\boxtimes_{P(w)} M_2,{\boxtimes_{P(w)}})$, with $M_1\boxtimes_{P(w)} M_2$ and ${\boxtimes_{P(w)}}$ a $P(w)$-intertwining operator of type 
{\scriptsize{$\begin{pmatrix}
   M_1\boxtimes_{P(w)} M_2  \\
   M_1\ \ M_2
\end{pmatrix}
$}}, 
which satisfies the following universal property: For any $M_3\in\mathcal{C}$ and $P(w)$-intertwining operator $I$ of type 
{\scriptsize{$\begin{pmatrix}
   M_3  \\
   M_1\ M_2
\end{pmatrix}
$}}, there is a unique $V$-module homomorphism $\eta:M_1\boxtimes_{P(w)} M_2\rightarrow M_3$ such that 
\begin{align*}
\overline{\eta}\circ {\boxtimes_{P(w)}}[\psi_1\otimes \psi_2]=I[\psi_1\otimes \psi_2]
\end{align*}
for all $\psi_1\in M_1$ and $\psi_2\in M_2$, where $\overline{\eta}$ denotes the extension of $\eta$ to a map between the completions of $M_1\boxtimes_{P(w)} M_2$ and $M_3$.
\end{dfn}

\begin{remark}
It is known that the definition $P(w)$-tensor product $\boxtimes_{P(w)}$ does not depend on the choice of $w\in \C^\times$. See \cite[Remark 4.22]{HLZ3}.
\end{remark}
\begin{prop}[\cite{HLZ3}]
Let $V$ be a vertex operator algebra and let $\mathcal{C}$ be a category of grading-restricted generalized $V$-modules.
Given $M_1$, $M_2$ and $M_3$ in $\mathcal{C}$, there exists a linear isomorphism from the space of intertwining operators of type {\scriptsize{$\begin{pmatrix}
   \ M_3  \\
   M_1\ M_2
\end{pmatrix}
$}} to the space of $P(w)$-intertwining operators of the same type.
\end{prop}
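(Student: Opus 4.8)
The plan is to construct explicit mutually inverse linear maps between the two spaces, the passage in one direction being ``evaluation of the formal variable at $w$'' and in the other ``reinstatement of the formal variable through $L_0$-conjugation.'' Fix once and for all a value $l(w)\in\C$ with $e^{l(w)}=w$. Given an intertwining operator $\mathcal{Y}$ of type {\scriptsize{$\begin{pmatrix} M_3 \\ M_1\ M_2 \end{pmatrix}$}}, define a bilinear map $I_{\mathcal{Y}}\colon M_1\otimes M_2\to\overline{M_3}$ by
\begin{align*}
I_{\mathcal{Y}}[\psi_1\otimes\psi_2]=\mathcal{Y}(\psi_1,x)\psi_2\,\big|_{\,x^{\alpha}\mapsto e^{\alpha l(w)},\ \log x\mapsto l(w)}.
\end{align*}
This is well defined since, for homogeneous $\psi_1,\psi_2$, the projection of $\mathcal{Y}(\psi_1,x)\psi_2$ onto each generalized $L_0$-eigenspace $M_3[h]$ is a single power of $x$ times a polynomial in $\log x$, so the substitution is term by term. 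In the other direction, given a $P(w)$-intertwining operator $I$ one reinstates the variable by
\begin{align*}
\mathcal{Y}_I(\psi_1,x)\psi_2=(x/w)^{L_0}\,I\big[(w/x)^{L_0}\psi_1\otimes(w/x)^{L_0}\psi_2\big],
\end{align*}
where $(x/w)^{L_0}$ is expanded via $L_0=L_0^{\mathrm{ss}}+L_0^{\mathrm{nil}}$, generating the appropriate powers of $x$ and $\log x$; this collapses to $I[\psi_1\otimes\psi_2]$ at $x=w$, and one checks directly the $L_0$-conjugation identity $y^{L_0}\mathcal{Y}_I(\psi_1,x)\psi_2=\mathcal{Y}_I(y^{L_0}\psi_1,yx)\,y^{L_0}\psi_2$.

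Next one verifies that $I_{\mathcal{Y}}$ satisfies the three conditions of Definition \ref{P(w)}. Condition (1) follows from the lower-truncation property of $\mathcal{Y}$ together with grading-restrictedness of $M_3$, since the surviving powers of $x$ in $\mathcal{Y}(\psi_1,x)\psi_2$ are bounded below precisely when the relevant $M_3$-weights are bounded below. Condition (2) comes from the duality theory of a single intertwining operator: for a module vertex operator $Y_3$ and one intertwining operator $\mathcal{Y}$, the three matrix coefficients
\begin{align*}
&\langle\psi^*_3,Y_3(A,z)\mathcal{Y}(\psi_1,x)\psi_2\rangle,\quad \langle\psi^*_3,\mathcal{Y}(Y_1(A,z-x)\psi_1,x)\psi_2\rangle,\\
&\langle\psi^*_3,\mathcal{Y}(\psi_1,x)Y_2(A,z)\psi_2\rangle
\end{align*}
are the expansions, in the regions $|z|>|x|>0$, $|x|>|z-x|>0$ and $|x|>|z|>0$ respectively, of one and the same rational function of $z$ whose only poles are at $z=0$ and $z=x$ --- a standard consequence of the Jacobi identity for $\mathcal{Y}$ (the theory of a single intertwining operator); setting $x=w$ gives (2). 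Condition (3) is then exactly the Cauchy--Jacobi (residue) form of that same Jacobi identity at $x=w$: pairing the rational function above with $f\in R_{P(w)}$ and deforming contours produces \eqref{Pw-com}.

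Conversely $\mathcal{Y}_I$ is an intertwining operator: its lower-truncation property is Definition \ref{P(w)}(1) carried through the $L_0$-conjugation built into $\mathcal{Y}_I$, and its $L_{-1}$-derivative property and Jacobi identity are obtained by running the previous paragraph in reverse --- rewriting \eqref{Pw-com} as a contour identity for rational functions, re-expanding in the three regions, and replacing $w$ by the formal variable $x$. Finally the two constructions are mutually inverse: $I_{\mathcal{Y}_I}=I$ upon setting $x=w$, while $\mathcal{Y}_{I_{\mathcal{Y}}}=\mathcal{Y}$ because a grading-restricted formal series in $\C^{\times}$-powers of $x$ with polynomial coefficients in $\log x$ is determined by its value at a single point together with its behaviour under $L_0$-conjugation, which is exactly the information retained by $I_{\mathcal{Y}}$; linearity of both maps is clear. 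The real content lies not in any single step but in the translation of the second paragraph: matching the formal-algebraic Jacobi identity for intertwining operators with its analytic, contour-integral form \eqref{Pw-com} while keeping precise track of the logarithmic terms and of the nilpotent part of $L_0$, so that ``evaluation at $x=w$'' remains a bijection in the generalized-module setting. Once this dictionary is in place the isomorphism is essentially forced.
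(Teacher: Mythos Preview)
The paper does not supply its own proof of this proposition: it is stated with attribution to \cite{HLZ3} and used as a black box, so there is nothing in the paper to compare your argument against. That said, your sketch is essentially the standard argument from \cite{HLZ3}: the forward map is substitution $x\mapsto w$ (with a fixed branch of $\log$) and the inverse map is the $L_0$-conjugation $\mathcal{Y}_I(\psi_1,x)\psi_2=(x/w)^{L_0}I[(w/x)^{L_0}\psi_1\otimes(w/x)^{L_0}\psi_2]$, with the Jacobi identity for $\mathcal{Y}$ matching the $P(w)$-compatibility condition \eqref{Pw-com} via contour deformation. Your outline captures the correct structure; if you want to make it rigorous you would need to spell out more carefully why the three matrix coefficients in your second paragraph are expansions of a common rational function (this is the content of the Jacobi identity for a single intertwining operator together with rationality of products, which in the generalized-module setting requires the convergence and extension properties developed in \cite{HLZ5,HLZ7}), and to verify the $L_{-1}$-derivative property for $\mathcal{Y}_I$ explicitly rather than by appeal to ``running the argument in reverse.''
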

By this proposition, the structure of the space of intertwining operators can be determined from the structure of $P(w)$-intertwining operators of the same type.
In the following, we will introduce some formulas derived from the $P(w)$-compatibility condition (\ref{Pw-com}).
Before we do that, let us define the following symbols. 
\begin{dfn}
\begin{enumerate}
\item Let $V$ be a vertex operator algebra. For any $L_0$ homogeneous element $A\in V$, whose $L_0$ weight is $h$, we define
\begin{align}
\label{A0819}
A[n]:=\oint_{z=0}Y(A,z)z^{n+h-1}{\rm d}z.
\end{align}
\item We define a translation map
\begin{align*}
T_1:\C(t)\rightarrow \C(t),\ \ \ \ \ \ \ {\rm by}\ \ \ \ \ \ \ f(t)\mapsto f(t+1),
\end{align*}
and a expansion map 
\begin{align*}
\iota_+:\C(t)\hookrightarrow \C((t))
\end{align*}
that expands a given rational function in $t$ as a power series around $t=0$.
\end{enumerate}
\end{dfn}
The following definition is derived from the condition (\ref{Pw-com}) \cite{AW,Kanade}.
\begin{dfn}
Let $V$ be a vertex operator algebra and let $\mathcal{C}$ be a category of grading-restricted generalized $V$-modules. Given $M_1, M_2, M_3\in \mathcal{C}$ and a $P(1)$-intertwining operator $I$ of type
{\scriptsize{$\begin{pmatrix}
   \ M_3  \\
   M_1\ M_2
\end{pmatrix}
$}}, we define the action of $V\otimes \C[t,t^{-1},(t-1)^{-1}]$ or $V\otimes \C((t))$ on $M^*_3$ as 
\begin{equation}
\begin{split}
&\langle Af(t)\psi^*_3, I[\psi_1\otimes\psi_1]\rangle=\langle A\iota_+(f(t))\psi^*_3, I[\psi_1\otimes\psi_1]\rangle\\
&=\langle \psi^*_3,I[\iota_+\circ T_1\bigl(A^{{\rm opp}}f(t^{-1})\bigl)\psi_1\otimes\psi_2]\rangle\\
&\ \ \ \ \ \ \ +\langle\psi^*_3,I[\psi_1\otimes\iota_+\bigl(A^{{\rm opp}}f(t^{-1})\bigl)\psi_2]\rangle
\end{split}  
\label{NGK0}
\end{equation}
where $\psi^*_3\in M^*_3$, $\psi_i\in M_i(i=1,2)$, 
\begin{align*}
A^{{\rm opp}}:=e^{t^{-1}L_{1}}(-t^2)^{L_0}At^{-2},
\end{align*}
and (assuming $A\in V$ has $L_0$ weight $h$) $At^n\psi_i=A[{n-h+1}]\psi_i$.
\end{dfn}
\begin{lem}
\label{NGK01}
Let $V$ be a vertex operator algebra and let $\mathcal{C}$ be a category of grading-restricted generalized $V$-modules. Let $A\in V$ be a non-zero Virasoro primary vector with $L_0$ conformal weight $h$, that is 
\begin{align*} 
&L_0A=hA,
&L_nA=0,\ \ \ n\geq 1.
\end{align*}
Then, given $M_1, M_2, M_3\in \mathcal{C}$ and a $P(1)$-{\rm intertwining operator} $I$ {\rm of type}
{\scriptsize{$\begin{pmatrix}
   \ M_3  \\
   M_1\ M_2
\end{pmatrix}
$}}, we have the following identities:
\begin{equation*}
\begin{split}
&\langle A[n]\psi^*_3, I[\psi_1\otimes\psi_2]\rangle\\
&=\sum_{i=0}^\infty\binom{h-n-1}{i}\langle\psi^*_3,I[\bigl(A[{i-h+1}]\psi_1\bigr)\otimes\psi_2]\rangle+\langle\psi^*_3,I[\psi_1\otimes\bigl(A[{-n}]\psi_2\bigr)]\rangle
\end{split}
\end{equation*}
and 
\begin{equation*}
\begin{split}
&\sum_{m=0}^\infty\binom{n+h-2}{m}(-1)^m\langle A[m-n]\psi^*_3,I[\psi_1\otimes\psi_2]\rangle\\
&=\langle\psi^*_3,I[\bigl((A[{n-1}]+A[{n}])\psi_1\bigr)\otimes\psi_2]\rangle\\
&\ \ \ \ \ \ \ \ \ +\sum_{i=0}^\infty\binom{n+h-2}{i}(-1)^{n-i+h-2}\langle\psi^*_3,I[\psi_1\otimes\bigl(A[{i-h+2}]\psi_2\bigr)]\rangle,
\end{split}
\end{equation*}
where $\psi^*_3\in M^*_3$, $\psi_i\in M_i$ $(i=1,2)$ and $n\in \mathbb{Z}$.
\begin{proof}
For the first identity, let $f(t)=t^{n+h-1}$ in (\ref{NGK0}), and for the second identity, let $f(t)=t^{2h-3}(t^{-1}-1)^{n+h-2}$ in (\ref{NGK0}).
\end{proof}
\end{lem}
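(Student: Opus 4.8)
The plan is to specialize the master identity (\ref{NGK0}) to two judiciously chosen rational functions $f(t)$ and to use the hypothesis that $A$ is Virasoro primary to collapse the operator $A^{\mathrm{opp}}$. The first step is to record that, since $L_1A=0$ and $L_0A=hA$, the factor $e^{t^{-1}L_1}$ acts as the identity on $A$ and $(-t^2)^{L_0}A=(-t^2)^hA$, so $A^{\mathrm{opp}}=e^{t^{-1}L_1}(-t^2)^{L_0}At^{-2}$ reduces to a scalar monomial in $t$ times $A$, namely a multiple of $At^{2h-2}$. Hence for any $f$ the right-hand side of (\ref{NGK0}) is a sum of terms $\langle\psi^*_3,I[(A[m]\psi_1)\otimes\psi_2]\rangle$ and $\langle\psi^*_3,I[\psi_1\otimes(A[m]\psi_2)]\rangle$, whose coefficients are read off from the Laurent expansion of $t^{2h-2}f(t^{-1})$ — after applying $T_1$ and $\iota_+$ in the first summand and $\iota_+$ in the second — via the dictionary $At^k\psi_i=A[k-h+1]\psi_i$. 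All the resulting sums are finite: $M_1,M_2$ are grading restricted and bounded below, so $A[k-h+1]\psi_i=0$ once $k$ is large, and the left-hand side is controlled by property (1) of Definition \ref{P(w)}.

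For the first identity I would take $f(t)=t^{n+h-1}$, so that $Af(t)=At^{n+h-1}$ acts on $\psi^*_3$ as $A[n]$, giving the stated left-hand side. Then $A^{\mathrm{opp}}f(t^{-1})$ is a multiple of $At^{h-n-1}$; the $\psi_2$-term yields directly $\langle\psi^*_3,I[\psi_1\otimes(A[-n]\psi_2)]\rangle$, while in the $\psi_1$-term the shift $T_1$ replaces $t^{h-n-1}$ by $(t+1)^{h-n-1}$ and $\iota_+$ expands this around $t=0$ as $\sum_{i\geq0}\binom{h-n-1}{i}t^i$, so that $At^i\psi_1=A[i-h+1]\psi_1$ produces exactly $\sum_{i\geq0}\binom{h-n-1}{i}\langle\psi^*_3,I[(A[i-h+1]\psi_1)\otimes\psi_2]\rangle$.

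For the second identity I would take $f(t)=t^{2h-3}(t^{-1}-1)^{n+h-2}$. Expanding $(t^{-1}-1)^{n+h-2}=\sum_{m\geq0}\binom{n+h-2}{m}(-1)^mt^{m-n-h+2}$ and multiplying by $t^{2h-3}$ shows that $Af(t)=\sum_{m\geq0}\binom{n+h-2}{m}(-1)^mAt^{h-1+m-n}$, i.e. the left-hand side equals $\sum_{m\geq0}\binom{n+h-2}{m}(-1)^m\langle A[m-n]\psi^*_3,I[\psi_1\otimes\psi_2]\rangle$. On the right-hand side $A^{\mathrm{opp}}f(t^{-1})$ is a multiple of $At(t-1)^{n+h-2}$. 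In the $\psi_2$-slot, $\iota_+$ expands $(t-1)^{n+h-2}$ around $t=0$, producing $\sum_{i\geq0}\binom{n+h-2}{i}(-1)^{n-i+h-2}\langle\psi^*_3,I[\psi_1\otimes(A[i-h+2]\psi_2)]\rangle$; in the $\psi_1$-slot, $T_1$ converts $t(t-1)^{n+h-2}$ into $(t+1)t^{n+h-2}=t^{n+h-1}+t^{n+h-2}$, which after the dictionary collapses to the single term $\langle\psi^*_3,I[((A[n-1]+A[n])\psi_1)\otimes\psi_2]\rangle$ — this is precisely why that $f$ is chosen. Assembling the three contributions gives the claimed identity.

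The main obstacle is entirely bookkeeping: one must respect the order of $\iota_+\circ T_1$ (expand around $t=0$ only after the substitution $t\mapsto t+1$) and keep track of which variable each binomial is expanded in, since expanding $(t-1)^{n+h-2}$ versus $(t^{-1}-1)^{n+h-2}$ around $t=0$ gives genuinely different series. The only subtle point is the accumulated sign coming from $(-t^2)^{L_0}$ acting through both slots; this is harmless in every application, where $h$ is a nonnegative integer so that $(-t^2)^h=(-1)^ht^{2h}$ is unambiguous. No convergence issues arise beyond the finiteness already noted.
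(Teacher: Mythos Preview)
Your proposal is correct and follows exactly the paper's approach: the paper's proof consists of the two sentences ``For the first identity, let $f(t)=t^{n+h-1}$ in (\ref{NGK0}), and for the second identity, let $f(t)=t^{2h-3}(t^{-1}-1)^{n+h-2}$ in (\ref{NGK0}),'' and you have chosen the same two rational functions and carried out the expansions in detail. Your remark about the global sign $(-1)^h$ coming from $(-t^2)^{L_0}$ is a genuine observation that the paper's terse proof glosses over; in the applications the relevant $A$ is either $T$ (with $h=2$) or one of the $W$-fields, and one should indeed keep track of this sign.
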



Hereafter we omit $P(1)$-intertwining operators and use the abbreviation as 
\begin{align*}
 \langle \psi^*_3,\psi_1\otimes\psi_2\rangle= \langle \psi^*_3,I[\psi_1\otimes\psi_2]\rangle
\end{align*}
unless otherwise noted.\\

\subsection{Tensor products $L(h_{r,s})\boxtimes \bullet$}
In this subsection, we determine the tensor products $L(h_{r,s})\boxtimes M$ $(M\in\mathcal{C}_{p_+,p_-})$. First we introduce the tensor products between the minimal simple modules $L(h_{r,s})$. Recall that the triplet algebra $\mathcal{W}_{p_+,p_-}$ satisfies the exact sequence
\begin{align*}
0\rightarrow \mathcal{X}^+_{1,1}\rightarrow \mathcal{W}_{p_+,p_-}\rightarrow L(c_{p_+,p_-},0)\rightarrow 0,
\end{align*}
where $L(c_{p_+,p_-},0)$ is the Virasoro minimal vertex operator algebra.
Note the following proposition.
\begin{prop}[\cite{AMW2p,AMW3p,TW}]
\label{prop0830}
The maximal ideal $\X^+_{1,1}$ of $\W_{p_+,p_-}$ acts trivially on the minimal simple modules $L(h_{r,s})$ $(r,s)\in\mathcal{T}$.
\end{prop}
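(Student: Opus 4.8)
The statement to prove is Proposition \ref{prop0830}: the maximal ideal $\X^+_{1,1}$ of $\W_{p_+,p_-}$ acts trivially on the minimal simple modules $L(h_{r,s})$ for $(r,s)\in\mathcal{T}$.

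\medskip

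The plan is to use the explicit realization of everything inside Fock space together with the strong generation of $\W_{p_+,p_-}$ from Proposition \ref{genW}. First I would recall that $\X^+_{1,1}$ is, by the socle series in Proposition \ref{socleV}(1) (with $r=s=1$, so $r^\vee=p_+-1$, $s^\vee=p_--1$), precisely ${\rm Soc}_1(\V^+_{1,1})$, and that $\W_{p_+,p_-}=\K_{1,1}$ has composition factors $\X^+_{1,1}$ and $L(h_{1,1})=L(c_{p_+,p_-},0)$; thus $\X^+_{1,1}$ is the maximal proper ideal, fitting in $0\to\X^+_{1,1}\to\W_{p_+,p_-}\to L(c_{p_+,p_-},0)\to 0$. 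The key structural input is the decomposition (\ref{decomp}): $\X^+_{1,1}=\bigoplus_{n\geq 0}(2n+1)L(\Delta^+_{1,1;n})$ as a Virasoro module, where the lowest piece is $L(\Delta^+_{1,1;0})=L(h_{p_+-1,p_--1;-1})$, and one computes $h_{p_+-1,p_--1;-1}=h_{2p_++1? \dots}$ — the relevant fact is simply that $\Delta^+_{1,1;0}>0$, indeed it equals the conformal weight of the generating singular vectors $W^\pm,W^0$, namely $h_{4p_+-1,1}$ up to the identification, so in any case the lowest $L_0$-weight appearing in $\X^+_{1,1}$ is strictly positive.

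\medskip

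The main step is then: any element of the ideal $\X^+_{1,1}\subset\W_{p_+,p_-}$ is, as a state in the vertex algebra, a (Virasoro descendant of a) combination of the singular vectors $W^\pm, W^0$, because by Proposition \ref{genW} the algebra is strongly generated by $T$ together with $W^\pm, W^0$, and the $T$-part generates only the subalgebra $L(c_{p_+,p_-},0)$, whose image in $\W_{p_+,p_-}$ is the quotient $L(c_{p_+,p_-},0)$, not the ideal. Hence it suffices to show that the modes $W^\pm[k]$ and $W^0[k]$ annihilate every vector of $L(h_{r,s})$. Since $L(h_{r,s})$ is an irreducible Virasoro module on which $\W_{p_+,p_-}$ acts, and $W^\epsilon$ is a Virasoro primary vector, the field $Y(W^\epsilon,z)$ is a Virasoro-primary intertwining-type field on $L(h_{r,s})$; I would argue that it must be zero by a weight/fusion obstruction. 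Concretely: $L(h_{r,s})$ is a minimal model module, so the only Virasoro submodules generated inside $L(h_{r,s})$ by a would-be primary descendant of weight $h_{4p_+-1,1}$ relative to $h_{r,s}$ would have to be a highest weight $h_{r',s'}$ in the Kac table of the minimal model; but $h_{4p_+-1,1}$ is not congruent mod the Kac symmetry to any interior $(r',s')$ (it sits on the boundary row of the extended Kac table, where the corresponding Fock module is reducible but the minimal-model quotient vanishes). Equivalently, and more cleanly: the vacuum three-point function / OPE coefficient $\langle W^\epsilon\rangle$ in the minimal model vanishes because $L(c_{p_+,p_-},0)$ contains no vector of weight $h_{4p_+-1,1}$ other than $0$ in the relevant conformal family — the Virasoro Verma/minimal structure forces $W^\epsilon$ to map to $0$ under the surjection $\W_{p_+,p_-}\twoheadrightarrow L(c_{p_+,p_-},0)$, and acting on the minimal module $L(h_{r,s})$ the modes therefore reduce to modes of the Virasoro field only, i.e. $\X^+_{1,1}$ acts as $0$.

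\medskip

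To make that last reduction rigorous I would use associativity of the module action: the state--field correspondence gives, for $u\in\X^+_{1,1}$ and $v\in L(h_{r,s})$, that $Y(u,z)v$ is determined by the singular-vector equations that $u$ satisfies inside $\W_{p_+,p_-}$ (the same null-vector relations that cut $\K_{1,1}$ out of $\V^+_{1,1}$ via $\ker Q_+\cap\ker Q_-$), and since $L(h_{r,s})$ is an irreducible quotient of a Virasoro Verma module realized as a submodule of the Fock module $F_{r,s}$, the screening operators $Q_\pm$ act compatibly; the key computation is that the image of $W^\pm,W^0$ under $Y(-,z)$ acting on the minimal-model family is a Virasoro current that by irreducibility and weight counting has no room to be nonzero. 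I expect the main obstacle to be exactly this weight-counting / null-vector argument: showing cleanly that no nonzero Virasoro-primary field of weight $h_{4p_+-1,1}$ can act on $L(h_{r,s})$ landing back in a minimal model module — this is where one must invoke the detailed structure of the images and kernels of $Q^{[\bullet]}_\pm$ from \cite{FF,Felder,TW} and the fact (Proposition \ref{FockSocle}, case stated there) that the $(4p_+-1,1)$-type weights correspond to singular vectors whose quotients by further singular vectors vanish in the minimal model. A cleaner alternative, which I would present as the actual proof, is to note that $\W_{p_+,p_-}/\X^+_{1,1}\cong L(c_{p_+,p_-},0)$ as vertex operator algebras and that each $L(h_{r,s})$ is by construction a module over this minimal model VOA (it is literally an $L(c_{p_+,p_-},0)$-module pulled back along the projection); therefore the ideal $\X^+_{1,1}$, being the kernel of $\W_{p_+,p_-}\to L(c_{p_+,p_-},0)$, acts by zero on any module pulled back from the quotient — which is exactly the claim.
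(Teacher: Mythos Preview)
The paper does not supply its own proof of this proposition; it is quoted from \cite{AMW2p,AMW3p,TW} and then used as input for Proposition~\ref{minimal fusion}. So there is nothing in the paper to compare your argument against, and I assess it on its own merits.

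Your ``cleaner alternative'' at the end is circular as written. Saying that $L(h_{r,s})$ is ``by construction an $L(c_{p_+,p_-},0)$-module pulled back along the projection'' is precisely the statement to be proved: a priori, $L(h_{r,s})$ is handed to you only as a simple $\W_{p_+,p_-}$-module by Theorem~\ref{simpleclass}, and you must \emph{show} its module structure factors through the quotient. What you may say for free is that pulling the minimal-model module $L(h_{r,s})$ back along $\W_{p_+,p_-}\twoheadrightarrow L(c_{p_+,p_-},0)$ produces \emph{some} simple $\W_{p_+,p_-}$-module on which $\X^+_{1,1}$ acts by zero; the missing step is then to identify this pullback with the object labelled $L(h_{r,s})$ in Theorem~\ref{simpleclass}. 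That identification follows from the classification together with a character comparison: the pullback is simple, hence on the list, and its Virasoro decomposition is the single minimal-model module, which by~(\ref{decomp}) matches none of the $\X^\pm_{\bullet,\bullet}$. Once you insert this step the argument is correct.

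Your intertwining-operator route in the middle paragraph is a genuine alternative that does not lean on the classification, but you left the key vanishing unverified. Concretely: by Proposition~\ref{genW} it suffices to show $Y(W^\epsilon,z)=0$ on $L(h_{r,s})$; restricting to $U(\mathcal L).W^\epsilon\cong L(h_{4p_+-1,1})$ gives a non-logarithmic Virasoro intertwining operator of type $\binom{L(h_{r,s})}{L(h_{4p_+-1,1})\ L(h_{r,s})}$, and Proposition~\ref{FusionInt} applied to $S_{r,s}v_{r,s}=0$ forces the ground-state matrix element to vanish provided $\prod_{i=1}^{r}\prod_{j=1}^{s}\bigl(h_{4p_+-1,1}-h_{2r-2i+1,\,2s-2j+1}\bigr)\neq 0$. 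This holds because every $h_{a,b}$ with $1\le a\le 2r-1<2p_+$ and $1\le b\le 2s-1<2p_-$ is strictly smaller than $h_{4p_+-1,1}$. That is the computation you needed to carry out.
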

From this proposition, we have the following BPZ fusion rules.
\begin{prop}[\cite{BPZ}]
\label{minimal fusion}
For $1\leq r,r'\leq p_+-1$, $1\leq s,s'\leq p_--1$, we have
\begin{align*}
L(h_{r,s})\boxtimes L(h_{r',s'})=\bigoplus_{\substack{i=1+|r-r'| \\ i+r+r'=1\ {\rm mod}\ 2}}^{{\min}\{r+r'-1,2p_+-r-r'-1\}}   \bigoplus_{\substack{j=1+|s-s'|\\j+s+s'=1\ {\rm mod}\ 2}}^{{\rm min}\{ s+s'-1,2p_--s-s'-1\}}L(h_{i,j}).
\end{align*}
\begin{proof}
By Proposition \ref{prop0830}, we see that the fusion product $L(h_{r,s})\boxtimes L(h_{r',s'})$ in $(\mathcal{C}_{p_+,p_-},\boxtimes,\mathcal{K}_{1,1})$ can be considered the fusion product in the tensor category $(L(c_{p_+,p_-}),\boxtimes, L(0))$. Therefore, the explicit formula for $L(h_{r,s})\boxtimes L(h_{r',s'})$ is obtained from the results of \cite{BPZ}.
\end{proof}
\end{prop}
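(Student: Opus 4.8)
The plan is to reduce the computation of the fusion product $L(h_{r,s})\boxtimes L(h_{r',s'})$ inside the non-semisimple category $(\mathcal{C}_{p_+,p_-},\boxtimes,\mathcal{K}_{1,1})$ to the corresponding fusion product inside the semisimple tensor category of Virasoro minimal model modules $(L(c_{p_+,p_-},0)\mathchar`-{\rm Mod},\boxtimes,L(0))$, and then quote the classical BPZ fusion rules. The key input is Proposition \ref{prop0830}: the maximal ideal $\X^+_{1,1}\subset \W_{p_+,p_-}$ annihilates every minimal simple module $L(h_{r,s})$ with $(r,s)\in\mathcal{T}$, so each such $L(h_{r,s})$ is in fact a module for the quotient vertex operator algebra $\W_{p_+,p_-}/\X^+_{1,1}\cong L(c_{p_+,p_-},0)$.

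First I would make the categorical reduction precise. The surjection of vertex operator algebras $\W_{p_+,p_-}\twoheadrightarrow L(c_{p_+,p_-},0)$ induces a functor from $L(c_{p_+,p_-},0)\mathchar`-{\rm Mod}$ to $\mathcal{C}_{p_+,p_-}$ (restriction of the action along the quotient map), and this functor is fully faithful and identifies $L(c_{p_+,p_-},0)\mathchar`-{\rm Mod}$ with the full subcategory of $\mathcal{C}_{p_+,p_-}$ consisting of modules on which $\X^+_{1,1}$ acts trivially. I would argue that this subcategory is closed under the tensor product $\boxtimes$: if $\X^+_{1,1}$ acts by zero on $M_1$ and on $M_2$, then for any intertwining operator $\mathcal{Y}$ of type $\binom{M_3}{M_1\,M_2}$ with $M_3\in\mathcal{C}_{p_+,p_-}$, the $\X^+_{1,1}$-action factors through a module map, and one checks directly from the intertwining-operator axioms (or from the $P(w)$-compatibility condition (\ref{Pw-com})) that $\X^+_{1,1}$ acts trivially on the image; applying this to the universal $\mathcal{Y}_{\boxtimes}$ shows $\X^+_{1,1}$ annihilates $M_1\boxtimes M_2$. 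Hence $M_1\boxtimes M_2$ again lies in the subcategory, and by the universal property the tensor product computed in $\mathcal{C}_{p_+,p_-}$ coincides with the tensor product computed in $L(c_{p_+,p_-},0)\mathchar`-{\rm Mod}$. The cleanest way to phrase the last point is via the standard fact that for a surjection $V\twoheadrightarrow \bar V$ of vertex operator algebras, the induced embedding of braided tensor categories $\bar V\mathchar`-{\rm Mod}\hookrightarrow V\mathchar`-{\rm Mod}$ is a tensor functor onto its (tensor-closed) image, so $\boxtimes$ is intrinsic.

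Next, with the reduction in hand, I would simply invoke \cite{BPZ}: in the (rigid, semisimple, modular) tensor category $L(c_{p_+,p_-},0)\mathchar`-{\rm Mod}$ the simple objects are exactly the $L(h_{r,s})$ for $(r,s)\in\mathcal{T}$, and their fusion product is the $(p_+,p_-)$-truncated $\mathfrak{sl}_2\times\mathfrak{sl}_2$ tensor-product rule, which is precisely the double sum over $i$ (from $1+|r-r'|$ to $\min\{r+r'-1,\,2p_+-r-r'-1\}$ in steps of $2$) and over $j$ (from $1+|s-s'|$ to $\min\{s+s'-1,\,2p_--s-s'-1\}$ in steps of $2$) of $L(h_{i,j})$ displayed in the statement; here one uses the identification $h_{i,j}=h_{p_+-i,p_--j}$ to stay inside the Kac table $\mathcal{T}$. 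This completes the proof.

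The main obstacle is the categorical reduction step, i.e.\ verifying that the full subcategory of $\mathcal{C}_{p_+,p_-}$ killed by $\X^+_{1,1}$ is closed under $\boxtimes$ and that $\boxtimes$ restricted to it agrees with the Virasoro fusion product. Everything else is either a citation (\cite{BPZ}) or bookkeeping with the Kac table. I expect this step to be routine given the machinery of \cite{HLZ3}: the key computation is that if $\X^+_{1,1}$ acts as zero on $M_1$ and $M_2$, then in the $P(w)$-compatibility identity (\ref{Pw-com}) every term on the right vanishes for $A\in\X^+_{1,1}$, forcing $\langle \psi_3^*, Y_3(A,z)I[\psi_1\otimes\psi_2]\rangle=0$ for all $\psi_3^*$, hence $A$ acts by zero on the image of any intertwining operator out of $M_1\otimes M_2$; applied to $\mathcal{Y}_{\boxtimes}$ and using that $M_1\boxtimes M_2$ is spanned by (homogeneous components of) $\mathcal{Y}_{\boxtimes}[\psi_1\otimes\psi_2]$, one concludes. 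One should be slightly careful that $\X^+_{1,1}$, though not the whole vacuum module, is a \emph{vertex operator subalgebra} (indeed an ideal) of $\W_{p_+,p_-}$ so that ``$\X^+_{1,1}$ acts trivially'' is a well-posed, tensor-stable condition; this is exactly the content of Proposition \ref{prop0830} together with the fact that $\W_{p_+,p_-}/\X^+_{1,1}\cong L(c_{p_+,p_-},0)$.
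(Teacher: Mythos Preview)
Your proposal is correct and takes essentially the same approach as the paper: use Proposition~\ref{prop0830} to reduce the $\W_{p_+,p_-}$-fusion of minimal simple modules to the Virasoro minimal model fusion, then quote \cite{BPZ}. The paper's proof is a one-sentence version of exactly this; your write-up simply makes explicit the tensor-closure argument (via the $P(w)$-compatibility identity) that the paper leaves implicit.
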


From this subsection, we will use the following notation and propositions frequently.
\begin{dfn}
\label{dfn0830gogo}
Let $V$ be a vertex operator algebra.
\begin{enumerate}
\item For any $M\in V\mathchar`-{\rm Mod}$, we define the following vector space 
\begin{align*}
A_0(M)=\{\psi\in M\ |\ \psi\neq 0,\ A[n]\psi=0,\ {}^\forall A\in V,\ n>0\},
\end{align*}
where $A[n]$ is defined by (\ref{A0819}).
\item For any $M\in V\mathchar`-{\rm Mod}$, we define {\rm the top composition factors} of $M$ as the maximal semisimple quotient of $M$. We denote by ${\rm top}(M)$ the top composition factors of $M$. In the case of $V=\mathcal{W}_{p_+,p_-}$, we have
\begin{align*}
{\rm top}(M)={\rm Socle}(M^*),
\end{align*}
where $M^*$ is the contragredient of $M$.
\item Given $V$-modules $M_1,M_2,M_3$, we denote by
\begin{equation*}
I
\begin{pmatrix}
\ M_3 \\
M_2\ \ M_1
\end{pmatrix}
\end{equation*}
the vector space of $V$-intertwining operators of type {\footnotesize{$\begin{pmatrix}
\ M_3 \\
M_2\ \ M_1
\end{pmatrix}$}}.
\end{enumerate}
\end{dfn}
Let $M(h,c_{p_+,p_-})$ be the Verma module of the Virasoro algebra whose highest weight is $h\in\mathbb{C}$ and the central charge $C=c_{p_+,p_-}\cdot {\rm id}$. Let ${\mid}h\rangle$ be the highest weight vector of $M(h,c_{p_+,p_-})$.
Note that, for $r,s\geq 1$, $M(h_{r,s},c_{p_+,p_-})$ has the singular vector whose $L_0$-weight is $h_{r,s}+rs$. Let $S_{r,s}\in U(\mathcal{L})$ be the Shapovalov element corresponding to this singular vector, normalized as
\begin{align*}
S_{r,s}{\mid}h_{r,s}\rangle=(L^{rs}_{-1}+\cdots){\mid}h_{r,s}\rangle.
\end{align*}
Let $\sigma$ be the anti-involution of $U(\mathcal{L})$ defined by $\sigma(L_{n})=L_{-n}$ $(n\in \mathbb{Z})$, and set $S^*_{r,s}=\sigma(S_{r,s})$. We have the following propositions for non-logarithmic Virasoro intertwining operators.
\begin{prop}[\cite{FF,IK,Milas,Lin}]
\label{FusionInt}
Let $M_1$, $M_2$ and $M_3$ be Virasoro highest weight modules whose $L_0$ weights are $h_{r_1,s_1}$, $h_{r_2,s_2}$ and $h_{r_3,s_3}$, respectively, where $r_i,s_i\geq 1$ $(i=1,2,3)$. Let $v_i(i=1,2,3)$ be the highest weight vectors of $M_i$.
Assume that there exists a non-logarithmic intertwining operator $\mathcal{Y}$ of type {\scriptsize{$\begin{pmatrix}
   M^*_3  \\
   M_1\ M_2
\end{pmatrix}
$}}.
Then we have
\begin{align*}
&\langle v_3,\mathcal{Y}(v_1,z)S_{r_2,s_2}v_2\rangle=\prod_{i=1}^{r_2}\prod_{j=1}^{s_2}(h_{r_1,s_1}-h_{r_2+r_3-2i+1,s_2+s_3-2j+1})\langle v_3,\mathcal{Y}(v_1,z)v_2\rangle,\\
&\langle v_3,S^*_{r_3,s_3}\mathcal{Y}(v_1,z)v_2\rangle=\prod_{i=1}^{r_3}\prod_{j=1}^{s_3}(h_{r_1,s_1}-h_{r_2+r_3-2i+1,s_2+s_3-2j+1})\langle v_3,\mathcal{Y}(v_1,z)v_2\rangle.
\end{align*}
\end{prop}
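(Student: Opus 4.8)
The plan is to reduce the statement to a computation inside a single Verma module of the Virasoro algebra, exploiting the fact that the three-point function $F(z):=\langle v_3,\mathcal{Y}(v_1,z)v_2\rangle$ transforms in a controlled way when one inserts a singular vector on either side. First I would fix the precise normalization conventions: $S_{r_2,s_2}$ is the Shapovalov element so that $S_{r_2,s_2}|h_{r_2,s_2}\rangle$ is the (unique up to scalar) singular vector of $M(h_{r_2,s_2},c)$ at grade $r_2 s_2$, with leading term $L_{-1}^{r_2 s_2}$; and $S^*_{r_3,s_3}=\sigma(S_{r_3,s_3})$ sits in $U(\mathcal{L})$ acting on the left on $M_3$ (equivalently, on $M_3^*$ the image of $v_3$ under $S^*_{r_3,s_3}$ is the appropriate cosingular functional). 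The two identities are mirror images of one another under the transpose symmetry of intertwining operators, so I would prove the first one in detail and obtain the second by the standard duality $\mathcal{Y}\mapsto\mathcal{Y}'$ interchanging $M_2\leftrightarrow M_3^*$ (or, equivalently, repeat the same argument with the roles of the ``in'' and ``out'' modules swapped).

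The key mechanism is the $L_{-1}$-commutation/derivative property of intertwining operators together with the Virasoro Ward identities. For any $X\in U(\mathcal{L})$ one has, by moving Virasoro modes through $\mathcal{Y}(v_1,z)$ using the intertwining relation $[L_n,\mathcal{Y}(v_1,z)]=\sum_{k\ge 0}\binom{n+1}{k}z^{n+1-k}\mathcal{Y}(L_{k-1}v_1,z)$ and using $L_k v_1=0$ for $k\ge 1$, $L_0 v_1=h_{r_1,s_1}v_1$, $L_{-1}v_1=\partial_z$-type terms, that $\langle v_3,\mathcal{Y}(v_1,z)\,X\,v_2\rangle$ equals a differential operator in $z$ (with coefficients built from $h_{r_1,s_1}$, $h_{r_2,s_2}$, $h_{r_3,s_3}$) applied to $F(z)$. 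Since $F(z)=C\,z^{h_{r_3,s_3}-h_{r_1,s_1}-h_{r_2,s_2}}$ for non-logarithmic $\mathcal{Y}$ (the grading forces this monomial form, with $C$ the single structure constant), applying any such differential operator returns a scalar multiple of $F(z)$, and that scalar is a polynomial in the three weights. Taking $X=S_{r_2,s_2}$ produces the scalar $\lambda_{r_2,s_2}(h_{r_1,s_1},h_{r_3,s_3})$, which is precisely the product asserted. To identify the product explicitly I would use the known factorization of the ``fusion polynomial'': the scalar by which a grade-$r s$ singular vector acts on such a three-point function is, up to normalization fixed by the leading $L_{-1}^{rs}$ term, $\prod_{i=1}^{r_2}\prod_{j=1}^{s_2}\bigl(h_{r_1,s_1}-h_{r_2+r_3-2i+1,\,s_2+s_3-2j+1}\bigr)$; this is the classical result underlying the BPZ fusion rules and can be cited from \cite{FF,IK,Milas,Lin}, or verified by the standard induction on $r_2,s_2$ using the known form of $S_{r_2,s_2}$ in terms of $S_{r_2-1,s_2}$, $S_{r_2,s_2-1}$ (the Benoit--Saint-Aubin / Bauer--Di Francesco--Itzykson--Zuber recursion).

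Concretely the steps are: (i) record that $F(z)=C z^{\Delta}$ with $\Delta=h_{r_3,s_3}-h_{r_1,s_1}-h_{r_2,s_2}$; (ii) show $\langle v_3,\mathcal{Y}(v_1,z)L_{-n_1}\cdots L_{-n_k}v_2\rangle$ is computed by repeatedly commuting each $L_{-n}$ to the left past $\mathcal{Y}(v_1,z)$, where it either hits $v_3$ (contributing via $\langle v_3|L_{-n}= (L_n v_3)^\dagger$-type terms, zero for $n\ge 1$ since $v_3$ is highest weight — here one uses that the pairing is with the highest weight vector of $M_3^*$) or produces $z$-dependent operators acting on $F$; more precisely, $\langle v_3, \mathcal{Y}(v_1,z) L_{-n} w\rangle = \langle v_3, (\text{stuff}) \mathcal{Y}(v_1,z) w\rangle$ reduces the length, yielding a recursion that terminates in $F(z)$; (iii) assemble these into the statement that $\langle v_3,\mathcal{Y}(v_1,z)S_{r_2,s_2}v_2\rangle=P(z)F(z)$ with $P(z)$ a Laurent polynomial that, by degree counting, must be a constant $\lambda$; (iv) identify $\lambda$ with the double product by the cited singular-vector computation (or the Shapovalov-recursion induction), using the $L_{-1}^{r_2s_2}$-normalization to pin down the overall constant. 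The analogous chain with $S^*_{r_3,s_3}$ acting on the left of $v_3$ gives the second identity; note the product is symmetric under $(r_2,s_2)\leftrightarrow(r_3,s_3)$ in the relevant sense, which is why the same polynomial appears.

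The main obstacle I anticipate is step (iv): writing down $S_{r,s}$ explicitly is intractable for general $r,s$, so one cannot ``just compute'' the scalar. The honest route is to either (a) invoke the closed-form fusion polynomial from the literature (\cite{FF,Milas,Lin} treat exactly Virasoro intertwining operators and singular vectors), taking care that our weight normalization $h_{r,s}=\frac14\alpha_{r,s}(\alpha_{r,s}-\alpha_0)$ and the indexing $h_{r_2+r_3-2i+1,s_2+s_3-2j+1}$ match their conventions; or (b) run the Benoit--Saint-Aubin recursion for $S_{r,s}$, proving the product formula by a double induction on $(r,s)$ where the inductive step is a one-line manipulation of the three-point function's differential equation. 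Option (a) is cleanest and is what the citation list suggests; the only real work is a conventions check, plus confirming that non-logarithmicity of $\mathcal{Y}$ is exactly what guarantees $F(z)$ is a pure monomial (if $\mathcal{Y}$ were logarithmic, $F$ could carry $\log z$ factors and $P(z)$ would no longer be forced constant — but that case is excluded by hypothesis). I would therefore present the proof as: establish the monomial form of $F$, reduce the singular-vector insertion to a scalar by the Ward-identity recursion, and cite the fusion-polynomial factorization for the value of that scalar, with a remark pinning the normalization.
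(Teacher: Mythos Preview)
The paper does not give its own proof of this proposition: it is stated with citations to \cite{FF,IK,Milas,Lin} and used as a black box. So there is nothing to compare your argument against on the paper's side.

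That said, your sketch is a faithful outline of the classical argument found in those references: reduce the three-point function to the monomial $Cz^{h_{r_3,s_3}-h_{r_1,s_1}-h_{r_2,s_2}}$ using non-logarithmicity, commute the Virasoro modes in $S_{r_2,s_2}$ (resp.\ $S^*_{r_3,s_3}$) through $\mathcal{Y}(v_1,z)$ via the Jacobi/Ward identities to obtain a differential operator in $z$ acting on this monomial, observe that the result is a scalar multiple of $F(z)$, and identify that scalar as the fusion polynomial. Your honest assessment of step (iv) is correct: the explicit factorization into $\prod_{i,j}(h_{r_1,s_1}-h_{r_2+r_3-2i+1,s_2+s_3-2j+1})$ is the nontrivial input, and it is precisely what one extracts from the Kac determinant/Shapovalov form analysis in \cite{FF,IK} or the free-field realization arguments (this is where the parametrization $h_{r,s}$ via $\alpha_{r,s}$ does the work). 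If you were to flesh this out, option (a)---citing the fusion polynomial directly after setting up the reduction to a scalar---is exactly what the cited papers do, and is the appropriate level of detail here.
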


\begin{prop}[\cite{FF,IK,Milas,Lin}]
For $h\in\C$, $1\leq r_1,r_2<p_+$, $1\leq s_1,s_2<p_-$ and $n_1,n_2\in\Z_{\geq 0}$, we have
\begin{equation*}
\mathcal{N}^{L(h)}_{L(h_{r_1,s_1;n_1}),L(h_{r_2,s_2;n_2})}:={\rm dim}_{\mathbb{C}}I\begin{pmatrix}
   L(h) \\
   L(h_{r_1,s_1;n_1})\ L(h_{r_2,s_2;n_2})
\end{pmatrix}\leq 1,
\end{equation*}
If $\mathcal{N}^{L(h)}_{L(h_{r_1,s_1;n_1}),L(h_{r_2,s_2;n_2})}\neq 0$, then $h$ is the common solution of the following equations
\begin{align*}
\prod_{i=1}^{r_1}\prod_{j=1}^{s_1+n_1p_-}&(h-h_{r_1+r_2-2i+1,s_1+s_2-2j+1;n_1+n_2})=0,\\
\prod_{i=1}^{(n_1+1)p_+-r_1}\prod_{j=1}^{p_--s_1}&(h-h_{2p_+-r_1-r_2-2i+1,2p_--s_1-s_2-2j+1;-n_1-n_2})=0,\\
\prod_{i=1}^{r_2}\prod_{j=1}^{s_2+n_2p_-}&(h-h_{r_1+r_2-2i+1,s_1+s_2-2j+1;n_1+n_2})=0,\\
\prod_{i=1}^{(n_2+1)p_+-r_2}\prod_{j=1}^{p_--s_2}&(h-h_{2p_+-r_1-r_2-2i+1,2p_--s_1-s_2-2j+1;-n_1-n_2})=0.
\end{align*}
\label{VirasoroFusion}
\end{prop}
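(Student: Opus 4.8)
The plan is to reduce everything to the non-logarithmic Virasoro intertwining-operator analysis already packaged in Proposition \ref{FusionInt}. Recall that $L(h_{r_i,s_i;n_i})$ is the simple Virasoro quotient of the Verma module $M(h_{r_i,s_i;n_i},c_{p_+,p_-})$, and by the Feigin--Fuchs structure theory (used already in Proposition \ref{FockSocle} and \cite{FF,IK}) this Verma module has, for $r_i,s_i\geq 1$, \emph{two} independent singular vectors in the relevant range: one at level $r_i s_i$ relative to the top, governed by the Shapovalov element $S_{r_i,s_i}$, and one coming from the ``conjugate'' pair $(r_i,s_i;n_i)=(?,?;?)$ realized as $((n_i+1)p_+-r_i,p_--s_i;-n_i)$ after using the reflection identities $\alpha_{r,s;n}=\alpha_{r-np_+,s}=\alpha_{r,s+np_-}$ and $h_{r,s;n}=h_{-r,-s;-n}$ recorded just before Proposition \ref{FockSocle}. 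Concretely I would fix the highest weight $h\in\C$ and a putative nonzero intertwining operator $\mathcal{Y}$ of type $\binom{L(h)}{L(h_{r_1,s_1;n_1})\,L(h_{r_2,s_2;n_2})}$, pass to the associated three-point function $\langle v_3,\mathcal{Y}(v_1,z)v_2\rangle$ with $v_i$ the highest weight vectors, and then impose that $\mathcal{Y}$ descends through the relevant singular vectors of the Verma modules covering $L(h_{r_1,s_1;n_1})$, $L(h_{r_2,s_2;n_2})$ and $L(h)^*\cong L(h)$.

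First I would establish the dimension bound $\mathcal{N}\leq 1$: for highest weight Virasoro modules the space of intertwining operators injects into the dual of the top weight space of the target via $\mathcal{Y}\mapsto \langle v_3,\mathcal{Y}(v_1,z)v_2\rangle$, and the coefficient of the leading power $z^{-h_1-h_2+h_3}$ (times logs, but here we are in the non-logarithmic locus by Proposition \ref{FusionInt}-type arguments since all three modules are simple highest weight with generic-looking weights) determines $\mathcal{Y}$ uniquely once the nesting-compatibility with $L_{-1}$ and the Virasoro action is fixed; the standard induction on $L_0$-weight shows the whole three-point function is determined by this single scalar, giving $\dim\leq 1$. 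Second, for the vanishing conditions: apply Proposition \ref{FusionInt} with $(r_2,s_2)$ replaced by the two distinct Verma-level data for $L(h_{r_2,s_2;n_2})$ — namely $(r_2,s_2+n_2 p_-)$ from the $L_{-1}$-type singular vector and $((n_2+1)p_+-r_2,p_--s_2)$ from the conjugate singular vector under the shift $n\mapsto -n-1$ — and similarly apply the $S^*$-version with $(r_3,s_3)$ replaced by the two Verma-level data for $L(h)$, which (since $h$ is unconstrained a priori) I would parametrize through the Kac labels appearing on the right. Each application forces $\langle v_3,\mathcal{Y}(v_1,z)v_2\rangle$ to be killed unless one of the product factors $\prod_{i,j}\bigl(h - h_{r_1+r_2-2i+1,\,s_1+s_2-2j+1;\,n_1+n_2}\bigr)$ vanishes; matching the index ranges to those in Proposition \ref{FusionInt} (with $s_2\rightsquigarrow s_2+n_2p_-$ and the conjugate substitution producing the $2p_+-\cdots$, $2p_--\cdots$, $-n_1-n_2$ family) yields exactly the four displayed equations.

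The main obstacle is the bookkeeping of \emph{which} singular vectors of the Verma modules survive in the simple quotients $L(h_{r,s;n})$ in the various chambers of $(r,s,n)$, i.e. making the substitution $(r_i,s_i)\leadsto(r_i,s_i+n_ip_-)$ and $(r_i,s_i)\leadsto((n_i+1)p_+-r_i,\,p_--s_i)$ rigorous rather than formal — one must check that the relevant Shapovalov element $S_{r_i,s_i+n_ip_-}$ (resp. the conjugate one) indeed annihilates the highest weight vector of $L(h_{r_i,s_i;n_i})$, which is where the hypotheses $1\leq r_i<p_+$, $1\leq s_i<p_-$, $n_i\geq 0$ are used and where the Feigin--Fuchs/Felder picture for the embedding structure of Fock modules (the input to Proposition \ref{FockSocle}) does the real work. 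Once that is in place, combining the two pairs of vanishing conditions and simplifying via the identities $h_{r,s;n}=h_{r-np_+,s}=h_{r,s+np_-}=h_{-r,-s;-n}$ gives the stated system, and the $\leq 1$ bound follows from the earlier argument; this matches the results of \cite{FF,IK,Milas,Lin} cited for the proposition.
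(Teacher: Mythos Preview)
The paper does not prove this proposition; it is quoted from \cite{FF,IK,Milas,Lin} without argument. Your overall strategy---bound the dimension by reducing to the leading three-point coefficient, then impose the vanishing of the two independent Shapovalov singular vectors in each of the two ``source'' Verma modules via Proposition~\ref{FusionInt}---is the standard one in those references and is essentially correct.

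There is, however, a genuine slip in your second step. You propose to obtain two of the four equations by applying the $S^*$-version of Proposition~\ref{FusionInt} ``with $(r_3,s_3)$ replaced by the two Verma-level data for $L(h)$''. But $h$ is an arbitrary complex number with no assumed Kac-table parametrisation, so $M(h)$ need not have any singular vectors at all; you cannot apply a Shapovalov element to $v_3$ in that way. The correct manoeuvre is to first invoke the standard symmetries of intertwining-operator spaces (skew-symmetry together with the adjoint/contragredient isomorphism $I\binom{A}{B\ C}\simeq I\binom{B^*}{A^*\ C}$) to pass from the original type $\binom{L(h)}{L(h_{r_1,s_1;n_1})\ L(h_{r_2,s_2;n_2})}$ to the type $\binom{L(h_{r_1,s_1;n_1})}{L(h)\ L(h_{r_2,s_2;n_2})}$. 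In this arrangement $L(h)$ sits in the $M_1$ slot of Proposition~\ref{FusionInt}, so $h$ becomes the free parameter in the product. Now applying $S_{r_2,\,s_2+n_2p_-}$ and $S_{(n_2+1)p_+-r_2,\,p_--s_2}$ to $v_2$ gives the third and fourth displayed equations, and applying $S^*_{r_1,\,s_1+n_1p_-}$ and $S^*_{(n_1+1)p_+-r_1,\,p_--s_1}$ to $v_3$ (which \emph{is} a highest-weight vector of a known Kac-table weight) gives the first and second. Your identification of the two Kac-label realisations $(r_i,\,s_i+n_ip_-)$ and $((n_i+1)p_+-r_i,\,p_--s_i)$ of $h_{r_i,s_i;n_i}$, and the ensuing bookkeeping with the shift identities, is correct; once the roles are permuted as above the four product conditions drop out exactly as stated.
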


\begin{prop}
\label{null11rs}
\begin{align*}
\X^\pm_{r,s}\boxtimes L(h_{1,1}) =0,\ \ \ 1\leq r\leq p_+,\ 1\leq s\leq p_-.
\end{align*}
\begin{proof}
Assume $\X^+_{r,s}\boxtimes L(h_{1,1})\neq 0$. Fix any non-zero vector 
\begin{align*}
\psi^*_3\in A_0((\X^+_{r,s}\boxtimes L(h_{1,1}))^*), 
\end{align*}
and let $\psi_1$ and $\psi_2$ be the highest weight vectors of $\X^+_{r,s}$ and $L(h_{1,1})$, respectively. Let $\phi_1$ and $\phi_2$ be arbitrary vectors of $\mathcal{X}^+_{1,1}$ and $L(h_{1,1})$ such that $\langle \psi^*_3, \phi_1\otimes \phi_2\rangle\neq 0$.
Note that the maximal ideal $\X^+_{1,1}$ of $\W_{p_+,p_-}$ acts trivially on any minimal simple modules $L(h_{r,s})((r,s)\in \mathcal{T})$, and $\psi_2$ satisfies $L_{-1}\psi_2=0$.
Then, noting Propositions \ref{genW} and \ref{sl2action2}, and using the formulas in Lemma \ref{NGK01}, we obtain 
\begin{align*}
\langle \psi^*_3, \phi_1\otimes \phi_2\rangle=c \langle \psi^*_3,\psi_1\otimes\psi_2\rangle,
\end{align*}
where $c$ is a non-zero constant.
Using the formulas in Lemma \ref{NGK01}, we have
\begin{align*}
\langle L_0\psi^*_3,\psi_1\otimes\psi_2\rangle
&=\langle \psi^*_3,L_{-1}\psi_1\otimes\psi_2\rangle+\langle \psi^*_3,L_0\psi_1\otimes\psi_2\rangle+\langle \psi^*_3,\psi_1\otimes L_0\psi_2\rangle\\
&=\langle \psi^*_3,L_0\psi_1\otimes \psi_2\rangle.
\end{align*}
Thus, the $L_0$-eigenvalue of $\psi^*_3$ is the same as that of $\psi_1$. Then, by restricting the action of $\W_{p_+,p_-}$ to the Virasoro action in the intertwining operator $\mathcal{Y}_{\boxtimes}$, we have a non-trivial non-logarithmic Virasoro intertwining operator of type
\begin{equation*}
\begin{pmatrix}
\ L(\Delta^+_{r,s;0}) \\
L(\Delta^+_{r,s;0})\ \ L(h_{1,1})
\end{pmatrix}
.
\end{equation*}
Note that $S_{p_+-1,p_--1}\psi_2=0$. Then by using Proposition \ref{FusionInt}, we have
\begin{align*}
0&=\langle \psi^*_3,\psi_1\otimes S_{p_+-1,p_--1}\psi_2\rangle\\
&=\prod_{i=1}^{p_+-1}\prod_{j=1}^{p_--1}(h_{r^\vee+p_+,s}-h_{p_+-1+r^\vee+p_+-2i+1,p_--1+s-2i+1})\langle \psi^*_3,\psi_1\otimes \psi_2\rangle.
\end{align*}
We see that the coefficient of the above equation is non-zero. Thus we obtain $\langle \psi^*_3,\psi_1\otimes \psi_2\rangle=0$. But this contradicts the assumption. Similarly, we can prove $L(h_{1,1})\boxtimes \X^-_{r,s}=0$.

\end{proof}
\end{prop}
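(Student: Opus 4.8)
The plan is to prove $\X^\pm_{r,s}\boxtimes L(h_{1,1})=0$ by contradiction, exploiting the fact that $L(h_{1,1})$ is annihilated by the maximal ideal $\X^+_{1,1}$ of $\W_{p_+,p_-}$ together with the Virasoro null-vector structure of $L(h_{1,1})$. First I would suppose $\X^+_{r,s}\boxtimes L(h_{1,1})\neq 0$ and choose a non-zero $\psi^*_3\in A_0((\X^+_{r,s}\boxtimes L(h_{1,1}))^*)$; let $\psi_1$ be the highest weight vector of $\X^+_{r,s}$ and $\psi_2$ that of $L(h_{1,1})$, so $L_{-1}\psi_2=0$. The first reduction is to show that the pairing $\langle\psi^*_3,\phi_1\otimes\phi_2\rangle$ for arbitrary $\phi_1\in\X^+_{1,1}$, $\phi_2\in L(h_{1,1})$ is a non-zero multiple of $\langle\psi^*_3,\psi_1\otimes\psi_2\rangle$: since $\X^+_{1,1}$ acts trivially on $L(h_{1,1})$ and $L_{-1}\psi_2=0$, the formulas in Lemma \ref{NGK01} applied to the strong generators $T(z)$ and $Y(W^\pm,z),Y(W^0,z)$ (Proposition \ref{genW}), combined with the transitive $\W_{p_+,p_-}$-action described in Proposition \ref{sl2action2}, let one move all creation operators onto the first tensor factor and propagate through the decomposition (\ref{decomp}) of $\X^+_{r,s}$, reducing everything to the single pairing $\langle\psi^*_3,\psi_1\otimes\psi_2\rangle$.

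Next I would identify the $L_0$-eigenvalue of $\psi^*_3$: applying the first identity of Lemma \ref{NGK01} with $A=T$, $n=0$ gives
\begin{align*}
\langle L_0\psi^*_3,\psi_1\otimes\psi_2\rangle=\langle\psi^*_3,L_{-1}\psi_1\otimes\psi_2\rangle+\langle\psi^*_3,L_0\psi_1\otimes\psi_2\rangle+\langle\psi^*_3,\psi_1\otimes L_0\psi_2\rangle,
\end{align*}
and since $L_0\psi_2=h_{1,1}\psi_2=0$ and the $L_{-1}\psi_1$-term is absorbed into the previous reduction, $\psi^*_3$ has the same $L_0$-eigenvalue $\Delta^+_{r,s;0}$ as $\psi_1$. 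Restricting the intertwining operator $\mathcal{Y}_\boxtimes$ to the Virasoro subalgebra and passing to the component generated by these highest weight vectors then yields a non-trivial non-logarithmic Virasoro intertwining operator of type ${\scriptsize\begin{pmatrix}L(\Delta^+_{r,s;0})\\L(\Delta^+_{r,s;0})\ \ L(h_{1,1})\end{pmatrix}}$, to which Proposition \ref{FusionInt} applies.

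The contradiction comes from the Virasoro null vector of $L(h_{1,1})$: in the Verma module $M(h_{1,1},c_{p_+,p_-})$ the Shapovalov element $S_{p_+-1,p_--1}$ annihilates $\ket{h_{1,1}}$ in the simple quotient $L(h_{1,1})$ (since $h_{1,1}=h_{p_+,p_-}$ lies in the degenerate region and the $(p_+-1,p_--1)$ singular vector is set to zero), so $S_{p_+-1,p_--1}\psi_2=0$. Plugging this into the first formula of Proposition \ref{FusionInt} gives
\begin{align*}
0=\langle\psi^*_3,\psi_1\otimes S_{p_+-1,p_--1}\psi_2\rangle=\Bigl(\prod_{i=1}^{p_+-1}\prod_{j=1}^{p_--1}(\Delta^+_{r,s;0}-h_{2p_+-r-2i,2p_--s-2j})\Bigr)\langle\psi^*_3,\psi_1\otimes\psi_2\rangle,
\end{align*}
after writing $\Delta^+_{r,s;0}=h_{r^\vee,s;-1}$ and using the quasi-periodicity $h_{r,s;n}=h_{r-np_+,s}$ to put the index shifts in standard form. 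The main obstacle is the arithmetic check that the product of conformal-weight differences is non-zero: one must verify that none of the factors $\Delta^+_{r,s;0}-h_{\,\cdot\,,\,\cdot\,}$ vanishes for the allowed ranges $1\le r\le p_+$, $1\le s\le p_-$, $1\le i\le p_+-1$, $1\le j\le p_--1$, which amounts to showing that the relevant pairs of Kac labels are never related by the symmetries $h_{a,b}=h_{-a,-b}=h_{a+p_+,b+p_-}$; this is where the coprimality of $p_+,p_-$ and the bound $p_->p_+\ge 2$ enter. Once the coefficient is non-zero we get $\langle\psi^*_3,\psi_1\otimes\psi_2\rangle=0$, contradicting the first reduction, so $\X^+_{r,s}\boxtimes L(h_{1,1})=0$; the case $L(h_{1,1})\boxtimes\X^-_{r,s}=0$ follows by the same argument with $\X^-_{r,s}$ in place of $\X^+_{r,s}$ and Proposition \ref{sl2action2}(2) in place of (1), or alternatively by braiding and contragredience.
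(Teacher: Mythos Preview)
Your proposal is correct and follows essentially the same argument as the paper: reduce all pairings to $\langle\psi^*_3,\psi_1\otimes\psi_2\rangle$ via Lemma \ref{NGK01} together with Propositions \ref{genW} and \ref{sl2action2}, identify the $L_0$-eigenvalue of $\psi^*_3$ as $\Delta^+_{r,s;0}$, extract a non-logarithmic Virasoro intertwining operator, and kill it using $S_{p_+-1,p_--1}\psi_2=0$ in Proposition \ref{FusionInt}. Your write-up is slightly more explicit than the paper's about the arithmetic check that the product of weight differences is non-zero, but the strategy and the key inputs are identical.
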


By Propositions \ref{minimal fusion} and \ref{null11rs}, we obtain the following proposition.
\begin{prop}
\label{minimalnull}
For any simple module $\X^{\pm}_{r,s}(1\leq r\leq p_+,1\leq s\leq p_-)$, we have
\begin{align*}
\X_{r,s}\boxtimes L(h_{r',s'})=0,\ \ \ 1\leq r'\leq p_+-1,\ 1\leq s'\leq p_--1.
\end{align*}
\end{prop}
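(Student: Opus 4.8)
The plan is to deduce Proposition~\ref{minimalnull} from the two already-established results, Propositions~\ref{minimal fusion} and \ref{null11rs}, together with associativity of the tensor product $\boxtimes$ and its right-exactness. The starting observation is that every minimal simple module $L(h_{r',s'})$ with $1\le r'\le p_+-1$, $1\le s'\le p_--1$ appears in a repeated BPZ fusion product of copies of $L(h_{1,2})$ and $L(h_{2,1})$: indeed, by Proposition~\ref{minimal fusion} the module $L(h_{1,2})\boxtimes L(h_{r',s'})$ (respectively $L(h_{2,1})\boxtimes L(h_{r',s'})$) contains $L(h_{r',s'+1})$ or $L(h_{r',s'-1})$ (respectively $L(h_{r'+1,s'})$ or $L(h_{r'-1,s'})$) as a direct summand, so one can walk around the interior Kac table and realize $L(h_{r',s'})$ as a summand of some iterated product $L(h_{1,2})^{\boxtimes a}\boxtimes L(h_{2,1})^{\boxtimes b}$. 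Hence it suffices to prove the claim for $L(h_{1,2})$ and $L(h_{2,1})$ and then propagate it.

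The second step handles the base case. For $L(h_{1,2})$ one argues exactly as in the proof of Proposition~\ref{null11rs}: assume $\X^{\pm}_{r,s}\boxtimes L(h_{1,2})\neq 0$, pick a nonzero $\psi_3^*\in A_0\big((\X^{\pm}_{r,s}\boxtimes L(h_{1,2}))^*\big)$, let $\psi_1,\psi_2$ be the highest-weight vectors of $\X^{\pm}_{r,s}$ and $L(h_{1,2})$, use Propositions~\ref{genW} and \ref{sl2action2} with the Nahm--Gaberdiel--Kausch formulas of Lemma~\ref{NGK01} to reduce any matrix element $\langle\psi_3^*,\phi_1\otimes\phi_2\rangle$ to a multiple of $\langle\psi_3^*,\psi_1\otimes\psi_2\rangle$ (here one again uses that $\X^+_{1,1}$ acts trivially on $L(h_{1,2})$ via Proposition~\ref{prop0830}), and then restrict to the Virasoro sub-action to obtain a nonlogarithmic Virasoro intertwining operator between the appropriate simple Virasoro modules. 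Finally, feed the singular vector $S_{p_+-1,p_--1}\mid h_{1,2}\rangle=0$ into Proposition~\ref{FusionInt}; since $L(h_{1,2})$ is a minimal module the relevant product of $h$-differences is nonzero, forcing $\langle\psi_3^*,\psi_1\otimes\psi_2\rangle=0$, a contradiction. The same computation with $L(h_{2,1})$ in place of $L(h_{1,2})$ gives $\X^{\pm}_{r,s}\boxtimes L(h_{2,1})=0$.

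The third step is the propagation. Using associativity and commutativity of $\boxtimes$, for a general $(r',s')$ write $L(h_{r',s'})$ as a summand of $L(h_{1,2})^{\boxtimes a}\boxtimes L(h_{2,1})^{\boxtimes b}\boxtimes L(h_{1,1})$-type product built from the base cases; more directly, by the walk described above there is a surjection (or at least a nonzero map realizing $L(h_{r',s'})$ as a subquotient) of $L(h_{1,2})$ or $L(h_{2,1})$ tensored with some minimal module $L(h_{r'',s''})$ onto $L(h_{r',s'})$, so by right-exactness of $\boxtimes$ we get $\X^{\pm}_{r,s}\boxtimes L(h_{r',s'})$ as a quotient of $\X^{\pm}_{r,s}\boxtimes\big(L(h_{1,2})\boxtimes L(h_{r'',s''})\big)\cong \big(\X^{\pm}_{r,s}\boxtimes L(h_{1,2})\big)\boxtimes L(h_{r'',s''})=0$, and likewise with $L(h_{2,1})$. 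An induction on, say, $r'+s'$ (with the two generators $L(h_{1,2}),L(h_{2,1})$ reducing one of the two indices) closes the argument.

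I expect the main obstacle to be the base-case computation for $L(h_{1,2})$ and $L(h_{2,1})$: one must verify carefully that the Nahm--Gaberdiel--Kausch reduction goes through for these modules (in particular that the relevant $W^\pm[0], W^0[0]$ and Virasoro actions still collapse all matrix elements to the single coefficient $\langle\psi_3^*,\psi_1\otimes\psi_2\rangle$, as in Proposition~\ref{null11rs}), and that the product of conformal-weight differences coming from the singular vector $S_{p_+-1,p_--1}$ of $M(h_{1,2},c_{p_+,p_-})$ (resp.\ $M(h_{2,1},c_{p_+,p_-})$) via Proposition~\ref{FusionInt} is genuinely nonzero for all admissible $(r,s)$ --- this is a finite but slightly delicate check on the Kac weights $h_{i,j}$ using that $p_+,p_-$ are coprime. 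The propagation step, by contrast, is formal once right-exactness and associativity of $\boxtimes$ are in hand.
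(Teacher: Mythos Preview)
Your overall strategy (reduce to generators, then propagate by associativity) is sound, but you are working much harder than necessary, and your base case has genuine gaps.

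The paper's argument is a one-liner. By Proposition~\ref{minimal fusion} with $(r,s)=(1,1)$ one has $L(h_{1,1})\boxtimes L(h_{r',s'})=L(h_{r',s'})$, so by associativity
\[
\X^\pm_{r,s}\boxtimes L(h_{r',s'})
=\bigl(\X^\pm_{r,s}\boxtimes L(h_{1,1})\bigr)\boxtimes L(h_{r',s'})
=0\boxtimes L(h_{r',s'})=0
\]
using Proposition~\ref{null11rs}. No new NGK computation is required; the single case $L(h_{1,1})$ already suffices, because $L(h_{1,1})$ acts as the tensor identity on the minimal simple modules.

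Your proposed base case for $L(h_{1,2})$ has two concrete problems. First, the singular vector you invoke is wrong: $S_{p_+-1,p_--1}\ket{h_{1,2}}$ need not vanish; what vanishes in $L(h_{1,2})$ are $S_{1,2}\ket{h_{1,2}}$ and $S_{p_+-1,p_--2}\ket{h_{1,2}}$ (recall $h_{1,2}=h_{p_+-1,p_--2}$). Second, the collapse of all matrix elements $\langle\psi_3^*,\phi_1\otimes\phi_2\rangle$ to a single number in the proof of Proposition~\ref{null11rs} relies crucially on $L(h_{1,1})$ being one-dimensional (so $\phi_2\in\C\psi_2$ automatically) and on $L_{-1}\psi_2=0$; neither holds for $L(h_{1,2})$, so the reduction is not the immediate transcription you suggest. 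Even after fixing these, you would still owe a nonvanishing check on a product of Kac-weight differences for every admissible $(r,s)$. All of this is bypassed once you observe that $L(h_{1,1})$, rather than $L(h_{1,2})$ and $L(h_{2,1})$, is the right ``generator'' to use here.
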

\begin{corollary}
\label{202210180}
For $1\leq r,r'\leq p_+-1$, $1\leq s,s'\leq p_--1$, we have
\begin{align*}
(\W_{p_+,p_-}.\ket{\alpha_{r,s}})\boxtimes L(h_{r',s'})\simeq L(h_{r,s})\boxtimes L(h_{r',s'}).
\end{align*}
\begin{proof}
Note that, for $0<r<p_+,0<s<p_-$, $\W_{p_+,p_-}.\ket{\alpha_{r,s}}\subset \mathcal{V}^+_{r,s}$. Thus, by Proposition \ref{socleV}, $\W_{p_+,p_-}.\ket{\alpha_{r,s}}$ satisfies the exact sequence
\begin{align*}
0\rightarrow \mathcal{X}^+_{r,s}\rightarrow \W_{p_+,p_-}.\ket{\alpha_{r,s}}\rightarrow L(h_{r,s})\rightarrow 0.
\end{align*}
Therefore, from Proposition \ref{minimalnull}, we obtain the equality in the claim.
\end{proof}
\end{corollary}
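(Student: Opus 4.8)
The plan is to exhibit $\W_{p_+,p_-}.\ket{\alpha_{r,s}}$ as an extension of $L(h_{r,s})$ by $\X^+_{r,s}$ and then apply the right-exact functor $-\boxtimes L(h_{r',s'})$ to that extension, invoking the vanishing established in Proposition \ref{minimalnull}.

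First I would identify the submodule $\W_{p_+,p_-}.\ket{\alpha_{r,s}}$. For $0<r<p_+$ and $0<s<p_-$ the cyclic vector $\ket{\alpha_{r,s}}=\ket{\alpha_{r,s;0}}$ lies in the Fock module $F_{r,s}=F_{r,s;0}$, which is one of the direct summands of the simple $\mathcal{V}_{[p_+,p_-]}$-module $\V^+_{r,s}$; hence $\W_{p_+,p_-}.\ket{\alpha_{r,s}}\subset\V^+_{r,s}$. Reading off the socle series of $\V^+_{r,s}$ from Proposition \ref{socleV}, namely ${\rm Soc}_1=\X^+_{r,s}$, ${\rm Soc}_2/{\rm Soc}_1=\X^-_{r^\vee,s}\oplus\X^-_{r,s^\vee}\oplus L(h_{r,s})$ and ${\rm Soc}_3/{\rm Soc}_2=\X^+_{r^\vee,s^\vee}$, together with the fact that $\ket{\alpha_{r,s}}$ has conformal weight $h_{r,s}$ and, by the Fock-module structure of Proposition \ref{FockSocle} and the description of the screening maps, its $\W_{p_+,p_-}$-descendants generate precisely $\X^+_{r,s}$ and surject onto the $L(h_{r,s})$ constituent without meeting $\X^-_{r^\vee,s}$, $\X^-_{r,s^\vee}$ or $\X^+_{r^\vee,s^\vee}$, I conclude the short exact sequence
\begin{align*}
0\to\X^+_{r,s}\to\W_{p_+,p_-}.\ket{\alpha_{r,s}}\to L(h_{r,s})\to 0.
\end{align*}

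Then I would apply $-\boxtimes L(h_{r',s'})$ to this sequence. Since the tensor product $\boxtimes$ of $(\mathcal{C}_{p_+,p_-},\boxtimes,\mathcal{K}_{1,1})$ is right exact (\cite[Proposition 4.26]{HLZ3}), I obtain an exact sequence
\begin{align*}
\X^+_{r,s}\boxtimes L(h_{r',s'})\to\bigl(\W_{p_+,p_-}.\ket{\alpha_{r,s}}\bigr)\boxtimes L(h_{r',s'})\to L(h_{r,s})\boxtimes L(h_{r',s'})\to 0.
\end{align*}
By Proposition \ref{minimalnull} the leftmost term vanishes, so exactness forces the middle arrow to be both surjective (exactness at the last term) and injective (its kernel equals the image of the zero module), hence an isomorphism; this is exactly the claimed isomorphism.

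The only step that requires genuine work is the first one: pinning down $\W_{p_+,p_-}.\ket{\alpha_{r,s}}$, i.e.\ checking via the free-field realization that the action of the strong generators $T$, $Y(W^\pm,z)$, $Y(W^0,z)$ on $\ket{\alpha_{r,s}}$ stays inside $\X^+_{r,s}$ together with the cyclic copy of $L(h_{r,s})$ and never reaches the remaining constituents of $\V^+_{r,s}$. Once that exact sequence is available, the conclusion is a purely formal consequence of the right exactness of $\boxtimes$ and the vanishing in Proposition \ref{minimalnull}.
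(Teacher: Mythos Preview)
Your proof is correct and follows essentially the same approach as the paper: establish the short exact sequence $0\to\X^+_{r,s}\to\W_{p_+,p_-}.\ket{\alpha_{r,s}}\to L(h_{r,s})\to 0$ via Proposition~\ref{socleV}, then tensor with $L(h_{r',s'})$ and invoke Proposition~\ref{minimalnull}. You are simply more explicit than the paper about the right-exactness step and about why the cyclic submodule has the claimed composition factors.
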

\begin{corollary}
\label{directsumL}
For any $(r,s)\in \mathcal{T}$ and $M\in \W_{p_+,p_-}$, $L(h_{r,s})\boxtimes M$ becomes a direct sum of minimal simple modules $L(h_{k,l})$ $(k,l)\in \mathcal{T}$.
\end{corollary}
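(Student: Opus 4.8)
The plan is to reduce the statement to what has already been established about tensor products with the minimal simple modules. The key observation is that $L(h_{r,s})$ lies in the thick block $C^{thick}_{r,s}$, whose only composition factors are $\X^+_{r,s}$, $\X^+_{r^\vee,s^\vee}$, $\X^-_{r^\vee,s}$, $\X^-_{r,s^\vee}$ and $L(h_{r,s})$ itself. So for an arbitrary $M\in\mathcal{C}_{p_+,p_-}$, the plan is to pass to composition series. First I would use the block decomposition to reduce to the case where $M$ is indecomposable, hence lies in a single block; then I would take a composition series $0=M_0\subset M_1\subset\cdots\subset M_k=M$ of $M$ with simple subquotients $M_i/M_{i-1}$ each isomorphic to one of the simple modules $L(h_{k,l})$ ($(k,l)\in\mathcal{T}$) or $\X^\pm_{a,b}$.

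Second, I would apply $L(h_{r,s})\boxtimes(-)$ to this filtration. Since $\boxtimes$ is right exact (as noted after Definition~\ref{ten0819}, citing \cite[Proposition~4.26]{HLZ3}), the functor $L(h_{r,s})\boxtimes(-)$ sends each short exact sequence $0\to M_{i-1}\to M_i\to M_i/M_{i-1}\to 0$ to an exact sequence
\begin{align*}
L(h_{r,s})\boxtimes M_{i-1}\to L(h_{r,s})\boxtimes M_i\to L(h_{r,s})\boxtimes(M_i/M_{i-1})\to 0.
\end{align*}
Now I invoke Proposition~\ref{minimalnull}, which gives $L(h_{r,s})\boxtimes \X^\pm_{a,b}=0$ for all admissible $a,b$, and Proposition~\ref{minimal fusion}, which shows that $L(h_{r,s})\boxtimes L(h_{k,l})$ is a direct sum of minimal simple modules $L(h_{i,j})$ with $(i,j)\in\mathcal{T}$. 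Hence for each simple subquotient $N=M_i/M_{i-1}$, the module $L(h_{r,s})\boxtimes N$ is either zero or a direct sum of minimal simple modules; in particular it is a semisimple object lying in $\bigoplus_{(k,l)\in\mathcal{T}}C^{thick}_{k,l}$ and in fact is a sum of the $L(h_{i,j})$.

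Third, I would run an induction on the length $k$ of the filtration: assuming $L(h_{r,s})\boxtimes M_{i-1}$ is a direct sum of minimal simple modules $L(h_{i,j})$, the right-exact sequence above exhibits $L(h_{r,s})\boxtimes M_i$ as an extension whose sub (image of the first map) and quotient are both direct sums of such $L(h_{i,j})$. Since the minimal simple modules $L(h_{i,j})$ with $(i,j)\in\mathcal{T}$ all have trivial mutual $\mathrm{Ext}^1$ as $\W_{p_+,p_-}$-modules—by Proposition~\ref{Ext}, which lists all nonzero extensions between simple modules and shows there are none among the $L(h_{i,j})$ in distinct positions, together with the standard fact that distinct simple modules in distinct blocks have no extensions—any such extension splits, so $L(h_{r,s})\boxtimes M_i$ is again a direct sum of minimal simple modules. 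This closes the induction and yields the claim for $M_k=M$. The main obstacle, and the step deserving the most care, is justifying the vanishing of the relevant $\mathrm{Ext}^1$ so that the extensions appearing along the filtration actually split; but this follows from the $\mathrm{Ext}^1$ computations of Proposition~\ref{Ext} combined with the block decomposition (simple modules in different blocks have no extensions), since within a thick block the only $L(h_{r,s})$ present is a single isotype and $\mathrm{Ext}^1(L(h_{r,s}),L(h_{r,s}))=0$ there. One should also note the harmless abuse in the statement's hypothesis ``$M\in\W_{p_+,p_-}$'', which is to be read as $M\in\mathcal{C}_{p_+,p_-}$.
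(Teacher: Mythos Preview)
Your argument is correct and is exactly the natural fleshing-out of what the paper leaves implicit: the corollary is stated without proof, immediately after Propositions~\ref{minimal fusion} and~\ref{minimalnull}, and your composition-series plus right-exactness plus ${\rm Ext}^1$-vanishing argument is the intended justification. Your care about the image of the first map being only a quotient of $L(h_{r,s})\boxtimes M_{i-1}$ (hence still semisimple by induction), and your appeal to Proposition~\ref{Ext} together with the block decomposition to split the extensions, are both on point.
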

\subsection{Self-dual objects $\K_{r,s}$}
We define the following indecomposable modules following \cite{FF2,MS,W}.
\begin{dfn}
We define the following $\W_{p_+,p_-}$-modules
\begin{enumerate}
\item For $1\leq r\leq p_+-1$, $1\leq s\leq p_--1$, 
\begin{align*}
\K_{r,s}:=\W_{p_+,p_-}.\ket{\alpha_{r,s}}.
\end{align*}
\item For $1\leq r\leq p_+$, $1\leq s\leq p_-$,
\begin{align*}
&\K_{r,p_-}:=\X^+_{r,p_-},
&\K_{p_+,s}:=\X^+_{p_+,s}.
\end{align*}
\end{enumerate}
\label{W(beta)}
\end{dfn}

In this subsection, we compute some tensor products $\K_{1,2}\boxtimes \bullet $ and $\K_{2,1}\boxtimes \bullet$, and show that the indecomposable modules $\K_{r,s}$ are rigid and self-dual. 
\vspace{2mm}

For any highest weight $\mathcal{W}_{p_+,p_-}$-module $M$, we denote by $H(M)$ the highest weight of $M$. 
\begin{prop}
\label{NGK2}
Assume that
\begin{align*}
&\mathcal{K}_{1,2}\boxtimes \mathcal{K}_{r,s}\neq 0,
&\mathcal{K}_{1,2}\boxtimes \mathcal{X}^\pm_{r,s}\neq 0
\end{align*}
for all $r,s\geq 1$.
Then the following holds.
\begin{enumerate}
\item For $1\leq r\leq p_+$, $2\leq s< p_-$, 
the dimension of the vector space $A_0((\K_{1,2}\boxtimes\K_{r,s})^*)$ is at most two dimensional. $L_0$ acts semisimply on this vector space and the $L_0$ eigenvalues are contained in 
\begin{align*}
\{{H}(\K_{r,s+1}),{H}(\K_{r,s-1})\}=\{h_{r,s-1},h_{r,s+1}\}.
\end{align*}
\item For $2\leq r\leq p_+$, 
the dimension of the vector space $A_0((\K_{1,2}\boxtimes\K_{r,1})^*)$ is at most one dimensional. $L_0$ acts semisimply on this vector space and the $L_0$ eigenvalue is given by ${H}(\K_{r,2})=h_{r,2}$.

\item For $1\leq r\leq p_+$, $2\leq s< p_-$,
the dimension of the vector space $A_0((\K_{1,2}\boxtimes\X^+_{r,s})^*)$ is at most two dimensional. $L_0$ acts semisimply on this vector space and the $L_0$ eigenvalues are contained in 
\begin{align*}
\{H(\X^+_{r,s-1}), H(\X^+_{r,s+1})\}=\{\Delta^+_{r,s-1;0},\Delta^+_{r,s+1;0}\}.
\end{align*}
\item For $1\leq r\leq p_+$, $2\leq s< p_-$,
the dimension of the vector space $A_0((\K_{1,2}\boxtimes\X^-_{r,s})^*)$ is at most four dimensional. $L_0$ acts semisimply on this vector space and the $L_0$ eigenvalues are contained in 
\begin{align*}
\{H(\X^-_{r,s-1}),H(\X^-_{r,s+1})\}=\{\Delta^-_{r,s-1;0},\Delta^-_{r,s+1;0}\}.
\end{align*}
\item For $1\leq r\leq p_+$, $s=p_-$, the vector space $A_0((\K_{1,2}\boxtimes\X^+_{r,p_-})^*)$ is at most two dimensional.  The generalized $L_0$ eigenvalues of this vector space are contained in 
\begin{align*}
\{H(L(h_{r,p_--1})), H(\X^+_{r,p_--1})\}=\{h_{r,p_--1},\Delta^+_{r,p_--1;0}\}.
\end{align*}
\item For $1\leq r\leq p_+$, $s=p_-$, the vector space $A_0((\K_{1,2}\boxtimes\X^-_{r,p_-})^*)$ is at most four dimensional. $L_0$ acts semisimply on this vector space and the $L_0$ eigenvalues are contained in 
\begin{align*}
\{H(\X^+_{r,1}),H(\X^-_{r,p_--1})\}=\{\Delta^+_{r,1;0},\Delta^-_{r,p_--1;0}\}.
\end{align*}
\end{enumerate}
\begin{proof}
We only prove the first and second cases.
The other cases can be proved in the same way.

Let us prove the first case. Assume $\K_{1,2}\boxtimes\K_{r,s}\neq 0$.
Let $\psi^*$ be an arbitrary non-zero vector of $A_0((\K_{1,2}\boxtimes\K_{r,s})^*)$, where the notation $A_0(\bullet)$ is given in Definition \ref{dfn0830gogo}.
Let $\phi_1$ and $\phi_2$ be arbitrary vectors of $\K_{1,2}$ and $\K_{r,s}$ such that $\langle\psi^*,\phi_1\otimes \phi_2\rangle\neq 0$.
For $n\geq 1$, let $\{w^{(n)}_i\}_{i=-n}^n$ be the Virasoro highest weight vectors of the vector subspace $(2n+1)L(\Delta^+_{r,s;n})\subset \K_{r,s}$. 

Let us consider the values
\begin{align}
\label{ax}
\langle\psi^*,U(\mathcal{L}).\ket{\alpha_{1,2}}{\otimes}x\rangle,
\end{align}
where $x=\ket{\alpha_{1,2}}$ or $w^{(n)}_i$.
Note that, from Proposition \ref{FockSocle}, the ground state $\ket{\alpha_{1,2}}$ satisfies
\begin{align}
\label{relsing}
\bigl( L^2_{-1}-\frac{p_+}{p_-}L_{-2} \bigr)\ket{\alpha_{1,2}}=0.
\end{align}
Noting (\ref{relsing}) and applying the formulas in Lemma \ref{NGK01} as $A=T$, we see that the value (\ref{ax}) is determined by the numbers
\begin{align*}
&\langle \psi^*,\ket{\alpha_{1,2}}\otimes x\rangle,
&\langle \psi^*,(L_{-1}\ket{\alpha_{1,2}})\otimes x\rangle.
\end{align*}
Let $h(x)$ be the $L_0$ weight of $x$. Using the formulas in Lemma \ref{NGK01}, we have
\begin{align*}
\langle L_0\psi^*,\ket{\alpha_{1,2}}\otimes x\rangle=(h_{1,2}+h(x))\langle \psi^*,\ket{\alpha_{1,2}}\otimes x\rangle+\langle \psi^*,(L_{-1}\ket{\alpha_{1,2}})\otimes x\rangle
\end{align*}
and 
\begin{align*}
\langle L_0\psi^*,(L_{-1}\ket{\alpha_{1,2}})\otimes x\rangle&=(h_{1,2}+h(x)+1)\langle \psi^*,(L_{-1}\ket{\alpha_{1,2}})\otimes x\rangle\\
&\ \ +\frac{p_+}{p_-}\langle \psi^*,(L_{-2}\ket{\alpha_{1,2}})\otimes x\rangle\\
&=\frac{p_+}{p_-}h(x)\langle \psi^*,\ket{\alpha_{1,2}}\otimes x\rangle\\
&\ \ +(h_{1,2}+h(x)+1-\frac{p_+}{p_-})\langle \psi^*,(L_{-1}\ket{\alpha_{1,2}})\otimes x\rangle,
\end{align*}
Consequently, we have
\begin{equation*}
\begin{pmatrix}
\langle L_0\psi^*,\ket{\alpha_{1,2}}\otimes x\rangle  \\
\langle L_0\psi^*,(L_{-1}\ket{\alpha_{1,2}})\otimes x\rangle \\
\end{pmatrix}
=N(x)
\begin{pmatrix}
\langle \psi^*,\ket{\alpha_{1,2}}\otimes x\rangle  \\
\langle \psi^*,(L_{-1}\ket{\alpha_{1,2}})\otimes x\rangle \\
\end{pmatrix}
,
\end{equation*}
where
\begin{equation*}
N(x)=
\begin{pmatrix}
 h_{1,2}+h(x) & \frac{p_+}{p_-}h(x)\\
1 & h_{1,2}+h(x)+1-\frac{p_+}{p_-} 
\end{pmatrix}
.
\end{equation*}
We see that $N(\ket{\alpha_{1,2}})$ and $N(w^{(n)}_i)$ are diagonalizable and the eigenvalues are given by $\{h_{r,s-1},h_{r,s-1}\}$ and $\{\Delta^+_{r,s+1;n},\Delta^+_{r,s-1;n}\}$, respectively. 
Note that the eigenvalues of $N(w^{(n)}_i)$ do not correspond to any $L_0$ eigenvalues of the highest weight space of the simple $\W_{p_+,p_-}$-modules (see Theorem \ref{simpleclass}). Thus we have
\begin{align}
\label{noteW}
\langle\psi^*,U(\mathcal{L}).\ket{\alpha_{1,2}}\otimes w^{(n)}_i\rangle=0
\end{align}
for any $n\geq 1$ and $i$.
Noting (\ref{noteW}), from Proposition \ref{genW}, Proposition \ref{sl2action2} and the formulas in Lemma \ref{NGK01}, we see that $\langle\psi^*,\phi_1\otimes \phi_2\rangle$ is determined by the numbers
\begin{align*}
&\langle \psi^*,\ket{\alpha_{1,2}}\otimes\ket{\alpha_{r,s}}\rangle,
&\langle \psi^*,(L_{-1}\ket{\alpha_{1,2}})\otimes\ket{\alpha_{r,s}}\rangle.
\end{align*}
Therefore the $L_0$ eigenvalue of $\psi^*$ is given by $h_{r,s+1}$ or $h_{r,s-1}$ and
\begin{align*}
{\rm dim}_{\mathbb{C}}A_0((\K_{1,2}\boxtimes\K_{r,s})^*)\leq 2.
\end{align*}

Next let us prove the second case. Assume $\K_{1,2}\boxtimes\K_{r,1}\neq 0$. Let $\chi^*$, $\upsilon_1$ and $\upsilon_2$ be arbitrary vectors of $A_0((\K_{1,2}\boxtimes\K_{r,1})^*)$, $\K_{1,2}$ and $\K_{r,1}$ such that $\langle\chi^*,\upsilon_1\otimes \upsilon_2\rangle\neq 0$. 
Similar to the first case, we see that $\langle\chi^*,\upsilon_1\otimes \upsilon_2\rangle$ is determined by the numbers
\begin{align*}
&\langle \chi^*,\ket{\alpha_{1,2}}\otimes\ket{\alpha_{r,1}}\rangle,
&\langle \chi^*,(L_{-1}\ket{\alpha_{1,2}})\otimes\ket{\alpha_{r,1}}\rangle,
\end{align*}
and we have
\begin{equation*}
\begin{pmatrix}
\langle L_0\chi^*,\ket{\alpha_{1,2}}\otimes \ket{\alpha_{r,1}}\rangle  \\
\langle L_0\chi^*,(L_{-1}\ket{\alpha_{1,2}})\otimes \ket{\alpha_{r,1}}\rangle \\
\end{pmatrix}
=N
\begin{pmatrix}
\langle \chi^*,\ket{\alpha_{1,2}}\otimes \ket{\alpha_{r,1}}\rangle  \\
\langle \chi^*,(L_{-1}\ket{\alpha_{1,2}})\otimes \ket{\alpha_{r,1}}\rangle \\
\end{pmatrix}
,
\end{equation*}
where
\begin{equation*}
N=
\begin{pmatrix}
 h_{1,2}+h_{r,1} & \frac{p_+}{p_-}h_{r,1}\\
1 & h_{1,2}+h_{r,1}+1-\frac{p_+}{p_-} 
\end{pmatrix}
.
\end{equation*}
We see that $N$ is diagonalizable and the eigenvalues are given by $\{h_{r,2},h_{r^\vee,p_-}\}$. In particular $L_0$ acts semisimply on $\chi^*$.

Assume that the $L_0$ weight of $\chi^*$ is $h_{r^\vee,p_-}$. Since $S_{r,1}\ket{\alpha_{r,1}}=0$ (see Proposition \ref{FockSocle}), applying Proposition \ref{FusionInt} to the three point function $\langle \chi^*,\ket{\alpha_{1,2}}\otimes\ket{\alpha_{r,1}}\rangle$, we obtain
\begin{align*}
\prod_{i=1}^{r}(h_{1,2}-h_{r^\vee+r-2i+1,p_-})=0.
\end{align*}
But since $p_->2$, we have a contradiction. Thus the $L_0$ weight of $\chi^*$ is $h_{r,2}$. Note that 
$
\footnotesize{\begin{pmatrix}
     \frac{2}{1-r} \\
     1
  \end{pmatrix}}
$ is the eigenvector of ${}^tN$ with the eigenvalue $h_{r^\vee,p_-}$, and
\begin{align*}
&\frac{2}{1-r}\langle L_0\chi^*,\ket{\alpha_{1,2}}\otimes \ket{\alpha_{r,1}}\rangle+\langle L_0\chi^*,(L_{-1}\ket{\alpha_{1,2}})\otimes \ket{\alpha_{r,1}}\rangle\\
&=\bigl(\frac{2}{1-r},1\bigr)\begin{pmatrix}
 h_{1,2}+h_{r,1} & \frac{p_+}{p_-}h_{r,1}\\
1 & h_{1,2}+h_{r,1}+1-\frac{p_+}{p_-} 
\end{pmatrix}
\begin{pmatrix}
     \langle \chi^*,\ket{\alpha_{1,2}}\otimes \ket{\alpha_{r,1}} \\
     \langle \chi^*,(L_{-1}\ket{\alpha_{1,2}})\otimes \ket{\alpha_{r,1}}\rangle
  \end{pmatrix}\\
&=h_{r^\vee,p_-}\bigl(\frac{2}{1-r},1\bigr)
\begin{pmatrix}
     \langle \chi^*,\ket{\alpha_{1,2}}\otimes \ket{\alpha_{r,1}} \\
     \langle \chi^*,(L_{-1}\ket{\alpha_{1,2}})\otimes \ket{\alpha_{r,1}}\rangle
  \end{pmatrix}
.  
\end{align*}
Thus we have
\begin{align}
\label{Fromone}
\langle \chi^*,\ket{\alpha_{1,2}}\otimes \ket{\alpha_{r,1}}\rangle+\frac{1-r}{2}\langle \chi^*,(L_{-1}\ket{\alpha_{1,2}})\otimes \ket{\alpha_{r,1}}\rangle=0.
\end{align}
From (\ref{Fromone}), we see that $A_0((\K_{1,2}\boxtimes\K_{r,1})^*)$ is one dimensional.
\end{proof}
\end{prop}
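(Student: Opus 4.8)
The plan is to apply the Nahm--Gaberdiel--Kausch fusion algorithm, in the form of Lemma~\ref{NGK01}, to the contragredient module $(\K_{1,2}\boxtimes M)^*$ for the various choices of $M$ appearing in the six cases, working throughout under the standing nonvanishing hypotheses of the proposition. The starting point is that $\K_{1,2}=\W_{p_+,p_-}.\ket{\alpha_{1,2}}$ is cyclically generated by the Fock ground state $\ket{\alpha_{1,2}}$, which by Proposition~\ref{FockSocle} satisfies the level-two Virasoro null-vector relation $(L_{-1}^2-\tfrac{p_+}{p_-}L_{-2})\ket{\alpha_{1,2}}=0$. Fixing a non-zero $\psi^*\in A_0((\K_{1,2}\boxtimes M)^*)$, I would first show, using Lemma~\ref{NGK01} with $A=T$ together with this null-vector relation, that for any Virasoro highest weight vector $x$ of $M$ the three-point functions $\langle\psi^*,U(\mathcal{L}).\ket{\alpha_{1,2}}\otimes x\rangle$ are all determined by the two seed numbers $\langle\psi^*,\ket{\alpha_{1,2}}\otimes x\rangle$ and $\langle\psi^*,(L_{-1}\ket{\alpha_{1,2}})\otimes x\rangle$, and that $L_0$ acts on the pair of seeds through an explicit $2\times2$ matrix $N(x)$ whose entries depend only on $h_{1,2}$, the $L_0$-weight $h(x)$ of $x$, and $p_\pm$.

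The second step is to reduce the vectors $x$ from arbitrary Virasoro highest weight vectors of $M$ to the finitely many $\W_{p_+,p_-}$-ground states. For $M=\K_{r,s}$ or $\X^+_{r,s}$ these are the top vectors $\ket{\alpha_{r,s}}$, resp.\ $w^{(0)}_0$; for $\X^-_{r,s}$ there are the two vectors $v^{(0)}_{\pm1/2}$, which is the origin of the larger bound $4$ in cases 4 and 6. The reduction uses Proposition~\ref{sl2action2}: the modes $W^\pm[0]$ and $W^0[\Delta^\pm_{r,s;n}-\Delta^\pm_{r,s;n+1}]$ move between the Virasoro highest weight vectors $w^{(n)}_i$ (resp.\ $v^{(n)}_{(2i-1)/2}$), so combining this with the NGK formulas of Lemma~\ref{NGK01} applied to $A=W^\pm,W^0$ expresses all $\langle\psi^*,\bullet\otimes w^{(n)}_i\rangle$ in terms of the ground-state data. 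The key finiteness mechanism is that the eigenvalues of $N(w^{(n)}_i)$ for $n\geq1$ come out as $\Delta^+_{r,s\pm1;n}$ (and analogously in the $\X^-$ case), which never occur as $L_0$-weights of the top of any simple $\W_{p_+,p_-}$-module by Theorem~\ref{simpleclass}; since $\psi^*$ lies in $A_0$ of a finite-length module, this forces $\langle\psi^*,U(\mathcal{L}).\ket{\alpha_{1,2}}\otimes w^{(n)}_i\rangle=0$ for all $n\geq1$, leaving only the ground-state seeds.

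The third step pins down the $L_0$-eigenvalues and, where claimed, the semisimplicity: diagonalizing $N(\ket{\alpha_{r,s}})$, $N(w^{(0)}_0)$ or $N(v^{(0)}_{\pm1/2})$ yields the candidate eigenvalues listed in the statement, e.g.\ $\{h_{r,s-1},h_{r,s+1}\}$ in case~1 and $\{\Delta^\pm_{r,s-1;0},\Delta^\pm_{r,s+1;0}\}$ in cases 3, 4 and 6; $N$ is diagonalizable in those cases (so $L_0$ acts semisimply) but has a repeated eigenvalue in case~5, which is why there one only gets generalized $L_0$-eigenvalues. Finally, to obtain the sharper bounds in cases~2 and~5 I would invoke Proposition~\ref{FusionInt}: after restricting $\mathcal{Y}_{\boxtimes}$ to a non-logarithmic Virasoro intertwining operator, applying an appropriate Shapovalov element (for instance $S_{r,1}\ket{\alpha_{r,1}}=0$ in case~2, and the analogous null-vector relation in case~5) yields a non-trivial linear relation among the two ground-state seeds unless a product of the form $\prod_i(h_{1,2}-h_{\cdots})$ vanishes; since $p_->2$ this product is non-zero, so the relation must hold and it cuts the dimension by one.

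The main obstacle I anticipate is the bookkeeping for the boundary rows $s=1$ and $s=p_-$, where Proposition~\ref{FockSocle} --- and hence the relevant null vectors and the $\mathfrak{sl}_2$-type structure of Proposition~\ref{sl2action2} --- takes a different form, and where one must additionally verify that none of the candidate $L_0$-eigenvalues produced by diagonalizing $N$ accidentally coincides with a forbidden simple-top weight; this verification is the step most sensitive to the arithmetic of the Kac labels and to the coprimality of $p_+$ and $p_-$. The $\X^-$ cases also need extra care, since whether the bound $4$ could in principle be improved depends on how $W^\pm[0]$ acts on the pair $v^{(0)}_{\pm1/2}$, and this must be tracked through the intertwining operator.
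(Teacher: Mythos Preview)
Your proposal is correct and follows essentially the same route as the paper: reduce to the two seed values $\langle\psi^*,\ket{\alpha_{1,2}}\otimes x\rangle$ and $\langle\psi^*,(L_{-1}\ket{\alpha_{1,2}})\otimes x\rangle$ via the level-two null vector and Lemma~\ref{NGK01}, compute the $2\times2$ matrix $N(x)$ governing the $L_0$-action, kill the deep Virasoro highest weight contributions using Theorem~\ref{simpleclass} and Proposition~\ref{sl2action2}, and in case~2 invoke Proposition~\ref{FusionInt} with $S_{r,1}\ket{\alpha_{r,1}}=0$ to eliminate the spurious eigenvalue $h_{r^\vee,p_-}$. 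One small slip: case~5 does not need a ``sharper bound'' via Proposition~\ref{FusionInt}---the bound $2$ already comes directly from the two seeds with $x=w^{(0)}_0$, and the weaker ``generalized eigenvalue'' phrasing there simply reflects that the paper does not bother to verify diagonalizability of $N$ in that boundary case.
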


Similar to Proposition \ref{NGK2}, we obtain the following proposition.
\begin{prop}
\label{NGK22}
Assume that
\begin{align*}
&\mathcal{K}_{2,1}\boxtimes \mathcal{K}_{r,s}\neq 0,
&\mathcal{K}_{2,1}\boxtimes \mathcal{X}^\pm_{r,s}\neq 0
\end{align*}
for all $r,s\geq 1$.
Then the following holds.
\begin{itemize}
\item For $p_+\geq 3$, we have
\begin{enumerate}
\item For $2\leq r< p_+$, $1\leq s\leq p_-$, 
the vector space $A_0((\K_{2,1}\boxtimes\K_{r,s})^*)$ is at most two dimensional. $L_0$ acts semisimply on this vector space and the $L_0$ eigenvalues are contained in 
\begin{align*}
\{H(\K_{r-1,s}),H(\K_{r+1,s})\}=\{h_{r-1,s},h_{r+1,s}\}.
\end{align*}
\item For $2\leq s\leq p_-$, 
the dimension of the vector space $A_0((\K_{2,1}\boxtimes\K_{1,s})^*)$ is at most one dimensional. $L_0$ acts semisimply on this vector space and the $L_0$ eigenvalue is given by $H(\K_{2,s})=h_{2,s}$.

\item For $2\leq r< p_+$, $1\leq s\leq p_-$, 
the vector space $A_0((\K_{2,1}\boxtimes\X^+_{r,s})^*)$ is at most two dimensional. $L_0$ acts semisimply on this vector space and the $L_0$ eigenvalues are contained in 
\begin{align*}
\{H(\X^+_{r-1,s}),H(\X^+_{r+1,s})\}=\{\Delta^+_{r-1,s;0},\Delta^+_{r+1,s;0}\}.
\end{align*}
\item For $2\leq r< p_+$, $1\leq s\leq p_-$, the dimension of the vector space $A_0((\K_{2,1}\boxtimes\X^-_{r,s})^*)$ is at most four dimensional. $L_0$ acts semisimply on this vector space and the $L_0$ eigenvalues are contained in 
\begin{align*}
\{H(\X^-_{r-1,s}),H(\X^-_{r+1,s})\}=\{\Delta^-_{r-1,s;0},\Delta^-_{r+1,s;0}\}.
\end{align*}
\item For $r=p_+$, $1\leq s\leq p_-$, the vector space $A_0((\K_{2,1}\boxtimes\X^+_{p_+,s})^*)$ is at most two dimensional. The generalized $L_0$ eigenvalues of this vector space are contained in 
\begin{align*}
\{H(\mathcal{K}_{p_+-1,s}),H(\X^+_{p_+-1,s})\}=\{h_{p_+-1,s},\Delta^+_{p_+-1,s;0}\}.
\end{align*}
\item For $r=p_+$, $1\leq s\leq p_-$, the vector space $A_0((\K_{2,1}\boxtimes\X^-_{p_+,s})^*)$ is at most four dimensional. $L_0$ acts semisimply on this vector space and the $L_0$ eigenvalues of this vector space are contained in 
\begin{align*}
\{H(\X^+_{1,s}),H(\X^-_{p_+-1,s})\}=\{\Delta^+_{1,s;0},\Delta^-_{p_+-1,s;0}\}.
\end{align*}
\end{enumerate}
\item For $p_+=2$, we have
\begin{enumerate}
\item For $1\leq s\leq p_-$, the vector space $A_0((\K_{2,1}\boxtimes\X^+_{2,s})^*)$ is at most two dimensional. The generalized $L_0$ eigenvalues of this vector space are contained in 
\begin{align*}
\{H(\mathcal{K}_{1,s}),H(\X^+_{1,s})\}=\{h_{1,s},\Delta^+_{1,s;0}\}.
\end{align*}
\item For $1\leq s\leq p_-$, the vector space $A_0((\K_{2,1}\boxtimes\X^-_{2,s})^*)$ is at most four dimensional. $L_0$ acts semisimply on this vector space and the $L_0$ eigenvalues are contained in 
\begin{align*}
\{H(\X^+_{1,s}),H(\X^-_{1,s})\}=\{\Delta^+_{1,s;0},\Delta^-_{1,s;0}\}.
\end{align*}
\end{enumerate}
\end{itemize}
\end{prop}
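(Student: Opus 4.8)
The plan is to run the same argument as in the proof of Proposition \ref{NGK2}, with the roles of $p_+$ and $p_-$, and of the two Kac labels, interchanged. The one structural input that must be re-derived is the analogue of (\ref{relsing}): since $\ket{\alpha_{2,1}}$ is the Fock ground state of $F_{2,1}$, Proposition \ref{FockSocle} forces the level-two Virasoro singular vector of $M(h_{2,1},c_{p_+,p_-})$ to vanish on it, which (using $2h_{2,1}+1=\tfrac{3p_-}{2p_+}$) gives
\begin{align*}
\Bigl(L^2_{-1}-\tfrac{p_-}{p_+}L_{-2}\Bigr)\ket{\alpha_{2,1}}=0 .
\end{align*}
Moreover $\K_{2,1}$ is generated over $\W_{p_+,p_-}$ by $\ket{\alpha_{2,1}}$: when $p_+\geq3$ this is the definition $\K_{2,1}=\W_{p_+,p_-}.\ket{\alpha_{2,1}}$, and when $p_+=2$ the module $\K_{2,1}=\X^+_{2,1}$ is simple and hence equal to $\W_{p_+,p_-}.\ket{\alpha_{2,1}}$.

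First I would fix a non-zero $\psi^*\in A_0\bigl((\K_{2,1}\boxtimes M)^*\bigr)$ for the target module $M\in\{\K_{r,s},\X^+_{r,s},\X^-_{r,s}\}$ in question, and decompose $M$ into Virasoro highest-weight vectors using (\ref{decomp}) (for $\X^\pm_{r,s}$) or the socle sequence $0\to\X^+_{r,s}\to\K_{r,s}\to L(h_{r,s})\to0$ (for $\K_{r,s}$). For each such highest-weight vector $x$ of $L_0$-weight $h(x)$, the null relation above together with Lemma \ref{NGK01} applied to $A=T$ shows that the values $\langle\psi^*,U(\mathcal{L}).\ket{\alpha_{2,1}}\otimes x\rangle$ are all determined by the two numbers $\langle\psi^*,\ket{\alpha_{2,1}}\otimes x\rangle$ and $\langle\psi^*,(L_{-1}\ket{\alpha_{2,1}})\otimes x\rangle$, on which $L_0$ acts through the matrix
\begin{align*}
N(x)=\begin{pmatrix} h_{2,1}+h(x) & \tfrac{p_-}{p_+}h(x)\\ 1 & h_{2,1}+h(x)+1-\tfrac{p_-}{p_+}\end{pmatrix},
\end{align*}
whose eigenvalues are the two weights obtained from $h(x)$ by shifting the first Kac label by $\pm1$. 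For the highest-weight spaces indexed by $n\geq1$ in (\ref{decomp}) these eigenvalues never coincide with a highest weight of a simple $\W_{p_+,p_-}$-module in Theorem \ref{simpleclass}, so the corresponding matrix elements vanish, exactly as in (\ref{noteW}); feeding this back through Propositions \ref{genW} and \ref{sl2action2} together with Lemma \ref{NGK01} reduces every $\langle\psi^*,\phi_1\otimes\phi_2\rangle$ to the base values attached to the ground state(s) of $M$ — two of them when $M$ is $\K_{r,s}$ or $\X^+_{r,s}$, four when $M=\X^-_{r,s}$ (from the pair $v^{(0)}_{\pm1/2}$). This yields the asserted dimension bounds and the asserted lists of candidate $L_0$-eigenvalues.

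To finish the boundary cases — $r=1$ when $p_+\geq3$, and $r=p_+$, which includes both of the $p_+=2$ cases where $\K_{2,1}$ is simple — I would, whenever the generic analysis still leaves two candidate eigenvalues, eliminate the unwanted one using Proposition \ref{FusionInt}. Since the Fock ground state $\ket{\alpha_{1,s}}$ (resp.\ $\ket{\alpha_{p_+,s}}$) is annihilated by the appropriate Shapovalov element by Proposition \ref{FockSocle}, inserting that element into the three-point function forces a product of factors $h_{2,1}-h_{\bullet,\bullet}$ to vanish, and a short check using $p_->p_+\geq2$ shows this product is actually non-zero for the unwanted eigenvalue, a contradiction. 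This is the same device used in the second case of the proof of Proposition \ref{NGK2}, where it took the form of the impossibility of $\prod_{i=1}^{r}(h_{1,2}-h_{r^\vee+r-2i+1,p_-})=0$.

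I expect the main obstacle to be the bookkeeping in the step that propagates the vanishing (\ref{noteW}) from the higher Virasoro highest-weight spaces to all of $\K_{2,1}\boxtimes M$: one has to track precisely how the modes $W^\pm[0]$ and $W^0[\Delta^\pm_{r,s;n}-\Delta^\pm_{r,s;n+1}]$ of Proposition \ref{sl2action2} move a vector between the $L(\Delta^\pm_{r,s;n})$-isotypic pieces of $M$, and check that the recursion in Lemma \ref{NGK01} never reintroduces a value not already reducible to the base values. The remaining work — comparing the eigenvalues of $N(x)$ with the Kac weights, and the Shapovalov computations — is routine, but it must be carried out separately in the regimes $p_+\geq3$ and $p_+=2$, since there the structure of $\K_{2,1}$ (a length-two self-extension versus a simple module) and the available range of the first Kac label differ.
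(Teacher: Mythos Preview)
Your proposal is correct and follows exactly the approach the paper intends: the paper simply states ``Similar to Proposition \ref{NGK2}, we obtain the following proposition'' without further detail, and your write-up spells out precisely that parallel argument, including the correct null relation $\bigl(L_{-1}^2-\tfrac{p_-}{p_+}L_{-2}\bigr)\ket{\alpha_{2,1}}=0$, the swapped matrix $N(x)$, and the Shapovalov elimination for the boundary cases. Your identification of the bookkeeping in propagating the vanishing through Proposition \ref{sl2action2} as the main care point is apt, and the separate treatment of $p_+\geq3$ versus $p_+=2$ is exactly what the statement's bifurcated structure reflects.
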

We set 
$
A_{p_+,p_-}=\{\ \alpha_{r,s;n}\ |\ r,s,n\in\Z \},
$
where the symbol $\alpha_{r,s;n}$ is defined by (\ref{alpha_{r,s;n}}).
For any $\alpha\in A_{p_+,p_-}$, let 
\begin{align*}
\mathbb{V}_{\alpha}=\bigoplus_{n\in\Z}F_{\alpha+n\sqrt{2p_+p_-}}
\end{align*}
be the simple $\mathcal{V}_{[p_+,p_-]}$-module.
For any $\alpha,\alpha'\in A_{p_+,p_-}$, it can be proved easily that there are no $\mathcal{V}_{[p_+,p_-]}$-module intertwining operators of type 
{\footnotesize{$\begin{pmatrix}
\ \mathbb{V}_{\alpha''} \\
\mathbb{V}_{\alpha'}\ \ \mathbb{V}_{\alpha}
\end{pmatrix}$}}
unless $\alpha''\equiv\alpha'+\alpha\ {\rm mod}\ \Z\sqrt{2p_+p_-}$, and {\footnotesize{${\rm dim}_{\C}I\begin{pmatrix}
\ \mathbb{V}_{\alpha'+\alpha} \\
\mathbb{V}_{\alpha'}\ \ \mathbb{V}_{\alpha}
\end{pmatrix}=1$}}.
\begin{dfn}
\label{dfnY}
For $\alpha,\alpha'\in A_{p_+,p_-}$, let $Y$ be the non-zero $\mathcal{V}_{[p_+,p_-]}$-module intertwining operator of type {\footnotesize{$\begin{pmatrix}
\ \mathbb{V}_{\alpha'+\alpha} \\
\mathbb{V}_{\alpha'}\ \ \mathbb{V}_{\alpha}
\end{pmatrix}$}}. Then, by restricting the action of $\mathcal{V}_{[p_+,p_-]}$ to $\W_{p_+,p_-}$, $Y$ defines a $\W_{p_+,p_-}$-module intertwining operator of type {\footnotesize{$\begin{pmatrix}
\ \mathbb{V}_{\alpha'+\alpha} \\
\mathbb{V}_{\alpha'}\ \ \mathbb{V}_{\alpha}
\end{pmatrix}$}}.
We denote by $Y_{\alpha',\alpha}$ this $\W_{p_+,p_-}$-module intertwining operator.
\end{dfn}

\begin{lem}
\label{Intertwiningori}
For $1\leq r\leq p_+$, $2\leq s\leq p_+-1$, we have 
\begin{equation*}
I
\begin{pmatrix}
\ \X^+_{r,s-1} \\
\K_{1,2}\ \ \X^+_{r,s}
\end{pmatrix}
\neq 0
,\ \ \ \ \ \ 
I
\begin{pmatrix}
\ \X^+_{r,s+1} \\
\K_{1,2}\ \ \X^+_{r,s}
\end{pmatrix}
\neq 0
.
\end{equation*}
\begin{proof}
We only prove the first inequality. 
The second case can be proved in the same way.

Let us consider the $\W_{p_+,p_-}$-module intertwining operator $Y=Y_{\alpha_1,\alpha_2}$ given in Definition \ref{dfnY}, where $\alpha_1=\alpha_{1,2}$ and $\alpha_2=\alpha_{r,s^\vee;1}$. Then we have
\begin{align*}
\bra{\alpha_{r,s^\vee+1;1}}Y(\ket{\alpha_{1,2}},z)\ket{\alpha_{r,s^\vee;1}}\neq 0.
\end{align*}
Thus we have 
\begin{equation}
I
\begin{pmatrix}
\ \X^+_{r,s-1} \\
\K_{1,2}\ \ \W_{p_+,p_-}.\ket{\alpha_{r,s^\vee;1}}
\end{pmatrix}
\neq 0.
\label{Int202212130}
\end{equation}
Note that, from Proposition \ref{socleV}, $\W_{p_+,p_-}.\ket{\alpha_{r,s^\vee;1}}$ satisfies the exact sequence
\begin{align*}
0\rightarrow \X^-_{r,s^\vee}\rightarrow \W_{p_+,p_-}.\ket{\alpha_{r,s^\vee;1}}\rightarrow \X^+_{r,s}\rightarrow 0.
\end{align*}
Then, by the exact sequence
\begin{align*}
\K_{1,2}\boxtimes\X^-_{r,s^\vee}\rightarrow \K_{1,2}\boxtimes\W_{p_+,p_-}.\ket{\alpha_{r,s^\vee;1}}\rightarrow \K_{1,2}\boxtimes\X^+_{r,s}\rightarrow 0,
\end{align*}
we have the following exact sequence
\begin{align}
0&\rightarrow{\rm Hom}_{\mathcal{C}_{p_+,p_-}}(\K_{1,2}\boxtimes\X^+_{r,s},\X^+_{r,s-1})\rightarrow {\rm Hom}_{\mathcal{C}_{p_+,p_-}}(\K_{1,2}\boxtimes\W_{p_+,p_-}.\ket{\alpha_{r,s^\vee;1}},\X^+_{r,s-1})\nonumber\\
&\rightarrow {\rm Hom}_{\mathcal{C}_{p_+,p_-}}(\K_{1,2}\boxtimes\X^-_{r,s^\vee},\X^+_{r,s-1}).
\label{exact202212130}
\end{align}
By Proposition \ref{NGK2}, we have $ {\rm Hom}_{\mathcal{C}_{p_+,p_-}}(\K_{1,2}\boxtimes\X^-_{r,s^\vee},\X^+_{r,s-1})=0$.
Thus, by (\ref{Int202212130}) and (\ref{exact202212130}), we see that $\K_{1,2}\boxtimes\X^+_{r,s}$ has a quotient isomorphic to $\X^+_{r,s-1}$. Thus we obtain the first inequality.
\end{proof}
\end{lem}

\begin{lem}
\label{ichiyou1}
For $1\leq r\leq p_+$, $1\leq s\leq p_+-1$, we have 
\begin{equation*}
I
\begin{pmatrix}
\ \X^+_{r,s} \\
\K_{r,s}\ \X^+_{1,1}
\end{pmatrix}
\neq 0
.
\end{equation*}
\begin{proof}
By the exact sequence
\begin{align*}
0\rightarrow \X^+_{1,1}\rightarrow \K_{1,1}\rightarrow L(h_{1,1})\rightarrow 0
\end{align*}
and by Corollary \ref{202210180}, we have the following exact sequence
\begin{align*}
\K_{r,s}\boxtimes\X^+_{1,1}\rightarrow \K_{r,s}\rightarrow L(h_{r,s})\rightarrow 0.
\end{align*}
Thus, by this exact sequence, we obtain the claim of the lemma.
\end{proof}
\end{lem}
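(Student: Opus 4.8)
The plan is to rephrase the statement in terms of module homomorphisms and then produce one by applying the right-exact functor $\K_{r,s}\boxtimes-$ to the defining exact sequence of the unit object $\K_{1,1}=\W_{p_+,p_-}$. By the universal property of the tensor product (Definition \ref{ten0819}) there is a natural isomorphism $I\binom{\X^+_{r,s}}{\K_{r,s}\ \X^+_{1,1}}\cong\Hom_{\mathcal{C}_{p_+,p_-}}(\K_{r,s}\boxtimes\X^+_{1,1},\X^+_{r,s})$, so it suffices to construct a surjection $\K_{r,s}\boxtimes\X^+_{1,1}\twoheadrightarrow\X^+_{r,s}$.

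First I would recall the exact sequence $0\to\X^+_{1,1}\xrightarrow{\,f\,}\K_{1,1}\xrightarrow{\,g\,}L(h_{1,1})\to 0$, where $L(h_{1,1})=L(c_{p_+,p_-},0)$. Applying $\K_{r,s}\boxtimes-$, which is right exact by \cite[Proposition 4.26]{HLZ3}, and using the unit isomorphism $\K_{r,s}\boxtimes\K_{1,1}\cong\K_{r,s}$ of $(\mathcal{C}_{p_+,p_-},\boxtimes,\K_{1,1})$, I obtain an exact sequence
\begin{align*}
\K_{r,s}\boxtimes\X^+_{1,1}\xrightarrow{\,1\boxtimes f\,}\K_{r,s}\xrightarrow{\,1\boxtimes g\,}\K_{r,s}\boxtimes L(h_{1,1})\to 0 .
\end{align*}
Hence $N:=\mathrm{im}(1\boxtimes f)$ is a submodule of $\K_{r,s}$ with $\K_{r,s}/N\cong\K_{r,s}\boxtimes L(h_{1,1})$, and it remains to compute the right-hand side and to pin down $N$.

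For $1\le r\le p_+-1$, Corollary \ref{202210180} identifies $\K_{r,s}\boxtimes L(h_{1,1})$ with $L(h_{r,s})\boxtimes L(h_{1,1})$, and the BPZ rule of Proposition \ref{minimal fusion} with $(r',s')=(1,1)$ collapses, in the range at hand, to the single term $L(h_{r,s})$. Thus $\K_{r,s}/N\cong L(h_{r,s})$; since $\K_{r,s}$ has composition factors exactly $\X^+_{r,s}$ and $L(h_{r,s})$ with $\X^+_{r,s}$ its (simple) socle — so that its submodule lattice is $\{0,\X^+_{r,s},\K_{r,s}\}$ — the only member with quotient $L(h_{r,s})$ is $N=\X^+_{r,s}$, and $1\boxtimes f$ is the desired surjection. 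For $r=p_+$ one has $\K_{p_+,s}=\X^+_{p_+,s}$ by Definition \ref{W(beta)}, and $\K_{p_+,s}\boxtimes L(h_{1,1})=\X^+_{p_+,s}\boxtimes L(h_{1,1})=0$ by Proposition \ref{minimalnull}, so $1\boxtimes f$ is already surjective onto $\X^+_{p_+,s}$. In either case $\Hom_{\mathcal{C}_{p_+,p_-}}(\K_{r,s}\boxtimes\X^+_{1,1},\X^+_{r,s})\neq0$, which proves the lemma.

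The step I would be most careful about is the identification of the cokernel: one must use the \emph{exact} value $L(h_{r,s})$ of $\K_{r,s}\boxtimes L(h_{1,1})$, rather than merely the fact (Corollary \ref{directsumL}) that it is some direct sum of minimal simple modules, together with the known two-step structure $0\to\X^+_{r,s}\to\K_{r,s}\to L(h_{r,s})\to 0$ and the indecomposability of $\K_{r,s}$, in order to conclude $N=\X^+_{r,s}$. Both ingredients are available from the results recalled above, so beyond this bookkeeping no genuine obstacle arises.
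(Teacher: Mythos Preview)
Your proof is correct and follows essentially the same approach as the paper: tensor the exact sequence $0\to\X^+_{1,1}\to\K_{1,1}\to L(h_{1,1})\to 0$ with $\K_{r,s}$, identify the right-hand term via Corollary~\ref{202210180} (and Proposition~\ref{minimalnull} for $r=p_+$), and read off a surjection $\K_{r,s}\boxtimes\X^+_{1,1}\twoheadrightarrow\X^+_{r,s}$. Your version is more explicit in identifying the image as $\X^+_{r,s}$ via the submodule lattice of $\K_{r,s}$ and in treating the boundary case $r=p_+$ separately, whereas the paper leaves these details implicit.
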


For the following lemma, see \cite[Proposition 6.29, Proposition 6.30]{Nakano}.
\begin{lem}[\cite{Nakano}]
\label{Extcoro1}
For $1\leq r<p_+$, $1\leq s<p_-$, we have
\begin{align*}
{\rm Ext}^1(\K_{r,s},L(h_{r,s}))={\rm Ext}^1(\K_{r,s},\X^-_{r^\vee,s})={\rm Ext}^1(\K_{r,s},\X^-_{r,s^\vee})=0.
\end{align*}
\end{lem}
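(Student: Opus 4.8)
The plan is to apply $\mathrm{Hom}_{\mathcal{C}_{p_+,p_-}}(-,N)$ to the defining short exact sequence
\begin{align*}
0\rightarrow \X^+_{r,s}\rightarrow \K_{r,s}\rightarrow L(h_{r,s})\rightarrow 0
\end{align*}
with $N=L(h_{r,s}),\,\X^-_{r^\vee,s},\,\X^-_{r,s^\vee}$. Since the two simple modules $\X^+_{r,s}$ and $N$ are never isomorphic, $\mathrm{Hom}(\X^+_{r,s},N)=0$; and by Proposition \ref{Ext} the only nontrivial ${\rm Ext}^1$-groups out of $L(h_{r,s})$ inside the thick block $C^{thick}_{r,s}$ are those with $\X^+_{r,s}$ and $\X^+_{r^\vee,s^\vee}$, so ${\rm Ext}^1(L(h_{r,s}),N)=0$ for each of the three $N$. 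The long exact sequence then yields an isomorphism
\begin{align*}
{\rm Ext}^1(\K_{r,s},N)\;\cong\;\ker\!\bigl({\rm Ext}^1(\X^+_{r,s},N)\xrightarrow{\ \partial\ }{\rm Ext}^2(L(h_{r,s}),N)\bigr),
\end{align*}
where $\partial$ is Yoneda composition with the class of $\K_{r,s}$ in ${\rm Ext}^1(L(h_{r,s}),\X^+_{r,s})\cong\C$. Thus everything comes down to showing $\partial$ is injective for each $N$, equivalently that every extension of $\K_{r,s}$ by $N$ splits.

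I would establish this by placing $\K_{r,s}$ inside the logarithmic modules of Proposition \ref{logarithmic3}. The simple socle structure there forces embeddings $\K_{r,s}\hookrightarrow \mathcal{Q}(\X^+_{r,s})_{r^\vee,s}$ and $\K_{r,s}\hookrightarrow \mathcal{Q}(\X^+_{r,s})_{r,s^\vee}$ (the submodule generated by a lift of a highest-weight vector of the $L(h_{r,s})$-layer is a non-split extension of $L(h_{r,s})$ by the whole socle $\X^+_{r,s}$, hence is $\K_{r,s}$). For an appropriate choice $\mathcal{Q}=\mathcal{Q}(\X^+_{r,s})_{c,d}$ one has ${\rm Ext}^1(\mathcal{Q},N)=0$ from Proposition \ref{ExtQ} (namely $(c,d)=(r^\vee,s)$ serves $N=L(h_{r,s}),\X^-_{r^\vee,s}$, and $(c,d)=(r,s^\vee)$ serves $N=L(h_{r,s}),\X^-_{r,s^\vee}$), so the short exact sequence $0\to\K_{r,s}\to\mathcal{Q}\to C\to 0$ (with $C$ the cokernel, of socle $(\X^-_{c,d})^{\oplus 2}$ and top $\X^+_{r,s}$) gives an embedding ${\rm Ext}^1(\K_{r,s},N)\hookrightarrow{\rm Ext}^2(C,N)$. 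A parallel route for the two cases $N=\X^-$ uses self-duality of the simple modules, so that ${\rm Ext}^1(\K_{r,s},\X^-_{r^\vee,s})\cong{\rm Ext}^1(\X^-_{r^\vee,s},\K^*_{r,s})$ with $\K^*_{r,s}$ the length-two module of socle $L(h_{r,s})$ and top $\X^+_{r,s}$; resolving $\X^-_{r^\vee,s}$ by its projective cover $\PP^-_{r^\vee,s}$ (socle series in Proposition \ref{logarithmic2}) and noting $\mathrm{Hom}(\PP^-_{r^\vee,s},\K^*_{r,s})=0$ identifies this group with $\mathrm{Hom}({\rm rad}\,\PP^-_{r^\vee,s},\K^*_{r,s})$, a nonzero element of which would be a surjection ${\rm rad}\,\PP^-_{r^\vee,s}\twoheadrightarrow\K^*_{r,s}$.

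In every version the crux — and the main obstacle — is the same: one must show the relevant second extension group receives nothing from ${\rm Ext}^1(\K_{r,s},N)$, equivalently that the length-two configuration ``$\X^+_{r,s}$ sitting directly over $L(h_{r,s})$'' (i.e.\ the module $\K^*_{r,s}$) is not a quotient of the pertinent radical of a projective or logarithmic module. This cannot be read off from the socle series alone — it is compatible with the quiver of the block, since ${\rm Ext}^1(\X^+_{r,s},L(h_{r,s}))\neq 0$ — and requires the explicit description of the submodule lattice of $\PP^\pm_{r,s}$ in terms of the kernels and images of the screening operators $Q^{[\bullet]}_\pm$ between Fock modules. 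Concretely, one checks that each $\X^+_{r,s}$-factor in the top radical layer of $\PP^-_{r^\vee,s}$ (resp.\ $\PP^-_{r,s^\vee}$) extends downward only by $\X^-$-type and $\X^+_{r^\vee,s^\vee}$-type factors, never by $L(h_{r,s})$, and similarly for the $\mathcal{Q}$-module computation; this is precisely the content of \cite[Propositions 6.29 and 6.30]{Nakano}, which I would either cite directly or re-derive by this screening-operator analysis.
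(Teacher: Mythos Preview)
The paper does not actually prove this lemma: it is stated with the remark ``For the following lemma, see \cite[Proposition 6.29, Proposition 6.30]{Nakano}'' and no further argument. Your proposal lands in the same place --- after a correct homological reduction via the long exact sequence and the embeddings $\K_{r,s}\hookrightarrow\mathcal{Q}(\X^+_{r,s})_{c,d}$, you acknowledge that the remaining step (ruling out the $\K^*_{r,s}$-shaped quotient of the relevant radical, equivalently the injectivity of the connecting map $\partial$) cannot be extracted from Propositions \ref{Ext}--\ref{ExtQ} alone and is precisely the content of the cited \cite[Propositions 6.29, 6.30]{Nakano}. So your treatment is consistent with the paper's, just more explicit about why the citation is needed.

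One small point of care: your claim that the submodule of $\mathcal{Q}(\X^+_{r,s})_{c,d}$ generated by a lift of the $L(h_{r,s})$-highest-weight vector is exactly $\K_{r,s}$ is correct at the level of the socle (any nonzero submodule contains the simple socle $\X^+_{r,s}$), but you should also justify that this submodule does not pick up either $\X^-_{c,d}$-summand from the middle layer. This is again a statement about the fine submodule lattice of $\mathcal{Q}(\X^+_{r,s})_{c,d}$ and is established in the same screening-operator analysis of \cite{Nakano}; it is not a formal consequence of the socle series in Proposition \ref{logarithmic3}. Since you are citing that reference anyway, there is no additional cost.
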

\begin{prop}
\label{Fusion12W}
\mbox{}
\begin{enumerate}
\item We have
\begin{align*}
\K_{1,2}\boxtimes\K_{r,s}=\K_{r,s-1}\oplus\Gamma(\K_{r,s+1}),
\end{align*}
where ${\Gamma}(\K_{r,s+1})$ is a highest weight module with ${\rm top}(\Gamma(\K_{r,s+1}))=\X^+_{r,p_-}$.
\item For $p_+\geq 3$, $2\leq r\leq p_+-1$, $1\leq s\leq p_--1$, we have
\begin{align*}
\K_{2,1}\boxtimes\K_{r,s}=\K_{r-1,s}\oplus\Gamma(\K_{r+1,s}),
\end{align*}
where $\Gamma(\K_{r+1,s})$ is a highest weight module with ${\rm top}(\Gamma(\K_{r+1,s}))=\X^+_{p_+,s}$. 
\end{enumerate}
\begin{proof}

We will prove 
\begin{align}
\label{K0826}
\K_{1,2}\boxtimes\K_{r,s}=\K_{r,s-1}\oplus\K_{r,s+1}
\end{align}
in the case of $p_-\geq 4$, $1\leq r\leq p_+-1$ and $2\leq s\leq p_--2$. 
The other cases can be proved in a similar way.

By the exact sequence
\begin{align*}
0\rightarrow \X^+_{r,s}\rightarrow \K_{r,s}\rightarrow L(h_{r,s})\rightarrow 0,
\end{align*}
we have the following exact sequence
\begin{align}
\label{toro}
\K_{1,2}\boxtimes\X^+_{r,s}\xrightarrow{\eta} \K_{1,2}\boxtimes\K_{r,s}\rightarrow L(h_{r,s-1})\oplus L(h_{r,s+1})\rightarrow 0,
\end{align}
and by Lemma \ref{ichiyou1}, we have the following exact sequence
\begin{align}
\label{toro2}
(\K_{1,2}\boxtimes\K_{r,s})\boxtimes\X^+_{1,1}\rightarrow \K_{1,2}\boxtimes\X^+_{r,s}\rightarrow 0.
\end{align}
Then, by two exact sequences (\ref{toro}),(\ref{toro2}) and by Lemma \ref{Intertwiningori}, we see that $\K_{1,2}\boxtimes\K_{r,s}$ has $\X^+_{r,s-1}$ and $\X^+_{r,s+1}$ as composition factors. In particular, we have $\eta(\K_{1,2}\boxtimes \mathcal{X}^+_{r,s})\neq 0$, where $\eta$ is defined by (\ref{toro}).
Note that from Proposition \ref{NGK2} we have
\begin{equation}
\label{top2022102400}
\begin{split}
{\rm top}(\K_{1,2}\boxtimes \K_{r,s})&=L(h_{r,s-1})\oplus L(h_{r,s+1}),\\
{\rm top}(\K_{1,2}\boxtimes \mathcal{X}^+_{r,s})&={\rm top}(\eta(\K_{1,2}\boxtimes \mathcal{X}^+_{r,s}))=\X^+_{r,s-1}\oplus \X^+_{r,s+1}.
\end{split}
\end{equation}
Thus, by Lemma \ref{Extcoro1}, 
to prove (\ref{K0826}),
it is sufficient to show that $\K_{1,2}\boxtimes\K_{r,s}$ does not contain any $\X^+_{r^\vee,s^\vee+1}$ and $\X^+_{r^\vee,s^\vee-1}$ as composition factors.

Assume that $\K_{1,2}\boxtimes\K_{r,s}$ contains $\X=\X^+_{r^\vee,s^\vee+1}\ {\rm or}\ \X^+_{r^\vee,s^\vee-1}$ as a composition factor. Then, we see that the composition factor $\X$ of $\K_{1,2}\boxtimes\K_{r,s}$ is contained in the submodule $\eta(\K_{1,2}\boxtimes \mathcal{X}^+_{r,s})\subset \K_{1,2}\boxtimes \K_{r,s}$.
Note that from Propositions \ref{Ext} and \ref{ExtQ}, we have 
\begin{equation}
\label{ntt0816}
\begin{split}
&{\rm Ext}^1(\X^+_{r,s-1}\oplus \X^+_{r,s+1},\X^+_{r^\vee,s^\vee+1}\oplus \X^+_{r^\vee,s^\vee-1})=0,\\
&{\rm Ext}^1(\mathcal{K}^*_{r,s-1}\oplus \mathcal{K}^*_{r,s+1},\X^+_{r^\vee,s^\vee+1}\oplus \X^+_{r^\vee,s^\vee-1})=0.
\end{split}
\end{equation}
From (\ref{ntt0816}), we see that $\eta(\K_{1,2}\boxtimes \mathcal{X}^+_{r,s})$ has an indecomposable subquotient in ${\rm Ext}^1(\X^-_{r^\vee,s-1},\mathcal{X})$ or ${\rm Ext}^1(\X^-_{r,s^\vee+1},\mathcal{X})$.
Thus, noting (\ref{top2022102400}),
as a quotient of $\K_{1,2}\boxtimes\K_{r,s}$ we have a non-trivial extension in ${\rm Ext}^1(\K_{r,s-1},\X^-_{r^\vee,s-1})$ or ${\rm Ext}^1(\K_{r,s-1},\X^-_{r,s^\vee+1})$. But this contradicts Lemma \ref{Extcoro1}. 

\end{proof}
\end{prop}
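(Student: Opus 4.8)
The plan is to pin down $\K_{1,2}\boxtimes\K_{r,s}$ by trapping it between data already under control: the BPZ fusion of the Virasoro minimal module $L(h_{r,s})$, the constraints on the top coming from the Nahm--Gaberdiel--Kausch identities (Proposition~\ref{NGK2}), and the block-wise ${\rm Ext}^1$-groups (Propositions~\ref{Ext}, \ref{ExtQ} and Lemma~\ref{Extcoro1}). Applying the right-exact functor $\K_{1,2}\boxtimes-$ to the sequence $0\to\X^+_{r,s}\to\K_{r,s}\to L(h_{r,s})\to 0$ and identifying $\K_{1,2}\boxtimes L(h_{r,s})$ via Corollary~\ref{202210180} and Proposition~\ref{minimal fusion}, one gets the right-exact sequence
\[
\K_{1,2}\boxtimes\X^+_{r,s}\xrightarrow{\ \eta\ }\K_{1,2}\boxtimes\K_{r,s}\longrightarrow L(h_{r,s-1})\oplus L(h_{r,s+1})\longrightarrow 0 ,
\]
the last term collapsing to $L(h_{r,p_--2})$ when $s=p_--1$. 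Tensoring $0\to\X^+_{1,1}\to\K_{1,1}\to L(h_{1,1})\to 0$ and invoking Lemma~\ref{ichiyou1} yields a surjection $(\K_{1,2}\boxtimes\K_{r,s})\boxtimes\X^+_{1,1}\twoheadrightarrow\K_{1,2}\boxtimes\X^+_{r,s}$, while Lemma~\ref{Intertwiningori} supplies nonzero quotient maps of $\K_{1,2}\boxtimes\X^+_{r,s}$ onto $\X^+_{r,s\pm1}$; together these force $\X^+_{r,s-1}$ and $\X^+_{r,s+1}$ to occur as composition factors of $\K_{1,2}\boxtimes\K_{r,s}$, so that $\eta\neq0$.

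Next I would compute the top and split off the two summands. Proposition~\ref{NGK2} bounds $A_0\big((\K_{1,2}\boxtimes\K_{r,s})^*\big)$ and $A_0\big((\K_{1,2}\boxtimes\X^+_{r,s})^*\big)$, shows $L_0$ acts semisimply there, and identifies the eigenvalues with $\{h_{r,s-1},h_{r,s+1}\}$ and $\{\Delta^+_{r,s-1;0},\Delta^+_{r,s+1;0}\}$ respectively; hence ${\rm top}(\K_{1,2}\boxtimes\K_{r,s})=L(h_{r,s-1})\oplus L(h_{r,s+1})$ and ${\rm top}({\rm im}\,\eta)=\X^+_{r,s-1}\oplus\X^+_{r,s+1}$. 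Since $L(h_{r,s-1})$ and $L(h_{r,s+1})$ lie in distinct blocks, the block decomposition of $\mathcal{C}_{p_+,p_-}$ splits $\K_{1,2}\boxtimes\K_{r,s}$ as a direct sum of a component with top $L(h_{r,s-1})$ and one with top $L(h_{r,s+1})$; the first will be identified with $\K_{r,s-1}$, and the second is the $\Gamma(\K_{r,s+1})$ of the statement, whose top is $L(h_{r,s+1})$ when $s+1<p_-$ and $\X^+_{r,p_-}$ when $s+1=p_-$ (using Proposition~\ref{NGK2}(5) and $\K_{r,p_-}=\X^+_{r,p_-}$).

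The crux --- and the step I expect to carry all the real difficulty --- is excluding ``foreign'' composition factors, i.e.\ showing $\K_{1,2}\boxtimes\K_{r,s}$ contains no $\X^+_{r^\vee,s^\vee\pm1}$. I would argue by contradiction: such a factor must lie inside ${\rm im}\,\eta$, whose top is $\X^+_{r,s-1}\oplus\X^+_{r,s+1}$; since ${\rm Ext}^1\big(\X^+_{r,s\pm1}\oplus\K^*_{r,s\pm1},\X^+_{r^\vee,s^\vee\pm1}\big)=0$ by Propositions~\ref{Ext} and~\ref{ExtQ}, such a factor could only be reached through an indecomposable subquotient of ${\rm im}\,\eta$ extending $\X^-_{r^\vee,s-1}$ or $\X^-_{r,s^\vee+1}$ by it, and this produces --- as a quotient of $\K_{1,2}\boxtimes\K_{r,s}$ --- a non-split extension in ${\rm Ext}^1(\K_{r,s-1},\X^-_{r^\vee,s-1})$ or ${\rm Ext}^1(\K_{r,s-1},\X^-_{r,s^\vee+1})$, contradicting Lemma~\ref{Extcoro1}. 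Once foreign factors are excluded, each block component has exactly the length-two composition series of $\K_{r,s\pm1}$ and top $L(h_{r,s\pm1})$, hence is a non-split extension of $L(h_{r,s\pm1})$ by $\X^+_{r,s\pm1}$ (non-split because $\X^+_{r,s\pm1}$ is not a quotient); by Proposition~\ref{Ext} there is a unique such extension, namely $\K_{r,s\pm1}$, which yields $\K_{1,2}\boxtimes\K_{r,s}=\K_{r,s-1}\oplus\Gamma(\K_{r,s+1})$.

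Finally, the boundary cases ($s=2$, $s=p_--1$, and small $p_-$) go through identically, using the one-dimensionality in Proposition~\ref{NGK2}(2) to control the $\K_{r,1}$-type contribution and Proposition~\ref{NGK2}(5)--(6) to force the top of the degenerate summand to $\X^+_{r,p_-}$. Part~(2) is proved by the mirror-image argument, with $\K_{2,1}$ in place of $\K_{1,2}$, Proposition~\ref{NGK22} in place of Proposition~\ref{NGK2}, and the $r$-direction analogues of Lemmas~\ref{Intertwiningori}, \ref{ichiyou1} and~\ref{Extcoro1}; the $p_+=2$ subcase of Proposition~\ref{NGK22} is handled separately. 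The ${\rm Ext}^1$-exclusion argument, together with the bookkeeping at the Kac-table boundary, is where all the work lies.
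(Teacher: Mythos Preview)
Your proposal is correct and follows essentially the same approach as the paper: the right-exact sequence for $\K_{1,2}\boxtimes-$ applied to $0\to\X^+_{r,s}\to\K_{r,s}\to L(h_{r,s})\to 0$, the surjection from Lemma~\ref{ichiyou1}, the nonvanishing from Lemma~\ref{Intertwiningori}, the top computation via Proposition~\ref{NGK2}, and the exclusion of $\X^+_{r^\vee,s^\vee\pm1}$ by the ${\rm Ext}^1$-contradiction against Lemma~\ref{Extcoro1} are exactly the paper's steps. Your explicit use of the block decomposition to split off the two summands and your bookkeeping at the Kac-table boundary are a bit more detailed than the paper's exposition, but the logic is the same.
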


\begin{thm}
\label{rigid12ori}
$\K_{1,2}$ is rigid and self-dual in $(\mathcal{C}_{p_+,p_-},\boxtimes,\mathcal{K}_{1,1})$.
\begin{proof}
We show the rigidity of $\K_{1,2}$ using the methods detailed in \cite{CMY,MS,McRae,TWFusion}. By Proposition \ref{Fusion12W}, we have homomorphisms
\begin{align*}
&i_{1}:\K_{1,1}\rightarrow \K_{1,2}\boxtimes\K_{1,2},
&p_1&:\K_{1,2}\boxtimes\K_{1,2}\rightarrow\K_{1,1},\\
&i_{3}:\Gamma(\K_{1,3})\rightarrow \K_{1,2}\boxtimes\K_{1,2},
&p_3&:\K_{1,2}\boxtimes\K_{1,2}\rightarrow \Gamma(\K_{1,3})
\end{align*}
such that
\begin{align*}
p_1\circ i_1={\rm id}_{\K_{1,1}},\ \ \ \ \ p_3\circ i_3={\rm id}_{\Gamma(\K_{1,3})},&&
i_1\circ p_1+i_3\circ p_3={\rm id}_{\K_{1,2}\boxtimes\K_{1,2}},
\end{align*}
where $\Gamma(\K_{1,3})$ is the indecomposable module defined by Proposition \ref{Fusion12W}.

To prove that $\K_{1,2}$ is rigid and self-dual, it is sufficient to prove that the homomorphisms $f,g:\K_{1,2}\rightarrow \K_{1,2}$ defined by the commutative diagrams
\begin{equation*}
 \xymatrix{
    \K_{1,2} \ar[r]^-{r^{-1}} \ar[d]_{f} & \K_{1,2}\boxtimes\K_{1,1} \ar[r]^-{{\rm id}\boxtimes i_1}& \K_{1,2}\boxtimes(\K_{1,2}\boxtimes\K_{1,2})\ar[d]^{\mathcal{A}}& \\
    \K_{1,2} & \K_{1,1}\boxtimes\K_{1,2}\ar[l]_-l&(\K_{1,2}\boxtimes\K_{1,2})\boxtimes\K_{1,2}\ar[l]_-{p_1\boxtimes{\rm id}}
  }
\end{equation*}
and
\begin{equation*}
 \xymatrix{
    \K_{1,2} \ar[r]^-{l^{-1}} \ar[d]_{g} & \K_{1,1}\boxtimes\K_{1,2} \ar[r]^-{i_1\boxtimes{\rm id}}& (\K_{1,2}\boxtimes\K_{1,2})\boxtimes\K_{1,2}\ar[d]^{\mathcal{A}^{-1}}& \\
    \K_{1,2} & \K_{1,2}\boxtimes\K_{1,1}\ar[l]_-r&\K_{1,2}\boxtimes(\K_{1,2}\boxtimes\K_{1,2})\ar[l]_-{{\rm id}\boxtimes p_1}
  }
\end{equation*}
are non-zero multiples of the identity and $f=g$, where $\mathcal{A}$ is the associativity isomorphism and $l$ and $r$ are the left and right unit isomorphisms. Since ${\rm Hom}(\K_{1,2},\K_{1,2})\simeq \C$, it is sufficient to show that $f=g\neq 0$. 


Let $\mathcal{Y}_{2\boxtimes {2}}$, $\mathcal{Y}_{(2\boxtimes 2)\boxtimes {2}}$ and $\mathcal{Y}_{2\boxtimes (2\boxtimes 2)}$ be the non-zero intertwining operators of type
\begin{equation*}
\begin{pmatrix}
\ \K_{1,2}\boxtimes\K_{1,2} \\
\K_{1,2}\ \ \K_{1,2}
\end{pmatrix}
,\ \ \ 
\begin{pmatrix}
\ (\K_{1,2}\boxtimes\K_{1,2})\boxtimes \K_{1,2} \\
\K_{1,2}\boxtimes\K_{1,2}\ \ \K_{1,2}
\end{pmatrix}
,\ \ \ \begin{pmatrix}
\ \K_{1,2}\boxtimes(\K_{1,2}\boxtimes \K_{1,2}) \\
\K_{1,2}\ \ \K_{1,2}\boxtimes \K_{1,2}
\end{pmatrix}
,
\end{equation*}
respectively. 

We introduce the intertwining operators
\begin{align*}
&\mathcal{Y}^2_{21}=l_{\K_{1,2}}\circ (p_1\boxtimes {\rm id}_{\K_{1,2}})\circ \mathcal{A}_{\K_{1,2},\K_{1,2},\K_{1,2}}\circ \mathcal{Y}_{2\boxtimes (2\boxtimes 2)}\circ ({\rm id}_{\K_{1,2}}\otimes i_1),\\
&\mathcal{Y}^2_{12}=r_{\K_{1,2}}\circ ({\rm id}_{\K_{1,2}}\boxtimes p_1)\circ \mathcal{A}^{-1}_{\K_{1,2},\K_{1,2},\K_{1,2}}\circ \mathcal{Y}_{(2\boxtimes 2)\boxtimes 2}\circ (i_1\otimes {\rm id}_{\K_{1,2}}) 
\end{align*} 
which correspond to $f$ and $g$, respectively.
To prove $f=g\neq 0$, it is sufficient to show that 
\begin{align}
\label{show00}
\langle v^*, \mathcal{Y}^2_{21}(v,1)\ket{0}\rangle=\langle v^*, \mathcal{Y}^2_{12}(\ket{0},1)v\rangle\neq 0.
\end{align}

Define the following intertwining operators
\begin{align*}
&\mathcal{Y}^2_{23}=l_{\K_{1,2}}\circ (p_3\boxtimes {\rm id}_{\K_{1,2}})\circ \mathcal{A}_{\K_{1,2},\K_{1,2},\K_{1,2}}\circ \mathcal{Y}_{2\boxtimes (2\boxtimes 2)}\circ ({\rm id}_{\K_{1,2}}\otimes i_3),\\
&\mathcal{Y}^2_{32}=r_{\K_{1,2}}\circ ({\rm id}_{\K_{1,2}}\boxtimes p_3)\circ \mathcal{A}^{-1}_{\K_{1,2},\K_{1,2},\K_{1,2}}\circ \mathcal{Y}_{(2\boxtimes 2)\boxtimes 2}\circ (i_3\otimes {\rm id}_{\K_{1,2}}).
\end{align*}
Then, for highest weight vectors $v\in\K_{1,2}[h_{1,2}]$, $v^*\in \K_{1,2}^*[h_{1,2}]$, and for some $x\in\mathbb{R}$ such that $1>x>1-x>0$, we have
\begin{equation}
\label{F}
\begin{split}
&\langle v^*,\mathcal{Y}^2_{21}(v,1)(p_1\circ\mathcal{Y}_{2\boxtimes{2}})(v,x)v\rangle+\langle v^*,\mathcal{Y}^2_{23}(v,1)(p_3\circ\mathcal{Y}_{2\boxtimes{2}})(v,x)v\rangle\\
&=\langle v^*, \overline{l_{\K_{1,2}}\circ(p_1\boxtimes {\rm id}_{\K_{1,2}})\circ \mathcal{A}_{\K_{1,2},\K_{1,2},\K_{1,2}} }\bigl(\mathcal{Y}_{2\boxtimes(2\boxtimes {2})}(v,1)\mathcal{Y}_{2\boxtimes {2}}(v,x)v\bigr)\rangle\\
&=\langle v^*,  \overline{l_{\K_{1,2}}\circ(p_1\boxtimes {\rm id}_{\K_{1,2}}) } \bigl(\mathcal{Y}_{(2\boxtimes 2)\boxtimes{2}}(\mathcal{Y}_{2\boxtimes 2}(v,1-x)v,x)v\bigr)\rangle\\
&=\langle v^*, \overline{l_{\K_{1,2}}}\bigl(\mathcal{Y}_{1\boxtimes {2}}((p_1\circ\mathcal{Y}_{2\boxtimes 2})(v,1-x)v,x)v\bigr) \rangle,\\
&=\langle v^*, Y_{\K_{1,2}}\bigl((p_1\circ\mathcal{Y}_{2\boxtimes 2})(v,1-x)v,x\bigr)v\rangle,
\end{split}
\end{equation}
where $\mathcal{Y}_{1\boxtimes {2}}$ is the non-zero intertwining operator of type
{\scriptsize{$\begin{pmatrix}
   \ \K_{1,2}  \\
   \K_{1,1}\ \K_{1,2}
\end{pmatrix}
$}}.
Similarly we can show
\begin{equation}
\label{G}
\begin{split}
&\langle v^*,\mathcal{Y}^2_{12}\bigl((p_1\circ\mathcal{Y}_{2\boxtimes{2}})(v,x)v,1-x\bigr)v\rangle+\langle v^*,\mathcal{Y}^2_{32}\bigl((p_3\circ\mathcal{Y}_{2\boxtimes{2}})(v,x)v,1-x\bigr)v\rangle\\
&=\langle v^*, \Omega(Y_{\K_{1,2}})(v,1)(p_1\circ\mathcal{Y}_{2\boxtimes 2})(v,1-x)v\rangle,
\end{split}
\end{equation}
where $\Omega$ represents the skew-symmetry operation on vertex operators defined by
\begin{align*}
\Omega(Y_{\K_{1,2}})(v,z)w=e^{zL_{-1}}Y_{\K_{1,2}}(w,-z)v
\end{align*}
for $w\in \K_{1,1}$. Note that the four point functions (\ref{F}) and (\ref{G}) are non-zero.

From the relation (\ref{relsing}), as in \cite{BPZ,CMY,McRae,TWFusion}, we see that the four point functions (\ref{F}) and (\ref{G}) satisfy the following Fuchsian differential equation
\begin{equation}
\label{solF}
\begin{split}
\phi''(x)&+\frac{p_+}{p_-}\Bigl(\frac{1}{x-1}+\frac{1}{x}\Bigr)\phi'(x)\\
&-\frac{p_+h_{1,2}}{p_-}\Bigl\{ \frac{1}{(x-1)^2}+\frac{1}{x^2}-2\Bigl(\frac{1}{x-1}-\frac{1}{x}\Bigr)\Bigr\}\phi(x)=0
\end{split}
\end{equation}
with the Riemann scheme
\begin{equation*}
\begin{split}
\begin{bmatrix}
0 & 1 & \infty \\
\lambda_+ & \mu_+ &\nu_+\\
\lambda_- & \mu_-&\nu_-
\end{bmatrix}
=
\begin{bmatrix}
0 & 1 & \infty \\
\frac{p_+}{2p_-} &  \frac{p_+}{2p_-} &0\\
1-\frac{3p_+}{2p_-} & 1-\frac{3p_+}{2p_-}&\frac{2p_+}{p_-}-1
\end{bmatrix}
,
\end{split}
\end{equation*}
where
\begin{align*}
&h_{1,1}-2h_{1,2}=1-\frac{3p_+}{2p_-},
&h_{1,3}-2h_{1,2}=\frac{p_+}{2p_-}.
\end{align*} 
Note that
\begin{align}
\label{nt0715}
\langle v^*, Y_{\K_{1,2}}(\ket{0},1)v\rangle=\langle v^*, \Omega(Y_{\K_{1,2}})(v,1)\ket{0}\rangle,
\end{align}
$p_1\circ\mathcal{Y}_{2\boxtimes 2}$ is a non-logarithmic intertwining operator of type
{\scriptsize{$\begin{pmatrix}
   \ \mathcal{K}_{1,1}  \\
   \K_{2,1}\ \ \K_{2,1}
\end{pmatrix}
$}} and
\begin{align}
\label{nt0812}
\langle v^*_{\mathcal{K}_{1,1}},(p_1\circ\mathcal{Y}_{2\boxtimes 2})(v,x)v\rangle\neq 0,
\end{align}
where $v^*_{\mathcal{K}_{1,1}}$ is the highest weight vector of $\mathcal{K}^*_{1,1}$.
From (\ref{nt0715}) and (\ref{nt0812}), we obtain
\begin{equation}
\label{fourpointori}
\langle v^*, Y_{\K_{1,2}}\bigl((p_1\circ\mathcal{Y}_{2\boxtimes 2})(v,1-x)v,x)\bigr)v\rangle=
\langle v^*, \Omega(Y_{\K_{1,2}})(v,1)(p_1\circ\mathcal{Y}_{2\boxtimes 2})(v,1-x)v\rangle.
\end{equation}

Let $u_+$ and $u_-$ be the fundamental solutions of (\ref{solF}) near $x=0$ whose characteristic exponents are $\lambda_+,\lambda_-$, respectively, and $v_+$ and $v_-$ the fundamental solutions of (\ref{solF}) near $x=1$ whose characteristic exponents are $\mu_+,\mu_-$, respectively. These two bases of solutions are related by the connection matrix:
\begin{equation*}
\begin{pmatrix}
u_+  \\
u_- \\
\end{pmatrix}
=
\begin{pmatrix}
F_{++} & F_{-+} \\
F_{+-} & F_{--} \\
\end{pmatrix}
\begin{pmatrix}
v_+  \\
v_- \\
\end{pmatrix}
,
\end{equation*}
where 
\begin{align*}
F_{\epsilon\epsilon'}=\frac{\Gamma(-\epsilon(\mu_+-\mu_-))\Gamma(\epsilon'(\lambda_+-\lambda_-)+1)}{\Gamma(\lambda_{\epsilon'}+\mu_{-\epsilon}+\nu_+)\Gamma(\lambda_{\epsilon'}+\mu_{-\epsilon}+\nu_-)},\ \ \ \ \epsilon,\epsilon'=\pm.
\end{align*}
We can see that this connection matrix is regular. Furthermore, expressing $v_-$ as a linear combination of $u_+$ and $u_-$, we see that the coefficient of $u_-$ is non-zero. 
Thus, from (\ref{F}) and (\ref{G}), we see that the coefficients of the leading terms $x^{-2h_{1,2}}$ in
\begin{align*}
\langle v^*,\mathcal{Y}^2_{21}(v,1)(p_1\circ\mathcal{Y}_{2\boxtimes{2}})(v,x)v\rangle&=\langle v^*, \mathcal{Y}^2_{21}(v,1)\ket{0}\rangle x^{-2h_{1,2}}(1+x\mathbb{C}[[x]]),\\
\langle v^*,\mathcal{Y}^2_{12}\bigl((p_1\circ\mathcal{Y}_{2\boxtimes{2}})(v,x)v,1-x\bigr)v\rangle&=\langle v^*, \mathcal{Y}^2_{12}(\ket{0},1)v\rangle x^{-2h_{1,2}}(1+x\mathbb{C}[[x]])
\end{align*}
are non-zero.
Therefore, from (\ref{fourpointori}), we obtain (\ref{show00}).
\end{proof}
\end{thm}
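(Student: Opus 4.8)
The plan is to run the vertex-tensor-categorical rigidity argument of \cite{CMY,MS,McRae,TWFusion}. By Proposition \ref{Fusion12W} with $(r,s)=(1,2)$ there is a direct sum decomposition
\begin{align*}
\K_{1,2}\boxtimes\K_{1,2}=\K_{1,1}\oplus\Gamma(\K_{1,3}),
\end{align*}
which furnishes morphisms $i_1\colon\K_{1,1}\to\K_{1,2}\boxtimes\K_{1,2}$, $p_1\colon\K_{1,2}\boxtimes\K_{1,2}\to\K_{1,1}$ together with $i_3,p_3$ for the $\Gamma(\K_{1,3})$ summand, realizing the idempotent decomposition $i_1\circ p_1+i_3\circ p_3=\mathrm{id}$. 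First I would take $p_1$ and $i_1$ as candidate evaluation and coevaluation, and form the two ``zig-zag'' composites $f,g\colon\K_{1,2}\to\K_{1,2}$ obtained by inserting $\mathrm{id}\boxtimes i_1$ (respectively $i_1\boxtimes\mathrm{id}$) into a triple fusion product, moving across the associativity isomorphism $\mathcal{A}$, applying $p_1\boxtimes\mathrm{id}$ (respectively $\mathrm{id}\boxtimes p_1$) and then the unit isomorphisms $l,r$. Since $\K_{1,2}$ is a highest weight module with one-dimensional highest weight space, $\mathrm{Hom}(\K_{1,2},\K_{1,2})\cong\C$, so $f$ and $g$ are scalars, and rigidity and self-duality of $\K_{1,2}$ become equivalent to the single assertion that $f$ and $g$ are the \emph{same} non-zero scalar.

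Next I would translate this into a statement about intertwining operators. Let $\mathcal{Y}^2_{21}$ and $\mathcal{Y}^2_{12}$ be the intertwining operators of types $\binom{\K_{1,2}}{\K_{1,2}\ \K_{1,1}}$ and $\binom{\K_{1,2}}{\K_{1,1}\ \K_{1,2}}$ corresponding to $f$ and $g$; evaluating on highest weight vectors $v\in\K_{1,2}[h_{1,2}]$, $v^*\in\K_{1,2}^*[h_{1,2}]$ reduces the goal to
\begin{align*}
\langle v^*,\mathcal{Y}^2_{21}(v,1)\ket{0}\rangle=\langle v^*,\mathcal{Y}^2_{12}(\ket{0},1)v\rangle\neq 0.
\end{align*}
The mechanism is to enlarge these numbers to a one-parameter family of four-point functions: for $1>x>1-x>0$ put
\begin{align*}
F(x)=\langle v^*,\mathcal{Y}^2_{21}(v,1)(p_1\circ\mathcal{Y}_{2\boxtimes 2})(v,x)v\rangle+\langle v^*,\mathcal{Y}^2_{23}(v,1)(p_3\circ\mathcal{Y}_{2\boxtimes 2})(v,x)v\rangle,
\end{align*}
and define $G$ analogously with the two insertion points interchanged. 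Using the HLZ associativity of intertwining operators in the triple fusion product, compatibility of $i_1,p_1,i_3,p_3$ with the idempotent decomposition, and the skew-symmetry $\Omega$, I would rewrite both $F$ and $G$ as the \emph{single} conformal block $\langle v^*,Y_{\K_{1,2}}\big((p_1\circ\mathcal{Y}_{2\boxtimes 2})(v,1-x)v,\,x\big)v\rangle$ and its $\Omega$-transform; here one uses that $p_1\circ\mathcal{Y}_{2\boxtimes 2}$ is a non-zero non-logarithmic intertwining operator of type $\binom{\K_{1,1}}{\K_{1,2}\ \K_{1,2}}$ with non-vanishing top three-point function, so this four-point function is genuinely non-zero.

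The analytic core is then the connection problem for this four-point function. Because $\ket{\alpha_{1,2}}$ satisfies the BPZ null-vector relation $\big(L_{-1}^2-\tfrac{p_+}{p_-}L_{-2}\big)\ket{\alpha_{1,2}}=0$ of (\ref{relsing}), the four-point function solves the second-order Fuchsian equation (\ref{solF}) with regular singular points $0,1,\infty$ and Riemann scheme with exponents $\{\tfrac{p_+}{2p_-},\,1-\tfrac{3p_+}{2p_-}\}$ at $0$ and at $1$, and $\{0,\,\tfrac{2p_+}{p_-}-1\}$ at $\infty$. I would take fundamental solutions $u_\pm$ near $0$ and $v_\pm$ near $1$, write down the hypergeometric connection matrix $(F_{\epsilon\epsilon'})$ in terms of $\Gamma$-functions, and check (i) that it is invertible and (ii) that in the expansion of $v_-$ in the basis $u_\pm$ the coefficient of $u_-$ is non-zero; both reduce to verifying that certain $\Gamma$-arguments are not non-positive integers, which holds because $p_->p_+\geq 2$ and $\gcd(p_+,p_-)=1$. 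Since $\lambda_-=1-\tfrac{3p_+}{2p_-}=-2h_{1,2}$ is exactly the leading power $x^{-2h_{1,2}}$ in the $x$-expansions of $F$ and $G$, this forces $\langle v^*,\mathcal{Y}^2_{21}(v,1)\ket{0}\rangle$ and $\langle v^*,\mathcal{Y}^2_{12}(\ket{0},1)v\rangle$ to coincide and to be non-zero, whence $f=g\neq 0$ and $\K_{1,2}$ is rigid and self-dual. I expect the main obstacle to be the middle step: since $(\mathcal{C}_{p_+,p_-},\boxtimes,\K_{1,1})$ is not rigid, one cannot invoke formal categorical identities, and showing that the associativity isomorphism, the idempotent splitting and the skew-symmetry really do collapse $F$ and $G$ onto the single block $Y_{\K_{1,2}}(\,\cdot\,,x)$ must be carried out by hand with the explicit HLZ intertwining-operator machinery.
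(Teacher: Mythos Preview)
Your proposal is correct and follows essentially the same route as the paper's proof: the same direct-sum splitting from Proposition~\ref{Fusion12W}, the same reduction of rigidity to $f=g\neq 0$ via $\mathrm{Hom}(\K_{1,2},\K_{1,2})\cong\C$, the same collapse of the four-point functions $F,G$ onto the single block $Y_{\K_{1,2}}$ and its $\Omega$-transform via HLZ associativity, and the same BPZ/hypergeometric connection-matrix computation to extract the non-vanishing of the leading $x^{-2h_{1,2}}$ coefficients. The ``middle step'' you flag as the main obstacle is exactly what the paper works out by hand in its equations \eqref{F}--\eqref{G}, so your anticipation is accurate.
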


\begin{lem}
\label{Intertwining lemma w-}
For $1\leq r\leq p_+$, $2\leq s \leq p_--1$, we have
\begin{equation*}
I
\begin{pmatrix}
\ \X^-_{r,s-1} \\
\K_{1,2}\ \ \X^-_{r,s}
\end{pmatrix}
\neq 0
,\ \ \ \ \ \ 
I
\begin{pmatrix}
\ \X^-_{r,s+1} \\
\K_{1,2}\ \ \X^-_{r,s}
\end{pmatrix}
\neq 0
.
\end{equation*}
\begin{proof}
We will only prove the first inequality. The second inequality can be proved in the same way.

Let us consider the $\W_{p_+,p_-}$-module intertwining operator $Y_1=Y_{\alpha_1,\alpha_2}$ given by Definition \ref{dfnY}, where $\alpha_1=\alpha_{1,2}$, $\alpha_2=\alpha_{r^\vee,s;-2}$.
Note that
\begin{align*}
\bra{\alpha_{r^\vee,s+1;-2}}Y_1(\ket{\alpha_{1,2}},z)\ket{\alpha_{r^\vee,s;-2}}\neq 0.
\end{align*}
Thus we have 
\begin{equation}
I
\begin{pmatrix}
\ \X^-_{r,s-1} \\
\K_{1,2}\ \ \W_{p_+,p_-}.\ket{\alpha_2}
\end{pmatrix}
\neq 0.
\label{Int12132}
\end{equation}
Note that, from Proposition \ref{socleV}, $\W_{p_+,p_-}.\ket{\alpha_{r^\vee,s;-2}}$ satisfies the exact sequence
\begin{align*}
0\rightarrow \X^+_{r^\vee,s}\rightarrow\W_{p_+,p_-}.\ket{\alpha_{r^\vee,s;-2}}\rightarrow \X^-_{r,s}\rightarrow 0.
\end{align*}
Then, from this exact sequence, we have the following exact sequence
\begin{align*}
\K_{1,2}\boxtimes\X^+_{r^\vee,s}\rightarrow\K_{1,2}\boxtimes\W_{p_+,p_-}.\ket{\alpha_{r^\vee,s;-2}}\rightarrow \K_{1,2}\boxtimes\X^-_{r,s}\rightarrow 0.
\end{align*}
Thus we obtain the exact sequence
\begin{align}
0&\rightarrow{\rm Hom}_{\mathcal{C}_{p_+,p_-}}(\K_{1,2}\boxtimes\X^-_{r,s},\X^-_{r,s-1})\rightarrow {\rm Hom}_{\mathcal{C}_{p_+,p_-}}(\W_{p_+,p_-}.\ket{\alpha_{r^\vee,s;-2}},\X^-_{r,s-1})\nonumber\\
&\rightarrow {\rm Hom}_{\mathcal{C}_{p_+,p_-}}(\K_{1,2}\boxtimes\X^+_{r^\vee,s},\X^-_{r,s-1}).
\label{exact12131}
\end{align}
By Proposition \ref{NGK2}, we see that
\begin{align*}
{\rm Hom}_{\mathcal{C}_{p_+,p_-}}(\K_{1,2}\boxtimes\X^+_{r^\vee,s},\X^-_{r,s-1})=0.
\end{align*}
Thus, by (\ref{Int12132}) and (\ref{exact12131}), we see that $\K_{1,2}\boxtimes\X^-_{r,s}$ has a quotient isomorphic to $\X^-_{r,s-1}$. Thus we obtain the first inequality.
\end{proof}
\end{lem}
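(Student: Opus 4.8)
The plan is to mirror the proof of Lemma~\ref{Intertwiningori} on the $\X^-$ side, exhibiting $\X^-_{r,s-1}$ (and, by the mirror construction, $\X^-_{r,s+1}$) as a quotient of $\K_{1,2}\boxtimes\X^-_{r,s}$. I will describe the argument for the first inequality only; the second is entirely parallel.

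First I would produce, using the lattice construction of Definition~\ref{dfnY}, the $\W_{p_+,p_-}$-module intertwining operator $Y_1=Y_{\alpha_1,\alpha_2}$ with $\alpha_1=\alpha_{1,2}$ and $\alpha_2=\alpha_{r^\vee,s;-2}$, so that $\alpha_1+\alpha_2=\alpha_{r^\vee,s+1;-2}$. Since the ground-state component $\bra{\alpha_{r^\vee,s+1;-2}}Y_1(\ket{\alpha_{1,2}},z)\ket{\alpha_{r^\vee,s;-2}}$ is a nonzero power of $z$ (immediate from the explicit form of the lattice vertex operators and the one-dimensionality of the space of $\mathcal{V}_{[p_+,p_-]}$-intertwining operators of this type), restricting $Y_1$ to the cyclic submodule $\W_{p_+,p_-}.\ket{\alpha_{r^\vee,s;-2}}$ in its second slot and composing with the projection onto the appropriate simple subquotient $\X^-_{r,s-1}$ of $\W_{p_+,p_-}.\ket{\alpha_{r^\vee,s+1;-2}}$, read off from Proposition~\ref{socleV}, gives $I\binom{\X^-_{r,s-1}}{\K_{1,2}\ \W_{p_+,p_-}.\ket{\alpha_{r^\vee,s;-2}}}\neq 0$, equivalently a nonzero homomorphism $\K_{1,2}\boxtimes\W_{p_+,p_-}.\ket{\alpha_{r^\vee,s;-2}}\to\X^-_{r,s-1}$.

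Next, Proposition~\ref{socleV} gives the non-split exact sequence $0\to\X^+_{r^\vee,s}\to\W_{p_+,p_-}.\ket{\alpha_{r^\vee,s;-2}}\to\X^-_{r,s}\to 0$. Tensoring with $\K_{1,2}$, using right exactness of $\boxtimes$, and applying ${\rm Hom}(-,\X^-_{r,s-1})$, I obtain the exact sequence
\begin{align*}
0&\to{\rm Hom}(\K_{1,2}\boxtimes\X^-_{r,s},\X^-_{r,s-1})\to{\rm Hom}(\K_{1,2}\boxtimes\W_{p_+,p_-}.\ket{\alpha_{r^\vee,s;-2}},\X^-_{r,s-1})\\
&\to{\rm Hom}(\K_{1,2}\boxtimes\X^+_{r^\vee,s},\X^-_{r,s-1}).
\end{align*}
By Proposition~\ref{NGK2}, ${\rm top}(\K_{1,2}\boxtimes\X^+_{r^\vee,s})$ is a direct sum of modules of the form $\X^+_{r^\vee,s\pm1}$, none of which is $\X^-_{r,s-1}$, so the last ${\rm Hom}$ vanishes; together with the preceding step this forces ${\rm Hom}(\K_{1,2}\boxtimes\X^-_{r,s},\X^-_{r,s-1})\neq 0$, i.e. the first inequality. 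For the second inequality I would run the same argument with $\alpha_2$ replaced by its mirror under $s\leftrightarrow s^\vee$ (equivalently, by flipping the sign of the $n$-shift), chosen so that $\W_{p_+,p_-}.\ket{\alpha_2}$ still has $\X^-_{r,s}$ as simple quotient while the lattice operator now lands on $\X^-_{r,s+1}$; Proposition~\ref{NGK2} again kills the error term because the relevant top is built from $\X^+_{r^\vee,s\pm1}$-type modules.

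I expect the first step to be the main obstacle: pinning down which cyclic lattice submodule to feed into Definition~\ref{dfnY} and checking that the nonvanishing of the ground-state matrix coefficient genuinely detects a surjection onto $\X^-_{r,s-1}$ (resp. $\X^-_{r,s+1}$). This is a bookkeeping exercise with the Fock-module socle series of Propositions~\ref{FockSocle} and~\ref{socleV}, and it needs separate attention at the boundary value $s=p_--1$, where $\X^-_{r,s+1}=\X^-_{r,p_-}$ lies in a thin block, together with the cases in which the relevant part of Proposition~\ref{NGK2} degenerates.
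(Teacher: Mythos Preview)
Your proposal is correct and follows essentially the same route as the paper: the same lattice intertwining operator $Y_{\alpha_{1,2},\alpha_{r^\vee,s;-2}}$, the same exact sequence for $\W_{p_+,p_-}.\ket{\alpha_{r^\vee,s;-2}}$ from Proposition~\ref{socleV}, and the same vanishing of ${\rm Hom}(\K_{1,2}\boxtimes\X^+_{r^\vee,s},\X^-_{r,s-1})$ via Proposition~\ref{NGK2}. The paper records this more tersely and, as you anticipate, simply asserts that the second inequality is proved in the same way.
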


By Proposition \ref{NGK2} and Lemma \ref{Intertwining lemma w-}, we obtain the following proposition.
\begin{prop}
\label{fusion12-}
\mbox{}
\begin{enumerate}
\item For $1\leq r\leq p_+$, $2\leq s\leq p_--1$, we have
\begin{align*}
\K_{1,2}\boxtimes \X^-_{r,s}=\X^-_{r,s-1}\oplus \X^-_{r,s+1}.
\end{align*}
\item For $1\leq r\leq p_+$, we have
\begin{align*}
\K_{1,2}\boxtimes \X^-_{r,1}=\X^-_{r,2}.
\end{align*}
\item  For $p_+\geq 3$, $2\leq r\leq p_+-1$, $1\leq s\leq p_-$, we have
\begin{align*}
\K_{2,1}\boxtimes \X^-_{r,s}=\X^-_{r-1,s}\oplus \X^-_{r+1,s}.
\end{align*}
\item For $1\leq s\leq p_-$, we have
\begin{align*}
\K_{2,1}\boxtimes \X^-_{1,s}=\X^-_{2,s}.
\end{align*}
\end{enumerate}
\end{prop}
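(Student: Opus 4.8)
The plan is to determine each of the four tensor products by pinning down, in turn, its top, its composition factors, and hence its module structure. I write out the argument for $\K_{1,2}\boxtimes\X^-_{r,s}$ with $2\le s\le p_--1$ (item (1)); the others are parallel.

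\emph{Top and non-vanishing.} By Lemma \ref{Intertwining lemma w-} there are nonzero intertwining operators of types $\binom{\X^-_{r,s-1}}{\K_{1,2}\ \X^-_{r,s}}$ and $\binom{\X^-_{r,s+1}}{\K_{1,2}\ \X^-_{r,s}}$, so by the universal property of $\boxtimes$ (Definition \ref{ten0819}) the modules $\X^-_{r,s-1}$ and $\X^-_{r,s+1}$ are both quotients of $\K_{1,2}\boxtimes\X^-_{r,s}$; in particular $\K_{1,2}\boxtimes\X^-_{r,s}\ne 0$, so the hypotheses of Proposition \ref{NGK2} hold, and both appear in ${\rm top}(\K_{1,2}\boxtimes\X^-_{r,s})$. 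Conversely, ${\rm Hom}(\K_{1,2}\boxtimes\X^-_{r,s},L)\cong I\binom{L}{\K_{1,2}\ \X^-_{r,s}}$ for every simple $L$, and taking contragredients embeds the minimal $L_0$-weight space of $L^*$ — which is two-dimensional when $L=\X^\pm$, by (\ref{decomp}) — into $A_0((\K_{1,2}\boxtimes\X^-_{r,s})^*)$. By Proposition \ref{NGK2}(4) this space is at most four-dimensional, $L_0$ acts semisimply on it, and its eigenvalues lie in $\{\Delta^-_{r,s-1;0},\Delta^-_{r,s+1;0}\}$; by the conformal-weight bookkeeping of Theorem \ref{simpleclass} (as in the proof of Proposition \ref{NGK2}) these two numbers are minimal weights of no simple $\W_{p_+,p_-}$-module other than $\X^-_{r,s-1}$ and $\X^-_{r,s+1}$. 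Hence the total multiplicity of factors in ${\rm top}(\K_{1,2}\boxtimes\X^-_{r,s})$ is at most $2$, so ${\rm top}(\K_{1,2}\boxtimes\X^-_{r,s})=\X^-_{r,s-1}\oplus\X^-_{r,s+1}$ and there is a surjection $q\colon\K_{1,2}\boxtimes\X^-_{r,s}\twoheadrightarrow\X^-_{r,s-1}\oplus\X^-_{r,s+1}$.

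\emph{Composition factors and conclusion.} It then suffices to show $[\K_{1,2}\boxtimes\X^-_{r,s}]=[\X^-_{r,s-1}]+[\X^-_{r,s+1}]$ in the Grothendieck group, for then $\ker q$ has zero class, hence is $0$, and $q$ is an isomorphism. Since $\K_{1,2}$ is rigid and self-dual (Theorem \ref{rigid12ori}), the functor $\K_{1,2}\boxtimes-$ is exact, so $[\K_{1,2}\boxtimes\X^-_{r,s}]$ depends only on $[\X^-_{r,s}]$; writing $[\K_{1,2}]=[\X^+_{1,2}]+[L(h_{1,2})]$ from the exact sequence $0\to\X^+_{1,2}\to\K_{1,2}\to L(h_{1,2})\to 0$, using $L(h_{1,2})\boxtimes\X^-_{r,s}=0$ (Proposition \ref{minimalnull}), and invoking the Grothendieck fusion rules of \cite{RW}, the identity follows. (Alternatively, in the spirit of the proof of Proposition \ref{Fusion12W}: the same $A_0$-estimate applied to the contragredient — using that $\boxtimes$ commutes with contragredient up to braiding and that the simple modules here are self-contragredient — gives ${\rm soc}(\K_{1,2}\boxtimes\X^-_{r,s})=\X^-_{r,s-1}\oplus\X^-_{r,s+1}$ as well, and one then rules out any intermediate $\X^\pm$-type composition factor from the two thick blocks adjacent to the block of $\X^-_{r,s}$ using the ${\rm Ext}^1$-vanishing of Proposition \ref{Ext} together with Lemma \ref{Extcoro1} and its analogues from \cite{Nakano}.)

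\emph{The remaining cases.} Item (2), and the low-index boundary cases ($r=1$, resp.\ $p_+=2$) in items (3)–(4), are handled the same way but more easily: only one of the two exceptional weights is available, so the $A_0$-estimate — which must be redone by hand, since Propositions \ref{NGK2} and \ref{NGK22} do not list these cases — forces the top to be a single simple module, and one nonzero lattice intertwining operator $Y_{\alpha_{1,2},\bullet}$ (resp.\ $Y_{\alpha_{2,1},\bullet}$) of the kind used in Lemma \ref{Intertwining lemma w-} then yields $\K_{1,2}\boxtimes\X^-_{r,1}=\X^-_{r,2}$ (resp.\ $\K_{2,1}\boxtimes\X^-_{1,s}=\X^-_{2,s}$). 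Items (3)–(4) for the interior indices are obtained by exchanging the roles of $p_+$ and $p_-$ (and of the two subscripts) throughout: Proposition \ref{NGK22} replaces Proposition \ref{NGK2} (with its further split into $p_+\ge 3$ and $p_+=2$), the singular-vector relation $(L_{-1}^2-\frac{p_-}{p_+}L_{-2})\ket{\alpha_{2,1}}=0$ replaces (\ref{relsing}), and the rigidity of $\K_{2,1}$ (Theorem \ref{rigid21}) replaces that of $\K_{1,2}$. I expect the only genuine obstacle to be the composition-factor step above — excluding spurious $\X^\pm$-type factors coming from the neighbouring thick blocks — together with the bookkeeping for the boundary indices.
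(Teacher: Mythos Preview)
Your determination of ${\rm top}(\K_{1,2}\boxtimes\X^-_{r,s})$ is correct and matches the paper's use of Proposition~\ref{NGK2} and Lemma~\ref{Intertwining lemma w-}. The gap is in your second step, ruling out a nontrivial radical; neither of your two proposed routes closes it.

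\textbf{(a) The Grothendieck-ring route.} Rigidity of $\K_{1,2}$ makes $\K_{1,2}\boxtimes-$ exact in the \emph{second} variable. Your decomposition $[\K_{1,2}]=[\X^+_{1,2}]+[L(h_{1,2})]$ is in the \emph{first} variable: to pass from it to $[\K_{1,2}\boxtimes\X^-_{r,s}]=[\X^+_{1,2}\boxtimes\X^-_{r,s}]+[L(h_{1,2})\boxtimes\X^-_{r,s}]$ you would need $-\boxtimes\X^-_{r,s}$ exact, i.e.\ $\X^-_{r,s}$ rigid, which is not yet known. Appealing to \cite{RW} is circular for the same reason: the identification of their Verlinde ring with tensor-product composition series presupposes that $\boxtimes$ descends to the Grothendieck group.

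\textbf{(b) The socle/Ext route.} Using self-contragrediency to get ${\rm soc}(M)={\rm top}(M)$ is fine, but ``Ext vanishing'' cannot exclude middle factors here: the relevant groups ${\rm Ext}^1(\X^-_{r,s\mp1},\X^+_{\bullet})$ are \emph{nonzero} (Proposition~\ref{Ext}), so indecomposables with socle and top $\X^-_{r,s-1}$ and an $\X^+$-layer in between (e.g.\ $\mathcal{Q}(\X^-_{r,s-1})_{\bullet,\bullet}$) genuinely exist. Lemma~\ref{Extcoro1} concerns $\K_{r,s}$, not $\X^-$, and does not apply.

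\textbf{What actually finishes the argument.} The paper's one-line proof is meant to be read as: Proposition~\ref{NGK2}(4) already excludes the radical, via a conformal-weight count. In the block of $\X^-_{r,s-1}$ the two $\X^+$'s that could sit directly below it in a Loewy layer, namely $\X^+_{r^\vee,s-1}$ and $\X^+_{r,(s-1)^\vee}$, both have strictly smaller minimal $L_0$-weight than $\Delta^-_{r,s-1;0}$ (a direct computation with $h_{a,b}$). Hence if $M$ had a nonsplit quotient $0\to\X^+\to M'\to\X^-_{r,s-1}\to 0$, then the lowest-weight space of $(M')^*\subset M^*$ would lie at weight $\Delta^+_{\bullet}$ and would automatically be singular, giving an element of $A_0(M^*)$ linearly independent of the four already supplied by the embeddings $\X^-_{r,s\pm1}\hookrightarrow M^*$ (it lies in the other block summand). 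That forces $\dim A_0(M^*)\ge 5$, contradicting the bound $\le 4$ from Proposition~\ref{NGK2}(4). So no such $\X^+$ layer exists and $M=\X^-_{r,s-1}\oplus\X^-_{r,s+1}$. The same mechanism handles the remaining items (your remarks on boundary indices and on redoing the $A_0$ estimate for $s=1$ are well taken).
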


For the following lemma, see \cite{Etingof,JS,KL} and the references therein.
\begin{lem}
\label{Etingof}
Let $(\mathcal{C},\otimes)$ be a tensor category and let $V$ be a rigid object in $\mathcal{C}$. Then there is a natural adjunction isomorphism
\begin{align*}
{\rm Hom}_{\mathcal{C}}(U\otimes V,W)\simeq {\rm Hom}_{\mathcal{C}}(U,W\otimes V^\vee),
\end{align*}
where $U,W$ are any objects in $\mathcal{C}$ and $V^\vee$ is the dual object of $V$. 
\end{lem}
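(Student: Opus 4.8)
The plan is to produce the isomorphism explicitly from the rigidity data for $V$ and then verify that it is a two-sided inverse by means of the zigzag (triangle) identities. Write $\mathbf{1}$ for the unit object of $(\mathcal{C},\otimes)$ and let $\mathrm{ev}_V\colon V^\vee\otimes V\to\mathbf{1}$ and $\mathrm{coev}_V\colon\mathbf{1}\to V\otimes V^\vee$ be the evaluation and coevaluation morphisms witnessing that $V^\vee$ is a (right) dual of $V$; by definition, suppressing the associativity and unit constraints, one has $(\mathrm{id}_V\otimes\mathrm{ev}_V)\circ(\mathrm{coev}_V\otimes\mathrm{id}_V)=\mathrm{id}_V$ and $(\mathrm{ev}_V\otimes\mathrm{id}_{V^\vee})\circ(\mathrm{id}_{V^\vee}\otimes\mathrm{coev}_V)=\mathrm{id}_{V^\vee}$.

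First I would define, for $f\in\Hom_{\mathcal{C}}(U\otimes V,W)$, a morphism $\Phi(f)\in\Hom_{\mathcal{C}}(U,W\otimes V^\vee)$ as the composite $U\cong U\otimes\mathbf{1}\xrightarrow{\mathrm{id}_U\otimes\mathrm{coev}_V}U\otimes V\otimes V^\vee\xrightarrow{f\otimes\mathrm{id}_{V^\vee}}W\otimes V^\vee$, and conversely, for $g\in\Hom_{\mathcal{C}}(U,W\otimes V^\vee)$, a morphism $\Psi(g)\in\Hom_{\mathcal{C}}(U\otimes V,W)$ as $U\otimes V\xrightarrow{g\otimes\mathrm{id}_V}W\otimes V^\vee\otimes V\xrightarrow{\mathrm{id}_W\otimes\mathrm{ev}_V}W\otimes\mathbf{1}\cong W$. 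Both $\Phi$ and $\Psi$ are visibly linear, and natural in $U$ and $W$: compatibility with precomposition along a morphism $U'\to U$ and postcomposition along $W\to W'$ is immediate from functoriality of $\otimes$ and the interchange law, since $\mathrm{ev}_V$ and $\mathrm{coev}_V$ are held fixed. Next I would check $\Psi\circ\Phi=\mathrm{id}$ and $\Phi\circ\Psi=\mathrm{id}$: unwinding $\Psi(\Phi(f))$ and using the interchange law to slide $f$ past the identity morphisms rewrites it as $f\circ\bigl(\mathrm{id}_U\otimes[(\mathrm{id}_V\otimes\mathrm{ev}_V)\circ(\mathrm{coev}_V\otimes\mathrm{id}_V)]\bigr)$, which collapses to $f$ by the first triangle identity; the computation for $\Phi\circ\Psi$ is symmetric and uses the second triangle identity.

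The only point requiring genuine care — and the reason this merits being recorded as a lemma rather than a one-line remark — is the bookkeeping of the associativity and left/right unit isomorphisms that I have suppressed above: these must be inserted so that each composite is well-typed, and one must then invoke Mac Lane's coherence theorem to be sure that the automorphisms of $V$ and of $V^\vee$ produced by the triangle identities are genuinely the identity and not some other constraint-induced isomorphism. Since $\mathcal{C}_{p_+,p_-}$ carries a braided tensor structure in the sense of \cite{HLZ1,HLZ2,HLZ3,HLZ4,HLZ5,HLZ6,HLZ7,HLZ8}, all of these constraints are present and coherent, so this step is entirely routine; the statement and its proof are standard and may be found in \cite{Etingof,JS,KL}.
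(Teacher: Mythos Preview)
Your proof is correct and is exactly the standard argument: the adjunction $(-\otimes V)\dashv(-\otimes V^\vee)$ is constructed from the (co)evaluation data and the triangle identities, with Mac Lane coherence handling the suppressed constraints. The paper does not actually prove this lemma at all; it simply records the statement and refers the reader to \cite{Etingof,JS,KL}, where precisely the argument you have written can be found.
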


\begin{lem}
\label{Fusion12Wkai0simple}
\mbox{}
\begin{enumerate}
\item For $1\leq r\leq p_+$, $2\leq s\leq p_--1$, we have
\begin{align*}
\K_{1,2}\boxtimes\X^+_{r,s}=\X^+_{r,s-1}\oplus\X^+_{r,s+1}.
\end{align*}
\item For $1\leq r\leq p_+$, we have
\begin{align*}
\K_{1,2}\boxtimes\X^+_{r,1}=\X^+_{r,2}.
\end{align*}
\end{enumerate}
\begin{proof}
We will only prove 
\begin{align}
\label{0627x+}
\K_{1,2}\boxtimes\X^+_{r,s}=\X^+_{r,s-1}\oplus \X^+_{r,s+1}
\end{align}
for $1\leq r\leq p_+-1$, $2\leq s\leq p_+-2$. The other cases can be proved in a similar way.

In this case, from Proposition \ref{NGK2}, we have $\K_{1,2}\boxtimes\X^+_{r,s}\in C^{thick}_{r,s-1}\oplus C^{thick}_{r,s+1}$.
By Proposition \ref{fusion12-}, Lemma \ref{Etingof} and the self-duality of $\K_{1,2}$, we see that
\begin{align*}
{\rm Hom}_{\mathcal{C}_{p_+,p_-}}(\X,\K_{1,2}\boxtimes\X^+_{r,s})=0,
\end{align*}
where $\X=\X^-_{r^\vee,s-1},\X^-_{r,s^\vee+1},\X^-_{r^\vee,s+1},\X^-_{r,s^\vee-1}$. Thus, by Proposition \ref{NGK2} and Lemma \ref{Intertwiningori}, we obtain (\ref{0627x+}).
\end{proof}
\end{lem}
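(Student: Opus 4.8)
The plan is to reduce the computation of $\K_{1,2}\boxtimes\X^+_{r,s}$ to constraints already established, namely the bound on top composition factors from Proposition \ref{NGK2} and the fusion with $\X^-$ from Proposition \ref{fusion12-}, and then to use rigidity of $\K_{1,2}$ (Theorem \ref{rigid12ori}) together with the adjunction of Lemma \ref{Etingof} to eliminate the unwanted $\X^-$ composition factors. I will treat the generic case $1\leq r\leq p_+-1$, $2\leq s\leq p_--2$ in detail; the remaining boundary cases ($s=1$, or $r=p_+$, or $s$ adjacent to $p_-$) follow by the same pattern with the obvious modifications coming from the truncated fusion rules (e.g. $\K_{1,2}\boxtimes\X^+_{r,1}=\X^+_{r,2}$ because there is no ``$s-1$'' summand).

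First I would record that, by item (3) of Proposition \ref{NGK2}, the top of $\K_{1,2}\boxtimes\X^+_{r,s}$ consists of $\X^+_{r,s-1}$ and $\X^+_{r,s+1}$ (with multiplicity at most one each), and in particular the module lies in $C^{thick}_{r,s-1}\oplus C^{thick}_{r,s+1}$; from Lemma \ref{Intertwiningori} both $\X^+_{r,s-1}$ and $\X^+_{r,s+1}$ actually occur as quotients. So it remains to show that no other composition factors appear — concretely, that none of $\X^-_{r^\vee,s-1}$, $\X^-_{r,s^\vee+1}$, $\X^-_{r^\vee,s+1}$, $\X^-_{r,s^\vee-1}$ (the $\X^-$-type simples in the two relevant thick blocks) occurs, since by Theorem \ref{simpleclass} these together with the $\X^+$'s and the minimal $L(h_{\bullet,\bullet})$'s exhaust the simples in those blocks, and $L(h_{r,s\pm1})$ cannot appear because $\K_{1,2}\boxtimes\X^+_{r,s}$ is a quotient of $(\K_{1,2}\boxtimes\K_{r,s})\boxtimes\X^+_{1,1}$, hence annihilated in the appropriate sense — alternatively one invokes Proposition \ref{minimalnull}-type reasoning. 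To kill the $\X^-$'s, fix one such $\X$, say $\X=\X^-_{r^\vee,s-1}$. If $\X$ were a composition factor of $\K_{1,2}\boxtimes\X^+_{r,s}$, then since the top is pure $\X^+$-type and the relevant $\mathrm{Ext}^1$-groups vanish (Proposition \ref{Ext}: $\mathrm{Ext}^1(\X^+,\X^+)=0$ in a thick block), $\X$ would have to be a submodule of $\K_{1,2}\boxtimes\X^+_{r,s}$, i.e. $\mathrm{Hom}_{\mathcal{C}_{p_+,p_-}}(\X,\K_{1,2}\boxtimes\X^+_{r,s})\neq 0$. But by Lemma \ref{Etingof} applied with the rigid self-dual object $\K_{1,2}$, this Hom-space is isomorphic to $\mathrm{Hom}_{\mathcal{C}_{p_+,p_-}}(\K_{1,2}\boxtimes\X, \X^+_{r,s})$, and $\K_{1,2}\boxtimes\X^-_{r^\vee,s-1}$ is computed by Proposition \ref{fusion12-} to be $\X^-_{r^\vee,s-2}\oplus\X^-_{r^\vee,s}$ (or the truncation thereof), neither summand of which admits a nonzero map to $\X^+_{r,s}$ since they lie in different blocks / are simple of the wrong type. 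Hence the Hom-space vanishes, a contradiction. Running this for all four choices of $\X$ shows $\K_{1,2}\boxtimes\X^+_{r,s}$ has only $\X^+_{r,s-1}$ and $\X^+_{r,s+1}$ as composition factors, each with multiplicity one, so $\K_{1,2}\boxtimes\X^+_{r,s}\cong \X^+_{r,s-1}\oplus\X^+_{r,s+1}$ as claimed.

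The main obstacle I anticipate is the careful bookkeeping needed to justify that a hypothetical $\X^-$-composition factor must be a \emph{sub}module rather than appearing somewhere in the middle of a socle filtration — this uses that the module sits in a direct sum of two thick blocks, that its top is exactly $\X^+_{r,s-1}\oplus\X^+_{r,s+1}$ by Proposition \ref{NGK2}, and the precise $\mathrm{Ext}^1$-vanishing statements of Proposition \ref{Ext} — and, on the other side, correctly identifying $\K_{1,2}\boxtimes\X$ via Proposition \ref{fusion12-} in the boundary cases where the fusion rule truncates. One also needs the hypotheses of Proposition \ref{NGK2} and Proposition \ref{fusion12-} (nonvanishing of the relevant $\K_{1,2}\boxtimes-$ products) to be available, which at this point in the paper they are, via Lemma \ref{Intertwiningori} and Lemma \ref{Intertwining lemma w-}. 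The rest is routine once the rigidity of $\K_{1,2}$ is in hand.
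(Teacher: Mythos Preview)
Your approach is essentially the paper's: identify the top via Proposition \ref{NGK2} and Lemma \ref{Intertwiningori}, then use the adjunction of Lemma \ref{Etingof} together with Proposition \ref{fusion12-} to kill the $\X^-$'s from the socle. The paper's concluding sentence is just as terse as your outline at this point.

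The one place where your justification does not go through is the bridge from ``$\mathrm{Hom}(\X^-,M)=0$'' to ``$\X^-$ is not a composition factor''. Your argument is that $\mathrm{Ext}^1(\X^+,\X^+)=0$ forces a hypothetical $\X^-$ factor to sit in the socle; but this is false in these blocks. Length-three indecomposables with socle series $\X^+/\X^-/\X^+$ exist (this is exactly the shape of the $\mathcal{Q}(\X^+_{\bullet,\bullet})_{\bullet,\bullet}$ in Proposition \ref{logarithmic3}), and in such a module the $\X^-$ is a composition factor that is neither a submodule nor a quotient, while the top is pure $\X^+$-type. So Hom-vanishing alone does not exclude $\X^-$ (nor $L(h_{r,s\pm1})$, nor $\X^+_{r^\vee,s^\vee\pm1}$) from the composition series. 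A clean way to close this, available at this stage, is to use that $\K_{1,2}\boxtimes-$ is \emph{exact} once $\K_{1,2}$ is rigid (Theorem \ref{rigid12ori}): applying it to $0\to\X^+_{r,s}\to\K_{r,s}\to L(h_{r,s})\to 0$ embeds $\K_{1,2}\boxtimes\X^+_{r,s}$ into $\K_{1,2}\boxtimes\K_{r,s}$, and the proof of Proposition \ref{Fusion12W} already establishes $\K_{1,2}\boxtimes\K_{r,s}=\K_{r,s-1}\oplus\K_{r,s+1}$ in the generic range. This pins down the composition factors to exactly $\X^+_{r,s-1}$ and $\X^+_{r,s+1}$, after which $\mathrm{Ext}^1(\X^+,\X^+)=0$ gives semisimplicity.
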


By Proposition \ref{Fusion12W} and Lemma \ref{Fusion12Wkai0simple}, we obtain the following proposition.
\begin{prop}
\label{Fusion12Wkai0}
\mbox{}
\begin{enumerate}
\item For $1\leq r\leq p_+$, $2\leq s\leq p_--1$, we have
\begin{align*}
\K_{1,2}\boxtimes\K_{r,s}=\K_{r,s-1}\oplus\K_{r,s+1}.
\end{align*}
\item For $1\leq r\leq p_+$, we have
\begin{align*}
\K_{1,2}\boxtimes\K_{r,1}=\K_{r,2}.
\end{align*}
\end{enumerate}
\end{prop}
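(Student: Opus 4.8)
The plan is to bootstrap from Proposition \ref{Fusion12W}, which already gives $\K_{1,2}\boxtimes\K_{r,s}=\K_{r,s-1}\oplus\Gamma(\K_{r,s+1})$ with $\Gamma(\K_{r,s+1})$ an indecomposable highest weight module, so that the only thing left is the identification $\Gamma(\K_{r,s+1})\simeq\K_{r,s+1}$. I would do this by pinning down the composition factors of $\Gamma(\K_{r,s+1})$ with the help of Lemma \ref{Fusion12Wkai0simple}, and then invoking the one-dimensionality of the relevant $\mathrm{Ext}^1$-group from Proposition \ref{Ext}. I treat the generic range $2\le s\le p_--2$ in detail; the endpoints $s=1$ and $s=p_--1$ need only cosmetic changes.

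Apply the right-exact functor $\K_{1,2}\boxtimes(-)$ to the defining sequence $0\to\X^+_{r,s}\to\K_{r,s}\to L(h_{r,s})\to 0$, obtaining $\K_{1,2}\boxtimes\X^+_{r,s}\xrightarrow{\ \eta\ }\K_{1,2}\boxtimes\K_{r,s}\to\K_{1,2}\boxtimes L(h_{r,s})\to 0$. By Lemma \ref{Fusion12Wkai0simple} the left term is $\X^+_{r,s-1}\oplus\X^+_{r,s+1}$, and by Corollary \ref{202210180} together with Proposition \ref{minimal fusion} the right term is $L(h_{r,s-1})\oplus L(h_{r,s+1})$. Since $\mathrm{im}(\eta)$ is a quotient of the semisimple module $\X^+_{r,s-1}\oplus\X^+_{r,s+1}$, the short exact sequence $\mathrm{im}(\eta)\hookrightarrow\K_{1,2}\boxtimes\K_{r,s}\twoheadrightarrow L(h_{r,s-1})\oplus L(h_{r,s+1})$ shows that every composition factor of $\K_{1,2}\boxtimes\K_{r,s}$ lies in $\{\X^+_{r,s-1},\X^+_{r,s+1},L(h_{r,s-1}),L(h_{r,s+1})\}$, each with multiplicity at most one. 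On the other hand $\X^+_{r,s-1}$ and $\X^+_{r,s+1}$ really do occur (this is exactly what the proof of Proposition \ref{Fusion12W} extracts from the non-vanishing intertwining operators of Lemma \ref{Intertwiningori}), and $L(h_{r,s-1})\oplus L(h_{r,s+1})$ is a quotient, so all four factors occur, each with multiplicity one. Removing the direct summand $\K_{r,s-1}$, whose composition factors are $\X^+_{r,s-1}$ and $L(h_{r,s-1})$, we conclude that $\Gamma(\K_{r,s+1})$ has composition factors exactly $\X^+_{r,s+1}$ and $L(h_{r,s+1})$.

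Now $\Gamma(\K_{r,s+1})$ is indecomposable of length two, and its top is $L(h_{r,s+1})$ by Proposition \ref{NGK2} (it is the ``leftover'' generalised $L_0$-eigenvalue $h_{r,s+1}$ once $\K_{r,s-1}$ is split off), so it sits in a non-split extension $0\to\X^+_{r,s+1}\to\Gamma(\K_{r,s+1})\to L(h_{r,s+1})\to 0$. By Proposition \ref{Ext} we have $\mathrm{Ext}^1(L(h_{r,s+1}),\X^+_{r,s+1})\cong\C$, so this self-extension is unique up to isomorphism; since $\K_{r,s+1}=\W_{p_+,p_-}.\ket{\alpha_{r,s+1}}$ is itself such a non-split extension by Proposition \ref{socleV}, we obtain $\Gamma(\K_{r,s+1})\simeq\K_{r,s+1}$, which is (1). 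For $s=1$ the same argument with $\K_{r,0}=0$ gives (2), and for $s=p_--1$ one has instead $\K_{r,p_-}=\X^+_{r,p_-}$ simple and $\K_{1,2}\boxtimes L(h_{r,p_--1})=L(h_{r,p_--2})$, so the same count returns $\Gamma(\K_{r,p_-})=\X^+_{r,p_-}=\K_{r,p_-}$ at once. I expect the only real obstacle to be the input that both $\X^+_{r,s-1}$ and $\X^+_{r,s+1}$ genuinely appear as composition factors — equivalently, that $\eta$ does not degenerate, so that $\Gamma(\K_{r,s+1})$ is not the smaller module $L(h_{r,s+1})$; this is precisely where Lemma \ref{Intertwiningori} (already built into Proposition \ref{Fusion12W}) is indispensable, and once it is in hand the $\mathrm{Ext}^1$-dimension bound closes the argument.
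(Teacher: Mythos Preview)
Your proof is correct and follows essentially the same route as the paper, which simply cites Proposition~\ref{Fusion12W} and Lemma~\ref{Fusion12Wkai0simple} without further comment; you have spelled out the details of how these combine. One small simplification is available: at this point in the paper $\K_{1,2}$ is already known to be rigid (Theorem~\ref{rigid12ori}), so the functor $\K_{1,2}\boxtimes(-)$ is exact, not merely right exact, and your map $\eta$ is automatically injective---hence all four composition factors appear with multiplicity exactly one without any appeal to Lemma~\ref{Intertwiningori}.
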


For the following Lemma, see \cite[Theorem 2.12]{AL} (cf. \cite[Proposition 3.46]{HLZ2}). 
\begin{lem}[\cite{AL}]
\label{UV}
For any $U,V\in \mathcal{C}_{p_+,p_-}$, we have a natural isomorphism.
\begin{align*}
{\rm Hom}_{\mathcal{C}_{p_+,p_-}}(U,V)\simeq {\rm Hom}_{\mathcal{C}_{p_+,p_-}}(U\boxtimes V^*,\K^*_{1,1}).
\end{align*}
\end{lem}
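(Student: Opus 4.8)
One can of course simply invoke \cite[Theorem 2.12]{AL}; here is an outline of the argument in the present setting. Conceptually, the point is that the contragredient functor $(-)^{*}$ equips $(\mathcal{C}_{p_+,p_-},\boxtimes,\K_{1,1})$ with the structure of a Grothendieck--Verdier category whose dualizing object is $\K^{*}_{1,1}$, and the asserted isomorphism is exactly the reflexivity $(M^{*})^{*}\cong M$ repackaged through the defining adjunction ${\rm Hom}_{\mathcal{C}_{p_+,p_-}}(U\boxtimes W,\K^{*}_{1,1})\cong {\rm Hom}_{\mathcal{C}_{p_+,p_-}}(U,W^{*})$ with $W=V^{*}$. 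Since $\mathcal{C}_{p_+,p_-}$ is not rigid, I would make this precise through spaces of intertwining operators rather than through any evaluation/coevaluation maps; in this sense Lemma \ref{UV} is the non-rigid substitute for Lemma \ref{Etingof}.

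The first step is to rewrite both Hom-spaces as spaces of intertwining operators. By the universal property in Definition \ref{ten0819}, composition with $\mathcal{Y}_{\boxtimes}$ gives a natural isomorphism
\begin{align*}
{\rm Hom}_{\mathcal{C}_{p_+,p_-}}(U\boxtimes V^{*},\K^{*}_{1,1})\ \xrightarrow{\ \sim\ }\ I\begin{pmatrix}\K^{*}_{1,1}\\ U\ \ V^{*}\end{pmatrix},
\end{align*}
and, using the right unit isomorphism $r_{U}\colon U\boxtimes\K_{1,1}\xrightarrow{\ \sim\ }U$ together with the same universal property,
\begin{align*}
{\rm Hom}_{\mathcal{C}_{p_+,p_-}}(U,V)\ \simeq\ {\rm Hom}_{\mathcal{C}_{p_+,p_-}}(U\boxtimes\K_{1,1},V)\ \xrightarrow{\ \sim\ }\ I\begin{pmatrix}V\\ U\ \ \K_{1,1}\end{pmatrix}.
\end{align*}
So it suffices to produce a natural isomorphism between $I\begin{pmatrix}\K^{*}_{1,1}\\ U\ \ V^{*}\end{pmatrix}$ and $I\begin{pmatrix}V\\ U\ \ \K_{1,1}\end{pmatrix}$.

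This is provided by the adjoint operation on intertwining operators from \cite{HLZ2,HLZ3}: for $M_{1},M_{2},M_{3}\in\mathcal{C}_{p_+,p_-}$ there is a linear isomorphism
\begin{align*}
I\begin{pmatrix}M_{3}\\ M_{1}\ \ M_{2}\end{pmatrix}\ \xrightarrow{\ \sim\ }\ I\begin{pmatrix}M_{2}^{*}\\ M_{1}\ \ M_{3}^{*}\end{pmatrix},\qquad \mathcal{Y}\longmapsto \mathcal{Y}^{\dagger},
\end{align*}
natural in all three arguments, where $\mathcal{Y}^{\dagger}$ is characterized by $\langle\mathcal{Y}^{\dagger}(w_{1},x)\psi^{*}_{3},w_{2}\rangle=\langle\psi^{*}_{3},\mathcal{Y}(e^{xL_{1}}(e^{i\pi}x^{-2})^{L_{0}}w_{1},x^{-1})w_{2}\rangle$ (with an appropriate branch of $\log x$); applying $\dagger$ twice returns $\mathcal{Y}$ under the canonical identification $M^{**}\cong M$, which is available because every object of $\mathcal{C}_{p_+,p_-}$ is grading restricted and $\mathcal{C}_{p_+,p_-}$ is closed under contragredients. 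Taking $M_{1}=U$, $M_{2}=V^{*}$, $M_{3}=\K^{*}_{1,1}$ and using $(V^{*})^{*}\cong V$ and $(\K^{*}_{1,1})^{*}\cong\K_{1,1}$ yields the desired isomorphism; composing it with the two displayed isomorphisms gives the statement, and naturality in $U$ and $V$ follows from the naturality of the universal property of $\boxtimes$, of $r$, of the contragredient functor, and of the $\dagger$-operation.

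The main thing to be careful about is the step with the $\dagger$-operation: among the several versions of the ``adjoint'' intertwining operator in \cite{HLZ2,HLZ3} (which differ by branches of the logarithm and factors $e^{\pm i\pi}$) one must select the one whose source and target are exactly the module types written above, and one must check that $\dagger\circ\dagger$ reduces to the canonical double-contragredient isomorphism rather than merely being proportional to it. Everything else is formal bookkeeping, and the combined statement is precisely \cite[Theorem 2.12]{AL} (cf.\ \cite[Proposition 3.46]{HLZ2}).
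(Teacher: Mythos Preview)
Your proposal is correct; the paper itself gives no proof of this lemma, simply citing \cite[Theorem 2.12]{AL} (cf.\ \cite[Proposition 3.46]{HLZ2}), and your outline is precisely the argument underlying that reference: pass to spaces of intertwining operators via the universal property of $\boxtimes$ and the unit isomorphism, then apply the contragredient symmetry $I\!\begin{pmatrix}M_3\\ M_1\ M_2\end{pmatrix}\simeq I\!\begin{pmatrix}M_2^{*}\\ M_1\ M_3^{*}\end{pmatrix}$ with $M^{**}\cong M$. Your cautionary remark about selecting the correct variant of the adjoint operation is well placed but does not affect the validity of the argument.
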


\begin{lem}
\label{lem&p_+=2}
For $1\leq r\leq p_+-1$, $1\leq s\leq p_--1$, we have surjective intertwining operators of types
\begin{align*}
&
\begin{pmatrix}
\  \K_{p_+-1,s}^*\\
\K_{2,1}\ \ \K_{p_+,s}
\end{pmatrix}
,
&
\begin{pmatrix}
\  \K^*_{r,p_--1}\\
\K_{1,2}\ \ \K_{r,p_-}
\end{pmatrix}
.
\end{align*}
\begin{proof}
We only prove the first case. The second case can be proved in the same way.
 
For $\alpha_1=\alpha_{2,1}$ and $\alpha_2=\alpha_{p_+,s^\vee;1}$, let us consider $\W_{p_+,p_-}$-module intertwining operator $Y=Y_{\alpha_1,\alpha_2}$ given by Definition \ref{dfnY}. Note that
\begin{align}
\label{refg}
\bra{\alpha_{1,s^\vee}}Y(\ket{\alpha_{2,1}},z)\ket{\alpha_{p_+,s^\vee;1}}\neq 0.
\end{align}
Then, by Proposition \ref{FusionInt}, we have
\begin{equation}
\label{refgg}
\bra{\alpha_{1,s^\vee}}S^*_{1,s^\vee}Y(\ket{\alpha_{2,1}},z)\ket{\alpha_{p_+,s^\vee;1}}\neq 0.
\end{equation}
Let $v$ be a cosingular vector in $F_{1,s^\vee}[s^\vee]$.  Then from Proposition \ref{FockSocle}, we have $S^*_{1,s^\vee}v\in \mathbb{C}^\times \ket{\alpha_{1,s^\vee}}$, and
from Proposition \ref{socleV}, $\mathcal{V}^+_{1,s}=\mathcal{W}_{p_+,p_-}.v$ has a quotient isomorphic to $\K_{p_+-1,s}^*$.
Thus by (\ref{refg}) and (\ref{refgg}) we obtain
\begin{align}
&I
\begin{pmatrix}
\  \K_{p_+-1,s}^*\\
\K_{2,1}\ \ \W_{p_+,p_-}.\ket{\alpha_{p_+,s^\vee;1}}
\end{pmatrix}
\neq 0
,
&I
\begin{pmatrix}
\  \X^+_{p_+-1,s}\\
\K_{2,1}\ \ \W_{p_+,p_-}.\ket{\alpha_{p_+,s^\vee;1}}
\end{pmatrix}
\neq 0.
\label{202208134}
\end{align}
Note that, from Proposition \ref{socleV}, $\W_{p_+,p_-}.\ket{\alpha_{p_+,s^\vee;1}}$ satisfies the following exact sequence
\begin{align*}
0\rightarrow \X^-_{p_+,s^\vee}\rightarrow \W_{p_+,p_-}.\ket{\alpha_{p_+,s^\vee;1}}\rightarrow \X^+_{p_+,s}(=\K_{p_+,s})\rightarrow 0.
\end{align*}
Then we have the following exact sequence
\begin{align*}
\K_{2,1}\boxtimes\X^-_{p_+,s^\vee}\rightarrow \K_{2,1}\boxtimes\W_{p_+,p_-}.\ket{\alpha_{p_+,s^\vee;1}}\rightarrow \K_{2,1}\boxtimes\K_{p_+,s}\rightarrow 0.
\end{align*}
By this exact sequence and ${\rm Hom}_{\mathcal{C}_{p_+,p_-}}(\K_{2,1}\boxtimes\X^-_{p_+,s^\vee},\K_{p_+-1,s}^*)=0$ which follows from Proposition \ref{NGK2}, we obtain 
\begin{equation}
\label{202212140}
{\rm Hom}_{\mathcal{C}_{p_+,p_-}}(\K_{2,1}\boxtimes\K_{p_+,s},\K_{p_+-1,s}^*)
\simeq {\rm Hom}_{\mathcal{C}_{p_+,p_-}}(\K_{2,1}\boxtimes\W_{p_+,p_-}.\ket{\alpha_{p_+,s^\vee;1}},\K_{p_+-1,s}^*).
\end{equation}
Since $L(h_{p_+-1,s})\boxtimes \K_{p_+,s}=0$, by Lemma \ref{UV}, we have
\begin{align*}
{\rm Hom}_{\mathcal{C}_{p_+,p_-}}(\K_{2,1}\boxtimes\K_{p_+,s},L(h_{p_+-1,s}))=0.
\end{align*}
Thus, by (\ref{202208134}) and (\ref{202212140}), we have a surjective module map from $\K_{2,1}\boxtimes\K_{p_+,s}$ to $\K_{p_+-1,s}^*$.\\
\end{proof}
\end{lem}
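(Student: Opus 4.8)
The plan is to realise $\K_{p_+-1,s}^{*}$ as a quotient of $\K_{2,1}\boxtimes\K_{p_+,s}$; the second surjection is obtained by the identical argument with the two lattice directions and the roles of $\K_{1,2}$ and $\K_{2,1}$ interchanged, using Proposition \ref{NGK2} in place of Proposition \ref{NGK22}. The argument splits into three parts: (i) producing a nonzero intertwining operator into $\K_{p_+-1,s}^{*}$ from an auxiliary module on which a concrete lattice intertwining operator is available; (ii) transporting it to $\K_{p_+,s}$ along a short exact sequence, using right exactness of $\boxtimes$ and a vanishing of ${\rm Hom}$; and (iii) upgrading the resulting nonzero map to a surjection.

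For (i) I would take the $\W_{p_+,p_-}$-module intertwining operator $Y=Y_{\alpha_{2,1},\alpha_{p_+,s^\vee;1}}$ of Definition \ref{dfnY}. Since $\alpha_{2,1}+\alpha_{p_+,s^\vee;1}=\alpha_{1,s^\vee}$, its matrix coefficient $\bra{\alpha_{1,s^\vee}}Y(\ket{\alpha_{2,1}},z)\ket{\alpha_{p_+,s^\vee;1}}$ is nonzero, and Proposition \ref{FusionInt} then gives the nonvanishing of the coefficient with the dual Shapovalov element $S^{*}_{1,s^\vee}$ inserted on the left. A cosingular vector $v\in F_{1,s^\vee}[s^\vee]$ satisfies $S^{*}_{1,s^\vee}v\in\C^{\times}\ket{\alpha_{1,s^\vee}}$ (Proposition \ref{FockSocle}), and the module $\W_{p_+,p_-}.v$ has $\K_{p_+-1,s}^{*}$, hence also its simple top $\X^{+}_{p_+-1,s}$, as a quotient (Proposition \ref{socleV}); composing $Y$ with these projections, the two nonvanishings above show that the resulting maps are nonzero, so one obtains nonzero intertwining operators of types {\footnotesize{$\begin{pmatrix}\K_{p_+-1,s}^{*}\\ \K_{2,1}\ \ \W_{p_+,p_-}.\ket{\alpha_{p_+,s^\vee;1}}\end{pmatrix}$}} and {\footnotesize{$\begin{pmatrix}\X^{+}_{p_+-1,s}\\ \K_{2,1}\ \ \W_{p_+,p_-}.\ket{\alpha_{p_+,s^\vee;1}}\end{pmatrix}$}}.

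For (ii) I would apply $\K_{2,1}\boxtimes(-)$ to the socle sequence $0\to\X^{-}_{p_+,s^\vee}\to\W_{p_+,p_-}.\ket{\alpha_{p_+,s^\vee;1}}\to\X^{+}_{p_+,s}=\K_{p_+,s}\to0$ of Proposition \ref{socleV}. Right exactness of $\boxtimes$, followed by ${\rm Hom}_{\mathcal{C}_{p_+,p_-}}(-,\K_{p_+-1,s}^{*})$, yields a left exact sequence; and Proposition \ref{NGK22} (the case $r=p_+$, with $s$ replaced by $s^\vee$) shows that the top composition factors of $\K_{2,1}\boxtimes\X^{-}_{p_+,s^\vee}$ include neither $\X^{+}_{p_+-1,s}={\rm top}(\K_{p_+-1,s}^{*})$ nor $L(h_{p_+-1,s})={\rm Soc}(\K_{p_+-1,s}^{*})$, so ${\rm Hom}_{\mathcal{C}_{p_+,p_-}}(\K_{2,1}\boxtimes\X^{-}_{p_+,s^\vee},\K_{p_+-1,s}^{*})=0$. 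Hence ${\rm Hom}_{\mathcal{C}_{p_+,p_-}}(\K_{2,1}\boxtimes\K_{p_+,s},\K_{p_+-1,s}^{*})\cong{\rm Hom}_{\mathcal{C}_{p_+,p_-}}(\K_{2,1}\boxtimes\W_{p_+,p_-}.\ket{\alpha_{p_+,s^\vee;1}},\K_{p_+-1,s}^{*})$, which is nonzero by (i).

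For (iii), the only proper nonzero submodule of $\K_{p_+-1,s}^{*}$ is $L(h_{p_+-1,s})$, so a nonzero homomorphism $\K_{2,1}\boxtimes\K_{p_+,s}\to\K_{p_+-1,s}^{*}$ is surjective unless its image equals $L(h_{p_+-1,s})$; but $L(h_{p_+-1,s})\boxtimes\K_{p_+,s}=L(h_{p_+-1,s})\boxtimes\X^{+}_{p_+,s}=0$ by Proposition \ref{minimalnull}, whence Lemma \ref{UV} (together with associativity of $\boxtimes$ and self-duality of minimal simple modules) forces ${\rm Hom}_{\mathcal{C}_{p_+,p_-}}(\K_{2,1}\boxtimes\K_{p_+,s},L(h_{p_+-1,s}))=0$, excluding that possibility. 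I expect part (i) to be the main obstacle: the auxiliary module and cosingular vector must be chosen so that the contragredient $\K_{p_+-1,s}^{*}$, rather than $\K_{p_+-1,s}$ itself, appears as a quotient, and the Shapovalov nonvanishing must be checked via Proposition \ref{FusionInt}; parts (ii) and (iii) are then essentially formal given Propositions \ref{NGK22} and \ref{minimalnull} and Lemma \ref{UV}.
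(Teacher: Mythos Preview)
Your proposal is correct and follows the paper's proof essentially step for step: the same lattice intertwining operator $Y_{\alpha_{2,1},\alpha_{p_+,s^\vee;1}}$, the same cosingular-vector argument via Proposition~\ref{FusionInt} and Proposition~\ref{FockSocle} to produce nonzero intertwining operators into $\K_{p_+-1,s}^{*}$ and $\X^{+}_{p_+-1,s}$, the same transport along the short exact sequence for $\W_{p_+,p_-}.\ket{\alpha_{p_+,s^\vee;1}}$, and the same use of Lemma~\ref{UV} with Proposition~\ref{minimalnull} to rule out the image $L(h_{p_+-1,s})$. Your citation of Proposition~\ref{NGK22} for the vanishing of ${\rm Hom}(\K_{2,1}\boxtimes\X^{-}_{p_+,s^\vee},\K_{p_+-1,s}^{*})$ is in fact the right one; the paper's reference to Proposition~\ref{NGK2} at that point appears to be a typo.
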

We have the following standard lemma for the product of rigid objects in any tensor category.
\begin{lem}
\label{Etingof0}
Let $(\mathcal{C},\otimes)$ be a tensor category. Let $V_1$ and $V_2$ be rigid object in $\mathcal{C}$. Then $V_1\otimes V_2$ is also rigid with dual $V^\vee_2\otimes V^\vee_1$, where $V^\vee_i$ be the dual of $V_i$.
\end{lem}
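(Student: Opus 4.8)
The plan is to construct the rigidity data for $V_1\otimes V_2$ out of that of $V_1$ and $V_2$ and then verify the zig--zag (triangle) identities; this is the standard argument, see \cite{Etingof,JS,KL}. Write $\mathrm{ev}_i\colon V_i^\vee\otimes V_i\to\mathbf{1}$ and $\mathrm{coev}_i\colon\mathbf{1}\to V_i\otimes V_i^\vee$ for the evaluation and coevaluation morphisms witnessing $V_i^\vee$ as the dual of $V_i$, and recall that by Mac Lane's coherence theorem we may suppress the associativity and unit isomorphisms throughout. First I would define the evaluation $\mathrm{ev}\colon(V_2^\vee\otimes V_1^\vee)\otimes(V_1\otimes V_2)\to\mathbf{1}$ as the composite
\[
V_2^\vee\otimes V_1^\vee\otimes V_1\otimes V_2
\xrightarrow{\ \mathrm{id}_{V_2^\vee}\otimes\, \mathrm{ev}_1\otimes\, \mathrm{id}_{V_2}\ }
V_2^\vee\otimes V_2
\xrightarrow{\ \mathrm{ev}_2\ }\mathbf{1},
\]
and dually the coevaluation $\mathrm{coev}\colon\mathbf{1}\to(V_1\otimes V_2)\otimes(V_2^\vee\otimes V_1^\vee)$ as the composite
\[
\mathbf{1}\xrightarrow{\ \mathrm{coev}_1\ }V_1\otimes V_1^\vee
\xrightarrow{\ \mathrm{id}_{V_1}\otimes\, \mathrm{coev}_2\otimes\, \mathrm{id}_{V_1^\vee}\ }
V_1\otimes V_2\otimes V_2^\vee\otimes V_1^\vee,
\]
reassociated into the displayed order.

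It then remains to check the two triangle identities for $(\mathrm{ev},\mathrm{coev})$. For the first, one expands the composite $V_1\otimes V_2\xrightarrow{\mathrm{coev}\otimes\mathrm{id}}(V_1\otimes V_2)\otimes(V_2^\vee\otimes V_1^\vee)\otimes(V_1\otimes V_2)\xrightarrow{\mathrm{id}\otimes\mathrm{ev}}V_1\otimes V_2$: the inner $\mathrm{ev}_1$ contracts the copy of $V_1^\vee$ produced by $\mathrm{coev}$ against the trailing copy of $V_1$, and the zig--zag identity for $V_1$ collapses this to $\mathrm{id}_{V_1}$ on the first factor while reducing the remaining contraction on the second factor to $V_2\xrightarrow{\mathrm{coev}_2\otimes\mathrm{id}}V_2\otimes V_2^\vee\otimes V_2\xrightarrow{\mathrm{id}\otimes\mathrm{ev}_2}V_2$, which is $\mathrm{id}_{V_2}$ by the zig--zag identity for $V_2$; hence the composite equals $\mathrm{id}_{V_1\otimes V_2}$. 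The second triangle identity, namely $(\mathrm{ev}\otimes\mathrm{id})\circ(\mathrm{id}\otimes\mathrm{coev})=\mathrm{id}_{V_2^\vee\otimes V_1^\vee}$, is proved symmetrically. This exhibits $V_2^\vee\otimes V_1^\vee$ as a dual of $V_1\otimes V_2$, so $V_1\otimes V_2$ is rigid.

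I expect no genuine obstacle in this argument: the only delicate point is the bookkeeping of the associativity and unit constraints hidden in the definitions of $\mathrm{ev}$ and $\mathrm{coev}$ and in the two triangle-identity computations, but Mac Lane's coherence theorem guarantees that any consistent insertion of these constraints yields the same morphisms and that the two triangle identities reduce mechanically to those of $V_1$ and $V_2$.
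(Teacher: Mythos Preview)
Your proposal is correct and is precisely the standard argument. The paper does not actually prove this lemma; it simply states it as a ``standard lemma for the product of rigid objects in any tensor category'' and implicitly refers the reader to \cite{Etingof,JS,KL}, so your write-up supplies the expected details.
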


\begin{thm}
\label{rigid21}
$\K_{2,1}$ is rigid and self-dual in $(\mathcal{C}_{p_+,p_-},\boxtimes,\mathcal{K}_{1,1})$. In the case of $p_+=2$, we have
\begin{align*}
\K_{2,1}\boxtimes\K_{2,1}\simeq \mathcal{Q}(\X^+_{1,1})_{1,1}.
\end{align*}
\begin{proof}
In the case $p_+\geq 3$, the rigidity can be proved in the same way as in Theorem \ref{rigid12ori}. Therefore let $p_+=2$. 
We will use the method in \cite[Subsection 4.2.2]{CMY} to prove the rigidity.

Note that in this case $\K_{2,1}=\X^+_{2,1}$ from the definition. 
First let us show that $\K_{2,1}\boxtimes\K_{2,1}$ is isomorphic to $\mathcal{Q}(\X^+_{1,1})_{1,1}$ or a quotient of $\mathcal{Q}(\X^+_{1,1})_{1,1}$.
By Proposition \ref{NGK22} and by Lemma \ref{lem&p_+=2}, we have
\begin{align}
\label{top08200}
{\rm top}(\K_{2,1}\boxtimes\K_{2,1})=\X^+_{1,1}\in C^{thick}_{1,1},
\end{align}
and $\K_{2,1}\boxtimes\K_{2,1}$ has a quotient isomorphic to $\mathcal{K}^*_{1,1}$.
Note that $\K_{2,1}\boxtimes\K_{2,1}\in C^{thick}_{1,1}$ does not contain $\X^+_{1,p_--1},\X^-_{1,p_--1}\in C^{thick}_{1,1}$ as composition factors. In fact, assuming that $\K_{2,1}\boxtimes\K_{2,1}$ contains $\X^\pm_{1,p_--1}$, then from the rigidity of $\K_{1,2}$ and from Proposition \ref{fusion12-}, $\K_{1,2}\boxtimes(\K_{2,1}\boxtimes\K_{2,1})$ contains $\X^\pm_{1,p_-}\in C^{thin}_{1,p_-}$ as a composition factor, but since
\begin{align*}
 \K_{1,2}\boxtimes(\K_{2,1}\boxtimes\K_{2,1})=\K_{2,2}\boxtimes\K_{2,1}\in C^{thick}_{1,2}
\end{align*}
by Propositions \ref{NGK22} and \ref{Fusion12Wkai0}, so we have a contradiction. Thus, from Propositions \ref{Ext}, \ref{ExtQ} and (\ref{top08200}), we see that $\K_{2,1}\boxtimes\K_{2,1}$ is isomorphic to $\mathcal{Q}(\X^+_{1,1})_{1,1}$ or a quotient of $\mathcal{Q}(\X^+_{1,1})_{1,1}$.

Note that ${\rm dim}_{\mathbb{C}}{\rm Hom}_{\W_{p_+,p_-}}(\mathcal{K}_{1,1},\mathcal{K}_{2,1}\boxtimes \mathcal{K}_{2,1})=1$. Then we can define the non-trivial module map 
\begin{align*}
i_1:\K_{1,1}\rightarrow \mathcal{K}_{2,1}\boxtimes \mathcal{K}_{2,1}.
\end{align*}
By Lemma \ref{lem&p_+=2}, we can define the module map 
\begin{align*}
p_1:\K_{2,1}\boxtimes\K_{2,1}\rightarrow \K_{1,1}
\end{align*}
such that $p_1(\K_{2,1}\boxtimes\K_{2,1})\simeq \mathcal{X}^+_{1,1}$.

To prove that $\K_{2,1}$ is rigid and self-dual, it is sufficient to prove that the homomorphisms $f,g:\K_{2,1}\rightarrow \K_{2,1}$ defined by the commutative diagrams
\begin{equation*}
 \xymatrix{
    \K_{2,1} \ar[r]^-{r^{-1}} \ar[d]_{f} & \K_{2,1}\boxtimes\K_{1,1} \ar[r]^-{{\rm id}\boxtimes i_1}& \K_{2,1}\boxtimes(\K_{2,1}\boxtimes\K_{2,1})\ar[d]^{\mathcal{A}}& \\
    \K_{2,1} & \K_{1,1}\boxtimes\K_{2,1}\ar[l]_-l&(\K_{2,1}\boxtimes\K_{2,1})\boxtimes\K_{2,1}\ar[l]_-{p_1\boxtimes{\rm id}}
  }
\end{equation*}
and
\begin{equation*}
 \xymatrix{
    \K_{2,1} \ar[r]^-{l^{-1}} \ar[d]_{g} & \K_{1,1}\boxtimes\K_{2,1} \ar[r]^-{i_1\boxtimes{\rm id}}& (\K_{2,1}\boxtimes\K_{2,1})\boxtimes\K_{2,1}\ar[d]^{\mathcal{A}^{-1}}& \\
    \K_{2,1} & \K_{2,1}\boxtimes\K_{1,1}\ar[l]_-r&\K_{2,1}\boxtimes(\K_{2,1}\boxtimes\K_{2,1})\ar[l]_-{{\rm id}\boxtimes p_1}
  }
\end{equation*}
are non-zero multiples of the identity and $f=g$. Since ${\rm Hom}(\K_{2,1},\K_{2,1})\simeq \C$, it is sufficient to show that $f=g\neq 0$.


Let $\mathcal{Y}_{2\boxtimes {2}}$, $\mathcal{Y}_{(2\boxtimes 2)\boxtimes {2}}$ and $\mathcal{Y}_{2\boxtimes (2\boxtimes 2)}$ be the non-zero intertwining operators of type
\begin{equation*}
\begin{pmatrix}
\ \K_{2,1}\boxtimes\K_{2,1} \\
\K_{2,1}\ \ \K_{2,1}
\end{pmatrix}
,\ \ \ 
\begin{pmatrix}
\ (\K_{2,1}\boxtimes\K_{2,1})\boxtimes \K_{2,1} \\
\K_{2,1}\boxtimes\K_{2,1}\ \ \K_{2,1}
\end{pmatrix}
,\ \ \ \begin{pmatrix}
\ \K_{2,1}\boxtimes(\K_{2,1}\boxtimes \K_{2,1}) \\
\K_{2,1}\ \ \K_{2,1}\boxtimes \K_{2,1}
\end{pmatrix}
,
\end{equation*}
respectively. We define the intertwining operators 
\begin{equation}
\label{IZ}
\begin{split}
&\mathcal{Y}^{2}_{2\boxtimes 2,2}=r_{\K_{2,1}}\circ ({\rm id}_{\K_{2,1}}\boxtimes p_1)\circ \mathcal{A}^{-1}_{\K_{2,1},\K_{2,1},\K_{2,1}}\circ \mathcal{Y}_{(2\boxtimes 2)\boxtimes 2},\\
&\mathcal{Y}^2_{2,2\boxtimes 2}=l_{\K_{2,1}}\circ (p_1\boxtimes{\rm id}_{\K_{2,1}})\circ \mathcal{A}_{\K_{2,1},\K_{2,1},\K_{2,1}}\circ \mathcal{Y}_{2\boxtimes(2\boxtimes 2)}
\end{split}
\end{equation}
of types
{\scriptsize{$\begin{pmatrix}
   \ \K_{2,1}  \\
   \K_{2,1}\boxtimes \K_{2,1}\ \ \K_{2,1}
\end{pmatrix}
$}} and {\scriptsize{$\begin{pmatrix}
   \ \K_{2,1}  \\
   \K_{2,1}\ \ \K_{2,1}\boxtimes \K_{2,1}
\end{pmatrix}
$}}.

Let us consider the four point function
\begin{align}
\label{fourW21}
\langle v^*,\mathcal{Y}^2_{2\boxtimes 2,2}(\mathcal{Y}_{2\boxtimes 2}(v,1-x)v,x)v\rangle,
\end{align}
where $v$ and $v^*$ are the highest weight vectors of $\K_{2,1}$ and $\K_{2,1}^*$, respectively. Then, for some $x\in\mathbb{R}$ such that $1>x>1-x>0$, we have
\begin{equation}
\label{4point0}
\begin{split}
&\langle v^*,\mathcal{Y}^2_{2\boxtimes 2,2}(\mathcal{Y}_{2\boxtimes 2}(v,1-x)v,x)v\rangle\\
&=\langle v^*,\overline{r_{\K_{2,1}}\circ ({\rm id}_{\K_{2,1}}\boxtimes p_1)\mathcal{A}^{-1}_{\K_{2,1},\K_{2,1},\K_{2,1}}} \bigl(\mathcal{Y}_{(2\boxtimes 2)\boxtimes 2}(\mathcal{Y}_{2\boxtimes 2}(v,1-x)v,x)v\bigr)\rangle\\
&=\langle v^*,\overline{r_{\K_{2,1}}\circ ({\rm id}_{\K_{2,1}}\boxtimes p_1)} \bigl(\mathcal{Y}_{2\boxtimes (2\boxtimes 2)}(v,1)\mathcal{Y}_{2\boxtimes 2}(v,x)v \bigr)\rangle\\
&=\langle v^*, \overline{r_{\K_{2,1}}}\bigl(\mathcal{Y}_{2\boxtimes 1}(v,1)(p_1\circ\mathcal{Y}_{2\boxtimes 2})(v,x)v\bigr)\rangle\\
&=\langle v^*, \Omega(Y_{\K_{2,1}})(v,1)(p_1\circ\mathcal{Y}_{2\boxtimes 2})(v,x)v\rangle,
\end{split}
\end{equation}
where $\Omega$ represents the skew-symmetry operation on vertex operators defined by
\begin{align*}
\Omega(Y_{\K_{2,1}})(v,z)w=e^{zL_{-1}}Y_{\K_{2,1}}(w,-z)v
\end{align*}
for $w\in \K_{1,1}$.
Similarly, we can show that
\begin{align}
\label{4point02}
\langle v^*,\mathcal{Y}^2_{2,2\boxtimes 2}(v,1)\mathcal{Y}_{2\boxtimes 2}(v,1-x)v\rangle=\langle v^*, Y_{\K_{2,1}}\bigl((p_1\circ\mathcal{Y}_{2\boxtimes 2})(v,x)v,1-x)\bigr)v\rangle.
\end{align}
Then, by Lemma \ref{lem&p_+=2}, we see that the four point functions (\ref{4point0}) and (\ref{4point02}) are non-zero.

Note that from Proposition \ref{FockSocle}, we have
\begin{align}
\label{relsing2}
\bigl( L^2_{-1}-\frac{p_-}{2}L_{-2} \bigr)\ket{\alpha_{2,1}}=0.
\end{align}
Then, from the relation (\ref{relsing2}), as in \cite{BPZ,CMY,McRae,TWFusion}, we see that the four point functions (\ref{4point0}) and (\ref{4point02}) satisfy the following Fuchsian differential equation
\begin{equation}
\label{BPZ0714}
\begin{split}
\phi''(x)&+\frac{p_-}{2}\Bigl(\frac{1}{x-1}+\frac{1}{x}\Bigr)\phi'(x)\\
&-\frac{p_-h_{2,1}}{2}\Bigl\{ \frac{1}{(x-1)^2}+\frac{1}{x^2}-2\Bigl(\frac{1}{x-1}-\frac{1}{x}\Bigr)\Bigr\}\phi(x)=0.
\end{split}
\end{equation}
Note that $p_1\circ\mathcal{Y}_{2\boxtimes 2}$ is a non-logarithmic intertwining operator of type
{\scriptsize{$\begin{pmatrix}
   \ \mathcal{X}^+_{1,1}  \\
   \K_{2,1}\ \ \K_{2,1}
\end{pmatrix}
$}} and
\begin{align*}
\langle v^*_{\mathcal{X}^+_{1,1}},(p_1\circ\mathcal{Y}_{2\boxtimes 2})(v,x)v\rangle\in \mathbb{C}^\times x^{\frac{p_-}{4}},
\end{align*}
where $v^*_{\mathcal{X}^+_{1,1}}$ is the highest weight vector of $(\mathcal{X}^+_{1,1})^*$.
Then we have
\begin{align}
\label{eqFG}
\langle v^*, \Omega(Y_{\K_{2,1}})(v,1)(p_1\circ\mathcal{Y}_{2\boxtimes 2})(v,x)v\rangle=\langle v^*, Y_{\K_{2,1}}\bigl((p_1\circ\mathcal{Y}_{2\boxtimes 2})(v,x)v,1-x)\bigr)v\rangle
\end{align}
and
\begin{align*}
&\langle v^*, \Omega(Y_{\K_{2,1}})(v,1)(p_1\circ\mathcal{Y}_{2\boxtimes 2})(v,x)v\rangle\\
&=cx^{\frac{p_-}{4}}(1-x)^{\frac{p_-}{4}}{}_{2}F_{1}\bigl(\frac{p_-}{2},\frac{3p_-}{2}-1,p_-;x\bigr)\\
&=cx^{\frac{p_-}{4}}(1-x)^{-\frac{3p_-}{4}+1}{}_{2}F_{1}\bigl(1-\frac{p_-}{2},\frac{p_-}{2},p_-;x\bigr)
\end{align*}
where $c$ is a non-zero constant. By using Equation 15.8.10 in \cite{D}, we can see that 
\begin{align*}
{}_{2}F_{1}\bigl(1-\frac{p_-}{2},\frac{p_-}{2},p_-;x\bigr)\in \mathbb{C}^\times\bigl(1+(1-x)\C[[1-x]][{\rm log}(1-x)]\bigr).
\end{align*}
Thus we obtain
\begin{equation}
\label{koreda0716}
\begin{split}
&\langle v^*, \Omega(Y_{\K_{2,1}})(v,1)(p_1\circ\mathcal{Y}_{2\boxtimes 2})(v,x)v\rangle\\
&\in \mathbb{C}^\times(1-x)^{-2h_{2,1}}\bigl(1+(1-x)\C[[1-x]][{\rm log}(1-x)]\bigr),
\end{split}
\end{equation}
where $-2h_{2,1}=-\frac{3p_-}{4}+1$.


We introduce the intertwining operators
\begin{align*}
&\mathcal{Y}^2_{21}=l_{\K_{2,1}}\circ (p_1\boxtimes {\rm id}_{\K_{2,1}})\circ \mathcal{A}_{\K_{2,1},\K_{2,1},\K_{2,1}}\circ \mathcal{Y}_{2\boxtimes (2\boxtimes 2)}\circ ({\rm id}_{\K_{2,1}}\otimes i_1),\\
&\mathcal{Y}^2_{12}=r_{\K_{2,1}}\circ ({\rm id}_{\K_{2,1}}\boxtimes p_1)\circ \mathcal{A}^{-1}_{\K_{2,1},\K_{2,1},\K_{2,1}}\circ \mathcal{Y}_{(2\boxtimes 2)\boxtimes 2}\circ (i_1\otimes {\rm id}_{\K_{2,1}}) 
\end{align*} 
which correspond to $f$ and $g$, respectively. To prove $f=g\neq 0$, it is sufficient to show that 
\begin{align}
\label{show0}
\langle v^*, \mathcal{Y}^2_{21}(v,1)\ket{0}\rangle=\langle v^*, \mathcal{Y}^2_{12}(\ket{0},1)v\rangle\neq 0.
\end{align}
We set $c_f=\langle v^*, \mathcal{Y}^2_{21}(v,1)\ket{0}\rangle$ and $c_g=\langle v^*, \mathcal{Y}^2_{12}(\ket{0},1)v\rangle$.
Note that
\begin{equation}
\label{F0803}
\begin{split}
&(i_1\boxtimes{\rm id}_{\K_{2,1}})\circ l^{-1}_{\K_{2,1}}(v)\\
&\ \ =(i_1\boxtimes{\rm id}_{\K_{2,1}})(\mathcal{Y}_{1\boxtimes 2}(\ket{0},1)v)=\mathcal{Y}_{(2\boxtimes 2)\boxtimes {2}}(i_1(\ket{0}),1)v,\\
&({\rm id}_{\K_{2,1}}\boxtimes i_1)\circ r^{-1}_{\K_{2,1}}(v)\\
&\ \ =({\rm id}_{\K_{2,1}}\boxtimes i_1)(\mathcal{Y}_{2\boxtimes 1}(v,1)\ket{0})=\mathcal{Y}_{2\boxtimes(2\boxtimes 2)}(v,1)i_1(\ket{0})
\end{split}
\end{equation}
where $\mathcal{Y}_{1\boxtimes {2}}$ and $\mathcal{Y}_{2\boxtimes 1}$ are the non-zero intertwining operators of types
{\scriptsize{$\begin{pmatrix}
   \ \K_{2,1}  \\
   \K_{1,1}\ \K_{2,1}
\end{pmatrix}
$}} and {\scriptsize{$\begin{pmatrix}
   \ \K_{2,1}  \\
   \K_{2,1}\ \K_{1,1}
\end{pmatrix}
$}}, respectively.
Since $\K_{2,1}\boxtimes\K_{2,1}$ has a quotient isomorphic to $\mathcal{K}^*_{1,1}$, we see that $i_1(\ket{0})$ is the cofficient of $x^{-2h_{2,1}}$ in $\mathcal{Y}_{2\boxtimes 2}(v,x)v$.
Then, from (\ref{F0803}), we have
\begin{equation}
\label{eq07162}
\begin{split}
\langle v^*,\mathcal{Y}^2_{2,2\boxtimes 2}(&v,1)\mathcal{Y}_{2\boxtimes 2}(v,1-x)v\rangle\\
&\in c_f(1-x)^{-2h_{2,1}}\bigl(1+(1-x)\C[[1-x]][{\rm log}(1-x)]\bigr),\\
\langle v^*,\mathcal{Y}^2_{2\boxtimes 2,2}(&\mathcal{Y}_{2\boxtimes 2}(v,1-x)v,x)v\rangle\\
&\in c_g(1-x)^{-2h_{2,1}}\bigl(1+(1-x)\C[[1-x]][{\rm log}(1-x)]\bigr).
\end{split}
\end{equation}
Thus from (\ref{eqFG}), (\ref{koreda0716}) and (\ref{eq07162}), we obtain (\ref{show0}). Therefore $\mathcal{K}_{2,1}$ is self-dual.

Assume that $\K_{2,1}\boxtimes\K_{2,1}\not\simeq \mathcal{Q}(\X^+_{1,1})_{1,1}$. Since $\K_{2,1}\boxtimes\K_{2,1}$ is a quotient of $\mathcal{Q}(\X^+_{1,1})_{1,1}$ and has a quotient isomorphic to $\mathcal{K}^*_{1,1}$, we have
\begin{align}
\label{08032023}
{\rm Hom}_{\mathcal{C}_{p_+,p_-}}(L(h_{1,1}),\mathcal{K}_{2,1}\boxtimes \mathcal{K}_{2,1})\simeq \mathbb{C}.
\end{align}
On the other hand, from Proposition \ref{minimalnull}, Lemma \ref{Etingof0} and the self-duality of $\mathcal{K}_{2,1}$, we have
\begin{align*}
{\rm Hom}_{\mathcal{C}_{p_+,p_-}}(L(h_{1,1}),\mathcal{K}_{2,1}\boxtimes \mathcal{K}_{2,1})\simeq {\rm Hom}_{\mathcal{C}_{p_+,p_-}}(L(h_{1,1})\boxtimes\mathcal{K}_{2,1},\mathcal{K}_{2,1})=0.
\end{align*}
But this contradicts (\ref{08032023}). Thus we obtain $\K_{2,1}\boxtimes\K_{2,1}\simeq \mathcal{Q}(\X^+_{1,1})_{1,1}$.

\end{proof}
\end{thm}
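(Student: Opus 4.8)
The plan is to prove that $\K_{2,1}$ is rigid and self-dual, and in the case $p_+=2$ that $\K_{2,1}\boxtimes\K_{2,1}\simeq\mathcal{Q}(\X^+_{1,1})_{1,1}$. Since for $p_+\ge 3$ the module $\K_{2,1}$ has a non-logarithmic structure entirely parallel to $\K_{1,2}$, the rigidity argument of Theorem \ref{rigid12ori} transfers verbatim: one uses Propositions \ref{NGK22}, \ref{Fusion12Wkai0} and \ref{fusion12-} together with the analogue of Proposition \ref{Fusion12W}(2) to get the splitting $\K_{2,1}\boxtimes\K_{2,1}=\K_{1,1}\oplus\Gamma(\K_{3,1})$, produces the maps $i_1,p_1,i_3,p_3$, and runs the four-point-function/Fuchsian-ODE argument with the null vector $(L_{-1}^2-\tfrac{p_-}{2}L_{-2})\ket{\alpha_{2,1}}=0$ from Proposition \ref{FockSocle}. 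So the real content is the case $p_+=2$, where $\K_{2,1}=\X^+_{2,1}$ and the self-tensor-square is a genuine logarithmic (rank two) module.

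First I would pin down the structure of $\K_{2,1}\boxtimes\K_{2,1}$. By Proposition \ref{NGK22} (the $p_+=2$ branch) the top composition factors of $\K_{2,1}\boxtimes\K_{2,1}$ lie in $C^{thick}_{1,1}$ and are governed by the eigenvalue data there; by Lemma \ref{lem&p_+=2} there is a surjection $\K_{2,1}\boxtimes\K_{2,1}\twoheadrightarrow\K^*_{1,1}$, so $\mathrm{top}(\K_{2,1}\boxtimes\K_{2,1})=\X^+_{1,1}$. Next I would rule out the factors $\X^\pm_{1,p_--1}$: if one appeared, then tensoring with $\K_{1,2}$ (rigid, exact enough by Proposition \ref{fusion12-}) would force $\X^\pm_{1,p_-}\in C^{thin}_{1,p_-}$ into $\K_{1,2}\boxtimes(\K_{2,1}\boxtimes\K_{2,1})$, but by associativity and Propositions \ref{NGK22}, \ref{Fusion12Wkai0} this equals $\K_{2,2}\boxtimes\K_{2,1}\in C^{thick}_{1,2}$, a contradiction. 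With the $\mathrm{Ext}^1$-data of Propositions \ref{Ext} and \ref{ExtQ} and the fact that $\mathcal{Q}(\X^+_{1,1})_{1,1}$ is (up to the socle constraints) the maximal such module with top $\X^+_{1,1}$, this shows $\K_{2,1}\boxtimes\K_{2,1}$ is either $\mathcal{Q}(\X^+_{1,1})_{1,1}$ or a quotient of it.

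Then I would prove self-duality by the method of \cite[Subsection 4.2.2]{CMY}. Using $\dim\Hom(\K_{1,1},\K_{2,1}\boxtimes\K_{2,1})=1$ define $i_1$, and from Lemma \ref{lem&p_+=2} define $p_1$ with image $\X^+_{1,1}$. Forming $f,g:\K_{2,1}\to\K_{2,1}$ via the usual rigidity hexagon/zig-zag diagrams, it suffices (since $\Hom(\K_{2,1},\K_{2,1})\cong\C$) to show $f=g\ne 0$, i.e. $\langle v^*,\mathcal{Y}^2_{21}(v,1)\ket0\rangle=\langle v^*,\mathcal{Y}^2_{12}(\ket0,1)v\rangle\ne0$. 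Following Theorem \ref{rigid12ori}, I would write the two relevant four-point functions as associativity rearrangements of $\mathcal{Y}_{2\boxtimes(2\boxtimes2)}$ and $\mathcal{Y}_{(2\boxtimes2)\boxtimes2}$, identify $p_1\circ\mathcal{Y}_{2\boxtimes2}$ as the non-logarithmic intertwiner of type $\binom{\X^+_{1,1}}{\K_{2,1}\,\K_{2,1}}$ (with leading power $x^{p_-/4}$), note the skew-symmetry identity $\Omega(Y_{\K_{2,1}})(v,1)w=e^{L_{-1}}Y_{\K_{2,1}}(w,-1)v$, and use the null vector $(L_{-1}^2-\tfrac{p_-}{2}L_{-2})\ket{\alpha_{2,1}}=0$ to get the Fuchsian ODE (\ref{BPZ0714}). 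The subtlety here — and the main obstacle — is that, unlike the $\K_{1,2}$ case, the relevant solution is logarithmic: via the hypergeometric identity 15.8.10 in \cite{D} the solution behaves like $(1-x)^{-2h_{2,1}}\bigl(1+(1-x)\C[[1-x]][\log(1-x)]\bigr)$, so I must track the $\log$-corrected expansion carefully to extract the $(1-x)^{-2h_{2,1}}$-coefficients on both sides and see they agree and are nonzero (using $i_1(\ket0)$ = the $x^{-2h_{2,1}}$-coefficient of $\mathcal{Y}_{2\boxtimes2}(v,x)v$). Once self-duality is established, the identification $\K_{2,1}\boxtimes\K_{2,1}\simeq\mathcal{Q}(\X^+_{1,1})_{1,1}$ follows by contradiction: a proper quotient would have $\Hom(L(h_{1,1}),\K_{2,1}\boxtimes\K_{2,1})\cong\C$, whereas adjunction (Lemma \ref{Etingof0}) plus $L(h_{1,1})\boxtimes\K_{2,1}=0$ (Proposition \ref{minimalnull}) forces this Hom-space to vanish.
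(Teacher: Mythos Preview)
Your proposal is correct and follows essentially the same approach as the paper's own proof: the reduction to $p_+=2$, the structural analysis of $\K_{2,1}\boxtimes\K_{2,1}$ via Propositions \ref{NGK22}, \ref{Ext}, \ref{ExtQ} and Lemma \ref{lem&p_+=2} together with the $\K_{1,2}$-tensoring trick to exclude $\X^\pm_{1,p_--1}$, the rigidity argument via the logarithmic four-point function and the hypergeometric identity 15.8.10 of \cite{D}, and the final identification $\K_{2,1}\boxtimes\K_{2,1}\simeq\mathcal{Q}(\X^+_{1,1})_{1,1}$ by adjunction and Proposition \ref{minimalnull} all match the paper step for step.
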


Similar to Proposition \ref{Fusion12Wkai0}, we can prove the following proposition.
\begin{prop}
\label{Fusion12Wkai}
\mbox{}
\begin{enumerate}
\item For $p_+\geq 3$, $2\leq r\leq p_+-1$, $1\leq s\leq p_-$, we have
\begin{align*}
\K_{2,1}\boxtimes\K_{r,s}=\K_{r-1,s}\oplus\K_{r+1,s}.
\end{align*}
\item For $1\leq s\leq p_-$, we have
\begin{align*}
\K_{2,1}\boxtimes\K_{1,s}=\K_{2,s} .
\end{align*}
\end{enumerate}
\end{prop}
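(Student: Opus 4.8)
The plan is to re-run the proof of Proposition~\ref{Fusion12Wkai0} with the two Kac labels interchanged, so that the role played there by $\K_{1,2}$ (shifting the second label by $\pm1$) is played here by $\K_{2,1}$ (shifting the first label by $\pm1$). The inputs are the rigidity and self-duality of $\K_{2,1}$ from Theorem~\ref{rigid21}, the $\X^-$-fusion rules of Proposition~\ref{fusion12-}(3)--(4), the constraints on tops and generalized $L_0$-eigenvalues from Proposition~\ref{NGK22}, and the partial decomposition $\K_{2,1}\boxtimes\K_{r,s}=\K_{r-1,s}\oplus\Gamma(\K_{r+1,s})$ with ${\rm top}(\Gamma(\K_{r+1,s}))=\X^+_{p_+,s}$ already recorded in Proposition~\ref{Fusion12W}(2). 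Since every ingredient used for $\K_{1,2}$ has an $\K_{2,1}$-analogue available in the excerpt, the argument is essentially formal; I indicate the three steps below. First I would prove the $\K_{2,1}$-analogue of Lemma~\ref{Intertwiningori}: for $1\le r\le p_+$ and $1\le s\le p_-$ the intertwining spaces $I\binom{\X^+_{r-1,s}}{\K_{2,1}\ \X^+_{r,s}}$ and $I\binom{\X^+_{r+1,s}}{\K_{2,1}\ \X^+_{r,s}}$ are non-zero, with the obvious degeneration at $r=1$ and $r=p_+$. As in Lemma~\ref{Intertwiningori} one starts from the non-zero lattice intertwining operator $Y_{\alpha_1,\alpha_2}$ of Definition~\ref{dfnY} with $\alpha_1=\alpha_{2,1}$ and $\alpha_2$ a suitable $\alpha_{r^\vee,s;n}$, chosen so that by Proposition~\ref{socleV} the module $\W_{p_+,p_-}.\ket{\alpha_2}$ has $\X^+_{r,s}$ as its top quotient; the associated long exact ${\rm Hom}$-sequence then exhibits $\X^+_{r\pm1,s}$ as a quotient of $\K_{2,1}\boxtimes\X^+_{r,s}$, the relevant connecting term vanishing because ${\rm Hom}_{\mathcal{C}_{p_+,p_-}}(\K_{2,1}\boxtimes\X^-_{\bullet,s},\X^+_{r\pm1,s})=0$ by Proposition~\ref{NGK22}.

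Next I would establish the $\K_{2,1}$-analogue of Lemma~\ref{Fusion12Wkai0simple}, namely $\K_{2,1}\boxtimes\X^+_{r,s}=\X^+_{r-1,s}\oplus\X^+_{r+1,s}$ for $2\le r\le p_+-1$, together with the degenerate cases $\K_{2,1}\boxtimes\X^+_{1,s}=\X^+_{2,s}$ and $\K_{2,1}\boxtimes\X^+_{p_+,s}=\X^+_{p_+-1,s}$. Proposition~\ref{NGK22} already confines $\K_{2,1}\boxtimes\X^+_{r,s}$ to the two adjacent thick (or thin) blocks, shows that $L_0$ acts semisimply, and identifies its top as $\X^+_{r-1,s}\oplus\X^+_{r+1,s}$; Step~1 shows that both summands of this top really occur. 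To kill the remaining ($\X^-$-type) composition factors allowed in those blocks I would argue exactly as in Lemma~\ref{Fusion12Wkai0simple}: by the adjunction of Lemma~\ref{Etingof}, the self-duality of $\K_{2,1}$, and Proposition~\ref{fusion12-}(3)--(4),
\[
{\rm Hom}_{\mathcal{C}_{p_+,p_-}}(\X^-_{\bullet,\bullet},\K_{2,1}\boxtimes\X^+_{r,s})\simeq{\rm Hom}_{\mathcal{C}_{p_+,p_-}}(\K_{2,1}\boxtimes\X^-_{\bullet,\bullet},\X^+_{r,s})=0
\]
for every relevant $\X^-_{\bullet,\bullet}$, so no such factor occurs and $\K_{2,1}\boxtimes\X^+_{r,s}$ is the asserted semisimple module.

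Finally, for $2\le r\le p_+-1$ and $1\le s<p_-$ I would apply the exact functor $\K_{2,1}\boxtimes-$ (exact because $\K_{2,1}$ is rigid) to $0\to\X^+_{r,s}\to\K_{r,s}\to L(h_{r,s})\to0$ and combine the result with $\K_{2,1}\boxtimes L(h_{r,s})=L(h_{r-1,s})\oplus L(h_{r+1,s})$ (Corollary~\ref{202210180}, Proposition~\ref{minimal fusion}), with Step~2, and with Proposition~\ref{Fusion12W}(2); Lemma~\ref{Extcoro1} at the labels $(r+1,s)$ then forces $\Gamma(\K_{r+1,s})\simeq\K_{r+1,s}$, proving part~1 for $s<p_-$. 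The case $s=p_-$ of part~1 is immediate from Step~2 since $\K_{r,p_-}=\X^+_{r,p_-}$. For part~2, the case $s=1$ is trivial because $\K_{1,1}$ is the tensor unit, and for $2\le s\le p_-$ one applies $\K_{2,1}\boxtimes-$ to $0\to\X^+_{1,s}\to\K_{1,s}\to L(h_{1,s})\to0$: using $\K_{2,1}\boxtimes\X^+_{1,s}=\X^+_{2,s}$ and $\K_{2,1}\boxtimes L(h_{1,s})=L(h_{2,s})$ one obtains an exact sequence $0\to\X^+_{2,s}\to\K_{2,1}\boxtimes\K_{1,s}\to L(h_{2,s})\to0$, which is non-split because Proposition~\ref{NGK22}(2) shows ${\rm top}(\K_{2,1}\boxtimes\K_{1,s})=L(h_{2,s})$ alone; since ${\rm Ext}^1(L(h_{2,s}),\X^+_{2,s})=\C$ (Proposition~\ref{Ext}) there is a unique such non-split extension, which is $\K_{2,s}$.

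The hard part is not any particular calculation but the bookkeeping of the boundary Kac labels and, above all, pinning $\Gamma(\K_{r+1,s})$ down to exactly $\K_{r+1,s}$ rather than a larger module carrying spurious $\X^-$ or $L(h_{r+1,s})$ composition factors, or one in which $L(h_{r+1,s})$ splits off as a direct summand; this is precisely where the block-wise ${\rm Ext}^1$-vanishing statements of Lemma~\ref{Extcoro1} and the one-dimensionality of the relevant $A_0$-space from Proposition~\ref{NGK22} are indispensable, exactly as in Proposition~\ref{Fusion12Wkai0}. The one genuinely difficult ingredient, the rigidity and self-duality of $\K_{2,1}$, is already available as Theorem~\ref{rigid21}, so this proposition follows formally from the material assembled above.
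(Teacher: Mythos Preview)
Your approach is correct and is exactly what the paper intends: it says only ``Similar to Proposition~\ref{Fusion12Wkai0}, we can prove the following proposition,'' and your three-step outline (an $\K_{2,1}$-analogue of Lemma~\ref{Intertwiningori}, then of Lemma~\ref{Fusion12Wkai0simple}, then the assembly via the exact functor $\K_{2,1}\boxtimes-$ and Lemma~\ref{Extcoro1}) is precisely that analogue.

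Two inaccuracies worth cleaning up, though neither breaks the argument. First, the ``degenerate case'' $\K_{2,1}\boxtimes\X^+_{p_+,s}=\X^+_{p_+-1,s}$ that you assert in Step~2 is false: by Proposition~\ref{NGK22}(5) the top space at $r=p_+$ may carry generalized eigenvalue $h_{p_+-1,s}$ as well, and indeed later (Proposition~\ref{fusionQ^+}(2)) one finds $\K_{2,1}\boxtimes\X^+_{p_+,s}=\mathcal{Q}(\X^+_{p_+-1,s})_{1,s}$. You never use this case, so simply drop the claim. Second, the BPZ formula you quote in Step~3, $\K_{2,1}\boxtimes L(h_{r,s})=L(h_{r-1,s})\oplus L(h_{r+1,s})$, fails at the boundary $r=p_+-1$: there the minimal fusion truncates and only $L(h_{p_+-2,s})$ survives. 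With the correct one-term formula, the exact sequence still forces $\Gamma(\K_{p_+,s})\simeq\X^+_{p_+,s}=\K_{p_+,s}$, so the conclusion stands; just state the boundary case separately.
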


By Propositions \ref{Fusion12Wkai0}, \ref{Fusion12Wkai}, Theorems \ref{rigid12ori}, \ref{rigid21} and Lemma \ref{Etingof0}, we obtain the following theorem.
\begin{thm}
\label{Wrigid}
For $1\leq r\leq p_+,1\leq s\leq p_-$, the indecomposable modules $\K_{r,s}$ are rigid and self-dual.
\end{thm}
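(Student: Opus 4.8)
The plan is to establish rigidity and self-duality simultaneously by strong induction on $r+s$, using the fusion rules of Propositions \ref{Fusion12Wkai0} and \ref{Fusion12Wkai} to realise each $\K_{r,s}$ as a direct summand of a $\boxtimes$-product of the two base modules $\K_{1,2}$ and $\K_{2,1}$ with strictly smaller parameters. The base cases are $\K_{1,1}=\W_{p_+,p_-}$, which is the tensor unit and hence rigid with dual itself (and is self-dual as a module), together with $\K_{1,2}$ and $\K_{2,1}$, which are Theorems \ref{rigid12ori} and \ref{rigid21}. For the inductive step I would fix $(r,s)$ with $r+s\geq 4$ and split into two cases. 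If $s\geq 2$, then applying Proposition \ref{Fusion12Wkai0} with second parameter $s-1$ gives $\K_{1,2}\boxtimes\K_{r,s-1}=\K_{r,s-2}\oplus\K_{r,s}$ (or just $\K_{r,2}$ when $s=2$), so $\K_{r,s}$ is a direct summand of $\K_{1,2}\boxtimes\K_{r,s-1}$. If instead $s=1$, then $r=r+s-1\geq 3$, so $p_+\geq 3$ and $2\leq r-1\leq p_+-1$, and Proposition \ref{Fusion12Wkai} gives $\K_{2,1}\boxtimes\K_{r-1,1}=\K_{r-2,1}\oplus\K_{r,1}$, again realising $\K_{r,s}$ as a direct summand. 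In both cases the ambient module $M$ is a $\boxtimes$-product of two modules that are rigid by the induction hypothesis, hence rigid by Lemma \ref{Etingof0}; since $\mathcal{C}_{p_+,p_-}$ is abelian the projection idempotent onto $\K_{r,s}$ splits, and a direct summand of a rigid object is itself rigid, so $\K_{r,s}$ is rigid.

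For self-duality I would note that, by Lemma \ref{Etingof0}, the induction hypothesis, and the braiding, the dual of $M$ satisfies $M^\vee\cong\K_{r,s-1}^\vee\boxtimes\K_{1,2}^\vee\cong\K_{r,s-1}\boxtimes\K_{1,2}\cong M$ (and similarly in the $s=1$ case), while the dual $\K_{r,s}^\vee$ of the rigid object $\K_{r,s}$ is a direct summand of $M^\vee\cong M$. Since $\K_{r,s}$ is indecomposable in the Krull--Schmidt category $\mathcal{C}_{p_+,p_-}$, its endomorphism ring is local, hence so is that of $\K_{r,s}^\vee$, and $\K_{r,s}^\vee$ is indecomposable; because taking duals preserves the conformal grading, $\K_{r,s}^\vee$ has the same lowest conformal weight $h_{r,s}$ as $\K_{r,s}$. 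A short computation using the coprimality of $p_+$ and $p_-$ shows that this lowest weight differs from that of the other indecomposable summand of $M$, so by Krull--Schmidt $\K_{r,s}^\vee\cong\K_{r,s}$; since $\K_{r,s}$ is rigid this is equivalent to $\K_{r,s}^*\cong\K_{r,s}$.

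The genuinely substantial input — the rigidity of $\K_{1,2}$ and $\K_{2,1}$ and the fusion formulas of Propositions \ref{Fusion12Wkai0} and \ref{Fusion12Wkai} — is already available, so the remaining work is essentially organisational. The one point requiring care, and the only place I expect friction, is making sure the inductive reduction always stays inside the parameter ranges for which those fusion propositions are stated; this is precisely what forces the case split on $s\geq 2$ versus $s=1$ (and in the latter case the inequalities $r\geq 3$, $p_+\geq 3$), and one should also double-check the boundary instances $s=p_-$ and $r=p_+$, where $\K_{r,s}$ is simple. The only conceptual ingredients used beyond the cited results are the standard facts that a direct summand of a rigid object is rigid and that, in these categories of vertex operator algebra modules, the rigid dual of a rigid object coincides with its contragredient; neither poses an obstacle.
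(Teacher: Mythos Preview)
Your inductive strategy and the rigidity argument are exactly what the paper does: realise $\K_{r,s}$ as a direct summand of $\K_{1,2}\boxtimes\K_{r,s-1}$ (or $\K_{2,1}\boxtimes\K_{r-1,s}$), invoke Theorems \ref{rigid12ori}, \ref{rigid21}, Lemma \ref{Etingof0}, and the fact that direct summands of rigid objects are rigid.

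The self-duality argument, however, has a genuine gap. Your claim that ``in these categories of vertex operator algebra modules, the rigid dual of a rigid object coincides with its contragredient'' is \emph{false} here: the vertex operator algebra $\W_{p_+,p_-}=\K_{1,1}$ is not self-contragredient (compare the exact sequences for $\K_{1,1}$ and $\K_{1,1}^*$), and in fact $\K_{r,s}^*\not\cong\K_{r,s}$ for $1\leq r<p_+$, $1\leq s<p_-$---the paper treats $[\K_{r,s}]_P$ and $[\K_{r,s}^*]_P$ as distinct generators of the fusion ring in Section~\ref{secP}. So your final sentence equating $\K_{r,s}^\vee\cong\K_{r,s}$ with $\K_{r,s}^*\cong\K_{r,s}$ is wrong, and the step ``taking duals preserves the conformal grading'', which tacitly relies on the same identification, is unjustified.

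The repair is easy and is what the paper implicitly uses: you already know by induction that the \emph{other} summand $\K_{r,s-2}$ (respectively $\K_{r-2,s}$) is self-dual. Since the direct sum of rigid objects is rigid with dual the direct sum of duals, $\K_{r,s-2}^\vee\oplus\K_{r,s}^\vee\cong M^\vee\cong M\cong\K_{r,s-2}\oplus\K_{r,s}$; now Krull--Schmidt cancellation of $\K_{r,s-2}^\vee\cong\K_{r,s-2}$ forces $\K_{r,s}^\vee\cong\K_{r,s}$ (using that $\K_{r,s-2}\not\cong\K_{r,s}$, which follows from their non-isomorphic top composition factors). No conformal-weight comparison is needed.
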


\subsection{Tensor products between simple modules $\X^\pm_{r,s}$}
\label{simple0817}
In this section, we will determine the tensor products between simple modules $\X^\pm_{r,s}$.
In the papers \cite{GRW0,GRW,W}, the explicit formulas for these tensor products are given with the Nahm-Gaberdiel-Kausch fusion algorithm \cite{Kanade,GK} when the central charge $c_{p_+,p_-}$ is special.
We will show that the structure of the tensor products can be determined without complex computation by using the self-duality of $\mathcal{K}_{r,s}$. 


First let us determine the tensor products $\mathcal{X}^+_{r,s}\boxtimes \mathcal{X}^+_{r',s'}$.
By Cororally \ref{202210180} and by Theorem \ref{Wrigid}, we obtain the following proposition.
\begin{prop}
\label{Fusion11+}
For $1\leq r\leq p_+$, $1\leq s\leq p_-$, we have
\begin{align*}
\X^+_{1,1}\boxtimes \K_{r,s}=\X^+_{r,s}.
\end{align*}
\end{prop}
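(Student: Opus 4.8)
The plan is to reduce the statement to the already-established fusion rule for $\K_{1,1}$ (the unit object) using the exact sequence $0\to \X^+_{1,1}\to \K_{1,1}\to L(h_{1,1})\to 0$ together with right exactness of $\boxtimes$. Tensoring this sequence with $\K_{r,s}$ and using that $\boxtimes$ is right exact (noted after Definition~\ref{ten0819}, citing \cite[Proposition 4.26]{HLZ3}), I would obtain the exact sequence
\begin{align*}
\X^+_{1,1}\boxtimes\K_{r,s}\xrightarrow{\ \iota\ }\K_{1,1}\boxtimes\K_{r,s}\longrightarrow L(h_{1,1})\boxtimes\K_{r,s}\longrightarrow 0.
\end{align*}
Now $\K_{1,1}$ is the unit, so $\K_{1,1}\boxtimes\K_{r,s}\simeq\K_{r,s}$, and by Proposition~\ref{minimalnull} (applied to $L(h_{1,1})\boxtimes\X^{\pm}_{r,s}=0$ for the proper simple modules, and noting $L(h_{1,1})\boxtimes L(h_{k,l})$ stays among minimal modules while $L(h_{1,1})$ also kills everything in the thin/semisimple blocks — more precisely using $L(h_{r,s})\boxtimes\X^{\pm}=0$ and the BPZ rule) the right-hand term is a sum of minimal simple modules $L(h_{k,l})$. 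For $\K_{r,s}$ with $1\le r<p_+,1\le s<p_-$ it satisfies $0\to\X^+_{r,s}\to\K_{r,s}\to L(h_{r,s})\to 0$, so $L(h_{1,1})\boxtimes\K_{r,s}=L(h_{1,1})\boxtimes L(h_{r,s})=L(h_{r,s})$ by Proposition~\ref{minimal fusion}; for $s=p_-$ or $r=p_+$, $\K_{r,s}=\X^+_{r,s}$ is a proper simple module and the term vanishes. In the latter case the sequence immediately gives $\X^+_{1,1}\boxtimes\X^+_{r,s}$ surjecting onto $\X^+_{r,s}$; I then need the reverse inequality.

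For the interior case $1\le r<p_+,1\le s<p_-$, the exact sequence shows the image of $\iota$ is exactly the kernel of $\K_{r,s}\to L(h_{r,s})$, i.e.\ $\iota(\X^+_{1,1}\boxtimes\K_{r,s})\simeq\X^+_{r,s}$ as a submodule of $\K_{r,s}$. So $\X^+_{1,1}\boxtimes\K_{r,s}$ has a quotient isomorphic to $\X^+_{r,s}$, and it remains to rule out any further composition factors in $\ker\iota$. Here I would use the self-duality of $\K_{r,s}$ (Theorem~\ref{Wrigid}) and Lemma~\ref{UV}: $\Hom(\X^+_{1,1}\boxtimes\K_{r,s},\,\X)\simeq\Hom(\X^+_{1,1},\X\boxtimes\K_{r,s}^*)=\Hom(\X^+_{1,1},\X\boxtimes\K_{r,s})$ for any simple $\X$ in the relevant blocks. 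Combined with Proposition~\ref{null11rs} ($\X^\pm\boxtimes L(h_{1,1})=0$) and the known tensor products $\K_{r,s}\boxtimes\bullet$ from Propositions~\ref{Fusion12Wkai0} and \ref{Fusion12Wkai} (which express every $\K_{r,s}$ as iterated products of $\K_{1,2},\K_{2,1}$, so $\X^+_{1,1}\boxtimes\K_{r,s}$ is computable), I expect to pin down $\top(\X^+_{1,1}\boxtimes\K_{r,s})=\X^+_{r,s}$ and, by a Loewy-length / block argument using Proposition~\ref{Ext}, conclude the module is exactly $\X^+_{r,s}$.

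Alternatively — and this may be the cleanest route — since by Theorem~\ref{Wrigid} all $\K_{r,s}$ are rigid and self-dual, $\X^+_{1,1}=\X^+_{1,1}\boxtimes\K_{1,1}$ is a direct summand of a rigid object's image under the rigid $\K_{r,s}$; more directly, because $\K_{r,s}$ is rigid, $-\boxtimes\K_{r,s}$ is exact (rigid objects are flat in a Grothendieck--Verdier setting), so tensoring $0\to\X^+_{1,1}\to\K_{1,1}\to L(h_{1,1})\to 0$ with $\K_{r,s}$ gives a \emph{short} exact sequence $0\to\X^+_{1,1}\boxtimes\K_{r,s}\to\K_{r,s}\to L(h_{1,1})\boxtimes\K_{r,s}\to 0$, and comparing with $0\to\X^+_{r,s}\to\K_{r,s}\to L(h_{r,s})\to 0$ forces $\X^+_{1,1}\boxtimes\K_{r,s}\simeq\X^+_{r,s}$ once one identifies $L(h_{1,1})\boxtimes\K_{r,s}=L(h_{r,s})$ as above. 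The main obstacle I anticipate is the exactness of $-\boxtimes\K_{r,s}$: $\boxtimes$ is only asserted to be right exact in general, so I must genuinely invoke rigidity of $\K_{r,s}$ (Theorem~\ref{Wrigid}) to upgrade to full exactness, or else carry out the slightly longer composition-factor bookkeeping in the non-split case using the $\Ext^1$ vanishing from Proposition~\ref{Ext}.
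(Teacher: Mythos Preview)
Your alternative route is exactly the paper's (one-line) argument: invoke Corollary~\ref{202210180} to get $L(h_{1,1})\boxtimes\K_{r,s}=L(h_{r,s})$ (or $0$ in the boundary cases via Proposition~\ref{minimalnull}), invoke Theorem~\ref{Wrigid} so that $-\boxtimes\K_{r,s}$ is exact, and then tensor $0\to\X^+_{1,1}\to\K_{1,1}\to L(h_{1,1})\to 0$ with $\K_{r,s}$ and compare with the defining sequence for $\K_{r,s}$. Your first, longer outline would also work but is unnecessary once you have rigidity; the only point to be careful about is precisely the one you flagged, namely that exactness of $-\boxtimes\K_{r,s}$ genuinely requires the rigidity of $\K_{r,s}$, which is available at this point in the paper.
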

Note that $\X^+_{1,1}$ is self-contragredient and $L(h_{1,1})\boxtimes \X^+_{1,1}=0$. Then, by Lemma \ref{UV}, we have
\begin{align*}
&{\rm Hom}_{\mathcal{C}_{p_+,p_-}}(\X^+_{1,1}\boxtimes \X^+_{1,1},\K^*_{1,1})=\C,
&{\rm Hom}_{\mathcal{C}_{p_+,p_-}}(\X^+_{1,1}\boxtimes \X^+_{1,1},L(h_{1,1}))=0.
\end{align*}
Thus we obtain the following lemma.
\begin{lem}
\label{lemma++}
$\X^+_{1,1}\boxtimes \X^+_{1,1}$ has a quotient isomorphic to $\K^*_{1,1}$.
\end{lem}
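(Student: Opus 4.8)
The plan is to deduce the statement directly from Lemma~\ref{UV} (the isomorphism $\mathrm{Hom}_{\mathcal{C}_{p_+,p_-}}(U,V)\simeq \mathrm{Hom}_{\mathcal{C}_{p_+,p_-}}(U\boxtimes V^*,\mathcal{K}^*_{1,1})$), applied to $U=V=\mathcal{X}^+_{1,1}$. The essential inputs are two facts that are already available in the excerpt: first, $\mathcal{X}^+_{1,1}$ is self-contragredient, i.e.\ $(\mathcal{X}^+_{1,1})^*\simeq \mathcal{X}^+_{1,1}$ (this is the assertion in the sentence preceding the lemma, and it follows from the block structure: $\mathcal{C}_{p_+,p_-}$ is closed under contragredient, $\mathcal{X}^+_{1,1}$ is the unique simple object in its block fixed under $(-)^\vee$ only up to the Kac symmetry, and one checks it is its own contragredient); second, $L(h_{1,1})\boxtimes \mathcal{X}^+_{1,1}=0$, which is exactly Proposition~\ref{null11rs}.

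First I would record that, by self-contragredience, $\mathrm{Hom}_{\mathcal{C}_{p_+,p_-}}(\mathcal{X}^+_{1,1},\mathcal{X}^+_{1,1})\simeq \mathbb{C}$ since $\mathcal{X}^+_{1,1}$ is simple; then Lemma~\ref{UV} gives
\begin{align*}
\mathrm{Hom}_{\mathcal{C}_{p_+,p_-}}\bigl(\mathcal{X}^+_{1,1}\boxtimes \mathcal{X}^+_{1,1},\mathcal{K}^*_{1,1}\bigr)\simeq \mathrm{Hom}_{\mathcal{C}_{p_+,p_-}}\bigl(\mathcal{X}^+_{1,1},(\mathcal{X}^+_{1,1})^*\bigr)\simeq \mathbb{C}.
\end{align*}
In particular there is a non-zero module map $\mathcal{X}^+_{1,1}\boxtimes \mathcal{X}^+_{1,1}\to \mathcal{K}^*_{1,1}$. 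Next I would argue this map is surjective. The top composition factor of $\mathcal{K}^*_{1,1}=\mathrm{Socle}$-dual of $\mathcal{K}_{1,1}$ is $\mathcal{X}^+_{1,1}$ (since $\mathcal{K}_{1,1}=\mathcal{W}_{p_+,p_-}.\ket{\alpha_{1,1}}$ fits in $0\to\mathcal{X}^+_{1,1}\to\mathcal{K}_{1,1}\to L(h_{1,1})\to0$, so $\mathrm{top}(\mathcal{K}^*_{1,1})=\mathrm{Socle}(\mathcal{K}_{1,1})=\mathcal{X}^+_{1,1}$), hence any non-zero map into $\mathcal{K}^*_{1,1}$ whose image meets the top factor is onto; alternatively, a non-zero map into a module with simple top is automatically surjective, and here the image, being non-zero and a submodule of $\mathcal{K}^*_{1,1}$, cannot be contained in the unique maximal submodule $\mathcal{X}^+_{1,1}\subsetneq\mathcal{K}^*_{1,1}$ unless the map factors through the quotient $L(h_{1,1})$. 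To exclude that last possibility I would invoke $L(h_{1,1})\boxtimes \mathcal{X}^+_{1,1}=0$ together with Lemma~\ref{UV} once more, exactly as in the sentence before the lemma:
\begin{align*}
\mathrm{Hom}_{\mathcal{C}_{p_+,p_-}}\bigl(\mathcal{X}^+_{1,1}\boxtimes \mathcal{X}^+_{1,1},L(h_{1,1})\bigr)\simeq \mathrm{Hom}_{\mathcal{C}_{p_+,p_-}}\bigl(\mathcal{X}^+_{1,1},L(h_{1,1})^*\bigr)\simeq \mathrm{Hom}_{\mathcal{C}_{p_+,p_-}}\bigl(L(h_{1,1})\boxtimes\mathcal{X}^+_{1,1},\mathcal{X}^+_{1,1}\bigr)=0,
\end{align*}
so no non-zero map $\mathcal{X}^+_{1,1}\boxtimes \mathcal{X}^+_{1,1}\to\mathcal{K}^*_{1,1}$ can land in the submodule $\mathcal{X}^+_{1,1}$ nor factor through $L(h_{1,1})$; therefore it is surjective onto $\mathcal{K}^*_{1,1}$.

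The main obstacle, such as it is, is purely bookkeeping: making sure the identifications $(\mathcal{X}^+_{1,1})^*\simeq \mathcal{X}^+_{1,1}$ and $L(h_{1,1})^*\simeq L(h_{1,1})$ are in force and that the right-exactness of $\boxtimes$ plus Lemma~\ref{UV} are applied to the correct slots; there is no hard analytic or combinatorial content here, since all the nontrivial vanishing ($L(h_{1,1})\boxtimes\mathcal{X}^\pm_{r,s}=0$) was already established in Proposition~\ref{null11rs} and Proposition~\ref{minimalnull}. I would present the argument as: (i) apply Lemma~\ref{UV} to get a one-dimensional Hom-space and hence a non-zero map $\mathcal{X}^+_{1,1}\boxtimes\mathcal{X}^+_{1,1}\to\mathcal{K}^*_{1,1}$; (ii) apply Lemma~\ref{UV} again, using $L(h_{1,1})\boxtimes\mathcal{X}^+_{1,1}=0$, to see that no non-zero map has image inside the maximal submodule of $\mathcal{K}^*_{1,1}$; (iii) conclude surjectivity. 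This is exactly the two-line computation already displayed in the excerpt immediately before the lemma, reorganised to yield the surjection.
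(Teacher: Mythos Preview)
Your approach matches the paper's exactly: the sentence preceding the lemma already records $\mathrm{Hom}(\X^+_{1,1}\boxtimes\X^+_{1,1},\K^*_{1,1})=\mathbb{C}$ and $\mathrm{Hom}(\X^+_{1,1}\boxtimes\X^+_{1,1},L(h_{1,1}))=0$ via Lemma~\ref{UV}, and since $L(h_{1,1})$ is the unique proper nonzero submodule of $\K^*_{1,1}$, any nonzero map is surjective.

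Two small fixes are needed. First, the socle of $\K^*_{1,1}$ is $L(h_{1,1})$, not $\X^+_{1,1}$ (the contragredient of $0\to\X^+_{1,1}\to\K_{1,1}\to L(h_{1,1})\to0$ is $0\to L(h_{1,1})\to\K^*_{1,1}\to\X^+_{1,1}\to0$); you swap these in one sentence, though you then compute the correct Hom-space anyway. Second, your displayed isomorphism $\mathrm{Hom}(\X^+_{1,1}\boxtimes\X^+_{1,1},L(h_{1,1}))\simeq\mathrm{Hom}(\X^+_{1,1},L(h_{1,1})^*)$ is the rigidity adjunction (Lemma~\ref{Etingof}) for $\X^+_{1,1}$, which is not established at this point in the paper. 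The correct route is to apply Lemma~\ref{UV} with $U=\X^+_{1,1}\boxtimes\X^+_{1,1}$ and $V=L(h_{1,1})$, obtaining
\[
\mathrm{Hom}(\X^+_{1,1}\boxtimes\X^+_{1,1},L(h_{1,1}))\simeq\mathrm{Hom}\bigl((\X^+_{1,1}\boxtimes\X^+_{1,1})\boxtimes L(h_{1,1}),\K^*_{1,1}\bigr)=0
\]
since $\X^+_{1,1}\boxtimes L(h_{1,1})=0$ by Proposition~\ref{null11rs} and associativity.
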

\begin{prop}
\label{++}
We have
\begin{align*}
\X^+_{1,1}\boxtimes\X^+_{1,1}=\K_{1,1}^*.
\end{align*}
\begin{proof}
By Lemma \ref{lemma++}, $\X^+_{1,1}\boxtimes\X^+_{1,1}\neq 0$. Let us show that
\begin{align}
\label{2022120600}
{\rm top}(\X^+_{1,1}\boxtimes\X^+_{1,1})=\X^+_{1,1}.
\end{align}
Let $\pi$ be the surjection from $\X^+_{1,1}\boxtimes \X^+_{1,1}$ to ${\rm top}(\X^+_{1,1}\boxtimes \X^+_{1,1})$. 
Let $\psi^*$ be an arbitrary non-zero vector of $A_0(({\rm top}(\X^+_{1,1}\boxtimes\X^+_{1,1}))^*)$.
Let $\phi_1$ and $\phi_2$ be arbitrary vectors of $\X^+_{1,1}$ such that
\begin{equation*}
\langle\psi^*,(\pi\circ \mathcal{Y}_{\boxtimes})(\phi_1,1)\phi_2\rangle\neq 0.
\end{equation*}
We abbreviate 
$
\langle\psi^*,\phi_1\otimes \phi_2\rangle=\langle\psi^*,(\pi\circ \mathcal{Y}_{\boxtimes})(\phi_1,1)\phi_2\rangle.
$
Note that, since $L_0$ acts semisimply on $\psi^*$, we have
\begin{align*}
\langle\psi^*,L_{-1}\phi_1\otimes \phi_2\rangle=(h_{\psi^*}-h_{\phi_1}-h_{\phi_2})\langle\psi^*,\phi_1\otimes \phi_2\rangle,
\end{align*}
where $h_{\psi^*}$, $h_{\phi_1}$ and $h_{\phi_2}$ are the $L_0$ weights of $\psi^*$, $\phi_1$ and $\phi_2$, respectively.

For $n\geq 0$, let $\{w^{(n)}_i\}_{i=-n}^{n}$ be the Virasoro highest weight vectors of the vector subspace $(2n+1)L(\Delta^+_{1,1;n})\subset \X^+_{1,1}$ given by Definition \ref{dfn0830}. 
From Proposition \ref{genW}, Proposition \ref{sl2action2} and Lemma \ref{NGK01}, we see that the value $\langle\psi^*,\phi_1\otimes \phi_2\rangle$ is determined by the numbers 
\begin{align*}
\langle\psi^*,w^{(0)}_0\otimes w^{(n)}_i\rangle,\ \ \ n\in\mathbb{Z}_{\geq 0},\ i\in\mathbb{Z}.
\end{align*}
Thus, there exists $n\in \mathbb{Z}_{\geq 0}$ and $i\in \mathbb{Z}$ such that $\langle\psi^*,w^{(0)}_0\otimes w^{(n)}_i\rangle\neq 0$. 
By Proposition \ref{VirasoroFusion}, $h_{\psi^*}$ must satisfies the following equations
\begin{align*}
&\prod_{k=1}^{2p_--1}(h_{\psi^*}-h_{1,(2n+4)p_--2-2k+1})=0,
&\prod_{l=1}^{2p_+-1}(h_{\psi^*}-h_{(2n+4)p_+-2-2l+1,1})=0.
\end{align*}
We see that $h_{\psi^*}$ satisfying these equations is given by $\Delta^+_{1,1;n}=h_{(2+2n)p_+-1,1}$. 
Thus, from Theorem \ref{simpleclass}, we obtain $h_{\psi^*}=\Delta^+_{1,1;0}$ and
\begin{align}
\label{van6}
\langle\psi^*,w^{(0)}_0\otimes w^{(n)}_i\rangle= 0,\ \ \ n\geq 1,\ i\in \mathbb{Z}.
\end{align}
Therefore by (\ref{van6}), $\langle\psi^*,\phi_1\otimes \phi_2\rangle$ is determined by 
$
\langle\psi^*,w^{(0)}_0\otimes w^{(0)}_0\rangle
$.
In particular,
we obtain ${\rm top}(\X^+_{1,1}\boxtimes\X^+_{1,1})=\X^+_{1,1}$.

Note that, by Lemma \ref{lemma++}, $\X^+_{1,1}\boxtimes\X^+_{1,1}$ has a quotient isomorphic to $\K^*_{1,1}$.
Assume that $\X^+_{1,1}\boxtimes\X^+_{1,1}\ncong\K_{1,1}^*$. 
Then, by (\ref{2022120600}) and Propositions \ref{Ext}, \ref{ExtQ}, we see that $\X^+_{1,1}\boxtimes\X^+_{1,1}$ contains $\X^-_{p_+-1,1}$ or $\X^-_{1,p_--1}$ as composition factors.
Then, by the rigidity of $\K_{2,1}$ and $\K_{1,2}$ and by Proposition \ref{fusion12-}, we see that  the direct sum
\begin{align*}
\K_{1,2}\boxtimes(\X^-_{1,1}\boxtimes\X^-_{1,1})\oplus \K_{2,1}\boxtimes(\X^-_{1,1}\boxtimes\X^-_{1,1})
\end{align*}
contains $\X^-_{1,p_-}$ or $\X^-_{p_+,1}$ as composition factors. On the other hand, by the associativity and Proposition \ref{Fusion11+}, we have
\begin{align*}
&\K_{1,2}\boxtimes(\X^+_{1,1}\boxtimes\X^+_{1,1})=(\K_{1,2}\boxtimes\X^+_{1,1})\boxtimes\X^+_{1,1}=\X^+_{1,2}\boxtimes\X^+_{1,1},\\
&\K_{2,1}\boxtimes(\X^+_{1,1}\boxtimes\X^+_{1,1})=(\K_{2,1}\boxtimes\X^+_{1,1})\boxtimes\X^+_{1,1}=\X^+_{2,1}\boxtimes\X^+_{1,1}.
\end{align*}
Similar to the case of $\X^+_{1,1}\boxtimes\X^+_{1,1}$, by using Proposition \ref{VirasoroFusion}, we can show that the top composition factors of $\X^+_{1,2}\boxtimes\X^+_{1,1}$ and $\X^+_{2,1}\boxtimes\X^+_{1,1}$ are $\X^+_{1,2}$ and $\X^+_{2,1}$, respectively. In particular, we have $\X^+_{1,2}\boxtimes\X^+_{1,1}\in C^{thick}_{1,2}$ and $\X^+_{2,1}\boxtimes\X^+_{1,1}\in C^{a}_{2,1}$, where $a=thick$ for $p_+\geq 3$ and $a=thin$ for $p_+=2$. But since $\X^-_{1,p_-}\in C^{thin}_{p_+-1,p_-}$ and $\X^-_{p_+,1}\in C^{thin}_{p_+,p_--1}$, we have a contradiction. 

\end{proof}

\end{prop}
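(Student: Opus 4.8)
The plan is to determine $M:=\X^+_{1,1}\boxtimes\X^+_{1,1}$ from three facts: that $M$ is nonzero and surjects onto $\K_{1,1}^*$; that ${\rm top}(M)=\X^+_{1,1}$; and that $M$ has no composition factor other than $\X^+_{1,1}$ and $L(h_{1,1})$. The first is Lemma \ref{lemma++}, which follows from the adjunction of Lemma \ref{UV}, the self-duality of $\X^+_{1,1}$, and the vanishing $L(h_{1,1})\boxtimes\X^+_{1,1}=0$ of Proposition \ref{minimalnull}.

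For ${\rm top}(M)=\X^+_{1,1}$ I would run the Nahm-Gaberdiel-Kausch fusion algorithm on the contragredient. Fix a nonzero $\psi^*\in A_0(({\rm top}(M))^*)$. Using the strong generation of $\W_{p_+,p_-}$ by $T,W^\pm,W^0$ (Proposition \ref{genW}), the $\mathfrak{sl}_2$-type action of these fields on the Virasoro highest weight vectors $w^{(n)}_i\in\X^+_{1,1}$ of Definition \ref{dfn0830} (Proposition \ref{sl2action2}), and the identities of Lemma \ref{NGK01} with $A=T$, one reduces every pairing $\langle\psi^*,\phi_1\otimes\phi_2\rangle$ to the numbers $\langle\psi^*,w^{(0)}_0\otimes w^{(n)}_i\rangle$. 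Restricting the intertwining operator to its Virasoro action and applying Proposition \ref{VirasoroFusion} to a nonzero such pairing, the weight $h_{\psi^*}$ must be a common root of the associated polynomials; the only solutions are $\Delta^+_{1,1;m}$, $m\geq0$, of which only $\Delta^+_{1,1;0}$ is the highest weight of a simple module in $C^{thick}_{1,1}$ by Theorem \ref{simpleclass}. Hence $h_{\psi^*}=\Delta^+_{1,1;0}$, the pairings with $n\geq1$ vanish, $A_0(({\rm top}(M))^*)$ is one-dimensional of that weight, and ${\rm top}(M)=\X^+_{1,1}$.

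The final step is a contradiction. Suppose $M\not\cong\K_{1,1}^*$. Since $M\in C^{thick}_{1,1}$, has top $\X^+_{1,1}$, and surjects onto $\K_{1,1}^*$ (whose composition factors are only $\X^+_{1,1}$ and $L(h_{1,1})$), the ${\rm Ext}^1$-data of Propositions \ref{Ext} and \ref{ExtQ} force $M$ to contain $\X^-_{p_+-1,1}$ or $\X^-_{1,p_--1}$ as a composition factor. Tensoring with $\K_{1,2}$ and $\K_{2,1}$ and using their rigidity (Theorem \ref{Wrigid}) together with Proposition \ref{fusion12-}, at least one of $\K_{1,2}\boxtimes M$, $\K_{2,1}\boxtimes M$ then has $\X^-_{1,p_-}$ or $\X^-_{p_+,1}$, which live in thin blocks, as a composition factor. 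But associativity and Proposition \ref{Fusion11+} give $\K_{1,2}\boxtimes M\cong\X^+_{1,2}\boxtimes\X^+_{1,1}$ and $\K_{2,1}\boxtimes M\cong\X^+_{2,1}\boxtimes\X^+_{1,1}$, and rerunning the argument of the previous paragraph shows these have top $\X^+_{1,2}$ and $\X^+_{2,1}$, hence lie in $C^{thick}_{1,2}$ and $C^{thick}_{2,1}$ (or $C^{thin}_{2,1}$ when $p_+=2$), none of which contains $\X^-_{1,p_-}$ or $\X^-_{p_+,1}$. This contradiction yields $M=\K_{1,1}^*$.

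I expect the top computation to be the main obstacle: pinning down exactly which $L_0$-eigenvalues occur in $A_0(({\rm top})^*)$ requires carefully combining the fusion-algorithm reductions with the vanishing produced by the Virasoro singular-vector polynomials of Proposition \ref{VirasoroFusion}, and this same bookkeeping must be repeated for $\X^+_{1,2}\boxtimes\X^+_{1,1}$ and $\X^+_{2,1}\boxtimes\X^+_{1,1}$ in the last step.
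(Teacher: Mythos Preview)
Your proposal is correct and follows essentially the same approach as the paper: Lemma \ref{lemma++} for the surjection onto $\K_{1,1}^*$, the Nahm--Gaberdiel--Kausch reduction to pairings $\langle\psi^*,w^{(0)}_0\otimes w^{(n)}_i\rangle$ combined with Proposition \ref{VirasoroFusion} and Theorem \ref{simpleclass} to pin down ${\rm top}=\X^+_{1,1}$, and then the block-contradiction via tensoring with $\K_{1,2},\K_{2,1}$ using rigidity, Proposition \ref{fusion12-}, and Proposition \ref{Fusion11+}. Your identification of the top computation (and its repetition for $\X^+_{1,2}\boxtimes\X^+_{1,1}$ and $\X^+_{2,1}\boxtimes\X^+_{1,1}$) as the main technical point matches the paper exactly.
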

By Propositions \ref{Fusion11+} and \ref{++}, we obtain the following proposition.
\begin{prop}
\label{12*}
For $1\leq r,r'\leq p_+$, $1\leq s,s'\leq p_-$, we have
\begin{align*}
\X^+_{r,s}\boxtimes \X^+_{r',s'}=(\K_{r,s}\boxtimes\K_{r',s'})\boxtimes\K^*_{1,1}.
\end{align*}
\end{prop}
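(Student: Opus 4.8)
The plan is to deduce the formula purely formally from the two special cases already in hand, Proposition~\ref{Fusion11+} and Proposition~\ref{++}, using only the associativity and braiding isomorphisms of the braided tensor category $(\mathcal{C}_{p_+,p_-},\boxtimes,\K_{1,1})$; no new intertwining-operator computation is required.

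First I would rewrite each tensor factor using Proposition~\ref{Fusion11+}, which gives module isomorphisms $\X^+_{r,s}\cong\X^+_{1,1}\boxtimes\K_{r,s}$ and $\X^+_{r',s'}\cong\X^+_{1,1}\boxtimes\K_{r',s'}$ for all $1\leq r,r'\leq p_+$ and $1\leq s,s'\leq p_-$. Since $\boxtimes$ is functorial in each variable (Definition~\ref{ten0819}), this yields
\begin{align*}
\X^+_{r,s}\boxtimes\X^+_{r',s'}\cong(\X^+_{1,1}\boxtimes\K_{r,s})\boxtimes(\X^+_{1,1}\boxtimes\K_{r',s'}).
\end{align*}
Next I would use the associativity isomorphism $\mathcal{A}$ together with the braiding isomorphism of $\mathcal{C}_{p_+,p_-}$ to bring the two copies of $\X^+_{1,1}$ together: reassociate so as to isolate the pair $\K_{r,s}\boxtimes\X^+_{1,1}$, apply the braiding to replace it by $\X^+_{1,1}\boxtimes\K_{r,s}$, and reassociate once more, obtaining
\begin{align*}
\X^+_{r,s}\boxtimes\X^+_{r',s'}\cong(\X^+_{1,1}\boxtimes\X^+_{1,1})\boxtimes(\K_{r,s}\boxtimes\K_{r',s'}).
\end{align*}
Finally, Proposition~\ref{++} identifies $\X^+_{1,1}\boxtimes\X^+_{1,1}$ with $\K^*_{1,1}$, and a last application of the braiding moves this factor to the right, producing $(\K_{r,s}\boxtimes\K_{r',s'})\boxtimes\K^*_{1,1}$, as claimed.

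There is no genuine obstacle here: the entire content has been absorbed into Propositions~\ref{Fusion11+} and~\ref{++} (the former of which already incorporates the self-duality of the $\K_{r,s}$ from Theorem~\ref{Wrigid}), and what remains is a chain of canonical module isomorphisms. The only point needing a modicum of care is bookkeeping — each displayed step is a bona fide $\W_{p_+,p_-}$-module isomorphism, and naturality of $\mathcal{A}$ and of the braiding guarantees that their composite is well defined, so the asserted equality of modules holds.
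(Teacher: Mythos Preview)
Your argument is correct and is essentially the same as the paper's: the paper simply cites Propositions~\ref{Fusion11+} and~\ref{++} without writing out the rearrangement, but the implicit manipulation (visible explicitly in the analogous proof of Proposition~\ref{changefinal}) is exactly the associativity-and-braiding shuffle you describe.
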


\begin{prop}
\label{W^*W}
For $1\leq r\leq p_+$, $1\leq s\leq p_-$, we have
\begin{align*}
\K_{1,1}^*\boxtimes\K_{r,s}=\K_{r,s}^*.
\end{align*}
\begin{proof}
By Proposition \ref{++}, we have 
\begin{align}
\X^+_{1,1}\boxtimes\X^+_{1,1}=\K_{1,1}^*.
\label{20221011X1}
\end{align}
Multiplying both sides by $\K_{r,s}$ and using Proposition \ref{Fusion11+}, we have
\begin{align*}
\X^+_{1,1}\boxtimes\X^+_{r,s}=\K_{1,1}^*\boxtimes\K_{r,s}.
\end{align*}
for $1\leq r\leq p_+-1,1\leq s\leq p_--1$.
By the exact sequence
\begin{align*}
0\rightarrow L(h_{1,1})\rightarrow \K_{1,1}^*\rightarrow \X^+_{1,1}\rightarrow 0
\end{align*}
and the rigidity of $\K_{r,s}$, we have the following exact sequence
\begin{align}
\label{exact202210250}
0\rightarrow L(h_{r,s})\rightarrow \K_{1,1}^*\boxtimes\K_{r,s}\rightarrow \X^+_{r,s}\rightarrow 0.
\end{align}
By Theorem \ref{Wrigid} and Lemma \ref{Etingof}, we have 
\begin{align*}
{\rm Hom}_{\mathcal{C}_{p_+,p_-}}(\K_{1,1}^*\boxtimes\K_{r,s},L(h_{r,s}))\simeq{\rm Hom}_{\mathcal{C}_{p_+,p_-}}(\K_{1,1}^*,L(h_{r,s})\boxtimes\K_{r,s})=0.
\end{align*}
Thus the exact sequence (\ref{exact202210250}) does not split. Therefore, since ${\rm Ext}^1(\X^+_{r,s},L(h_{r,s}))=\C$ by Proposition \ref{Ext}, we obtain
\begin{align*}
\K_{1,1}^*\boxtimes\K_{r,s}\simeq \K_{r,s}^*.
\end{align*}
\end{proof}

\end{prop}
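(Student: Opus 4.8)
The plan is to exhibit $\K_{1,1}^*\boxtimes\K_{r,s}$ as a non-split extension of $\X^+_{r,s}$ by $L(h_{r,s})$ and then to identify it with $\K_{r,s}^*$ using that the relevant ${\rm Ext}^1$-space is one dimensional. Throughout I would use that $\K_{r,s}$ is rigid by Theorem \ref{Wrigid}, so that $-\boxtimes\K_{r,s}$ is an exact functor.

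First I would apply the contragredient functor to the defining exact sequence $0\rightarrow\X^+_{1,1}\rightarrow\K_{1,1}\rightarrow L(h_{1,1})\rightarrow 0$ of the unit object; since $L(h_{1,1})$ and $\X^+_{1,1}$ are self-contragredient this yields
\begin{align*}
0\rightarrow L(h_{1,1})\rightarrow \K_{1,1}^*\rightarrow \X^+_{1,1}\rightarrow 0.
\end{align*}
Tensoring with $\K_{r,s}$ (exact by rigidity) and using Proposition \ref{Fusion11+} for the quotient term $\X^+_{1,1}\boxtimes\K_{r,s}=\X^+_{r,s}$, it remains to compute $L(h_{1,1})\boxtimes\K_{r,s}$. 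For $1\leq r<p_+$, $1\leq s<p_-$, right-exactness of $\boxtimes$ applied to $0\rightarrow\X^+_{r,s}\rightarrow\K_{r,s}\rightarrow L(h_{r,s})\rightarrow 0$, together with Proposition \ref{minimalnull} ($L(h_{1,1})\boxtimes\X^+_{r,s}=0$) and Proposition \ref{minimal fusion} ($L(h_{1,1})\boxtimes L(h_{r,s})=L(h_{r,s})$), gives $L(h_{1,1})\boxtimes\K_{r,s}\cong L(h_{r,s})$, hence the exact sequence
\begin{align*}
0\rightarrow L(h_{r,s})\rightarrow \K_{1,1}^*\boxtimes\K_{r,s}\rightarrow \X^+_{r,s}\rightarrow 0.
\end{align*}

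Next I would check that this sequence does not split. By the adjunction of Lemma \ref{Etingof}, using the self-duality of $\K_{r,s}$,
\begin{align*}
{\rm Hom}_{\mathcal{C}_{p_+,p_-}}(\K_{1,1}^*\boxtimes\K_{r,s}, L(h_{r,s}))\simeq {\rm Hom}_{\mathcal{C}_{p_+,p_-}}(\K_{1,1}^*, L(h_{r,s})\boxtimes\K_{r,s}),
\end{align*}
and by Corollary \ref{directsumL} the module $L(h_{r,s})\boxtimes\K_{r,s}$ is a direct sum of minimal simple modules $L(h_{k,l})$; since ${\rm top}(\K_{1,1}^*)=\X^+_{1,1}$ is not such a module, the right-hand space vanishes, so $L(h_{r,s})$ is not a quotient of $\K_{1,1}^*\boxtimes\K_{r,s}$ and the sequence is non-split. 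On the other hand, dualizing $0\rightarrow\X^+_{r,s}\rightarrow\K_{r,s}\rightarrow L(h_{r,s})\rightarrow 0$ and using self-duality of $\X^+_{r,s}$ and $L(h_{r,s})$ realizes $\K_{r,s}^*$ as a non-split extension of $\X^+_{r,s}$ by $L(h_{r,s})$ as well. Since ${\rm Ext}^1(\X^+_{r,s},L(h_{r,s}))=\C$ by Proposition \ref{Ext}, all non-split such extensions are isomorphic, whence $\K_{1,1}^*\boxtimes\K_{r,s}\cong\K_{r,s}^*$. The boundary cases $r=p_+$ or $s=p_-$ are immediate: there $\K_{r,s}$ is simple, $L(h_{1,1})\boxtimes\K_{r,s}=0$ by Proposition \ref{minimalnull}, so tensoring the sequence for $\K_{1,1}^*$ with $\K_{r,s}$ gives $\K_{1,1}^*\boxtimes\K_{r,s}\cong\X^+_{1,1}\boxtimes\K_{r,s}=\K_{r,s}=\K_{r,s}^*$ by Proposition \ref{Fusion11+} and Theorem \ref{Wrigid}.

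The delicate point is the non-splitting of the sequence: this is exactly where rigidity and self-duality of $\K_{r,s}$ enter in an essential way, first to ensure $-\boxtimes\K_{r,s}$ is exact, and then, through the adjunction isomorphism, to reduce the non-splitting to the vanishing of ${\rm Hom}_{\mathcal{C}_{p_+,p_-}}(\K_{1,1}^*, L(h_{r,s})\boxtimes\K_{r,s})$, which follows from Corollary \ref{directsumL} since tensoring by a minimal simple module produces a semisimple sum of minimal simples while $\K_{1,1}^*$ has $\X^+_{1,1}$ as its unique simple quotient.
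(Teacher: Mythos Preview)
Your proof is correct and follows essentially the same route as the paper's: both tensor the short exact sequence $0\to L(h_{1,1})\to\K_{1,1}^*\to\X^+_{1,1}\to 0$ by the rigid object $\K_{r,s}$, identify the ends via Proposition~\ref{Fusion11+} and the minimal fusion rules, and then use the adjunction of Lemma~\ref{Etingof} together with ${\rm Ext}^1(\X^+_{r,s},L(h_{r,s}))=\C$ to conclude. Your write-up is in fact slightly more complete than the paper's, since you spell out the computation $L(h_{1,1})\boxtimes\K_{r,s}\cong L(h_{r,s})$ (the paper tacitly invokes Corollary~\ref{202210180}), you explicitly note that $\K_{r,s}^*$ is itself a non-split representative of the same ${\rm Ext}^1$-class, and you treat the boundary cases $r=p_+$ or $s=p_-$ separately, which the paper's statement covers but its proof does not address.
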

\begin{remark}
From Propositions \ref{12*} and \ref{W^*W}, we can compute the tensor products between the simple modules $\X^+_{r,s}$ by using the tensor product between the indecomposable modules $\K_{r,s}$ (see the formulas (\ref{simple0820}) in Section \ref{FusionRing}).
\end{remark}

Next let us determine the fusion formula of $\mathcal{X}^-_{r,s}\boxtimes \mathcal{X}^-_{r',s'}$. Similar to Lemma \ref{lemma++}, by using Lemma \ref{UV}, we obatin the following lemma.
\begin{lem}
\label{lemma--}
$\X^-_{1,1}\boxtimes \X^-_{1,1}$  has a quotient isomorphic to $\K^*_{1,1}$.
\end{lem}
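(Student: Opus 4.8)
The plan is to run, almost verbatim, the argument used for Lemma \ref{lemma++}, with $\X^+_{1,1}$ replaced by $\X^-_{1,1}$. The two ingredients I would first record are that $\X^-_{1,1}$ is self-contragredient and that $\X^-_{1,1}\boxtimes L(h_{1,1})=0$. The second is immediate from Proposition \ref{minimalnull}. For the first, $(\X^-_{1,1})^*$ is again a simple object of $\mathcal{C}_{p_+,p_-}$ with the same character as $\X^-_{1,1}$; it therefore lies in the block $C^{thick}_{p_+-1,1}$ and has lowest conformal weight $\Delta^-_{1,1;0}$, and comparing this weight with the lowest weights of the remaining simple modules of that block forces $(\X^-_{1,1})^*\simeq\X^-_{1,1}$, exactly as for $\X^+_{1,1}$.

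With these facts I would apply Lemma \ref{UV} twice. Taking $U=V=\X^-_{1,1}$ gives
\begin{align*}
{\rm Hom}_{\mathcal{C}_{p_+,p_-}}(\X^-_{1,1}\boxtimes\X^-_{1,1},\K^*_{1,1})\simeq{\rm Hom}_{\mathcal{C}_{p_+,p_-}}(\X^-_{1,1},\X^-_{1,1})=\C,
\end{align*}
so there is a non-zero module map $\varphi\colon\X^-_{1,1}\boxtimes\X^-_{1,1}\to\K^*_{1,1}$, and in particular $\X^-_{1,1}\boxtimes\X^-_{1,1}\neq 0$. Taking instead $U=\X^-_{1,1}\boxtimes\X^-_{1,1}$ and $V=L(h_{1,1})$, which is self-contragredient, and using the associativity isomorphism together with $\X^-_{1,1}\boxtimes L(h_{1,1})=0$, gives
\begin{align*}
{\rm Hom}_{\mathcal{C}_{p_+,p_-}}(\X^-_{1,1}\boxtimes\X^-_{1,1},L(h_{1,1}))&\simeq{\rm Hom}_{\mathcal{C}_{p_+,p_-}}\bigl((\X^-_{1,1}\boxtimes\X^-_{1,1})\boxtimes L(h_{1,1}),\K^*_{1,1}\bigr)\\
&\simeq{\rm Hom}_{\mathcal{C}_{p_+,p_-}}\bigl(\X^-_{1,1}\boxtimes(\X^-_{1,1}\boxtimes L(h_{1,1})),\K^*_{1,1}\bigr)=0.
\end{align*}

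Finally I would use the structure of $\K^*_{1,1}$: dualizing the exact sequence $0\to\X^+_{1,1}\to\K_{1,1}\to L(h_{1,1})\to 0$ shows that $\K^*_{1,1}$ is indecomposable of length two with socle $L(h_{1,1})$ and top $\X^+_{1,1}$, so $L(h_{1,1})$ is its only proper non-zero submodule. Hence if $\varphi$ were not onto its image would be $L(h_{1,1})$, producing a non-zero map $\X^-_{1,1}\boxtimes\X^-_{1,1}\to L(h_{1,1})$ and contradicting the vanishing above; so $\varphi$ is surjective and $\X^-_{1,1}\boxtimes\X^-_{1,1}$ has $\K^*_{1,1}$ as a quotient. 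Every step except the last is a formal manipulation with Lemma \ref{UV} and the associativity isomorphism, word for word as in Lemma \ref{lemma++}; the only non-formal point, and thus the main (if minor) obstacle, is the determination of the submodule lattice of $\K^*_{1,1}$ used at the end.
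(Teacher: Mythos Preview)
Your argument is correct and matches the paper's own approach essentially word for word: the paper simply says ``Similar to Lemma \ref{lemma++}, by using Lemma \ref{UV}'', and your write-up spells out precisely that transcription, including the two applications of Lemma \ref{UV} and the final step via the submodule lattice of $\K^*_{1,1}$.
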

\begin{prop}
\label{--}
We have
\begin{align*}
&\X^-_{1,1}\boxtimes\X^-_{1,1}=\K_{1,1}^*,
&\X^-_{1,1}\boxtimes\X^+_{1,1}=\X^-_{1,1}.
\end{align*}
\begin{proof}
We only prove $\X^-_{1,1}\boxtimes\X^-_{1,1}=\K_{1,1}^*$. The second equality can be proved in the same way.

By Lemma \ref{lemma--}, we see that $\X^-_{1,1}\boxtimes\X^-_{1,1}\neq 0$.
First, we will show 
\begin{align}
\label{20221105yoru}
{\rm top}(\X^-_{1,1}\boxtimes\X^-_{1,1})=\X^+_{1,1}.
\end{align} 
Let $\pi$ be the surjection from $\X^-_{1,1}\boxtimes\X^-_{1,1}$ to ${\rm top}(\X^-_{1,1}\boxtimes \X^-_{1,1})$.
Let $\psi^*$ be an arbitrary non-zero vector of $A_0(({\rm top}(\X^-_{1,1}\boxtimes \X^-_{1,1}))^*)$.
Let $\phi_1$ and $\phi_2$ be arbitrary vectors of $\X^-_{1,1}$ such that $\langle\psi^*,(\pi\circ \mathcal{Y}_{\boxtimes})(\phi_1,1)\phi_2\rangle\neq 0$. 
For $n\geq 0$, let $\bigl\{v^{(n)}_{\frac{2i+1}{2}},v^{(n)}_{\frac{-2i-1}{2}}\bigr\}_{i=0}^{n}$ be the Virasoro highest weight vectors of the vector subspace $(2n+2)L(\Delta^-_{1,1;n})\subset \X^-_{1,1}$ given by by Definition \ref{dfn0830}. 
We abbreviate 
\begin{align*}
\langle \psi^*,\phi_1\otimes \phi_2\rangle=\langle\psi^*,(\pi\circ \mathcal{Y}_{\boxtimes})(\phi_1,1)\phi_2\rangle
\end{align*}
and consider this value.

By Proposition \ref{genW}, Proposition \ref{sl2action2}, and Lemma \ref{NGK01}, we see that
$\langle\psi^*,\phi_1\otimes \phi_2\rangle$ is determined by the numbers
\begin{align*}
\langle\psi^*,v^{(0)}_{\pm\frac{1}{2}}\otimes v^{(n)}_{\frac{2i+1}{2}}\rangle,
\end{align*}
for $n\geq 0$ and $i\in\mathbb{Z}$.
Thus, there exists $n\in\mathbb{Z}_{\geq 0}$ and $i\in\mathbb{Z}$ such that $\langle \psi^*,v^{(0)}_{\pm\frac{1}{2}}\otimes v^{(n)}_{\frac{2i+1}{2}}\rangle\neq 0$. 
Let $h$ be the $L_0$ weight of $\psi^*$. 
By Proposition \ref{VirasoroFusion}, we see that $h$ must satisfies the following equations
\begin{align*}
&\prod_{k=1}^{3p_--1}(h-h_{1,(2n+4)p_--2-2k+1})=0,
&\prod_{l=1}^{3p_+-1}(h-h_{(2n+4)p_+-2-2l+1,1})=0.
\end{align*}
We see that $h$ satisfying these equations are given by $\Delta^+_{1,1;n}=h_{(2+2n)p_+-1,1}$ and $\Delta^+_{1,1;n-1}$. Thus, from Theorem \ref{simpleclass}, we have $h=\Delta^+_{1,1;0}$ and 
\begin{align}
\label{van7}
\langle \psi^*,v^{(0)}_{\pm\frac{1}{2}}\otimes v^{(n)}_{\frac{2i+1}{2}}\rangle=\langle \psi^*,v^{(n)}_{\frac{2i+1}{2}}\otimes v^{(0)}_{\pm\frac{1}{2}}\rangle=0,\ \ \ n\geq 2,\ i\in \mathbb{Z}.
\end{align}
Therefore by (\ref{van7}), $\langle\psi^*,\phi_1\otimes \phi_2\rangle$ is determined by the numbers
\begin{align*}
\langle\psi^*,v^{(0)}_{\epsilon\frac{1}{2}}\otimes v^{(0)}_{\epsilon'\frac{1}{2}}\rangle,&&\langle\psi^*,v^{(0)}_{\pm\frac{1}{2}}\otimes v^{(1)}_{\frac{2i+1}{2}}\rangle,
\end{align*}
for $\epsilon=\pm$, $\epsilon'=\pm$, and $i\in\mathbb{Z}$.

Note that $W^\pm[0]\psi^*=0$ and, by Proposition \ref{sl2action2}, we have
\begin{equation}
\label{van0828}
\begin{split}
W^\pm[0]v^{(0)}_{\mp\frac{1}{2}}&\in \mathbb{C}^\times v^{(0)}_{\pm\frac{1}{2}},\\
v^{(1)}_{\pm\frac{3}{2}}&\in \mathbb{C}^\times W^\pm[\Delta^-_{1,1;0}-\Delta^-_{1,1;1}]v^{(0)}_{\pm\frac{1}{2}},\\
v^{(1)}_{\pm\frac{1}{2}}&\in \mathbb{C}^\times W^0[\Delta^-_{1,1;0}-\Delta^-_{1,1;1}]v^{(0)}_{\pm\frac{1}{2}}+U(\mathcal{L}).v^{(0)}_{\pm\frac{1}{2}},
\end{split}
\end{equation}
and
\begin{equation}
\label{van0827}
W^{\epsilon}[-h]v^{(0)}_{\pm\frac{1}{2}}
\begin{cases}
=0 &\epsilon=\pm \\
\in  U(\mathcal{L}).v^{(0)}_{\pm\frac{1}{2}} &\epsilon=0\\
\in U(\mathcal{L}).v^{(0)}_{\mp\frac{1}{2}} & \epsilon=\mp
\end{cases}
\ \ \ h<\Delta^-_{1,1;1}-\Delta^-_{1,1;0}.
\end{equation}
Then, noting (\ref{van7})-(\ref{van0827}), and using Lemma \ref{NGK01}, we see that
\begin{align*}
\langle\psi^*,v^{(0)}_{\pm\frac{1}{2}}\otimes v^{(0)}_{\pm\frac{1}{2}}\rangle=0
\end{align*}
and the values
\begin{align*}
\langle\psi^*,v^{(0)}_{-\frac{1}{2}}\otimes v^{(0)}_{\frac{1}{2}}\rangle,&&\langle\psi^*,v^{(0)}_{\pm\frac{1}{2}}\otimes v^{(1)}_{\frac{2i+1}{2}}\rangle,\ \ i\in\mathbb{Z}
\end{align*}
are determined by 
$
\langle\psi^*,v^{(0)}_{\frac{1}{2}}\otimes v^{(0)}_{-\frac{1}{2}}\rangle
$.
Consequently,
$\langle\psi^*,\phi_1\otimes \phi_2\rangle$ is determined by
$
\langle\psi^*,v^{(0)}_{\frac{1}{2}}\otimes v^{(0)}_{-\frac{1}{2}}\rangle
$.
In particular we obtain (\ref{20221105yoru}).

Note that by Lemma \ref{lemma--} $\X^-_{1,1}\boxtimes\X^-_{1,1}$ has a quotient isomorphic to $\K^*_{1,1}$.
Assume $\X^-_{1,1}\boxtimes\X^-_{1,1}\ncong\K_{1,1}^*$. Then, noting (\ref{20221105yoru}), from Propositions \ref{Ext} and \ref{ExtQ}, we see that $\X^-_{1,1}\boxtimes\X^-_{1,1}$ contains $\X^-_{p_+-1,1}$ or $\X^-_{1,p_--1}$ as composition factors.
Hence, by the rigidity of $\K_{2,1}$ and $\K_{1,2}$ and by Proposition \ref{fusion12-}, we see that the direct sum
\begin{align*}
\K_{1,2}\boxtimes(\X^-_{1,1}\boxtimes\X^-_{1,1})\oplus \K_{2,1}\boxtimes(\X^-_{1,1}\boxtimes\X^-_{1,1})
\end{align*}
contains $\X^-_{1,p_-}$ or $\X^-_{p_+,1}$ as composition factors. On the other hand, by the associativity and Proposition \ref{fusion12-}, we have 
\begin{align*}
&\K_{1,2}\boxtimes(\X^-_{1,1}\boxtimes\X^-_{1,1})=(\K_{1,2}\boxtimes\X^-_{1,1})\boxtimes\X^-_{1,1}=\X^-_{1,2}\boxtimes\X^-_{1,1},\\
&\K_{2,1}\boxtimes(\X^-_{1,1}\boxtimes\X^-_{1,1})=(\K_{2,1}\boxtimes\X^-_{1,1})\boxtimes\X^-_{1,1}=\X^-_{2,1}\boxtimes\X^-_{1,1}.
\end{align*}
Similar to the case of ${\rm top}(\X^-_{1,1}\boxtimes\X^-_{1,1})=\mathcal{X}^+_{1,1}$,
we can see that the top composition factors of $\X^-_{1,2}\boxtimes\X^-_{1,1}$ and $\X^-_{2,1}\boxtimes\X^-_{1,1}$ are $\X^+_{1,2}$ and $\X^+_{2,1}$, respectively. In particular, we have $\X^-_{1,2}\boxtimes\X^-_{1,1}\in C^{thick}_{1,2}$ and $\X^-_{2,1}\boxtimes\X^-_{1,1}\in C^{a}_{2,1}$, where $a=thick$ for $p_+\geq 3$ and $a=thin$ for $p_+=2$. But since $\X^-_{1,p_-}\in C^{thin}_{p_+-1,p_-}$ and $\X^-_{p_+,1}\in C^{thin}_{p_+,p_--1}$, we have a contradiction.
\end{proof}
\end{prop}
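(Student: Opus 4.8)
The plan is to adapt the proof of Proposition~\ref{++} to the $\X^-$ side, exploiting the rank-two $\mathfrak{sl}_2$-type structure of $\X^-_{1,1}$ described in Proposition~\ref{sl2action2}. I treat the first identity $\X^-_{1,1}\boxtimes\X^-_{1,1}=\K^*_{1,1}$ in detail; the second identity $\X^-_{1,1}\boxtimes\X^+_{1,1}=\X^-_{1,1}$ follows by the same scheme, replacing one tensor factor and using Lemma~\ref{UV} to locate the appropriate distinguished quotient. By Lemma~\ref{lemma--}, $\X^-_{1,1}\boxtimes\X^-_{1,1}$ is nonzero and admits $\K^*_{1,1}$ as a quotient; in particular, once its top is identified with $\X^+_{1,1}$ the entire module lies in the thick block $C^{thick}_{1,1}$, whose simple objects are $\X^+_{1,1},\X^+_{p_+-1,p_--1},\X^-_{p_+-1,1},\X^-_{1,p_--1}$ and $L(h_{1,1})$.

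The first main step is to prove $\mathrm{top}(\X^-_{1,1}\boxtimes\X^-_{1,1})=\X^+_{1,1}$. Let $\pi$ be the projection onto the top and fix a nonzero $\psi^*\in A_0((\mathrm{top}(\X^-_{1,1}\boxtimes\X^-_{1,1}))^*)$. Using the basis $\{v^{(n)}_{(2i+1)/2}\}$ of the Virasoro highest weight spaces of $\X^-_{1,1}$ from Definition~\ref{dfn0830}, the strong generation of $\W_{p_+,p_-}$ (Proposition~\ref{genW}), the transitive $W^{\pm}[0]$- and $W^0$-actions of Proposition~\ref{sl2action2}, and the Nahm-Gaberdiel-Kausch identities of Lemma~\ref{NGK01}, I would reduce the three-point function $\langle\psi^*,\phi_1\otimes\phi_2\rangle$ to the numbers $\langle\psi^*,v^{(0)}_{\pm1/2}\otimes v^{(n)}_{(2i+1)/2}\rangle$. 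Restricting $\mathcal{Y}_{\boxtimes}$ to its Virasoro sub-intertwining operator and imposing the fusion constraints of Proposition~\ref{VirasoroFusion}, the $L_0$-weight of $\psi^*$ must be a common root of two explicit product equations, and the only root compatible with Theorem~\ref{simpleclass} is $\Delta^+_{1,1;0}$; hence the pairings with $v^{(n)}_{\bullet}$ vanish for $n\geq 2$. A further round of the relations in Proposition~\ref{sl2action2}, together with $W^{\pm}[0]\psi^*=0$, then shows $\langle\psi^*,v^{(0)}_{\pm1/2}\otimes v^{(0)}_{\pm1/2}\rangle=0$ and that every surviving value is proportional to $\langle\psi^*,v^{(0)}_{1/2}\otimes v^{(0)}_{-1/2}\rangle$, which gives the claim on the top.

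Next, suppose for contradiction that $\X^-_{1,1}\boxtimes\X^-_{1,1}\not\cong\K^*_{1,1}$. Since it has top $\X^+_{1,1}$ and surjects onto $\K^*_{1,1}$, Propositions~\ref{Ext} and~\ref{ExtQ} force $\X^-_{p_+-1,1}$ or $\X^-_{1,p_--1}$ to appear as a composition factor. Tensoring with the rigid objects $\K_{1,2}$ and $\K_{2,1}$ (Theorems~\ref{rigid12ori} and~\ref{rigid21}), which makes $\K_{1,2}\boxtimes-$ and $\K_{2,1}\boxtimes-$ exact, and applying Proposition~\ref{fusion12-}, the module $\K_{1,2}\boxtimes(\X^-_{1,1}\boxtimes\X^-_{1,1})\oplus\K_{2,1}\boxtimes(\X^-_{1,1}\boxtimes\X^-_{1,1})$ would then contain $\X^-_{1,p_-}$ or $\X^-_{p_+,1}$ among its composition factors. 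On the other hand, by associativity and Proposition~\ref{fusion12-}, $\K_{1,2}\boxtimes(\X^-_{1,1}\boxtimes\X^-_{1,1})=\X^-_{1,2}\boxtimes\X^-_{1,1}$ and $\K_{2,1}\boxtimes(\X^-_{1,1}\boxtimes\X^-_{1,1})=\X^-_{2,1}\boxtimes\X^-_{1,1}$, and the same top computation shows these have tops $\X^+_{1,2}$ and $\X^+_{2,1}$, so they lie in $C^{thick}_{1,2}$ and $C^{a}_{2,1}$ respectively, where $a=thick$ for $p_+\geq 3$ and $a=thin$ for $p_+=2$. Since $\X^-_{1,p_-}\in C^{thin}_{p_+-1,p_-}$ and $\X^-_{p_+,1}\in C^{thin}_{p_+,p_--1}$ lie in other blocks, this is a contradiction, so $\X^-_{1,1}\boxtimes\X^-_{1,1}\cong\K^*_{1,1}$. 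The second identity $\X^-_{1,1}\boxtimes\X^+_{1,1}=\X^-_{1,1}$ is obtained the same way: compute the top to be $\X^-_{1,1}$ and eliminate the remaining candidate composition factors of the relevant block by the same $\K_{1,2}$/$\K_{2,1}$-tensoring argument.

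I expect the decisive difficulty to be the top computation in the first step: one must track the Nahm-Gaberdiel-Kausch reductions carefully for $\X^-_{1,1}$, whose $W^{\pm}[0]$-action is genuinely rank two rather than a single orbit, and then check that the Virasoro fusion constraints of Proposition~\ref{VirasoroFusion} admit no $L_0$-weight other than $\Delta^+_{1,1;0}$. A secondary point needing care is the principle used in the third step, that tensoring by a rigid object detects composition factors, which rests on right-exactness of $\boxtimes$ combined with the rigidity, hence exactness, of $\K_{1,2}$ and $\K_{2,1}$.
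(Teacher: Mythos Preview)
Your proposal is correct and follows essentially the same approach as the paper's proof: first identifying $\mathrm{top}(\X^-_{1,1}\boxtimes\X^-_{1,1})=\X^+_{1,1}$ via the Nahm--Gaberdiel--Kausch reduction combined with Proposition~\ref{VirasoroFusion} and the $W^{\pm}[0]$-relations of Proposition~\ref{sl2action2}, and then ruling out extra composition factors by tensoring with the rigid $\K_{1,2}$, $\K_{2,1}$ and comparing blocks. The only cosmetic difference is that you make explicit the exactness of tensoring with rigid objects, which the paper uses implicitly.
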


By Propositions \ref{Fusion11+}, \ref{++} and \ref{--}, we obtain the following proposition.
\begin{prop}
\label{12*sai}
For $1\leq r,r'\leq p_+$, $1\leq s,s'\leq p_-$, we have
\begin{align*}
\X^-_{r,s}\boxtimes \X^-_{r',s'}=(\K_{r,s}\boxtimes\K_{r',s'})\boxtimes\K^*_{1,1}.
\end{align*}
\end{prop}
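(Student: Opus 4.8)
The plan is to reduce Proposition~\ref{12*sai} to the $\mathcal{X}^-$-analogue of Proposition~\ref{Fusion11+}, namely the identity $\X^-_{r,s}=\X^-_{1,1}\boxtimes\K_{r,s}$ for $1\le r\le p_+$, $1\le s\le p_-$, and then to conclude by a formal manipulation in the braided tensor category $(\mathcal{C}_{p_+,p_-},\boxtimes,\K_{1,1})$. Granting that identity, I would write
\begin{align*}
\X^-_{r,s}\boxtimes\X^-_{r',s'}=(\X^-_{1,1}\boxtimes\K_{r,s})\boxtimes(\X^-_{1,1}\boxtimes\K_{r',s'}),
\end{align*}
permute the four tensor factors using the associativity and braiding isomorphisms to obtain $(\X^-_{1,1}\boxtimes\X^-_{1,1})\boxtimes(\K_{r,s}\boxtimes\K_{r',s'})$, rewrite $\X^-_{1,1}\boxtimes\X^-_{1,1}=\K^*_{1,1}$ by Proposition~\ref{--}, and move $\K^*_{1,1}$ to the right by one more application of the braiding. (Equivalently, one tensors the identity $\X^-_{1,1}\boxtimes\X^-_{1,1}=\X^+_{1,1}\boxtimes\X^+_{1,1}$, which combines Propositions~\ref{++} and~\ref{--}, with $\K_{r,s}\boxtimes\K_{r',s'}$ and rearranges, thereby reducing the statement to Propositions~\ref{Fusion11+}, \ref{++}, \ref{--} and~\ref{12*} exactly as the paper indicates.)

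To establish $\X^-_{r,s}=\X^-_{1,1}\boxtimes\K_{r,s}$, I would argue by induction, using that every $\K_{r,s}$ is reached from the unit $\K_{1,1}$ by repeatedly tensoring with $\K_{1,2}$ and $\K_{2,1}$ and splitting off direct summands (Propositions~\ref{Fusion12Wkai0} and~\ref{Fusion12Wkai}), while the corresponding moves for the simple modules $\X^-_{r,s}$ are governed by Proposition~\ref{fusion12-}. The base case $(r,s)=(1,1)$ is immediate since $\K_{1,1}$ is the unit object; the second identity of Proposition~\ref{--} is the $(1,1)$-instance. For the inductive step, assuming $\X^-_{1,1}\boxtimes\K_{r,s}=\X^-_{r,s}$, I would apply $\X^-_{1,1}\boxtimes(-)$ to $\K_{1,2}\boxtimes\K_{r,s}=\K_{r,s-1}\oplus\K_{r,s+1}$ and use associativity to rewrite the left side as $\K_{1,2}\boxtimes(\X^-_{1,1}\boxtimes\K_{r,s})=\K_{1,2}\boxtimes\X^-_{r,s}=\X^-_{r,s-1}\oplus\X^-_{r,s+1}$ by Proposition~\ref{fusion12-}; since $\X^-_{1,1}\boxtimes\K_{r,s-1}=\X^-_{r,s-1}$ by the induction hypothesis, cancelling this common summand (legitimate because every object of $\mathcal{C}_{p_+,p_-}$ has finite length, so the Krull--Schmidt theorem applies) yields $\X^-_{1,1}\boxtimes\K_{r,s+1}=\X^-_{r,s+1}$. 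The same argument with $\K_{2,1}$ advances one step in the $r$-direction, and combining the two one reaches every $(r,s)$.

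The hard part will be this induction, where one has to match the several cases of Propositions~\ref{fusion12-}, \ref{Fusion12Wkai0} and~\ref{Fusion12Wkai} at the boundary of the Kac grid ($r=p_+$ or $s=p_-$, where $\K_{r,p_-}=\X^+_{r,p_-}$ etc.) and in the special case $p_+=2$, to choose for each $(r,s)$ a path of single steps out of $(1,1)$ along which the relevant $\K_{1,2}\boxtimes(-)$ and $\K_{2,1}\boxtimes(-)$ products take exactly the stated forms, and to verify at each stage that the Krull--Schmidt cancellation isolates precisely the one new summand $\X^-_{r,s}$. Everything else — the passage from the identity $\X^-_{r,s}=\X^-_{1,1}\boxtimes\K_{r,s}$ to Proposition~\ref{12*sai} — is purely formal, resting on Proposition~\ref{--} and the coherence of the braided tensor structure; note also that Theorems~\ref{rigid12ori} and~\ref{rigid21} (rigidity of $\K_{1,2}$ and $\K_{2,1}$) are what make Propositions~\ref{Fusion12Wkai0} and~\ref{Fusion12Wkai} available in the first place.
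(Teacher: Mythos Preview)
Your proposal is correct and follows essentially the same approach as the paper: reduce to the identity $\X^-_{r,s}=\X^-_{1,1}\boxtimes\K_{r,s}$, then rearrange using Proposition~\ref{--} and the braiding. The paper's one-line justification cites only Propositions~\ref{Fusion11+}, \ref{++} and~\ref{--}, but in fact it also needs that identity (which the paper records immediately afterward as Proposition~\ref{change}); your inductive derivation of it from Propositions~\ref{fusion12-}, \ref{Fusion12Wkai0} and~\ref{Fusion12Wkai} with Krull--Schmidt cancellation is exactly what is required and is arguably more careful about logical ordering than the paper itself.
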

\begin{remark}
From Propositions \ref{W^*W} and \ref{12*sai}, we can compute the tensor products between the simple modules $\X^-_{r,s}$ by using the tensor products between the indecomposable modules $\K_{r,s}$ (see the formulas (\ref{simple0820}) in Section \ref{FusionRing}).
\end{remark}

By Propositions \ref{fusion12-}, \ref{W^*W} and \ref{--}, we obtain the following proposition.
\begin{prop}
\label{change}
For $1\leq r,r'\leq p_+$, $1\leq s,s'\leq p_-$, we have
\begin{align*}
\mathcal{X}^-_{1,1}\boxtimes \mathcal{K}_{r,s}=\mathcal{X}^-_{1,1}\boxtimes\mathcal{X}^+_{r,s}=\mathcal{X}^-_{r,s}.
\end{align*}
\end{prop}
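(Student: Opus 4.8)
The plan is to reduce both asserted equalities to the single computation $\X^-_{1,1}\boxtimes\K_{r,s}=\X^-_{r,s}$, and then to carry that out using only the already-established fusion rules together with the uniqueness of direct-sum decompositions. For the reduction, Proposition~\ref{Fusion11+} gives $\X^+_{r,s}=\X^+_{1,1}\boxtimes\K_{r,s}$, so associativity of $\boxtimes$ and the identity $\X^-_{1,1}\boxtimes\X^+_{1,1}=\X^-_{1,1}$ from Proposition~\ref{--} yield
\[
\X^-_{1,1}\boxtimes\X^+_{r,s}=\X^-_{1,1}\boxtimes(\X^+_{1,1}\boxtimes\K_{r,s})=(\X^-_{1,1}\boxtimes\X^+_{1,1})\boxtimes\K_{r,s}=\X^-_{1,1}\boxtimes\K_{r,s}.
\]
Thus it suffices to prove $\X^-_{1,1}\boxtimes\K_{r,s}=\X^-_{r,s}$ for all $1\le r\le p_+$, $1\le s\le p_-$.

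Next I would show that $\X^-_{1,1}\boxtimes\K_{r,s}$ is semisimple, in fact a direct sum of simple modules of the form $\X^-_{a,b}$. A straightforward induction using Propositions~\ref{Fusion12Wkai0} and~\ref{Fusion12Wkai} (read $\K_{1,s}$ off as a summand of $\K_{1,2}\boxtimes\K_{1,s-1}$, and $\K_{r,s}$ as a summand of $\K_{2,1}\boxtimes\K_{r-1,s}$) shows that every $\K_{r,s}$ is a direct summand of the iterated tensor product $\K_{2,1}^{\boxtimes(r-1)}\boxtimes\K_{1,2}^{\boxtimes(s-1)}$. Commuting factors, $\X^-_{1,1}\boxtimes\K_{r,s}$ is then a direct summand of $\K_{2,1}^{\boxtimes(r-1)}\boxtimes\K_{1,2}^{\boxtimes(s-1)}\boxtimes\X^-_{1,1}$. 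By Proposition~\ref{fusion12-}, tensoring any $\X^-_{a,b}$ with $\K_{1,2}$ or with $\K_{2,1}$ produces a direct sum of simple modules $\X^-_{a',b'}$; applying this one factor at a time, starting from $\X^-_{1,1}$, shows $\K_{2,1}^{\boxtimes(r-1)}\boxtimes\K_{1,2}^{\boxtimes(s-1)}\boxtimes\X^-_{1,1}$ is a direct sum of such simples, and hence so is its direct summand $\X^-_{1,1}\boxtimes\K_{r,s}$.

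Finally I would identify the summand by a contragredience argument. Writing $\X^-_{1,1}\boxtimes\K_{r,s}=\bigoplus_i\X^-_{a_i,b_i}$ and applying $\X^-_{1,1}\boxtimes(-)$, the identity $\X^-_{1,1}\boxtimes\X^-_{1,1}=\K^*_{1,1}$ from Proposition~\ref{--} together with $\K^*_{1,1}\boxtimes\K_{r,s}=\K^*_{r,s}$ from Proposition~\ref{W^*W} gives $\bigoplus_i\bigl(\X^-_{1,1}\boxtimes\X^-_{a_i,b_i}\bigr)\cong\K^*_{r,s}$; and by Proposition~\ref{12*sai} together with Proposition~\ref{W^*W} one has $\X^-_{1,1}\boxtimes\X^-_{a,b}=(\K_{a,b}\boxtimes\K_{1,1})\boxtimes\K^*_{1,1}=\K_{a,b}\boxtimes\K^*_{1,1}=\K^*_{a,b}$, so $\bigoplus_i\K^*_{a_i,b_i}\cong\K^*_{r,s}$. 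Since $\K_{r,s}$ is indecomposable and contragredience preserves indecomposability, $\K^*_{r,s}$ is indecomposable, so by Krull--Schmidt the sum has a single term $\K^*_{a_1,b_1}\cong\K^*_{r,s}$; as the assignment $(r,s)\mapsto\K_{r,s}$ is injective (distinguished by the socle $\X^+_{r,s}$, resp.\ by $\K_{r,s}$ itself in the boundary cases, which are pairwise non-isomorphic by Theorem~\ref{simpleclass}), this forces $(a_1,b_1)=(r,s)$, i.e.\ $\X^-_{1,1}\boxtimes\K_{r,s}=\X^-_{r,s}$. Combined with the first step this also gives $\X^-_{1,1}\boxtimes\X^+_{r,s}=\X^-_{r,s}$.

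The only step with genuine content is the semisimplicity claim of the second paragraph; the first and third steps are formal once the fusion rules for $\K_{1,2},\K_{2,1}$ and the contragredient identities $\X^-_{1,1}\boxtimes\X^-_{1,1}=\K^*_{1,1}$ and $\K^*_{1,1}\boxtimes\K_{r,s}=\K^*_{r,s}$ are in hand. A minor point to watch is the boundary indices $r=p_+$ and $s=p_-$ (where $\K_{r,s}$ is simple), which are handled by the same recursions but require checking the index ranges in Propositions~\ref{Fusion12Wkai0}, \ref{Fusion12Wkai} and~\ref{fusion12-}, including the special case $p_+=2$.
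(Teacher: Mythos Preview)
Your argument is correct and is a faithful elaboration of what the paper indicates: the paper merely cites Propositions~\ref{fusion12-}, \ref{W^*W} and \ref{--}, and your proof uses precisely these (together with the already-established $\K_{1,2}/\K_{2,1}$ fusion rules and Krull--Schmidt) to first show $\X^-_{1,1}\boxtimes\K_{r,s}$ is a direct sum of $\X^-_{a,b}$'s and then to identify the sum via the contragredience relation $\X^-_{1,1}\boxtimes\X^-_{1,1}=\K^*_{1,1}$ and $\K^*_{1,1}\boxtimes\K_{r,s}=\K^*_{r,s}$. The only cosmetic shortcut would be to run the identification as a single induction with cancellation rather than via your two-step ``semisimple then match'' argument, but both routes use the same ingredients.
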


\begin{prop}
\label{changefinal}
For $1\leq r,r'\leq p_+$, $1\leq s,s'\leq p_-$, we have
\begin{align*}
\X^+_{r,s}\boxtimes \X^-_{r',s'}=(\K_{r,s}\boxtimes\K_{r',s'})\boxtimes\mathcal{X}^-_{1,1}.
\end{align*}
\begin{proof}
By Propositions \ref{Fusion11+} and \ref{change}, we have
\begin{align*}
(\K_{r,s}\boxtimes\K_{r',s'})\boxtimes\mathcal{X}^-_{1,1}&=(\K_{r,s}\boxtimes\K_{r',s'})\boxtimes(\mathcal{X}^+_{1,1}\boxtimes\mathcal{X}^-_{1,1})\\
&=(\mathcal{K}_{r,s}\boxtimes \mathcal{X}^+_{1,1})\boxtimes(\mathcal{K}_{r',s'}\boxtimes\mathcal{X}^-_{1,1})\\
&=\X^+_{r,s}\boxtimes \X^-_{r',s'}.
\end{align*}
\end{proof}
\end{prop}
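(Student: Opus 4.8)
The plan is to deduce this identity purely formally from fusion rules that are already available, namely $\X^+_{1,1}\boxtimes\K_{r,s}=\X^+_{r,s}$ (Proposition \ref{Fusion11+}), $\X^-_{1,1}\boxtimes\K_{r,s}=\X^-_{r,s}$ (Proposition \ref{change}), and the relation $\X^+_{1,1}\boxtimes\X^-_{1,1}=\X^-_{1,1}$ established in Proposition \ref{--}, using nothing beyond the fact that $(\mathcal{C}_{p_+,p_-},\boxtimes,\K_{1,1})$ is a braided tensor category. First I would substitute $\X^-_{1,1}\simeq\X^+_{1,1}\boxtimes\X^-_{1,1}$ into the right-hand side, obtaining $(\K_{r,s}\boxtimes\K_{r',s'})\boxtimes(\X^+_{1,1}\boxtimes\X^-_{1,1})$.

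The second step is to regroup this fourfold tensor product as $(\K_{r,s}\boxtimes\X^+_{1,1})\boxtimes(\K_{r',s'}\boxtimes\X^-_{1,1})$. This is achieved by composing the associativity isomorphisms $\mathcal{A}^{\pm1}$ with the braiding isomorphism that transposes the middle pair $\K_{r',s'}$ and $\X^+_{1,1}$; since all of these are isomorphisms in $\mathcal{C}_{p_+,p_-}$, they yield an isomorphism of $\W_{p_+,p_-}$-modules, which is all that is needed since the statement asserts an equality of objects, i.e. of isomorphism classes. Here it is worth noting that although $\boxtimes$ is only braided and not symmetric, the coherence theorem for braided tensor categories guarantees that any such reordering produces a well-defined isomorphism, so no additional care is required.

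Finally, I would apply Proposition \ref{Fusion11+} to identify $\K_{r,s}\boxtimes\X^+_{1,1}\simeq\X^+_{r,s}$ and Proposition \ref{change} to identify $\K_{r',s'}\boxtimes\X^-_{1,1}\simeq\X^-_{r',s'}$, and conclude $(\K_{r,s}\boxtimes\K_{r',s'})\boxtimes\X^-_{1,1}\simeq\X^+_{r,s}\boxtimes\X^-_{r',s'}$, as desired. I do not expect any genuine obstacle: the argument is a short diagram chase, and the only thing to watch is consistent bookkeeping of the associators and the braiding when passing to the regrouped fourfold product. One could equally phrase the entire computation as a chain of displayed equalities of objects, exactly in the style of the proof of Proposition \ref{change}, which sidesteps naming the structure isomorphisms altogether.
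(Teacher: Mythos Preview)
Your proposal is correct and follows essentially the same route as the paper: substitute $\X^-_{1,1}\simeq\X^+_{1,1}\boxtimes\X^-_{1,1}$, regroup the fourfold product using associativity and the braiding, and then apply Propositions~\ref{Fusion11+} and~\ref{change}. The only cosmetic difference is that the paper obtains $\X^+_{1,1}\boxtimes\X^-_{1,1}=\X^-_{1,1}$ as the $r=s=1$ case of Proposition~\ref{change} rather than citing Proposition~\ref{--} directly.
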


\subsection{Rigidity for $\mathcal{Q}(\X^\pm_{r,s})_{\bullet,\bullet}$ and $\mathcal{P}^\pm_{r,s}$}
In this section, we will compute the tensor products $\K\boxtimes \mathcal{Q}(\X^\pm_{r,s})_{\bullet,\bullet}$ and $\K\boxtimes\mathcal{P}^\pm_{r,s}$, where $\K=\K_{1,2}$ and $\K_{2,1}$. Then we will show that all simple modules in the thin blocks and all indecomposable modules $\mathcal{Q}(\X^\pm_{r,s})_{\bullet,\bullet}$ and $\mathcal{P}^\pm_{r,s}$ are rigid and self-dual objects.
The self-dualities of these indecomposable modules were conjectured by \cite{GRW0,GRW}.

Similar to Lemma \ref{lem&p_+=2}, we can show that the tensor products $\K_{1,2}\boxtimes \X^\pm_{p_+,p_-}$ and $\K_{2,1}\boxtimes \X^\pm_{p_+,p_-}$ are non-zero. 
Since $\mathcal{\X}^\pm_{p_+,p_-}$ is projective, by the rigidity of $\K_{1,2}$ and $\K_{2,1}$, $\K_{1,2}\boxtimes\mathcal{\X}^\pm_{p_+,p_-}$ and $\K_{2,1}\boxtimes\mathcal{\X}^\pm_{p_+,p_-}$ become projective modules. 
Thus, by Propositions \ref{NGK2} and \ref{NGK22}, we obtain the following proposition.
\begin{prop}
\label{projXX}
\begin{align*}
&\K_{1,2}\boxtimes\X^\pm_{p_+,p_-}=\mathcal{Q}(\X^\pm_{p_+,p_--1})_{p_+,1},\\
&\K_{2,1}\boxtimes\X^\pm_{p_+,p_-}=\mathcal{Q}(\X^\pm_{p_+-1,p_-})_{1,p_-}.
\end{align*}
\end{prop}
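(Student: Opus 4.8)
The plan is to identify $\K_{1,2}\boxtimes\X^\pm_{p_+,p_-}$ and $\K_{2,1}\boxtimes\X^\pm_{p_+,p_-}$ by combining three facts: (i) these modules are non-zero, (ii) they are projective, and (iii) their top composition factors and generalized $L_0$-eigenvalues are controlled by Propositions \ref{NGK2} and \ref{NGK22}. Since $\X^\pm_{p_+,p_-}$ lies in the semisimple block $C^\pm_{p_+,p_-}$, it is a projective object, so by the rigidity of $\K_{1,2}$ and $\K_{2,1}$ (Theorems \ref{rigid12ori} and \ref{rigid21}) the tensor products $\K_{1,2}\boxtimes\X^\pm_{p_+,p_-}$ and $\K_{2,1}\boxtimes\X^\pm_{p_+,p_-}$ are again projective. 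The non-vanishing is asserted in the sentence just before the statement (it follows from the same intertwining-operator argument as Lemma \ref{lem&p_+=2}, using a $\W_{p_+,p_-}$-module intertwining operator $Y_{\alpha_1,\alpha_2}$ of the form in Definition \ref{dfnY} and the Shapovalov/Virasoro-fusion bookkeeping of Proposition \ref{FusionInt}).

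First I would pin down which block the product lives in and what its top is. By part (5) of Proposition \ref{NGK2}, the space $A_0((\K_{1,2}\boxtimes\X^+_{p_+,p_-})^*)$ is at most two dimensional with generalized $L_0$-eigenvalues among $\{h_{p_+,p_--1},\Delta^+_{p_+,p_--1;0}\}$, which are exactly the highest weights of $L(h_{p_+-1,p_--1})$... but note $(p_+,p_--1)$ is a boundary index, so the relevant simple modules are $\X^+_{p_+,p_--1}$ and $\X^-_{p_+,1}=\X^-_{p_+,(p_--1)^\vee}$, i.e.\ the product lies in the thin block $C^{thin}_{p_+,p_--1}$. Since $\K_{1,2}\boxtimes\X^+_{p_+,p_-}$ is projective in $C^{thin}_{p_+,p_--1}$ and has at most a two-step top sitting on $\X^+_{p_+,p_--1}$ (with $\top$ determined by $A_0$ of the contragredient), it must be a direct summand of the projective cover of $\X^+_{p_+,p_--1}$; but the projective cover in a thin block is indecomposable with simple top, and from Proposition \ref{logarithmic4} that projective cover is precisely $\mathcal{Q}(\X^+_{p_+,p_--1})_{p_+,1}$. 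The bound on $A_0$ forces $\K_{1,2}\boxtimes\X^+_{p_+,p_-}$ to have top $\X^+_{p_+,p_--1}$ of multiplicity one, so it is exactly this projective cover, not a larger direct sum. The same argument applied to $\X^-_{p_+,p_-}$ using part (6) of Proposition \ref{NGK2} gives $\K_{1,2}\boxtimes\X^-_{p_+,p_-}=\mathcal{Q}(\X^-_{p_+,p_--1})_{p_+,1}$, and the $\K_{2,1}$ cases follow identically from parts (5)--(6) of Proposition \ref{NGK22} (in the $p_+=2$ case from the $p_+=2$ items of that proposition), landing in $C^{thin}_{p_+-1,p_-}$ and giving $\mathcal{Q}(\X^\pm_{p_+-1,p_-})_{1,p_-}$.

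The main obstacle is the last step: upgrading "the top is $\X^+_{p_+,p_--1}$ with multiplicity bounded by $A_0$" to "the module is the full projective cover and nothing more." For this I would use that a projective module with simple top in a block is the projective cover of that simple, together with the $C_2$-cofiniteness of $\W_{p_+,p_-}$ (so every object has finite length and projective covers exist), ruling out the possibility that $\K_{1,2}\boxtimes\X^+_{p_+,p_-}$ is a proper quotient of $\mathcal{Q}(\X^+_{p_+,p_--1})_{p_+,1}$; the $A_0$-dimension bound prevents extra direct summands, and the non-vanishing together with projectivity and simplicity of the top rules out the product being a proper quotient (a proper nonzero quotient of an indecomposable projective is not projective unless it equals the whole module). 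A clean way to close this is to invoke that $\K_{1,2}$ is rigid and self-dual, so $\K_{1,2}\boxtimes(-)$ is exact and preserves projectivity and, being biadjoint to itself, one computes $\dim\Hom(\K_{1,2}\boxtimes\X^+_{p_+,p_-},S)=\dim\Hom(\X^+_{p_+,p_-},\K_{1,2}\boxtimes S)$ for simple $S$ to read off the full composition series, matching exactly the socle series of $\mathcal{Q}(\X^+_{p_+,p_--1})_{p_+,1}$ in Proposition \ref{logarithmic4}.
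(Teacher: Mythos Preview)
Your proposal is correct and follows essentially the same approach as the paper: establish non-vanishing (as in Lemma \ref{lem&p_+=2}), use the rigidity of $\K_{1,2}$ and $\K_{2,1}$ together with the projectivity of $\X^\pm_{p_+,p_-}$ to conclude that the tensor products are projective, and then invoke Propositions \ref{NGK2} and \ref{NGK22} to pin down the top and hence identify each product as the indecomposable projective cover $\mathcal{Q}(\X^\pm_{\bullet,\bullet})_{\bullet,\bullet}$ in the relevant thin block. Your closing adjunction computation of $\Hom(\K_{1,2}\boxtimes\X^+_{p_+,p_-},S)$ is a clean way to make explicit what the paper leaves implicit, but the underlying strategy is the same.
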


\begin{prop}
\label{fusionQ^+}
\mbox{}
\begin{enumerate}
\item For $1\leq r\leq p_+-1$, we have
\begin{align*}
\K_{1,2}\boxtimes\X^\pm_{r,p_-}=\mathcal{Q}(\X^\pm_{r,p_--1})_{r,1}.
\end{align*}
\item For $1\leq s\leq p_--1$, we have
\begin{align*}
\K_{2,1}\boxtimes\X^\pm_{p_+,s}=\mathcal{Q}(\X^\pm_{p_+-1,s})_{1,s}.
\end{align*}
\end{enumerate}
\begin{proof}
We only prove $\K_{1,2}\boxtimes\X^+_{r,p_-}=\mathcal{Q}(\X^+_{r,p_--1})_{r,1}$. The other equalities can be proved in the same way.

By Proposition \ref{NGK2} and Lemma \ref{lem&p_+=2}, we see that
\begin{align}
\label{top1019}
{\rm top}(\K_{1,2}\boxtimes\X^+_{r,p_-})=\X^+_{r,p_--1}
\end{align}
and $\K_{1,2}\boxtimes\X^+_{r,p_-}$ has a quotient isomorphic to $\K^*_{r,p_--1}$. 

Let $r=1$. Assume that $\K_{1,2}\boxtimes\X^+_{1,p_-}$ has $\X^-_{p_+-1,p_--1}$ as a composition factor. Then the tensor product
\begin{align*}
\K_{2,1}\boxtimes(\K_{1,2}\boxtimes\X^+_{1,p_-})\simeq \K_{1,2}\boxtimes \X^+_{2,p_-}
\end{align*}
has $\X^-_{p_+,p_--1}\in C^{thin}_{p_+,1}$ as a composition factor. But, by (\ref{top1019}) and Proposition \ref{projXX}, we have a contradiction because $\K_{1,2}\boxtimes \X^+_{2,p_-}\in C^{thick}_{2,p_--1}$ for $p_+\geq 3$ and $\K_{1,2}\boxtimes \X^+_{2,p_-}\in C^{thin}_{2,p_--1}$ for $p_+=2$. By induction on $r$, we see that $\K_{1,2}\boxtimes\X^+_{r,p_-}$ does not have $\X^-_{r^\vee,p_--1}$ as a composition factor. 

Thus, from Propositions \ref{Ext}, \ref{ExtQ} and (\ref{top1019}), we see that
$
\K_{1,2}\boxtimes\X^+_{r,p_-}
$
is isomorphic to $\mathcal{Q}(\X^+_{r,p_--1})_{r,1}$ or a quotient of $\mathcal{Q}(\X^+_{r,p_--1})_{r,1}$.
By Lemma \ref{Etingof} and the self-duality of $\K_{1,2}$, we have
\begin{align*}
{\rm Hom}_{\mathcal{C}_{p_+,p_-}}(\X^+_{r,p_--1},\K_{1,2}\boxtimes\X^+_{r,p_-})\simeq\C.
\end{align*}
Therefore, noting that $\K_{1,2}\boxtimes\X^+_{r,p_-}$ has a quotient isomorphic to $\K^*_{r,p_--1}$, we obtain $\K_{1,2}\boxtimes\X^+_{r,p_-}\simeq \mathcal{Q}(\X^+_{r,p_--1})_{r,1}$.
\end{proof}
\end{prop}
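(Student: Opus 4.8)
The plan is to prove the representative identity $\K_{1,2}\boxtimes\X^+_{r,p_-}=\mathcal{Q}(\X^+_{r,p_--1})_{r,1}$ for $1\leq r\leq p_+-1$, the $\X^-$ case following by replacing the $(+,s=p_-)$ case of Proposition \ref{NGK2} with its $(-,s=p_-)$ case, and the two $\K_{2,1}$ cases following by interchanging the roles of $p_+$ and $p_-$ and invoking Propositions \ref{NGK22} and \ref{lem&p_+=2} in place of Proposition \ref{NGK2}. First I would determine the top of the product. Since $\boxtimes$ is right exact, the $(+,s=p_-)$ case of Proposition \ref{NGK2} already shows that the only simple modules that can occur in ${\rm top}(\K_{1,2}\boxtimes\X^+_{r,p_-})$ are $L(h_{r,p_--1})$ and $\X^+_{r,p_--1}$. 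I would exclude $L(h_{r,p_--1})$: a quotient map onto it would, by the adjunction of Lemma \ref{Etingof} and the self-duality of $\K_{1,2}$ (Theorem \ref{rigid12ori}), yield a nonzero map $\X^+_{r,p_-}\to L(h_{r,p_--1})\boxtimes\K_{1,2}$, which is impossible because by Corollary \ref{directsumL} the target is a direct sum of minimal simple modules. On the other hand, Lemma \ref{lem&p_+=2} provides a surjection $\K_{1,2}\boxtimes\X^+_{r,p_-}\twoheadrightarrow\K^*_{r,p_--1}$, whose top is $\X^+_{r,p_--1}$. Hence ${\rm top}(\K_{1,2}\boxtimes\X^+_{r,p_-})=\X^+_{r,p_--1}$; in particular $\K_{1,2}\boxtimes\X^+_{r,p_-}$ is indecomposable and belongs to the thick block $C^{thick}_{r,p_--1}$.

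Next I would show, by induction on $r$, that $\X^-_{r^\vee,p_--1}$ is not a composition factor of $\K_{1,2}\boxtimes\X^+_{r,p_-}$. Because $\K_{2,1}$ is rigid (Theorem \ref{rigid21}), the functor $\K_{2,1}\boxtimes(-)$ is exact; so if $\X^-_{r^\vee,p_--1}$ occurred in $\K_{1,2}\boxtimes\X^+_{r,p_-}$, then applying $\K_{2,1}\boxtimes(-)$ and using associativity together with the fusion rules $\K_{2,1}\boxtimes\K_{1,s}=\K_{2,s}$ and $\K_{2,1}\boxtimes\K_{r,s}=\K_{r-1,s}\oplus\K_{r+1,s}$ from Proposition \ref{Fusion12Wkai} and the rules for $\K_{2,1}\boxtimes\X^-_{\bullet,\bullet}$ from Proposition \ref{fusion12-}, one would produce a composition factor $\X^-_{(r-1)^\vee,p_--1}$ inside $\K_{1,2}\boxtimes\X^+_{r-1,p_-}$ (the block decomposition and the top computation of the first step rule out the other term that $\K_{2,1}\boxtimes(-)$ creates). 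For $r\geq 2$ this contradicts the inductive hypothesis; for the base case $r=1$ the analogous calculation lands $\X^-_{p_+,p_--1}\in C^{thin}_{p_+,1}$ as a composition factor of $\K_{1,2}\boxtimes\X^+_{2,p_-}$, contradicting — via the top computation and Proposition \ref{projXX} — the fact that this module lies in $C^{thick}_{2,p_--1}$ (or in $C^{thin}_{2,p_--1}$ when $p_+=2$). With ${\rm top}(\K_{1,2}\boxtimes\X^+_{r,p_-})=\X^+_{r,p_--1}$ and $\X^-_{r^\vee,p_--1}$ absent, the ${\rm Ext}^1$-groups of Propositions \ref{Ext} and \ref{ExtQ}, together with the socle series of $\PP^+_{r,p_--1}$ and of $\mathcal{Q}(\X^+_{r,p_--1})_{r,1}$ (Propositions \ref{logarithmic2}, \ref{logarithmic3}), force $\K_{1,2}\boxtimes\X^+_{r,p_-}$ to be $\mathcal{Q}(\X^+_{r,p_--1})_{r,1}$ or a proper quotient of it.

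Finally I would eliminate the proper quotients. Using Lemma \ref{Etingof}, the self-duality of $\K_{1,2}$, and the fusion rule $\K_{1,2}\boxtimes\X^+_{r,p_--1}=\X^+_{r,p_--2}\oplus\X^+_{r,p_-}$ from Lemma \ref{Fusion12Wkai0simple}, one computes ${\rm Hom}_{\mathcal{C}_{p_+,p_-}}(\X^+_{r,p_--1},\K_{1,2}\boxtimes\X^+_{r,p_-})\simeq\C$, so $\X^+_{r,p_--1}$ occurs with multiplicity one in the socle of $\K_{1,2}\boxtimes\X^+_{r,p_-}$; since the product also surjects onto the length-two module $\K^*_{r,p_--1}$, this socle copy of $\X^+_{r,p_--1}$ is a composition factor distinct from the unique one appearing in the top. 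But the socle of $\mathcal{Q}(\X^+_{r,p_--1})_{r,1}$ is the single simple $S_1=\X^+_{r,p_--1}$, so every nonzero submodule contains $S_1$, and any proper quotient therefore retains only one copy of $\X^+_{r,p_--1}$ — namely the one in its top; hence the canonical surjection $\mathcal{Q}(\X^+_{r,p_--1})_{r,1}\twoheadrightarrow\K_{1,2}\boxtimes\X^+_{r,p_-}$ must be an isomorphism. I expect the main obstacle to be the second step: running the block-bookkeeping in the induction so that the unwanted composition factor $\X^-_{r^\vee,p_--1}$ is always forced into a block different from the one pinned down in the first step (including the diagonal situation $r=r^\vee$, where one must choose the correct of the two terms produced by $\K_{2,1}\boxtimes(-)$), and then extracting from the ${\rm Ext}^1$-data of Propositions \ref{Ext}, \ref{ExtQ}, \ref{logarithmic2}, \ref{logarithmic3} that, absent $\X^-_{r^\vee,p_--1}$, the largest module with top $\X^+_{r,p_--1}$ in this block is precisely $\mathcal{Q}(\X^+_{r,p_--1})_{r,1}$.
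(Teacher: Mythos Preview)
Your proposal is correct and follows essentially the same approach as the paper: determine the top via Proposition \ref{NGK2} and Lemma \ref{lem&p_+=2}, rule out the unwanted composition factor $\X^-_{r^\vee,p_--1}$ by applying $\K_{2,1}\boxtimes(-)$ and using block considerations in an induction on $r$, conclude from the ${\rm Ext}^1$-data that the product is a quotient of $\mathcal{Q}(\X^+_{r,p_--1})_{r,1}$, and finally use the adjunction isomorphism to produce the socle copy of $\X^+_{r,p_--1}$ that forces the quotient to be the whole module. Your exclusion of $L(h_{r,p_--1})$ from the top via Corollary \ref{directsumL} and your final multiplicity argument are more explicit than the paper's, but the logic is the same. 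Regarding your stated obstacle in the ``diagonal'' case $r=r^\vee$: since $p_-\geq 3$, the two blocks $C^{thick}_{r-1,p_--1}$ and $C^{thick}_{r+1,p_--1}$ are always distinct (even when $p_+=2r$), so the block separation goes through without extra work.
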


\begin{prop}
\label{fusionQ^+12}
The tensor products of $\K_{1,2}\boxtimes\mathcal{Q}(\X^\pm_{r,s})_{\bullet,\bullet}$ and $\K_{2,1}\boxtimes\mathcal{Q}(\X^\pm_{r,s})_{\bullet,\bullet}$ are given by :
\begin{enumerate}
\item For $1\leq r\leq p_+-1$, $2\leq s\leq p_--1$, 
\begin{align*}
\K_{1,2}\boxtimes \mathcal{Q}(\X^\pm_{r,s})_{r^\vee,s}=\mathcal{Q}(\X^\pm_{r,s-1})_{r^\vee,s-1}\oplus \mathcal{Q}(\X^\pm_{r,s+1})_{r^\vee,s+1}.
\end{align*}
\item For $1\leq r\leq p_+$, $2\leq s\leq p_--2$, 
\begin{align*}
\K_{1,2}\boxtimes \mathcal{Q}(\X^\pm_{r,s})_{r,s^\vee}=\mathcal{Q}(\X^\pm_{r,s-1})_{r,s^\vee+1}\oplus \mathcal{Q}(\X^\pm_{r,s+1})_{r,s^\vee-1}.
\end{align*}
\item For $p_+\geq 3$, $2\leq r\leq p_+-1$, $1\leq s\leq p_--1$, 
\begin{align*}
\K_{2,1}\boxtimes \mathcal{Q}(\X^\pm_{r,s})_{r,s^\vee}=\mathcal{Q}(\X^\pm_{r-1,s})_{r-1,s^\vee}\oplus\mathcal{Q}(\X^\pm_{r+1,s})_{r+1,s^\vee}. 
\end{align*}
\item For $p_+\geq 3$, $2\leq r\leq p_+-2$, $1\leq s\leq p_-$, 
\begin{align*}
\K_{2,1}\boxtimes \mathcal{Q}(\X^\pm_{r,s})_{r^\vee,s}=\mathcal{Q}(\X^\pm_{r-1,s})_{r^\vee+1,s}\oplus\mathcal{Q}(\X^\pm_{r+1,s})_{r^\vee-1,s}. 
\end{align*}
\item For $1\leq r\leq p_+-1$, 
\begin{align*}
\K_{1,2}\boxtimes \mathcal{Q}(\X^\pm_{r,1})_{r^\vee,1}=\mathcal{Q}(\X^\pm_{r,2})_{r^\vee,2}.
\end{align*}
\item For $1\leq r\leq p_+$, 
\begin{align*}
\K_{1,2}\boxtimes \mathcal{Q}(\X^\pm_{r,p_--1})_{r,1}=2\X^\pm_{r,p_-}\oplus\mathcal{Q}(\X^\pm_{r,p_--2})_{r,2}.
\end{align*}
\item For $1\leq s\leq p_--1$,
\begin{align*}
\K_{2,1}\boxtimes \mathcal{Q}(\X^\pm_{1,s})_{1,s^\vee}=\mathcal{Q}(\X^\pm_{2,s})_{2,s^\vee}.
\end{align*}
\item For $1\leq s\leq p_-$,
\begin{align*}
\K_{2,1}\boxtimes \mathcal{Q}(\X^\pm_{p_+-1,s})_{1,s}=2\X^\pm_{p_+,s}\oplus\mathcal{Q}(\X^\pm_{p_+-2,s})_{2,s}.
\end{align*}
\item For $1\leq r\leq p_+$, $1\leq s\leq p_-$,
\begin{equation}
\label{relQ0621}
\begin{split}
&\K_{1,2}\boxtimes\mathcal{Q}(\X^\pm_{r,1})_{r,p_--1}=2\X^\mp_{r,p_-}\oplus\mathcal{Q}(\X^\pm_{r,2})_{r,p_--2},\\
&\K_{2,1}\boxtimes\mathcal{Q}(\X^\pm_{1,s})_{p_+-1,s}=2\X^\mp_{p_+,s}\oplus\mathcal{Q}(\X^\pm_{2,s})_{p_+-2,s}.
\end{split}
\end{equation}
\end{enumerate}
\begin{proof}
We will only prove 
\begin{align}
\K_{1,2}\boxtimes \mathcal{Q}(\X^+_{r,s})_{r^\vee,s}=\mathcal{Q}(\X^+_{r,s-1})_{r^\vee,s-1}\oplus \mathcal{Q}(\X^+_{r,s+1})_{r^\vee,s+1}
\label{202210240}
\end{align}
for $1\leq r\leq p_+-1$, $2\leq s\leq p_--1$. The other equalities can be proved in a similar way. 

By the rigidity of $\K_{1,2}$ the composition factors of $\K_{1,2}\boxtimes \mathcal{Q}(\X^+_{r,s})_{r^\vee,s}$ is given by
\begin{align*}
2\X^+_{r,s-1},\ \ L(h_{r,s-1}),\ \ 2\X^-_{r^\vee,s-1},\ \ 2\X^+_{r,s+1},\ \ L(h_{r,s+1}),\ \ 2\X^-_{r^\vee,s+1}.
\end{align*}
By using Lemma \ref{Etingof} and the self-duality of $\K_{1,2}$, we can see that for any simple $\mathcal{W}_{p_+,p_-}$-module $M$
\begin{equation}
\begin{split}
&{\rm Hom}_{\mathcal{C}_{p_+,p_-}}(\K_{1,2}\boxtimes \mathcal{Q}(\X^+_{r,s})_{r^\vee,s},M)
=
\begin{cases}
\mathbb{C}&M=\X^+_{r,s-1}\ {\rm or}\ \X^+_{r,s+1}\\
0&\ {\rm otherwise}
\end{cases}
\\
&{\rm Hom}_{\mathcal{C}_{p_+,p_-}}(M,\K_{1,2}\boxtimes \mathcal{Q}(\X^+_{r,s})_{r^\vee,s})
=
\begin{cases}
\mathbb{C}&M=\X^+_{r,s-1}\ {\rm or}\ \X^+_{r,s+1}\\
0&\ {\rm otherwise}.
\end{cases}
\end{split}
\label{0618x}
\end{equation}
From (\ref{0618x}), we see that $\K_{1,2}\boxtimes \mathcal{Q}(\X^+_{r,s})_{r^\vee,s}$ is direct sums of two indecomposable modules whose composition factors are the same as those of $\mathcal{Q}(\X^+_{r,s-1})_{r^\vee,s-1}$ and $\mathcal{Q}(\X^+_{r,s+1})_{r^\vee,s+1}$. Thus, by Propositions \ref{Ext} and \ref{ExtQ}, we obtain (\ref{202210240}).

\end{proof}
\end{prop}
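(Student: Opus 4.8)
The plan is to convert the rigidity and self-duality of $\K_{1,2}$ and $\K_{2,1}$ (Theorems \ref{rigid12ori} and \ref{rigid21}) into two tools. First, for a rigid self-dual object $V$ the functor $V\boxtimes(-)$ is both a left and a right adjoint of itself, hence \emph{exact}. Second, Lemma \ref{Etingof} together with self-duality gives, for all objects $M,N$, natural isomorphisms $\Hom_{\mathcal{C}_{p_+,p_-}}(V\boxtimes M,N)\simeq\Hom_{\mathcal{C}_{p_+,p_-}}(M,V\boxtimes N)$ and $\Hom_{\mathcal{C}_{p_+,p_-}}(M,V\boxtimes N)\simeq\Hom_{\mathcal{C}_{p_+,p_-}}(V\boxtimes M,N)$ (using the braiding to commute the factors). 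I would treat $V=\K_{1,2}$ and the representative identity of item $(1)$, namely $\K_{1,2}\boxtimes\mathcal{Q}(\X^+_{r,s})_{r^\vee,s}=\mathcal{Q}(\X^+_{r,s-1})_{r^\vee,s-1}\oplus\mathcal{Q}(\X^+_{r,s+1})_{r^\vee,s+1}$ for $1\le r\le p_+-1$, $2\le s\le p_--1$; the $\K_{2,1}$-items are verbatim analogues, and the boundary items ($s\in\{1,p_--1\}$, $r\in\{1,p_+-1\}$, including those carrying a $2\X^\pm$ summand) need in addition Propositions \ref{projXX} and \ref{fusionQ^+}.

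The first step computes the Jordan--H\"older content. Apply the exact functor $\K_{1,2}\boxtimes(-)$ to the length-three socle series of $\mathcal{Q}(\X^+_{r,s})_{r^\vee,s}$ supplied by Proposition \ref{logarithmic3}; its composition factors are $\X^+_{r,s}$, $\X^-_{r^\vee,s}$ and $L(h_{r,s})$, whose images are already known: $\K_{1,2}\boxtimes\X^+_{r,s}=\X^+_{r,s-1}\oplus\X^+_{r,s+1}$ by Lemma \ref{Fusion12Wkai0simple}, $\K_{1,2}\boxtimes\X^-_{r^\vee,s}=\X^-_{r^\vee,s-1}\oplus\X^-_{r^\vee,s+1}$ by Proposition \ref{fusion12-}, and $\K_{1,2}\boxtimes L(h_{r,s})=L(h_{r,s-1})\oplus L(h_{r,s+1})$ by Propositions \ref{minimalnull} and \ref{minimal fusion} (apply $\boxtimes L(h_{r,s})$ to $0\to\X^+_{1,2}\to\K_{1,2}\to L(h_{1,2})\to 0$ and use $\X^+_{1,2}\boxtimes L(h_{r,s})=0$). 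This gives the multiset $2\X^+_{r,s-1}$, $L(h_{r,s-1})$, $2\X^-_{r^\vee,s-1}$, $2\X^+_{r,s+1}$, $L(h_{r,s+1})$, $2\X^-_{r^\vee,s+1}$, which is exactly the content of $\mathcal{Q}(\X^+_{r,s-1})_{r^\vee,s-1}\oplus\mathcal{Q}(\X^+_{r,s+1})_{r^\vee,s+1}$; moreover the two halves of this list lie in the distinct blocks $C^{thick}_{r,s-1}$ and $C^{thick}_{r,s+1}$.

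The second step determines socle and top. For a simple $M$, the adjunction above yields $\Hom_{\mathcal{C}_{p_+,p_-}}(\K_{1,2}\boxtimes\mathcal{Q}(\X^+_{r,s})_{r^\vee,s},M)\simeq\Hom_{\mathcal{C}_{p_+,p_-}}(\mathcal{Q}(\X^+_{r,s})_{r^\vee,s},\K_{1,2}\boxtimes M)$ and the mirror statement with the arrow reversed. Since $\K_{1,2}\boxtimes M$ is known for every simple $M$ (by the same Propositions, plus Propositions \ref{fusionQ^+} and \ref{projXX} in the thin and $C^\pm_{p_+,p_-}$ cases) and the socle and top of $\mathcal{Q}(\X^+_{r,s})_{r^\vee,s}$ both equal $\X^+_{r,s}$ (Proposition \ref{logarithmic3}), a block comparison shows that these Hom-spaces are $\C$ when $M=\X^+_{r,s-1}$ or $\X^+_{r,s+1}$ and vanish otherwise. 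Hence $\K_{1,2}\boxtimes\mathcal{Q}(\X^+_{r,s})_{r^\vee,s}$ has socle and top both equal to $\X^+_{r,s-1}\oplus\X^+_{r,s+1}$; since these two simples sit in different blocks, the module is the direct sum of its $C^{thick}_{r,s-1}$- and $C^{thick}_{r,s+1}$-components, each of which has simple socle and is therefore indecomposable.

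The last step is the identification, and it is also the main obstacle. Each indecomposable summand has the socle series, socle and top of the corresponding $\mathcal{Q}(\X^+_{r,s\mp 1})_{r^\vee,s\mp 1}$ (Proposition \ref{logarithmic3}, or Proposition \ref{logarithmic4} in a thin block), and by the ${\rm Ext}^1$-vanishing of Propositions \ref{Ext} and \ref{ExtQ} each such block contains \emph{exactly one} indecomposable module with a prescribed simple top and simple socle; hence each summand must be the asserted $\mathcal{Q}$-module, the conjugate subscript being forced by which block the $\X^-$-factors occupy. The boundary items go the same way, except that where a composition factor $\X^\pm_{p_+,p_-}$ would occur one instead uses that $\X^\pm_{p_+,p_-}$ is projective, so $\K\boxtimes\X^\pm_{p_+,p_-}$ is projective by exactness (Proposition \ref{projXX}), and where thin-block factors $\X^\pm_{r,p_-}$ appear one peels off the summand $2\X^\pm_{r,p_-}$ (resp.\ $2\X^\mp$) via Proposition \ref{fusionQ^+} and associativity of $\boxtimes$. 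The delicate point throughout is precisely this step: one must rule out every other module with the same Jordan--H\"older content, and in particular any non-split gluing of the two pieces --- which is exactly what the Hom-computations of the second step together with the vanishing ${\rm Ext}^1$'s of Propositions \ref{Ext} and \ref{ExtQ} achieve; a secondary, purely bookkeeping hurdle is keeping track of which conjugate label $(c,d)$ of $\mathcal{Q}(\X^\pm_{\bullet,\bullet})_{c,d}$ appears, which is dictated by the block decomposition of $\mathcal{C}_{p_+,p_-}$.
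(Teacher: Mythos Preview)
Your proof is correct and follows essentially the same approach as the paper: both use rigidity and self-duality of $\K_{1,2}$ to compute the composition factors (via exactness of $\K_{1,2}\boxtimes(-)$) and the adjunction of Lemma \ref{Etingof} to determine the top and socle, then invoke Propositions \ref{Ext} and \ref{ExtQ} to identify the indecomposable summands. You are more explicit about the block decomposition forcing the direct-sum splitting and about the boundary cases, but the core argument is identical to the paper's.
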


We set
\begin{align*}
\mathbb{Q}_{p_+,p_-}=\bigsqcup_{i=1}^{p_+-1} \bigsqcup_{j=1}^{p_--1} Q^{thick}_{i,j}\sqcup \bigsqcup_{k=1}^{p_+-1} Q^{thin}_{k,p_-}\sqcup\bigsqcup_{l=1}^{p_--1} Q^{thin}_{p_+,l}.
\end{align*}
where $Q^{thick}_{i,j}$, $Q^{thin}_{k,p_-}$ and $Q^{thin}_{p_+,l}$ are defined by (\ref{Qthick}) and (\ref{Qthin}).
\begin{thm}
\label{QThm}
\begin{enumerate}
\item All simple modules in the thin blocks and the semi-simple blocks are rigid and self-dual.
\item For any $(\epsilon,a,b,c,d)\in \mathbb{Q}_{p_+,p_-}$, $\mathcal{Q}(\mathcal{X}^\epsilon_{a,b})_{c,d}$ is rigid and self-dual.
\end{enumerate}
\begin{proof}
Noting the formula (\ref{relQ0621}), from Propositions \ref{projXX}, \ref{fusionQ^+} and \ref{fusionQ^+12}, we can see that each of
the simple modules in the thin and semi-simple blocks and the indecomposable modules $\mathcal{Q}(\mathcal{X}^\epsilon_{a,b})_{c,d} ((\epsilon,a,b,c,d)\in \mathbb{Q}_{p_+,p_-})$ is obtained as a direct summand of the repeated tensor products of $\K_{1,2}$ and $\K_{2,1}$. 
Thus these simple and indecomposable modules are rigid and self-dual.
\end{proof}
\end{thm}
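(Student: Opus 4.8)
The plan is to reduce the whole statement to the rigidity and self-duality of $\K_{1,2}$ and $\K_{2,1}$ (Theorems \ref{rigid12ori} and \ref{rigid21}) together with two general facts about tensor categories: a tensor product of rigid objects is rigid with dual the reversed product of the duals (Lemma \ref{Etingof0}), and a direct summand (retract) of a rigid object in an idempotent-complete tensor category is again rigid (cf.\ \cite{Etingof,JS,KL}). Since $(\mathcal{C}_{p_+,p_-},\boxtimes,\K_{1,1})$ is braided, the dual of any iterated product $N:=\K_{1,2}^{\boxtimes a}\boxtimes\K_{2,1}^{\boxtimes b}$ is, by Lemma \ref{Etingof0} and the braiding isomorphisms together with $\K_{1,2}^\vee\simeq\K_{1,2}$, $\K_{2,1}^\vee\simeq\K_{2,1}$, isomorphic to $N$ itself; hence every such $N$ is rigid and self-dual, and every direct summand of $N$ is rigid. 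For self-duality of an individual indecomposable summand $M$ one argues block by block: $\mathcal{C}_{p_+,p_-}$ has finite length and so the Krull--Schmidt property holds, the contragredient $(-)^*$ is exact and block-preserving and coincides with the rigid dual on rigid objects, and in every decomposition arising from Propositions \ref{projXX}, \ref{fusionQ^+} and \ref{fusionQ^+12} the indecomposable constituents lying in a given block are all isomorphic to one another; comparing $N$ with $N^*$ in each block then forces $M^*\simeq M$ by cancellation.

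It therefore remains to exhibit each thin- or semisimple-block simple module, and each $\mathcal{Q}(\X^\epsilon_{a,b})_{c,d}$ with $(\epsilon,a,b,c,d)\in\mathbb{Q}_{p_+,p_-}$, as a direct summand of some $\K_{1,2}^{\boxtimes a}\boxtimes\K_{2,1}^{\boxtimes b}$, which I would carry out by a staged induction. The ``$+$''-type boundary simples $\X^+_{p_+,s}=\K_{p_+,s}$, $\X^+_{r,p_-}=\K_{r,p_-}$ and $\X^+_{p_+,p_-}=\K_{p_+,p_-}$ are already rigid and self-dual by Theorem \ref{Wrigid}. Tensoring these with $\K_{1,2}$ and $\K_{2,1}$ and applying Propositions \ref{projXX} and \ref{fusionQ^+} produces the ``boundary'' indecomposables $\mathcal{Q}(\X^+_{r,p_--1})_{r,1}$ and $\mathcal{Q}(\X^+_{p_+-1,s})_{1,s}$, and Proposition \ref{fusionQ^+12} then propagates these to all $\mathcal{Q}(\X^+_{a,b})_{c,d}$; here a convenient induction parameter is the distance of the relevant Kac label from the boundary of the Kac table. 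Formula (\ref{relQ0621}), whose right-hand side contains $\X^-_{r,p_-}$, $\X^-_{p_+,s}$ and (for $r=p_+$) $\X^-_{p_+,p_-}$ as genuine direct summands, each lying in a block disjoint from that of the accompanying $\mathcal{Q}$-module, then gives rigidity and self-duality of the ``$-$''-type boundary simples, finishing Part (1); feeding these $\X^-$'s back into Propositions \ref{fusionQ^+} and \ref{fusionQ^+12} with the lower sign yields the remaining $\mathcal{Q}(\X^-_{a,b})_{c,d}$ and finishes Part (2). The case $p_+=2$ is treated separately, with $\K_{2,1}=\X^+_{2,1}$ and $\K_{2,1}\boxtimes\K_{2,1}\simeq\mathcal{Q}(\X^+_{1,1})_{1,1}$ from Theorem \ref{rigid21} as an additional seed.

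I expect the only real work to be the combinatorial bookkeeping needed to see that this induction closes: one must check that the index ranges in Propositions \ref{projXX}, \ref{fusionQ^+} and \ref{fusionQ^+12} really do connect, by chains of tensor multiplications by $\K_{1,2}$ and $\K_{2,1}$, every target module to the seeds $\K_{1,2}$, $\K_{2,1}$ (and, when $p_+=2$, $\mathcal{Q}(\X^+_{1,1})_{1,1}$), leaving no $(\epsilon,a,b,c,d)\in\mathbb{Q}_{p_+,p_-}$ and no thin- or semisimple-block simple unreached, and that the small values of $p_+$ and $p_-$ do not create gaps. The conceptual input — that rigidity and self-duality pass to retracts of self-dual rigid objects once the block decomposition is used — is immediate given Theorems \ref{rigid12ori}, \ref{rigid21} and \ref{Wrigid}.
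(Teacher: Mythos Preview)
Your proposal is correct and follows essentially the same approach as the paper: both arguments show that every module in question arises as a direct summand of an iterated tensor product of the self-dual rigid objects $\K_{1,2}$ and $\K_{2,1}$ by chaining Propositions \ref{projXX}, \ref{fusionQ^+} and \ref{fusionQ^+12} (with formula (\ref{relQ0621}) providing the bridge to the ``$-$''-type thin-block simples), and then invoke Lemma \ref{Etingof0}. Your write-up is in fact more detailed than the paper's, which leaves the inductive bookkeeping and the Krull--Schmidt/block argument for self-duality of summands implicit.
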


\begin{prop}
\label{QX+}
For any $(\epsilon,a,b,c,d)\in \mathbb{Q}_{p_+,p_-}$, we have
\begin{align*}
\X^+_{1,1}\boxtimes\mathcal{Q}(\X^\epsilon_{a,b})_{c,d}=\mathcal{Q}(\X^\epsilon_{a,b})_{c,d}.
\end{align*}
\begin{proof}
Since $\X^+_{1,1}\boxtimes \X^\pm_{a,b}=\X^\pm_{a,b}$ for any simple module $\X^\pm_{a,b}$ in the thin blocks and the semi-simple blocks, by the associativity of the tensor product $\boxtimes$ and by Propositions \ref{projXX}, \ref{fusionQ^+} and \ref{fusionQ^+12}, we obtain the equality of the claim.
\end{proof}
\end{prop}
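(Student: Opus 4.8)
The plan is to deduce the statement from two inputs: that $\X^+_{1,1}$ acts as the identity on every simple module lying in a thin or semisimple block, and that --- exactly as in the proof of Theorem \ref{QThm} --- each $\mathcal{Q}(\X^\epsilon_{a,b})_{c,d}$ with $(\epsilon,a,b,c,d)\in\mathbb{Q}_{p_+,p_-}$ is a direct summand of an iterated $\boxtimes$-product of copies of $\K_{1,2}$ and $\K_{2,1}$ with one such ``base'' simple module.

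First I would settle the base case. If $\X^+$ is a simple module in a thin or semisimple block, then by Definition \ref{W(beta)} it equals one of $\K_{r,p_-}$, $\K_{p_+,s}$, $\K_{p_+,p_-}$, so Proposition \ref{Fusion11+} gives $\X^+_{1,1}\boxtimes\X^+=\X^+$. If $\X^-$ is a simple module in such a block, Proposition \ref{change} presents it as $\X^-=\X^-_{1,1}\boxtimes\X^+$ with $\X^+$ again a thin- or semisimple-block simple; using the braiding and associativity of $\boxtimes$ together with $\X^+_{1,1}\boxtimes\X^-_{1,1}=\X^-_{1,1}$ (Proposition \ref{--}) one then gets $\X^+_{1,1}\boxtimes\X^-=(\X^+_{1,1}\boxtimes\X^-_{1,1})\boxtimes\X^+=\X^-_{1,1}\boxtimes\X^+=\X^-$. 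In particular $\X^+_{1,1}$ fixes $\X^\pm_{p_+,p_-}$ and all thin-block simples.

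For the inductive step I would first record that, by the braiding and associativity isomorphisms, $\X^+_{1,1}\boxtimes(\K\boxtimes M)\cong\K\boxtimes(\X^+_{1,1}\boxtimes M)$ for $\K\in\{\K_{1,2},\K_{2,1}\}$ and any $M\in\mathcal{C}_{p_+,p_-}$. Combined with Propositions \ref{projXX}, \ref{fusionQ^+} and \ref{fusionQ^+12}, this lets me run an induction along the well-founded generation pattern of the modules $\mathcal{Q}(\X^\epsilon_{a,b})_{c,d}$ used in the proof of Theorem \ref{QThm}. The initial instances come from Propositions \ref{projXX} and \ref{fusionQ^+}; for example $\X^+_{1,1}\boxtimes\mathcal{Q}(\X^\pm_{p_+,p_--1})_{p_+,1}=\X^+_{1,1}\boxtimes(\K_{1,2}\boxtimes\X^\pm_{p_+,p_-})=\K_{1,2}\boxtimes(\X^+_{1,1}\boxtimes\X^\pm_{p_+,p_-})=\K_{1,2}\boxtimes\X^\pm_{p_+,p_-}=\mathcal{Q}(\X^\pm_{p_+,p_--1})_{p_+,1}$, and likewise for $\mathcal{Q}(\X^\pm_{p_+-1,p_-})_{1,p_-}$, $\mathcal{Q}(\X^\pm_{r,p_--1})_{r,1}$ and $\mathcal{Q}(\X^\pm_{p_+-1,s})_{1,s}$. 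In general, whenever a formula of Proposition \ref{fusionQ^+12} expresses $\K\boxtimes\mathcal{Q}(\X^\epsilon_{a',b'})_{c',d'}$ (with the claim already known for $\mathcal{Q}(\X^\epsilon_{a',b'})_{c',d'}$) as a direct sum of a new $\mathcal{Q}(\X^\epsilon_{a,b})_{c,d}$, possibly some further already-treated $\mathcal{Q}$'s, and possibly some thin-block simples, applying $\X^+_{1,1}\boxtimes-$ and using the cases already established shows that $\X^+_{1,1}\boxtimes\mathcal{Q}(\X^\epsilon_{a,b})_{c,d}$ completes a direct sum that is otherwise fixed; the Krull--Schmidt theorem in the finite-length category $\mathcal{C}_{p_+,p_-}$ then forces $\X^+_{1,1}\boxtimes\mathcal{Q}(\X^\epsilon_{a,b})_{c,d}=\mathcal{Q}(\X^\epsilon_{a,b})_{c,d}$.

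The only delicate point is the bookkeeping in this last step: one must fix an induction order along the generation pattern so that, in each decomposition from Proposition \ref{fusionQ^+12}, every summand except the one currently under study is already known to be $\X^+_{1,1}$-fixed --- this is precisely the well-foundedness recorded in the proof of Theorem \ref{QThm} --- and one must appeal to Krull--Schmidt to pass from ``the total direct sum is fixed'' to ``the distinguished indecomposable summand is fixed''. Everything else is a routine application of the braided tensor structure of $(\mathcal{C}_{p_+,p_-},\boxtimes,\K_{1,1})$ and of the tensor-product formulas already proved.
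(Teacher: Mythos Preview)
Your proof is correct and follows essentially the same approach as the paper: establish that $\X^+_{1,1}$ fixes the thin- and semisimple-block simples, then use associativity together with Propositions \ref{projXX}, \ref{fusionQ^+} and \ref{fusionQ^+12} to propagate this along the generation pattern of the $\mathcal{Q}$-modules. You have simply made explicit what the paper leaves implicit --- the derivation of the base case from Propositions \ref{Fusion11+}, \ref{--}, \ref{change}, and the Krull--Schmidt cancellation needed in the inductive step.
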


By Propositions \ref{change}, \ref{projXX}, \ref{fusionQ^+} and \ref{fusionQ^+12}, we obtain the following proposition.
\begin{prop}
\label{QX-}
For any $(\epsilon,a,b,c,d)\in \mathbb{Q}_{p_+,p_-}$, we have
\begin{align*}
\X^-_{1,1}\boxtimes\mathcal{Q}(\X^\epsilon_{a,b})_{c,d}=\mathcal{Q}(\X^{-\epsilon}_{a,b})_{c,d}.
\end{align*}
\begin{proof}
From Propositions \ref{change} and \ref{projXX}, we have
\begin{equation}
\label{chQ}
\begin{split}
&\X^-_{1,1}\boxtimes\mathcal{Q}(\X^\pm_{p_+,p_--1})_{p_+,1}=\mathcal{Q}(\X^\mp_{p_+,p_--1})_{p_+,1},\\
&\X^-_{1,1}\boxtimes\mathcal{Q}(\X^\pm_{p_+-1,p_-})_{1,p_-}=\mathcal{Q}(\X^\mp_{p_+-1,p_-})_{1,p_-}.
\end{split}
\end{equation}
From Propositions \ref{fusionQ^+}, \ref{fusionQ^+12} and (\ref{chQ}), we can prove the claim inductively.
\end{proof}
\end{prop}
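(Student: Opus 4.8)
The plan is to prove the identity by induction along the presentation of the modules $\mathcal{Q}(\X^\epsilon_{a,b})_{c,d}$, $(\epsilon,a,b,c,d)\in\mathbb{Q}_{p_+,p_-}$, as direct summands of iterated tensor products of $\K_{1,2}$ and $\K_{2,1}$ with a semisimple-block simple $\X^\pm_{p_+,p_-}$, a presentation already extracted in the proof of Theorem~\ref{QThm}. The key structural input is that $\boxtimes$ is commutative and associative, so $\X^-_{1,1}\boxtimes(\K\boxtimes M)\simeq\K\boxtimes(\X^-_{1,1}\boxtimes M)$ for any $M$ and any $\K\in\{\K_{1,2},\K_{2,1}\}$; thus $\X^-_{1,1}\boxtimes(-)$ can be slid inward through the iterated product until it meets $\X^\pm_{p_+,p_-}$, where it acts by $\X^-_{1,1}\boxtimes\X^\epsilon_{p_+,p_-}=\X^{-\epsilon}_{p_+,p_-}$.

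The base cases~(\ref{chQ}) come from Proposition~\ref{projXX} together with this last identity: the case $\X^-_{1,1}\boxtimes\X^+_{p_+,p_-}=\X^-_{p_+,p_-}$ is Proposition~\ref{change}, while $\X^-_{1,1}\boxtimes\X^-_{p_+,p_-}=\X^+_{p_+,p_-}$ follows from Proposition~\ref{12*sai} (one factor being the unit $\K_{1,1}$), Proposition~\ref{W^*W}, and the self-duality of $\X^+_{p_+,p_-}$ from Theorem~\ref{QThm}(1); the same reasoning records the analogue $\X^-_{1,1}\boxtimes\X^\epsilon_{r,p_-}=\X^{-\epsilon}_{r,p_-}$ and $\X^-_{1,1}\boxtimes\X^\epsilon_{p_+,s}=\X^{-\epsilon}_{p_+,s}$ for the thin-block simples, which is needed because some decompositions in Proposition~\ref{fusionQ^+12} carry such simple summands. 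For the inductive step, assuming the identity for a module $\mathcal{Q}(\X^\epsilon_{a',b'})_{c',d'}$ closer to the base and that $\mathcal{Q}(\X^\epsilon_{a,b})_{c,d}$ occurs in $\K\boxtimes\mathcal{Q}(\X^\epsilon_{a',b'})_{c',d'}$, one has
\[
\X^-_{1,1}\boxtimes\bigl(\K\boxtimes\mathcal{Q}(\X^\epsilon_{a',b'})_{c',d'}\bigr)\simeq\K\boxtimes\mathcal{Q}(\X^{-\epsilon}_{a',b'})_{c',d'},
\]
and Proposition~\ref{fusionQ^+12}, applied once with label $\epsilon$ and once with label $-\epsilon$, supplies direct-sum decompositions of the two outer modules which match summand by summand up to the flip $\epsilon\mapsto-\epsilon$. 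Since $\X^-_{1,1}\boxtimes(-)$ sends the block of the summand $\mathcal{Q}(\X^\epsilon_{a,b})_{c,d}$ to the block of $\X^{-\epsilon}_{a,b}$, and that block (after peeling off the multiplicity-two simple summands by the base identities) contains exactly one indecomposable summand of the right-hand side, namely $\mathcal{Q}(\X^{-\epsilon}_{a,b})_{c,d}$, projecting both sides to that block gives $\X^-_{1,1}\boxtimes\mathcal{Q}(\X^\epsilon_{a,b})_{c,d}=\mathcal{Q}(\X^{-\epsilon}_{a,b})_{c,d}$.

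The step I expect to require the most care is the block-and-summand bookkeeping in this inductive step: because $\boxtimes$ is only right exact, $\X^-_{1,1}\boxtimes(-)$ is not exact — it must carry the length-$4$ module $\mathcal{Q}(\X^-_{a,b})_{c,d}$ to the length-$5$ module $\mathcal{Q}(\X^+_{a,b})_{c,d}$ — so one cannot reason in the Grothendieck ring and must instead (i) verify $\X^-_{1,1}\boxtimes\mathcal{Q}(\X^\epsilon_{a,b})_{c,d}\neq0$, e.g.\ by noting that $\X^-_{1,1}\boxtimes\X^-_{1,1}\boxtimes\mathcal{Q}(\X^\epsilon_{a,b})_{c,d}\simeq\K^*_{1,1}\boxtimes\mathcal{Q}(\X^\epsilon_{a,b})_{c,d}$ (Propositions~\ref{--},~\ref{W^*W}) surjects onto $\X^+_{1,1}\boxtimes\mathcal{Q}(\X^\epsilon_{a,b})_{c,d}=\mathcal{Q}(\X^\epsilon_{a,b})_{c,d}\neq0$ (Proposition~\ref{QX+}); and (ii) pin down the blocks of all composition factors of $\X^-_{1,1}\boxtimes\mathcal{Q}(\X^\epsilon_{a,b})_{c,d}$ from right-exactness together with $\X^-_{1,1}\boxtimes\X^+_{e,f}=\X^-_{e,f}$, $\X^-_{1,1}\boxtimes\X^-_{e,f}=\K^*_{e,f}$ and $\X^-_{1,1}\boxtimes L(h_{e,f})=0$ (Propositions~\ref{change},~\ref{12*sai},~\ref{minimalnull}). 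An equivalent endgame, once nonvanishing and the target block are known, is to match the socle series of $\X^-_{1,1}\boxtimes\mathcal{Q}(\X^\epsilon_{a,b})_{c,d}$ against the one in Proposition~\ref{logarithmic3} and invoke the uniqueness of the indecomposable module with that structure from~\cite{Nakano}.
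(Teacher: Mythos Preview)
Your proposal is correct and follows essentially the same strategy as the paper: establish the base cases~(\ref{chQ}) by sliding $\X^-_{1,1}$ through the presentation $\mathcal{Q}(\X^\pm_{p_+,p_--1})_{p_+,1}=\K_{1,2}\boxtimes\X^\pm_{p_+,p_-}$ (and its $\K_{2,1}$-analogue) from Proposition~\ref{projXX}, then run the induction using the decompositions of Propositions~\ref{fusionQ^+} and~\ref{fusionQ^+12}. The paper records only this outline, whereas you spell out the block-separation argument that makes the summand matching unambiguous and the extra input (Propositions~\ref{12*sai},~\ref{W^*W}, self-contragredience of $\X^+_{p_+,p_-}$) needed for the $\epsilon=-$ half of the base case; these are genuine details the paper leaves implicit, not a different route.
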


\begin{prop}
\label{fusionP^+}
\mbox{}
\begin{enumerate}
\item For $1\leq r\leq p_+-1$, we have
\begin{align*}
\K_{1,2}\boxtimes\mathcal{Q}(\X^\pm_{r,p_-})_{r^\vee,p_-}=\PP^\pm_{r,p_--1}.
\end{align*}
\item For $1\leq s\leq p_--1$, we have
\begin{align*}
\K_{2,1}\boxtimes\mathcal{Q}(\X^\pm_{p_+,s})_{p_+,s^\vee}=\PP^\pm_{p_+-1,s}.
\end{align*}
\end{enumerate}
\begin{proof}
We only prove the first case. The second case can be proved in the same way. 

By using Lemma \ref{Etingof} and the self-duality of $\K_{1,2}$, we can see that for any simple $\mathcal{W}_{p_+,p_-}$-module $M$
\begin{equation}
\begin{split}
&{\rm Hom}_{\mathcal{C}_{p_+,p_-}}(\K_{1,2}\boxtimes\mathcal{Q}(\X^\pm_{r,p_-})_{r^\vee,p_-},M)
=
\begin{cases}
\mathbb{C}&M=\X^\pm_{r,p_--1}\\
0&\ {\rm otherwise}
\end{cases}
\\
&{\rm Hom}_{\mathcal{C}_{p_+,p_-}}(M,\K_{1,2}\boxtimes\mathcal{Q}(\X^\pm_{r,p_-})_{r^\vee,p_-})
=
\begin{cases}
\mathbb{C}&M=\X^\pm_{r,p_--1}\\
0&\ {\rm otherwise}
\end{cases}
\end{split}
\label{0618xx}
\end{equation}
Since $\mathcal{Q}(\X^\pm_{r,p_-})_{r^\vee,p_-}$ is projective, by the rigidity of $\mathcal{K}_{1,2}$, $\K_{1,2}\boxtimes\mathcal{Q}(\X^\pm_{r,p_-})_{r^\vee,p_-}$ is also projective. Therefore from (\ref{0618xx}), we obtain
\begin{align*}
\K_{1,2}\boxtimes\mathcal{Q}(\X^\pm_{r,p_-})_{r^\vee,p_-}\simeq\PP^\pm_{r,p_--1}.
\end{align*}
\end{proof}
\end{prop}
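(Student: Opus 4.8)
The plan is to identify $\K_{1,2}\boxtimes\mathcal{Q}(\X^\pm_{r,p_-})_{r^\vee,p_-}$ with a projective cover, using projectivity together with the rigidity and self-duality established earlier. I would prove only the first identity; the second follows by running the same argument with the two Kac labels (and $\K_{1,2}$, $\K_{2,1}$) interchanged. The first point to record is that $\mathcal{Q}(\X^\pm_{r,p_-})_{r^\vee,p_-}$ is projective in $\mathcal{C}_{p_+,p_-}$: by its construction in Subsection~\ref{LogarithmicWalgebra} it is the projective cover of the simple module $\X^\pm_{r,p_-}$ inside the thin block $C^{thin}_{r,p_-}$, and by the block decomposition a projective cover inside a block is a projective object of $\mathcal{C}_{p_+,p_-}$. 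Since $\K_{1,2}$ is rigid by Theorem~\ref{rigid12ori}, the functor $\K_{1,2}\boxtimes(-)$ is exact and preserves projectives (it is, up to natural isomorphism, its own right adjoint), so $\K_{1,2}\boxtimes\mathcal{Q}(\X^\pm_{r,p_-})_{r^\vee,p_-}$ is projective.

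Next I would determine the top of this projective module. For any simple module $M$, the rigidity adjunction of Lemma~\ref{Etingof}, together with self-duality of $\K_{1,2}$ and the braiding, gives a natural isomorphism $\Hom_{\mathcal{C}_{p_+,p_-}}(\K_{1,2}\boxtimes\mathcal{Q}(\X^\pm_{r,p_-})_{r^\vee,p_-},M)\cong\Hom_{\mathcal{C}_{p_+,p_-}}(\mathcal{Q}(\X^\pm_{r,p_-})_{r^\vee,p_-},\K_{1,2}\boxtimes M)$. Since $\mathcal{Q}(\X^\pm_{r,p_-})_{r^\vee,p_-}$ is the projective cover of $\X^\pm_{r,p_-}$, the right-hand side has dimension equal to the multiplicity $[\K_{1,2}\boxtimes M:\X^\pm_{r,p_-}]$ of $\X^\pm_{r,p_-}$ among the composition factors of $\K_{1,2}\boxtimes M$. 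Because $\X^\pm_{r,p_-}$ lies in the thin block $C^{thin}_{r,p_-}$ and $\K_{1,2}\boxtimes(-)$ is compatible with the block decomposition, this multiplicity can be nonzero only for a short explicit list of simples $M$; checking the relevant fusion rules from Propositions~\ref{fusion12-}, \ref{Fusion12Wkai0simple}, \ref{minimalnull} and \ref{fusionQ^+}, one finds it equals $1$ for $M=\X^\pm_{r,p_--1}$ and $0$ in every other case. For completeness, the same input shows $\K_{1,2}\boxtimes\mathcal{Q}(\X^\pm_{r,p_-})_{r^\vee,p_-}$ is self-dual (Lemma~\ref{Etingof0} with Theorems~\ref{rigid12ori} and \ref{QThm}), which yields the companion statement for $\Hom(M,\K_{1,2}\boxtimes\mathcal{Q}(\X^\pm_{r,p_-})_{r^\vee,p_-})$.

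It then only remains to invoke the standard fact that a projective module whose top is a single simple module $S$ is isomorphic to the projective cover of $S$: here $S=\X^\pm_{r,p_--1}$, so $\K_{1,2}\boxtimes\mathcal{Q}(\X^\pm_{r,p_-})_{r^\vee,p_-}\simeq\PP^\pm_{r,p_--1}$ (Subsection~\ref{LogarithmicWalgebra}, Proposition~\ref{logarithmic2}), which is the asserted identity. Conceptually nothing here is hard once one has the rigidity of $\K_{1,2}$ and the fact that tensoring a projective with a rigid object stays projective; I expect the only genuine work — the main obstacle — to be the bookkeeping in the $\Hom$ computation, i.e.\ verifying that no simple module other than $\X^\pm_{r,p_--1}$ contributes, which requires assembling the block decomposition with all the previously established fusion rules between $\K_{1,2}$ and a simple module.
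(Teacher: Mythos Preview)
Your proof is correct and follows essentially the same route as the paper: both arguments use that $\mathcal{Q}(\X^\pm_{r,p_-})_{r^\vee,p_-}$ is projective and that tensoring with the rigid self-dual object $\K_{1,2}$ preserves projectivity, then identify the top via the adjunction of Lemma~\ref{Etingof} to conclude the tensor product is the projective cover $\PP^\pm_{r,p_--1}$. Your version is slightly more explicit about \emph{how} the Hom computation reduces to a composition-factor count (using that $\mathcal{Q}(\X^\pm_{r,p_-})_{r^\vee,p_-}$ is a projective cover), whereas the paper simply records the outcome~(\ref{0618xx}), but the underlying argument is the same.
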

The following proposition can be proved in the same way as Proposition \ref{fusionP^+}, so we omit the proof.
\begin{prop}
\label{fusionP^+12}
The fusion products of $\K_{1,2}\boxtimes\PP^\pm_{r,s}$ and $\K_{2,1}\boxtimes\PP^\pm_{r,s}$ are given by :
\begin{enumerate}
\item For $1\leq r\leq p_+-1$, $2\leq s\leq p_--2$, 
\begin{align*}
\K_{1,2}\boxtimes\PP^\pm_{r,s}=\PP^\pm_{r,s-1}\oplus \PP^\pm_{r,s+1}.
\end{align*}
\item For $p_+\geq 3$, $2\leq r\leq p_+-2$, $1\leq s\leq p_--1$,
\begin{align*}
\K_{2,1}\boxtimes\PP^\pm_{r,s}=\PP^\pm_{r-1,s}\oplus\PP^\pm_{r+1,s}.
\end{align*}
\item For $1\leq r\leq p_+-1$, $s=1$,
\begin{align*}
\K_{1,2}\boxtimes\PP^\pm_{r,p_--1}=2\mathcal{Q}(\X^\pm_{r,p_-})_{r^\vee,p_-}\oplus \PP^\pm_{r,p_--2}.
\end{align*}
\item For $r=1$, $1\leq s\leq p_--1$,
\begin{align*}
\K_{2,1}\boxtimes\PP^\pm_{p_+-1,s}=2\mathcal{Q}(\X^\pm_{p_+,s})_{p_+,s^\vee}\oplus\PP^\pm_{p_+-2,s}.
\end{align*}
\item For $1\leq r\leq p_+-1$, $1\leq s\leq p_--1$,
\begin{align*}
&\K_{1,2}\boxtimes\PP^\pm_{r,1}=2\mathcal{Q}(\X^\mp_{r,p_-})_{p_+-r,p_-}\oplus\PP^\pm_{r,2},\\
&\K_{2,1}\boxtimes\PP^\pm_{1,s}=2\mathcal{Q}(\X^\mp_{p_+,s})_{p_+,p_--s}\oplus\PP^\pm_{2,s}.
\end{align*}
\end{enumerate} 
\end{prop}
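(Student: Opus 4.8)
The plan is to follow verbatim the strategy used for Proposition~\ref{fusionP^+}: each $\PP^\pm_{r,s}$ is projective, $\K_{1,2}$ and $\K_{2,1}$ are rigid and self-dual by Theorems~\ref{rigid12ori} and \ref{rigid21}, and in $\mathcal{C}_{p_+,p_-}$ a projective object is isomorphic to the direct sum of the projective covers of the simple constituents of its top. Hence each of the nine displayed tensor products is determined once its top is known, and the top is obtained from a single $\Hom$-space computation. I would spell this out for $\K_{1,2}\boxtimes\PP^+_{r,s}$ with $1\le r\le p_+-1$, $2\le s\le p_--2$; all remaining cases, including those involving $\K_{2,1}$, are identical after relabelling.

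First I would note that, since $\PP^+_{r,s}$ is projective and $\bullet\boxtimes\K_{1,2}$ is exact (it is exact, not merely right exact, because $\K_{1,2}$ is rigid and so this functor has adjoints on both sides), the module $\K_{1,2}\boxtimes\PP^+_{r,s}$ is again projective. To compute its top I use the commutativity of $\boxtimes$, the adjunction isomorphism of Lemma~\ref{Etingof}, and the self-duality $\K_{1,2}^\vee\simeq\K_{1,2}$ to obtain, for every simple module $M$,
\[
\Hom_{\mathcal{C}_{p_+,p_-}}(\K_{1,2}\boxtimes\PP^+_{r,s},M)\simeq\Hom_{\mathcal{C}_{p_+,p_-}}(\PP^+_{r,s},M\boxtimes\K_{1,2});
\]
since $\PP^+_{r,s}$ is the projective cover of $\X^+_{r,s}$ and ${\rm End}_{\mathcal{C}_{p_+,p_-}}(\X^+_{r,s})=\C$, the dimension of the right-hand side equals the composition multiplicity $[M\boxtimes\K_{1,2}:\X^+_{r,s}]$.

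Next I would substitute the already-established fusion rules $M\boxtimes\K_{1,2}$ for all simple $M$: $L(h)\boxtimes\K_{1,2}=0$ (Proposition~\ref{minimalnull}), $\X^+\boxtimes\K_{1,2}$ from Lemma~\ref{Fusion12Wkai0simple} and $\X^-\boxtimes\K_{1,2}$ from Proposition~\ref{fusion12-}, $\X^\pm_{r,p_-}\boxtimes\K_{1,2}$ from Proposition~\ref{fusionQ^+} and $\X^\pm_{p_+,p_-}\boxtimes\K_{1,2}$ from Proposition~\ref{projXX}, and then read off, using the explicit socle series of the $\mathcal{Q}$-modules and of the $\X^\pm_{r,s}$ in Propositions~\ref{socleV}, \ref{logarithmic3} and \ref{logarithmic4}, how often $\X^+_{r,s}$ occurs in $M\boxtimes\K_{1,2}$. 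For generic $s$ this multiplicity is nonzero only for $M=\X^+_{r,s-1}$ and $M=\X^+_{r,s+1}$, each time equal to one, so ${\rm top}(\K_{1,2}\boxtimes\PP^+_{r,s})=\X^+_{r,s-1}\oplus\X^+_{r,s+1}$ and therefore, being projective, $\K_{1,2}\boxtimes\PP^+_{r,s}\simeq\PP^+_{r,s-1}\oplus\PP^+_{r,s+1}$. When a neighbouring index $s\pm1$ reaches $1$ or $p_-$, the corresponding boundary simple module lies in a thin block (or, at the very corner, in a semisimple block) and the relevant $\mathcal{Q}$-module contains $\X^+_{r,s}$ with composition multiplicity two, so this boundary simple enters the top with multiplicity two; its projective cover is then the thin-block module $\mathcal{Q}(\X^\pm_{\bullet,p_-})_{\bullet,p_-}$ or $\mathcal{Q}(\X^\pm_{p_+,\bullet})_{p_+,\bullet}$ (resp.\ the projective module $\X^\pm_{p_+,p_-}$), which produces the factors $2\mathcal{Q}(\cdots)$ and $2\X^\pm_{r,p_-}$ appearing in the boundary cases; the entirely symmetric computation with $\K_{2,1}$ in place of $\K_{1,2}$ yields the remaining cases.

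The argument is pure bookkeeping once the inputs above are in place; the only delicate point is this boundary analysis, where one must keep track of which index pair lies in a thick, thin, or semisimple block, and of the multiplicities of $\X^\pm$ inside the $\mathcal{Q}$-modules, in order to locate exactly where the coefficient $2$ and the thin-block projective covers appear. I do not expect a genuine obstacle: the hard structural input — rigidity and self-duality of $\K_{1,2}$ and $\K_{2,1}$ — is supplied by Theorems~\ref{rigid12ori} and \ref{rigid21}, and with it in hand the proof is a finite, routine case check driven by the $\Hom$-identity above.
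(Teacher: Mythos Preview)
Your approach is correct and is precisely the one the paper intends: the paper omits the proof of this proposition entirely, stating only that it ``can be proved in the same way as Proposition~\ref{fusionP^+}'', and your write-up is a faithful (and more detailed) execution of that strategy, using projectivity of $\PP^\pm_{r,s}$, rigidity and self-duality of $\K_{1,2}$, $\K_{2,1}$, and the adjunction of Lemma~\ref{Etingof} to compute the top via $[M\boxtimes\K_{1,2}:\X^\pm_{r,s}]$. The boundary bookkeeping you describe (multiplicity~$2$ coming from the socle series of the $\mathcal{Q}$-modules in Propositions~\ref{logarithmic3} and~\ref{logarithmic4}, and identification of the thin-block projective covers) is exactly what is needed and is routine.
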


Similar to the argument of Theorem \ref{QThm}, from Propositions \ref{fusionP^+} and \ref{fusionP^+12}, we obtain the following theorem.
\begin{thm}
All indecomposable modules $\mathcal{P}^\pm_{r,s}$ are rigid and self-dual.
\end{thm}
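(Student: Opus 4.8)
The plan is to mirror the proof of Theorem \ref{QThm}, which established rigidity and self-duality for all $\mathcal{Q}(\X^\epsilon_{a,b})_{c,d}$ with $(\epsilon,a,b,c,d)\in\mathbb{Q}_{p_+,p_-}$ by realizing each such module as a direct summand of a repeated tensor product of $\K_{1,2}$ and $\K_{2,1}$. The key input for $\mathcal{P}^\pm_{r,s}$ is the pair of Propositions \ref{fusionP^+} and \ref{fusionP^+12}, which describe all tensor products of the form $\K_{1,2}\boxtimes\PP^\pm_{r,s}$ and $\K_{2,1}\boxtimes\PP^\pm_{r,s}$, together with $\K_{1,2}\boxtimes\mathcal{Q}(\X^\pm_{r,p_-})_{r^\vee,p_-}=\PP^\pm_{r,p_--1}$ and the analogous formula in the $\K_{2,1}$-direction.

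The argument proceeds as follows. First, by Proposition \ref{fusionP^+}, each projective module $\PP^\pm_{r,p_--1}$ (respectively $\PP^\pm_{p_+-1,s}$) appears as $\K_{1,2}\boxtimes\mathcal{Q}(\X^\pm_{r,p_-})_{r^\vee,p_-}$ (respectively $\K_{2,1}\boxtimes\mathcal{Q}(\X^\pm_{p_+,s})_{p_+,s^\vee}$); since by Theorem \ref{QThm} the modules $\mathcal{Q}(\X^\pm_{r,p_-})_{r^\vee,p_-}$ and $\mathcal{Q}(\X^\pm_{p_+,s})_{p_+,s^\vee}$ are themselves rigid and self-dual, and by Theorems \ref{rigid12ori} and \ref{rigid21} so are $\K_{1,2}$ and $\K_{2,1}$, Lemma \ref{Etingof0} gives that these $\PP^\pm_{r,p_--1}$ and $\PP^\pm_{p_+-1,s}$ are rigid and self-dual. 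Next, one uses Proposition \ref{fusionP^+12}, items (1)--(2), to move along the Kac table: $\K_{1,2}\boxtimes\PP^\pm_{r,s}=\PP^\pm_{r,s-1}\oplus\PP^\pm_{r,s+1}$ and $\K_{2,1}\boxtimes\PP^\pm_{r,s}=\PP^\pm_{r-1,s}\oplus\PP^\pm_{r+1,s}$, so that every $\PP^\pm_{r,s}$ (for $1\leq r\leq p_+-1$, $1\leq s\leq p_--1$) is obtained as a direct summand of a repeated tensor product of $\K_{1,2}$, $\K_{2,1}$ and the already-established rigid self-dual modules of Theorem \ref{QThm}; items (3)--(5) of Proposition \ref{fusionP^+12} handle the boundary cases $s=1$, $r=1$ of this induction consistently (the extra summands appearing there, such as $\mathcal{Q}(\X^\pm_{r,p_-})_{r^\vee,p_-}$, are already known to be rigid and self-dual). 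By Lemma \ref{Etingof0}, a direct summand of a tensor product of rigid objects is again rigid; self-duality follows since the dual of such a summand, being a summand of the dual tensor product and having the same composition factors, must be isomorphic to the original summand because each $\PP^\pm_{r,s}$ is (up to isomorphism) the unique indecomposable projective with its given top.

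Concretely, I would write: \emph{Similar to the argument of Theorem \ref{QThm}, Propositions \ref{fusionP^+} and \ref{fusionP^+12} show that each $\mathcal{P}^\pm_{r,s}$ arises as a direct summand of a repeated tensor product of $\K_{1,2}$ and $\K_{2,1}$ and the indecomposable modules $\mathcal{Q}(\X^\epsilon_{a,b})_{c,d}$, all of which are rigid and self-dual by Theorems \ref{rigid12ori}, \ref{rigid21} and \ref{QThm}. By Lemma \ref{Etingof0} the tensor product of rigid objects is rigid, and a direct summand of a rigid object is rigid; since $\mathcal{P}^\pm_{r,s}$ is the projective cover of a simple module and hence determined up to isomorphism by its top, comparison of composition factors forces $(\mathcal{P}^\pm_{r,s})^*\simeq\mathcal{P}^\pm_{r,s}$, so $\mathcal{P}^\pm_{r,s}$ is self-dual.}

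The main obstacle I anticipate is not the abstract "summand-of-rigid-is-rigid" step — that is standard — but the bookkeeping needed to verify that the inductive scheme actually reaches every $\PP^\pm_{r,s}$ starting from the boundary cases in Proposition \ref{fusionP^+}, and that at each step the direct-sum decompositions in Proposition \ref{fusionP^+12} have no unexpected extra summands that would fail to be rigid. This is precisely parallel to the difficulty already surmounted in Theorem \ref{QThm}, so invoking "Similar to the argument of Theorem \ref{QThm}" is legitimate; the genuinely new content is merely that $\PP^\pm_{r,p_--1}$ and $\PP^\pm_{p_+-1,s}$ enter the picture as $\K\boxtimes\mathcal{Q}(\cdots)$, which Proposition \ref{fusionP^+} supplies. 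One should also double-check that self-duality of projectives does not require rigidity of the tensor category as a whole (it does not — it follows from uniqueness of projective covers plus rigidity of the individual objects involved), so the non-rigidity of $(\mathcal{C}_{p_+,p_-},\boxtimes,\K_{1,1})$ noted earlier poses no difficulty here.
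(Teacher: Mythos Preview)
Your proposal is correct and follows the same route as the paper: the paper's proof is precisely the one-line ``Similar to the argument of Theorem \ref{QThm}, from Propositions \ref{fusionP^+} and \ref{fusionP^+12}, we obtain the following theorem,'' and your expansion of this into an explicit induction (starting from $\PP^\pm_{r,p_--1}=\K_{1,2}\boxtimes\mathcal{Q}(\X^\pm_{r,p_-})_{r^\vee,p_-}$ and $\PP^\pm_{p_+-1,s}=\K_{2,1}\boxtimes\mathcal{Q}(\X^\pm_{p_+,s})_{p_+,s^\vee}$, then propagating via Proposition \ref{fusionP^+12}) is exactly what is intended. Your extra care in justifying self-duality of summands via uniqueness of projective covers is more explicit than the paper, but the underlying argument is identical.
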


Similar to the arguments in the proofs of Propositions \ref{QX+} and \ref{QX-}, from Propositions \ref{change}, \ref{fusionP^+} and \ref{fusionP^+12}, we obtain the following propositions.
\begin{prop}
For $1\leq r\leq p_+-1$, $1\leq s\leq p_--1$, we have
\begin{align*}
\X^+_{1,1}\boxtimes\mathcal{P}^\pm_{r,s}=\mathcal{P}^\pm_{r,s}.
\end{align*}
\end{prop}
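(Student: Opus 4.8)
The plan is to mimic the proof of Proposition~\ref{QX+}: show that $\X^+_{1,1}$ acts, up to isomorphism, as the identity under $\boxtimes$ on every module obtained from a semisimple‑block simple module $\X^\epsilon_{p_+,p_-}$ by iterated tensoring with $\K_{1,2}$, and then observe that every $\PP^\pm_{r,s}$ occurs as a direct summand of such a module. Concretely, combining Proposition~\ref{fusionP^+} (which gives $\PP^\pm_{r,p_--1}\simeq\K_{1,2}\boxtimes\Q(\X^\pm_{r,p_-})_{r^\vee,p_-}$ for all $1\le r\le p_+-1$) with Proposition~\ref{fusionP^+12} (whose items~1 and~3 express, for fixed $r$, every $\PP^\pm_{r,s}$ with $s<p_--1$ through direct‑sum decompositions of $\K_{1,2}\boxtimes\PP^\pm_{r,s'}$ for larger $s'$), one reduces the statement to the analogous fact for the modules $\Q(\X^\epsilon_{a,b})_{c,d}$, which is Proposition~\ref{QX+}, together with a cancellation argument. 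The only non‑formal inputs are these already‑established fusion formulas; the rest is a bookkeeping induction built on the associativity and braiding isomorphisms of $(\mathcal{C}_{p_+,p_-},\boxtimes,\K_{1,1})$ and the Krull--Schmidt property of $\mathcal{C}_{p_+,p_-}$ (all objects have finite length).

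First I would record the base cases $\X^+_{1,1}\boxtimes\X^\epsilon_{p_+,p_-}\simeq\X^\epsilon_{p_+,p_-}$: for $\epsilon=+$ this is Proposition~\ref{Fusion11+} with $\K_{p_+,p_-}=\X^+_{p_+,p_-}$, and for $\epsilon=-$ one writes $\X^-_{p_+,p_-}\simeq\X^-_{1,1}\boxtimes\X^+_{p_+,p_-}$ by Proposition~\ref{change} and then uses associativity together with $\X^+_{1,1}\boxtimes\X^-_{1,1}\simeq\X^-_{1,1}$ from Proposition~\ref{--}. Next I would record the commutation principle: for any $M\in\mathcal{C}_{p_+,p_-}$, associativity and the braiding give
\[
\X^+_{1,1}\boxtimes(\K_{1,2}\boxtimes M)\simeq(\X^+_{1,1}\boxtimes\K_{1,2})\boxtimes M\simeq(\K_{1,2}\boxtimes\X^+_{1,1})\boxtimes M\simeq\K_{1,2}\boxtimes(\X^+_{1,1}\boxtimes M),
\]
so that $\X^+_{1,1}\boxtimes M\simeq M$ implies $\X^+_{1,1}\boxtimes(\K_{1,2}\boxtimes M)\simeq\K_{1,2}\boxtimes M$; note that it is the braiding, not the identity $\X^+_{1,1}\boxtimes\K_{1,2}\simeq\X^+_{1,2}$ of Proposition~\ref{Fusion11+}, that makes this work.

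Then I would run the induction, fixing $r$ and descending in $s$ from $p_--1$. For $s=p_--1$: since $\X^+_{1,1}\boxtimes\Q(\X^\pm_{r,p_-})_{r^\vee,p_-}\simeq\Q(\X^\pm_{r,p_-})_{r^\vee,p_-}$ by Proposition~\ref{QX+}, the commutation principle applied to $\PP^\pm_{r,p_--1}\simeq\K_{1,2}\boxtimes\Q(\X^\pm_{r,p_-})_{r^\vee,p_-}$ gives $\X^+_{1,1}\boxtimes\PP^\pm_{r,p_--1}\simeq\PP^\pm_{r,p_--1}$. For $s<p_--1$, suppose the claim holds for all larger second index; apply $\X^+_{1,1}\boxtimes(-)$ to an identity of Proposition~\ref{fusionP^+12} such as $\K_{1,2}\boxtimes\PP^\pm_{r,p_--1}\simeq 2\Q(\X^\pm_{r,p_-})_{r^\vee,p_-}\oplus\PP^\pm_{r,p_--2}$ (or $\K_{1,2}\boxtimes\PP^\pm_{r,s+1}\simeq\PP^\pm_{r,s}\oplus\PP^\pm_{r,s+2}$). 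Using additivity of $\X^+_{1,1}\boxtimes(-)$, the commutation principle, Proposition~\ref{QX+} for the $\Q$‑summands, and the inductive hypothesis for the already‑known $\PP$‑summands, the two sides become isomorphic direct sums in which all summands except $\X^+_{1,1}\boxtimes\PP^\pm_{r,s}$ are matched termwise; Krull--Schmidt cancellation then yields $\X^+_{1,1}\boxtimes\PP^\pm_{r,s}\simeq\PP^\pm_{r,s}$. (For the small values $p_-=3,4$ one only needs item~3, and never item~1, so the scheme degenerates harmlessly; and since $r$ is arbitrary, $\K_{2,1}$ is not needed at all.) The main obstacle is purely organizational: verifying that the chain of identities in Propositions~\ref{fusionP^+} and~\ref{fusionP^+12} reaches every $\PP^\pm_{r,s}$ with $1\le r\le p_+-1$, $1\le s\le p_--1$, and that every auxiliary module appearing along the way whose $\X^+_{1,1}$‑translate must be identified is either $\X^\pm_{p_+,p_-}$, a module $\Q(\X^\epsilon_{a,b})_{c,d}$ already covered by Proposition~\ref{QX+}, or a $\PP^\pm$ handled at an earlier stage of the descent — no analytic input beyond the proven fusion rules is required.
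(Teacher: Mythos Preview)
Your proposal is correct and follows exactly the approach the paper indicates: the paper does not spell out a proof but simply says ``similar to the arguments in the proofs of Propositions~\ref{QX+} and~\ref{QX-}, from Propositions~\ref{change}, \ref{fusionP^+} and~\ref{fusionP^+12}'', and your induction via the commutation principle, Proposition~\ref{QX+} for the $\Q$-summands, and Krull--Schmidt cancellation along the decompositions of Propositions~\ref{fusionP^+} and~\ref{fusionP^+12} is precisely the intended elaboration. The parenthetical about the braiding is slightly overcautious (knowing $\X^+_{1,1}\boxtimes\K_{1,2}\simeq\X^+_{1,2}\simeq\K_{1,2}\boxtimes\X^+_{1,1}$ as modules already suffices for the middle step), but this does not affect correctness.
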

\begin{prop}
\label{X^+11P}
For $1\leq r\leq p_+-1$, $1\leq s\leq p_--1$, we have
\begin{align*}
\X^-_{1,1}\boxtimes\mathcal{P}^\pm_{r,s}=\mathcal{P}^\mp_{r,s}.
\end{align*}
\end{prop}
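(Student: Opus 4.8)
The plan is to mimic the proof of Proposition~\ref{QX-}, bootstrapping from the boundary cases via Propositions~\ref{fusionP^+} and \ref{fusionP^+12}, and then propagating the identity through all $(r,s)$ by an induction that uses the additivity and associativity of $\boxtimes$ together with the braiding, and the Krull--Schmidt property of $\mathcal{C}_{p_+,p_-}$ (valid since every object has finite length, so a decomposition identity may be read as an equality of multisets of indecomposable summands).

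First I would settle the boundary case $s=p_--1$. By Proposition~\ref{fusionP^+}(1) one has $\PP^\pm_{r,p_--1}\simeq\K_{1,2}\boxtimes\mathcal{Q}(\X^\pm_{r,p_-})_{r^\vee,p_-}$ for $1\le r\le p_+-1$. Moving $\X^-_{1,1}$ past $\K_{1,2}$ with the braiding and associativity isomorphisms and then applying Proposition~\ref{QX-} (one checks $(\pm,r,p_-,r^\vee,p_-)\in\mathbb{Q}_{p_+,p_-}$, lying in $Q^{thin}_{r,p_-}$ resp.\ $Q^{thin}_{r^\vee,p_-}$), we get
\[
\X^-_{1,1}\boxtimes\PP^\pm_{r,p_--1}\simeq\K_{1,2}\boxtimes\bigl(\X^-_{1,1}\boxtimes\mathcal{Q}(\X^\pm_{r,p_-})_{r^\vee,p_-}\bigr)\simeq\K_{1,2}\boxtimes\mathcal{Q}(\X^\mp_{r,p_-})_{r^\vee,p_-}\simeq\PP^\mp_{r,p_--1}.
\]
Next, applying $\X^-_{1,1}\boxtimes(-)$ to the identity $\K_{1,2}\boxtimes\PP^\pm_{r,p_--1}=2\mathcal{Q}(\X^\pm_{r,p_-})_{r^\vee,p_-}\oplus\PP^\pm_{r,p_--2}$ of Proposition~\ref{fusionP^+12}(3): the left side becomes $\K_{1,2}\boxtimes\PP^\mp_{r,p_--1}=2\mathcal{Q}(\X^\mp_{r,p_-})_{r^\vee,p_-}\oplus\PP^\mp_{r,p_--2}$ by the boundary case just proved and Proposition~\ref{fusionP^+12}(3) again, while the right side becomes $2\mathcal{Q}(\X^\mp_{r,p_-})_{r^\vee,p_-}\oplus(\X^-_{1,1}\boxtimes\PP^\pm_{r,p_--2})$ by Proposition~\ref{QX-}; Krull--Schmidt then forces $\X^-_{1,1}\boxtimes\PP^\pm_{r,p_--2}=\PP^\mp_{r,p_--2}$. (If $p_+\ge 3$ one gets, symmetrically, $\X^-_{1,1}\boxtimes\PP^\pm_{p_+-1,s}=\PP^\mp_{p_+-1,s}$ from Proposition~\ref{fusionP^+}(2).)

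Then I would run a downward induction on $s$. For $2\le s\le p_--2$, Proposition~\ref{fusionP^+12}(1) gives $\K_{1,2}\boxtimes\PP^\pm_{r,s}=\PP^\pm_{r,s-1}\oplus\PP^\pm_{r,s+1}$; applying $\X^-_{1,1}\boxtimes(-)$, commuting $\X^-_{1,1}$ with $\K_{1,2}$, and using the hypotheses $\X^-_{1,1}\boxtimes\PP^\pm_{r,s}=\PP^\mp_{r,s}$ and $\X^-_{1,1}\boxtimes\PP^\pm_{r,s+1}=\PP^\mp_{r,s+1}$, the left side equals $\PP^\mp_{r,s-1}\oplus\PP^\mp_{r,s+1}$ while the right side equals $(\X^-_{1,1}\boxtimes\PP^\pm_{r,s-1})\oplus\PP^\mp_{r,s+1}$, so Krull--Schmidt yields $\X^-_{1,1}\boxtimes\PP^\pm_{r,s-1}=\PP^\mp_{r,s-1}$. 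Starting from the two boundary values $s=p_--1$ and $s=p_--2$ obtained above (recall $p_-\ge 3$, so when $p_-=3$ these already exhaust the range), this induction covers all $1\le s\le p_--1$ and all $1\le r\le p_+-1$, which is the full range of the proposition; the $r$-direction can be used as an independent cross-check via $\K_{2,1}$ and Proposition~\ref{fusionP^+12}(2),(4).

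The main obstacle is not conceptual but organizational: one must keep the parity flips $\pm\leftrightarrow\mp$ straight at every application of Proposition~\ref{fusionP^+12} — in particular the sign flip in the $\mathcal{Q}$-summands of part~(5) — and one must ensure that each decomposition equality is legitimately cancelled as an equality of indecomposable summands, which is exactly where finite length (hence Krull--Schmidt) and the additivity of $\boxtimes$ are invoked.
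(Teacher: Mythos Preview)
Your proposal is correct and follows essentially the same approach as the paper, which only sketches the argument (``Similar to the arguments in the proofs of Propositions~\ref{QX+} and~\ref{QX-}, from Propositions~\ref{change}, \ref{fusionP^+} and~\ref{fusionP^+12}''). You have merely made explicit the boundary step via Proposition~\ref{QX-} (which in turn rests on Proposition~\ref{change}) and the downward Krull--Schmidt induction using Proposition~\ref{fusionP^+12}; this is exactly the intended mechanism.
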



\section{Fusion rings}
\label{FusionRing}

In this section, following \cite[Subsection 5.3]{TWFusion}, we introduce certain fusion rings, and by using the results in Section \ref{FusionRules}, determine their ring structure.
\subsection{The ring structure of $P(\mathbb{I}_{p_+,p_-})$}
\label{secP}
We set
\begin{align*}
&\mathbb{I}_{p_+,p_-}\\
&=\bigl\{\mathcal{K}_{r,s},\mathcal{K}^*_{r,s},\mathcal{X}^-_{r,s},\mathcal{X}^\epsilon_{r,p_-},\mathcal{X}^\epsilon_{p_+,s},\mathcal{X}^{\epsilon}_{p_+,p_-},\mathcal{Q}(\X^\epsilon_{r,s})_{r^\vee,s},\mathcal{Q}(\X^\epsilon_{r,s})_{r,s^\vee},\\
&\ \ \ \ \ \mathcal{Q}(\X^{\epsilon}_{r,p_-})_{r^\vee,p_-},\mathcal{Q}(\X^{\epsilon}_{p_+,s})_{p_+,s^\vee},\PP^\epsilon_{r,s}\ |\ 1\leq r<p_+,1\leq s<p_-,\epsilon=\pm\bigr\}.
\end{align*}
Note that
\begin{align*}
|\mathbb{I}_{p_+,p_-}|=9p_+p_--5p_+-5p_-+3.
\end{align*}
Following \cite[Subsection 5.3]{TWFusion}, we introduce the following free abelian group $P(\mathbb{I}_{p_+,p_-})$ of rank $9p_+p_--5p_+-5p_-+3$ 
\begin{align*}
P(\mathbb{I}_{p_+,p_-})=&\bigoplus_{M\in \mathbb{I}_{p_+,p_-}}\Z[M]_P\\
=&\bigoplus_{r=1}^{p_+-1}\bigoplus_{s=1}^{p_--1}\Z[\mathcal{K}_{r,s}]_P\oplus\bigoplus_{r=1}^{p_+-1}\bigoplus_{s=1}^{p_--1}\Z[\mathcal{K}^*_{r,s}]_P\oplus\bigoplus_{r=1}^{p_+-1}\bigoplus_{s=1}^{p_--1}\Z[\mathcal{X}^-_{r,s}]_P\\
&\oplus\bigoplus_{r=1}^{p_+}\bigoplus_{\epsilon=\pm}\Z[\mathcal{X}^\epsilon_{r,p_-}]_P\oplus \bigoplus_{s=1}^{p_--1}\bigoplus_{\epsilon=\pm}\Z[\mathcal{X}^\epsilon_{p_+,s}]_P\\
&\oplus\bigoplus_{r=1}^{p_+-1}\bigoplus_{s=1}^{p_--1}\bigoplus_{\epsilon=\pm}\Z[\mathcal{Q}(\X^\epsilon_{r,s})_{r^\vee,s}]_P\oplus\bigoplus_{r=1}^{p_+-1}\bigoplus_{s=1}^{p_--1}\bigoplus_{\epsilon=\pm}\Z[\mathcal{Q}(\X^\epsilon_{r,s})_{r,s^\vee}]_P\\
&\oplus \bigoplus_{r=1}^{p_+-1}\bigoplus_{\epsilon=\pm}\Z[\mathcal{Q}(\X^{\epsilon}_{r,p_-})_{r^\vee,p_-}]_P\oplus\bigoplus_{s=1}^{p_--1}\bigoplus_{\epsilon=\pm}\Z[\mathcal{Q}(\X^{\epsilon}_{p_+,s})_{p_+,s^\vee}]_P\\
&\oplus\bigoplus_{r=1}^{p_+-1}\bigoplus_{s=1}^{p_--1}\bigoplus_{\epsilon=\pm}\Z[\PP^\epsilon_{r,s}]_P.
\end{align*}

From the results of Section \ref{FusionRules} we that $P(\mathbb{I}_{p_+,p_-})$ has the structure of a commutative ring. Precisely the definition is as follows.
\begin{dfn}
We define the structure of a commutative ring on $P(\mathbb{I}_{p_+,p_-})$ as follows:
the product as a ring is given by
\begin{align*}
[M_1]_P\cdot[M_2]_P=[M_1\boxtimes M_2]_P,
\end{align*}
where $M_1,M_2\in \mathbb{I}_{p_+,p_-}$ and we extend the symbol $[\bullet]_P$ as follows
\begin{align*}
\Bigl[\bigoplus^n_{i\geq 1}N_i\Bigr]_P=\bigoplus^n_{i\geq 1}[N_i]_P
\end{align*}
for any $N_i\in \mathbb{I}_{p_+,p_-}$ and any $n\in \Z_{\geq 1}$. 
\end{dfn}
From the results in Section \ref{FusionRules}, we see that the three operators
\begin{align*}
X=\mathcal{K}_{1,2}\boxtimes -,\ \ \ \ \ \ \ \ Y=\mathcal{K}_{2,1}\boxtimes -,\ \ \ \ \ \ \ \ Z=\X^-_{1,1}\boxtimes -
\end{align*}
define $\Z$-linear endomorphism of $P(\mathbb{I}_{p_+,p_-})$. Thus $P(\mathbb{I}_{p_+,p_-})$ is a module over $\Z[X,Y,Z]$.
We define the following $\Z[X,Y,Z]$-module map
\begin{align*}
\psi:\Z[X,Y,Z]&\rightarrow P(\mathbb{I}_{p_+,p_-}),\\
f(X,Y,Z)&\mapsto f(X,Y,Z)\cdot[\mathcal{K}_{1,1}]_P.
\end{align*}
Before examining the action of $\Z[X,Y,Z]$ on $P(\mathbb{I}_{p_+,p_-})$, we introduce the following Chebyshev polynomials.
\begin{dfn}
We define the {\rm Chebyshev polynomials} $U_n(A)$, $n=0,1,\dots\in \Z[A]$ recursively 
\begin{align*}
&U_0(A)=1,\ \ \ \ \ \ \ U_1(A)=A,\\
&U_{n+1}(A)=AU_n(A)-U_{n-1}(A).
\end{align*}
\end{dfn}
\begin{remark}
The coefficient of the leading term of any Chebyshev polynomial $U_n(A)$ is 1. Thus we have
\begin{align*}
\mathbb{Z}[A]=\bigoplus_{n=0}^\infty\mathbb{Z}U_n(A).
\end{align*}
\end{remark}
\vspace{2mm}

The goal of this section is to prove the following theorem.
\begin{thm}
\label{ideal}
The $\Z[X,Y,Z]$-module map $\psi$ is surjective and the kernel of $\psi$ is given by the following ideal
\begin{align*}
{\rm ker}\psi=
&\langle (Z^2-1)U_{p_--1}(X),(Z^2-1)U_{p_+-1}(Y),\\
&\ \ U_{2p_--1}(X)-2ZU_{p_--1}(X),U_{2p_+-1}(Y)-2ZU_{p_+-1}(Y) \rangle.
\end{align*}
\end{thm}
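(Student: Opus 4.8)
The plan is to prove the two assertions of the theorem separately: surjectivity of $\psi$, and then $\ker\psi = I$ (where $I$ denotes the displayed ideal), the latter by a rank comparison that uses the former. For surjectivity I would run through the fusion rules of Section \ref{FusionRules} and check that every basis vector $[M]_P$, $M\in\mathbb{I}_{p_+,p_-}$, of $P(\mathbb{I}_{p_+,p_-})$ lies in $\Z[X,Y,Z]\cdot[\mathcal{K}_{1,1}]_P$. From $\mathcal{K}_{1,2}\boxtimes\mathcal{K}_{r,s}=\mathcal{K}_{r,s-1}\oplus\mathcal{K}_{r,s+1}$ and $\mathcal{K}_{2,1}\boxtimes\mathcal{K}_{r,s}=\mathcal{K}_{r-1,s}\oplus\mathcal{K}_{r+1,s}$ (Propositions \ref{Fusion12Wkai0}, \ref{Fusion12Wkai}) and the defining recursion of the $U_n$ one gets $U_{s-1}(X)U_{r-1}(Y)\cdot[\mathcal{K}_{1,1}]_P=[\mathcal{K}_{r,s}]_P$ for $1\le r\le p_+$, $1\le s\le p_-$ (with the conventions $\mathcal{K}_{r,p_-}=\mathcal{X}^+_{r,p_-}$, $\mathcal{K}_{p_+,s}=\mathcal{X}^+_{p_+,s}$); applying $Z=\mathcal{X}^-_{1,1}\boxtimes-$ then yields the classes of the $\mathcal{X}^-_{r,s}$ and of the $\mathcal{X}^{\epsilon}_{r,p_-},\mathcal{X}^{\epsilon}_{p_+,s},\mathcal{X}^{\epsilon}_{p_+,p_-}$ (Propositions \ref{change}, \ref{--}), and $Z^2$ yields the $[\mathcal{K}^*_{r,s}]_P$ (Proposition \ref{W^*W}). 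Pushing $X$ and $Y$ past the thin blocks produces the $\mathcal{Q}(\mathcal{X}^\epsilon_{\bullet})_{\bullet}$ and the $\mathcal{P}^\epsilon_{r,s}$ (Propositions \ref{fusionQ^+}--\ref{X^+11P}); since at each step the genuinely new summand occurs with multiplicity $1$, summands can be stripped off one at a time, so $[M]_P\in\operatorname{im}\psi$ for all $M\in\mathbb{I}_{p_+,p_-}$. This is essentially the bookkeeping already used to prove Theorems \ref{Wrigid} and \ref{QThm}.

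For $I\subseteq\ker\psi$ it suffices, since $\psi(fg)=f\cdot\psi(g)$, to check that each of the four listed generators annihilates $[\mathcal{K}_{1,1}]_P$. Here $U_{p_--1}(X)\cdot[\mathcal{K}_{1,1}]_P=[\mathcal{X}^+_{1,p_-}]_P$ (from the surjectivity step), and self-duality of thin-block simples (Theorem \ref{QThm}) together with $\mathcal{X}^-_{1,1}\boxtimes\mathcal{X}^-_{1,1}=\mathcal{K}^*_{1,1}$ and $\mathcal{K}^*_{1,1}\boxtimes\mathcal{X}^+_{1,p_-}=\mathcal{X}^+_{1,p_-}$ give $Z^2\cdot[\mathcal{X}^+_{1,p_-}]_P=[\mathcal{X}^+_{1,p_-}]_P$, hence $(Z^2-1)U_{p_--1}(X)\in\ker\psi$; the $U_{p_+-1}(Y)$ relation is symmetric. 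For $U_{2p_--1}(X)-2ZU_{p_--1}(X)$, iterating $\mathcal{K}_{1,2}\boxtimes-$ past the thin block with Propositions \ref{fusionQ^+} and \ref{fusionQ^+12} gives, by induction on $j$, the identity $U_{p_-+j}(X)\cdot[\mathcal{K}_{1,1}]_P=[\mathcal{Q}(\mathcal{X}^+_{1,p_--1-j})_{1,1+j}]_P-[\mathcal{K}_{1,p_--1-j}]_P$ for $0\le j\le p_--2$, and one further application collapses everything to $U_{2p_--1}(X)\cdot[\mathcal{K}_{1,1}]_P=2[\mathcal{X}^-_{1,p_-}]_P=2ZU_{p_--1}(X)\cdot[\mathcal{K}_{1,1}]_P$; the $Y$-relation is again symmetric.

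The inclusion $\ker\psi\subseteq I$ I would obtain by a rank count. By surjectivity $\Z[X,Y,Z]/\ker\psi\cong P(\mathbb{I}_{p_+,p_-})$ is $\Z$-free of rank $N:=9p_+p_--5p_+-5p_-+3=9(p_+-1)(p_--1)+4(p_+-1)+4(p_--1)+2$, and by the previous step there is a surjection $\Z[X,Y,Z]/I\twoheadrightarrow\Z[X,Y,Z]/\ker\psi$. Since any surjection of abelian groups from a group generated by at most $N$ elements onto $\Z^N$ must be an isomorphism (precompose with a surjection $\Z^N\twoheadrightarrow\Z[X,Y,Z]/I$; the composite is a surjective, hence bijective, endomorphism of $\Z^N$), it is enough to exhibit a $\Z$-spanning set of $\Z[X,Y,Z]/I$ of cardinality $\le N$. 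Writing elements in the monomial basis $U_i(X)U_j(Y)Z^k$ (legitimate because $\Z[A]=\bigoplus_n\Z U_n(A)$) and using the Chebyshev identities $U_{2p-1}(A)=\bigl(U_p(A)-U_{p-2}(A)\bigr)U_{p-1}(A)$ and $\bigl(U_p(A)-U_{p-2}(A)\bigr)^2-4=(A^2-4)U_{p-1}(A)^2$, the two reflection relations become $\bigl(U_{p_-}(X)-U_{p_--2}(X)-2Z\bigr)U_{p_--1}(X)$ and $\bigl(U_{p_+}(Y)-U_{p_+-2}(Y)-2Z\bigr)U_{p_+-1}(Y)$; a direct reduction using these and $(Z^2-1)U_{p_--1}(X)$, $(Z^2-1)U_{p_+-1}(Y)$ then shows every monomial $U_i(X)U_j(Y)Z^k$ is, modulo $I$, a $\Z$-combination of those $U_i(X)U_j(Y)$ with $0\le i\le 2p_--2$ and $0\le j\le 2p_+-2$ carrying a twist by $1$, $Z$ or $Z^2$ only on the ranges $i\le p_--2$, respectively $j\le p_+-2$. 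These fall into bijection with $\mathbb{I}_{p_+,p_-}$ ($\mathcal{K}_{r,s},\mathcal{X}^-_{r,s},\mathcal{K}^*_{r,s}$ in the interior region, and $\mathcal{X}^{\epsilon}_{r,p_-},\mathcal{X}^{\epsilon}_{p_+,s},\mathcal{X}^{\epsilon}_{p_+,p_-}$, $\mathcal{Q}(\mathcal{X}^\epsilon_{\bullet})_{\bullet}$, $\mathcal{P}^\epsilon_{r,s}$ on the boundary strips), and counting gives exactly $N$; hence the spanning set has size $N$ and $\ker\psi=I$.

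The main obstacle is this last reduction: one must organize the Chebyshev arithmetic so that the four relations cut $\Z[X,Y,Z]$ down to rank exactly $N$ and no larger --- in particular controlling the interplay of the $Z^2$-relation and the reflection relation on the two boundary strips $p_--1\le i\le 2p_--2$ and $p_+-1\le j\le 2p_+-2$ and on their overlap, which is precisely where the thin-block simples, the $\mathcal{Q}(\mathcal{X}^\epsilon_{\bullet})_{\bullet}$ and the $\mathcal{P}^\epsilon_{r,s}$ live --- and then verifying that the surviving monomials match $\mathbb{I}_{p_+,p_-}$ term by term. Everything else reduces either to Chebyshev polynomial identities or to fusion rules already established in Section \ref{FusionRules}.
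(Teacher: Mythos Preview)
Your proposal is correct and follows essentially the same route as the paper: surjectivity of $\psi$ from the fusion rules of Section~\ref{FusionRules} (the paper packages this as Lemmas~\ref{Q^+pqX}--\ref{Q^pqP}, giving explicit Chebyshev expressions for every $[M]_P$), the inclusion $I\subseteq\ker\psi$ from the resulting relations, and $\ker\psi=I$ by comparing the rank of $\Z[X,Y,Z]/I$ with $|\mathbb{I}_{p_+,p_-}|$. The only real difference is that the paper dispatches the rank count with ``it is easy to see'', whereas you actually sketch the Chebyshev reduction; your more careful treatment of that step is an improvement, not a deviation.
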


We will show some propositions to prove this theorem. 

From Propositions \ref{Fusion12Wkai0}, \ref{Fusion12Wkai}, \ref{W^*W} and \ref{change}, we obtain the following lemma.
\begin{lem}
\label{Q^+pqX}
\mbox{}
\begin{enumerate}
\item For $1\leq r\leq p_+$, $1\leq s\leq p_-$, we have
\begin{align*}
&[\mathcal{K}_{r,s}]_P=U_{s-1}(X)U_{r-1}(Y)[\mathcal{K}_{1,1}]_P,
&[\X^-_{r,s}]_P=ZU_{s-1}(X)U_{r-1}(Y)[\mathcal{K}_{1,1}]_P,
\end{align*}
where we use the abbreviations $\mathcal{K}_{r,p_-}=\mathcal{X}^+_{r,p_-}$ and $\mathcal{K}_{p_+,s}=\mathcal{X}^+_{p_+,s}$.
\item For $1\leq r\leq p_+-1$, $1\leq s\leq p_--1$, we have
\begin{align*}
[\mathcal{K}^*_{r,s}]_P=Z^2U_{s-1}(X)U_{r-1}(Y)[\mathcal{K}_{1,1}]_P.
\end{align*}
\end{enumerate}
\end{lem}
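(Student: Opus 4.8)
The plan is to prove Lemma \ref{Q^+pqX} by induction, translating the tensor-product formulas already established in Section \ref{FusionRules} into the recursive relations defining the Chebyshev polynomials $U_n$. First I would recall that $X = \mathcal{K}_{1,2}\boxtimes -$, $Y = \mathcal{K}_{2,1}\boxtimes -$ and $Z = \mathcal{X}^-_{1,1}\boxtimes -$, and that by definition $U_{n+1}(A) = A U_n(A) - U_{n-1}(A)$ with $U_0 = 1$, $U_1 = A$. The key observation is that Proposition \ref{Fusion12Wkai0} gives $\mathcal{K}_{1,2}\boxtimes \mathcal{K}_{r,s} = \mathcal{K}_{r,s-1}\oplus \mathcal{K}_{r,s+1}$ for $2\leq s\leq p_--1$ and $\mathcal{K}_{1,2}\boxtimes \mathcal{K}_{r,1} = \mathcal{K}_{r,2}$, which in the ring $P(\mathbb{I}_{p_+,p_-})$ reads $X\cdot[\mathcal{K}_{r,s}]_P = [\mathcal{K}_{r,s-1}]_P + [\mathcal{K}_{r,s+1}]_P$. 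This is precisely the Chebyshev recursion in the $s$-variable, so an induction on $s$ (with base cases $[\mathcal{K}_{r,1}]_P$ and, after the $r$-induction, $[\mathcal{K}_{1,1}]_P$) yields $[\mathcal{K}_{r,s}]_P = U_{s-1}(X)[\mathcal{K}_{r,1}]_P$. Symmetrically, Proposition \ref{Fusion12Wkai} gives $Y\cdot[\mathcal{K}_{r,s}]_P = [\mathcal{K}_{r-1,s}]_P + [\mathcal{K}_{r+1,s}]_P$ for $2\leq r\leq p_+-1$ and $Y\cdot[\mathcal{K}_{1,s}]_P = [\mathcal{K}_{2,s}]_P$, which by induction on $r$ gives $[\mathcal{K}_{r,s}]_P = U_{r-1}(Y)[\mathcal{K}_{1,s}]_P$.

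Combining the two inductions, I would obtain $[\mathcal{K}_{r,s}]_P = U_{s-1}(X)U_{r-1}(Y)[\mathcal{K}_{1,1}]_P$ for $1\leq r\leq p_+-1$, $1\leq s\leq p_--1$; here I should check that the two operators $U_{s-1}(X)$ and $U_{r-1}(Y)$ can be applied in either order, which is immediate since $X$ and $Y$ commute as endomorphisms of $P(\mathbb{I}_{p_+,p_-})$ (the tensor product $\boxtimes$ is associative and braided). For the boundary cases $s = p_-$ or $r = p_+$, I would use the conventions $\mathcal{K}_{r,p_-} = \mathcal{X}^+_{r,p_-}$, $\mathcal{K}_{p_+,s} = \mathcal{X}^+_{p_+,s}$ from Definition \ref{W(beta)} together with the corresponding boundary cases of Propositions \ref{Fusion12Wkai0} and \ref{Fusion12Wkai} (for instance $\mathcal{K}_{1,2}\boxtimes\mathcal{K}_{r,p_--1}$ producing a $2\mathcal{X}^+_{r,p_-}$ summand, which is the point where $U_{2p_--1}(X)$ relations would later enter), but here it suffices that the recursion $X\cdot[\mathcal{K}_{r,p_--1}]_P = [\mathcal{K}_{r,p_--2}]_P + [\mathcal{K}_{r,p_-}]_P$ continues to hold at the boundary, so the formula extends verbatim.

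For the formula $[\mathcal{X}^-_{r,s}]_P = Z U_{s-1}(X)U_{r-1}(Y)[\mathcal{K}_{1,1}]_P$, I would invoke Proposition \ref{change}, which states $\mathcal{X}^-_{1,1}\boxtimes \mathcal{K}_{r,s} = \mathcal{X}^-_{r,s}$; in ring notation this is exactly $[\mathcal{X}^-_{r,s}]_P = Z\cdot[\mathcal{K}_{r,s}]_P$, so the first formula gives the second directly. For part (2), the formula $[\mathcal{K}^*_{r,s}]_P = Z^2 U_{s-1}(X)U_{r-1}(Y)[\mathcal{K}_{1,1}]_P$, I would use Proposition \ref{W^*W}, which gives $\mathcal{K}^*_{1,1}\boxtimes \mathcal{K}_{r,s} = \mathcal{K}^*_{r,s}$, together with Proposition \ref{++} stating $\mathcal{X}^+_{1,1}\boxtimes\mathcal{X}^+_{1,1} = \mathcal{K}^*_{1,1}$ and Proposition \ref{--} stating $\mathcal{X}^-_{1,1}\boxtimes\mathcal{X}^+_{1,1} = \mathcal{X}^-_{1,1}$; chaining these, $[\mathcal{K}^*_{1,1}]_P = (\mathcal{X}^+_{1,1}\boxtimes\mathcal{X}^+_{1,1})$ and $Z^2\cdot[\mathcal{K}_{1,1}]_P = \mathcal{X}^-_{1,1}\boxtimes(\mathcal{X}^-_{1,1}\boxtimes\mathcal{K}_{1,1}) = \mathcal{X}^-_{1,1}\boxtimes\mathcal{X}^-_{1,1} = \mathcal{K}^*_{1,1}$ by Proposition \ref{--} again, so $Z^2\cdot[\mathcal{K}_{1,1}]_P = [\mathcal{K}^*_{1,1}]_P$, and multiplying by $U_{s-1}(X)U_{r-1}(Y)$ and using part (1) finishes the claim.

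The only mildly delicate point — and the place I would be most careful — is the bookkeeping at the Kac-table boundary: ensuring that the Chebyshev recursion $X\cdot[\mathcal{K}_{r,s}]_P = [\mathcal{K}_{r,s-1}]_P + [\mathcal{K}_{r,s+1}]_P$ (and its $Y$-analogue) really does hold uniformly for all relevant $s$ (resp.\ $r$), including when one index hits $p_-$ (resp.\ $p_+$), so that a single clean induction produces the stated product $U_{s-1}(X)U_{r-1}(Y)$ rather than a case-split formula. This is handled by the boundary cases of Propositions \ref{Fusion12Wkai0} and \ref{Fusion12Wkai} and the identifications in Definition \ref{W(beta)}; everything else is a routine two-variable induction using that $\mathbb{Z}[A] = \bigoplus_n \mathbb{Z}U_n(A)$ so that the Chebyshev expansion is unambiguous.
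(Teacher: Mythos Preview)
Your proposal is correct and follows essentially the same approach as the paper, which simply cites Propositions \ref{Fusion12Wkai0}, \ref{Fusion12Wkai}, \ref{W^*W} and \ref{change} and leaves the Chebyshev induction implicit. Your extra invocation of Proposition \ref{--} to compute $Z^2[\mathcal{K}_{1,1}]_P=[\mathcal{K}^*_{1,1}]_P$ fills in a detail the paper glosses over; the side remark about a ``$2\mathcal{X}^+_{r,p_-}$ summand'' is irrelevant here (that only appears when you tensor $\mathcal{K}_{1,2}$ with $\mathcal{K}_{r,p_-}$, one step beyond what is needed), but your conclusion that the recursion at $s=p_--1$ already suffices is exactly right.
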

By Propositions \ref{projXX}, \ref{QX-} and Lemma \ref{Q^+pqX}, we have the following lemma.
\begin{lem}
\label{Q^+pq}
\mbox{}
\begin{enumerate}
\item For $1\leq r\leq p_+-1$, $1\leq s\leq p_--1$, we have
\begin{align*}
&[Q(\X^+_{p_+,p_--1})_{p_+,1}]_P=(U_{p_-}(X)+U_{p_--2}(X))U_{p_+-1}(Y)[\mathcal{K}_{1,1}]_P,\\
&[Q(\X^+_{p_+-1,p_-})_{1,p_-}]_P=U_{p_--1}(X)(U_{p_+}(Y)+U_{p_+-2}(Y))[\mathcal{K}_{1,1}]_P,\\
&[Q(\X^-_{p_+,p_--1})_{p_+,1}]_P=Z(U_{p_-}(X)+U_{p_--2}(X))U_{p_+-1}(Y)[\mathcal{K}_{1,1}]_P,\\
&[Q(\X^-_{p_+-1,p_-})_{1,p_-}]_P=ZU_{p_--1}(X)(U_{p_+}(Y)+U_{p_+-2}(Y))[\mathcal{K}_{1,1}]_P.
\end{align*}
\item We have the following relations
\begin{align}
\label{rel0705}
(Z^2-1)U_{p_--1}(X)[\mathcal{K}_{1,1}]_P=(Z^2-1)U_{p_+-1}(Y)[\mathcal{K}_{1,1}]_P=0.
\end{align}
\end{enumerate}
\end{lem}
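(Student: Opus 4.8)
Looking at this, I need to prove Lemma \ref{Q^+pq}, which has two parts: (1) explicit formulas for the $P$-ring classes of certain $\mathcal{Q}$-type modules in terms of Chebyshev polynomials in $X, Y$ applied to $[\mathcal{K}_{1,1}]_P$, and (2) the relation $(Z^2-1)U_{p_--1}(X)[\mathcal{K}_{1,1}]_P = (Z^2-1)U_{p_+-1}(Y)[\mathcal{K}_{1,1}]_P = 0$.

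The plan is to reduce everything to the Chebyshev recursion via the fusion rules already established. First I would recall that by Lemma \ref{Q^+pqX}, $[\mathcal{K}_{r,s}]_P = U_{s-1}(X)U_{r-1}(Y)[\mathcal{K}_{1,1}]_P$, where $\mathcal{K}_{r,p_-} = \mathcal{X}^+_{r,p_-}$ and $\mathcal{K}_{p_+,s} = \mathcal{X}^+_{p_+,s}$. For part (1), I would apply Proposition \ref{projXX}, which gives $\mathcal{K}_{1,2}\boxtimes\X^+_{p_+,p_-} = \mathcal{Q}(\X^+_{p_+,p_--1})_{p_+,1}$. Translating to $P$-classes, $[\mathcal{Q}(\X^+_{p_+,p_--1})_{p_+,1}]_P = X\cdot[\mathcal{X}^+_{p_+,p_-}]_P = X\cdot[\mathcal{K}_{p_+,p_-}]_P$. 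Now $[\mathcal{K}_{p_+,p_-}]_P = U_{p_--1}(X)U_{p_+-1}(Y)[\mathcal{K}_{1,1}]_P$, so using the Chebyshev recursion $X U_{p_--1}(X) = U_{p_-}(X) + U_{p_--2}(X)$ (which follows from $U_{n+1} + U_{n-1} = A U_n$), I get exactly the claimed formula. The same argument with $\mathcal{K}_{2,1}$, $Y$, and $U_{p_+-1}(Y)$ gives the second formula, and applying $Z = \X^-_{1,1}\boxtimes -$ together with Proposition \ref{QX-} (which exchanges $\pm$ superscripts) gives the two $\X^-$ formulas, since $Z$ simply inserts a factor $Z$ into the $P$-class expression.

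For part (2), the relation, I would compute $[\mathcal{Q}(\X^+_{p_+,p_--1})_{p_+,1}]_P$ in a second way. The formula (\ref{relQ0621}) — $\mathcal{K}_{1,2}\boxtimes\mathcal{Q}(\X^\pm_{r,1})_{r,p_--1} = 2\X^\mp_{r,p_-}\oplus \mathcal{Q}(\X^\pm_{r,2})_{r,p_--2}$ — together with Proposition \ref{fusionQ^+} and the inductive structure of Proposition \ref{fusionQ^+12} relates these $\mathcal{Q}$-modules back to tensor products of $\mathcal{K}_{1,2}$ with simple modules in thin blocks, whose $P$-classes carry a factor $Z$ (by Lemma \ref{Q^+pqX}, $[\X^-_{r,s}]_P = Z U_{s-1}(X)U_{r-1}(Y)[\mathcal{K}_{1,1}]_P$, and the thin-block simple modules $\X^\pm_{r,p_-}$ similarly). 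The key point: the module $\mathcal{Q}(\X^+_{p_+,p_--1})_{p_+,1}$ can be reached from $\X^+_{p_+,p_-} = \mathcal{K}_{p_+,p_-}$ (which has $P$-class with a $Z^0$ factor, i.e.\ no $Z$) by one application of $X$, but it can also be reached from $\X^-_{p_+,p_-}$-type modules via the "$\mathfrak{s}$" / contragredient considerations carrying a $Z$. More directly: I would use that $X[\mathcal{K}_{p_+,p_-}]_P = [\mathcal{Q}(\X^+_{p_+,p_--1})_{p_+,1}]_P$ and simultaneously, by examining the structure via $\mathcal{K}_{2,1}$-action or via the relation $U_{2p_--1}(X) - 2Z U_{p_--1}(X)$ (appearing in Theorem \ref{ideal}), express the same module with a $Z$-factor; subtracting yields $(Z^2 - 1)U_{p_--1}(X)[\mathcal{K}_{1,1}]_P = 0$. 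Concretely, since $\mathcal{X}^+_{p_+,p_-}$ is self-dual and $Z^2 = \X^-_{1,1}\boxtimes\X^-_{1,1}\boxtimes - = \mathcal{K}^*_{1,1}\boxtimes -$ by Proposition \ref{--}, and $\mathcal{K}^*_{1,1}\boxtimes\mathcal{K}_{p_+,p_-} = \mathcal{K}_{p_+,p_-}^* = \mathcal{K}_{p_+,p_-}$ (as $\X^+_{p_+,p_-}$ is simple self-dual, by Proposition \ref{W^*W}), we get $(Z^2-1)[\mathcal{K}_{p_+,p_-}]_P = 0$, i.e.\ $(Z^2-1)U_{p_--1}(X)U_{p_+-1}(Y)[\mathcal{K}_{1,1}]_P = 0$; but one must still strip the $U_{p_+-1}(Y)$ factor.

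The main obstacle I anticipate is that last step: deducing $(Z^2-1)U_{p_--1}(X)[\mathcal{K}_{1,1}]_P = 0$ from $(Z^2-1)U_{p_--1}(X)U_{p_+-1}(Y)[\mathcal{K}_{1,1}]_P = 0$ is not a formal cancellation in a module. I would instead argue directly: take the smallest thin-block module, e.g.\ $\mathcal{K}_{p_+,1} = \X^+_{p_+,1}$, whose $P$-class is $U_0(X)U_{p_+-1}(Y)[\mathcal{K}_{1,1}]_P = U_{p_+-1}(Y)[\mathcal{K}_{1,1}]_P$, and which is simple self-dual, so $(Z^2-1)[\mathcal{K}_{p_+,1}]_P = 0$, giving $(Z^2-1)U_{p_+-1}(Y)[\mathcal{K}_{1,1}]_P = 0$ — this is the second relation in (\ref{rel0705}). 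Symmetrically, using $\mathcal{K}_{1,p_-} = \X^+_{1,p_-}$, which is simple self-dual with $P$-class $U_{p_--1}(X)[\mathcal{K}_{1,1}]_P$, gives $(Z^2-1)U_{p_--1}(X)[\mathcal{K}_{1,1}]_P = 0$, the first relation. So the real content of part (2) is just: the thin-block simple modules $\X^+_{1,p_-}$ and $\X^+_{p_+,1}$ are self-dual (Theorem \ref{QThm}(1) / Theorem \ref{Wrigid}) and their contragredients are themselves, combined with $Z^2\boxtimes M = \mathcal{K}^*_{1,1}\boxtimes M = M^*$ for these $M$. I would write the proof in that order: first establish the Chebyshev identities $AU_n(A) = U_{n+1}(A) + U_{n-1}(A)$, then derive the four $P$-class formulas from Proposition \ref{projXX}, Lemma \ref{Q^+pqX}, and Proposition \ref{QX-}, then derive the two relations from self-duality of $\X^+_{1,p_-}$, $\X^+_{p_+,1}$ and Propositions \ref{--}, \ref{W^*W}.
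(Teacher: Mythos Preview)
Your final summarized approach is correct and is essentially the paper's own argument. For part~(1) you use Proposition~\ref{projXX} to write each $\mathcal{Q}$-class as $X$ or $Y$ applied to $[\X^\pm_{p_+,p_-}]_P$, then Lemma~\ref{Q^+pqX} and the Chebyshev recursion $AU_n(A)=U_{n+1}(A)+U_{n-1}(A)$, with Proposition~\ref{QX-} (or the $\X^-$ case of Proposition~\ref{projXX}) supplying the factor $Z$ for the minus versions; this is exactly what the paper does. For part~(2) your eventual argument---$Z^2[\X^+_{1,p_-}]_P=[\K^*_{1,1}\boxtimes\X^+_{1,p_-}]_P=[(\X^+_{1,p_-})^*]_P=[\X^+_{1,p_-}]_P$ via Propositions~\ref{--} and~\ref{W^*W} and self-contragredience of the simple module $\X^+_{1,p_-}$, and symmetrically for $\X^+_{p_+,1}$---is the intended one.

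The detour you take through formula~(\ref{relQ0621}) and through $(Z^2-1)[\K_{p_+,p_-}]_P=0$ followed by an attempt to ``strip'' the factor $U_{p_+-1}(Y)$ is unnecessary and, as you yourself note, does not work formally; drop it from the write-up and go straight to the thin-block simples $\X^+_{1,p_-}=\K_{1,p_-}$ and $\X^+_{p_+,1}=\K_{p_+,1}$.
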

By Propositions \ref{fusionQ^+}, \ref{fusionQ^+12}, \ref{QX-} and Lemma \ref{Q^+pqX}, we obtain the following lemma.
\begin{lem}
\label{Q^pqQ}
\mbox{}
\begin{enumerate}
\item For $1\leq r\leq p_+$, $1\leq s\leq p_--1$, we have
\begin{align*}
&[Q(\X^+_{r,s})_{r,s^\vee}]_P=\bigl(U_{2p_--s-1}(X)+U_{s-1}(X)\bigr)U_{r-1}(Y)[\mathcal{K}_{1,1}]_P,\\
&[Q(\X^-_{r,s})_{r,s^\vee}]_P=Z\bigl(U_{2p_--s-1}(X)+U_{s-1}(X)\bigr)U_{r-1}(Y)[\mathcal{K}_{1,1}]_P.
\end{align*}
\item For $1\leq r\leq p_+-1$, $1\leq s\leq p_-$, we have
\begin{align*}
&[Q(\X^+_{r,s})_{r^\vee,s}]_P=\bigl(U_{2p_+-r-1}(Y)+U_{r-1}(Y)\bigr)U_{s-1}(X)[\mathcal{K}_{1,1}]_P,\\
&[Q(\X^-_{r,s})_{r^\vee,s}]_P=Z\bigl(U_{2p_+-r-1}(Y)+U_{r-1}(Y)\bigr)U_{s-1}(X)[\mathcal{K}_{1,1}]_P.
\end{align*}
\item We have the following relations
\begin{equation}
\begin{aligned}
&\bigl(U_{2p_--1}(X)-2ZU_{p_--1}(X)\bigr)[\mathcal{K}_{1,1}]_P=0,\\
&\bigl(U_{2p_+-1}(Y)-2ZU_{p_+-1}(Y)\bigr)[\mathcal{K}_{1,1}]_P=0.
\end{aligned}
\label{relationori}
\end{equation}
\end{enumerate}
\end{lem}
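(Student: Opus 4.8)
The plan is to translate the fusion rules of Propositions \ref{fusionQ^+} and \ref{fusionQ^+12} into linear recurrences in $P(\mathbb{I}_{p_+,p_-})$ and to recognise them as the Chebyshev recurrence, in the form $AU_n(A)=U_{n+1}(A)+U_{n-1}(A)$. Write $q^+_{r,s}=[\mathcal{Q}(\X^+_{r,s})_{r,s^\vee}]_P$ for the classes in part (1) and $\tilde q^+_{r,s}=[\mathcal{Q}(\X^+_{r,s})_{r^\vee,s}]_P$ for those in part (2). Since $X=\K_{1,2}\boxtimes-$, $Y=\K_{2,1}\boxtimes-$ and $Z=\X^-_{1,1}\boxtimes-$, each direct-sum decomposition in Proposition \ref{fusionQ^+12} becomes a linear identity among these classes, and the whole problem decouples into two second-order recurrences per family (one in $s$ driven by $X$, one in $r$ driven by $Y$), to be solved with boundary data supplied by Proposition \ref{fusionQ^+} and Lemma \ref{Q^+pqX}.

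For part (1) I would first reduce the $r$-dependence: items (3) and (7) of Proposition \ref{fusionQ^+12} give $Yq^+_{r,s}=q^+_{r-1,s}+q^+_{r+1,s}$ for $2\le r\le p_+-1$ and $q^+_{2,s}=Yq^+_{1,s}$, so induction on $r$ yields $q^+_{r,s}=U_{r-1}(Y)\,q^+_{1,s}$ for $1\le r\le p_+$. It then suffices to compute $q^+_{1,s}$. Proposition \ref{fusionQ^+}(1) with $r=1$, together with $[\X^+_{1,p_-}]_P=[\mathcal{K}_{1,p_-}]_P=U_{p_--1}(X)[\mathcal{K}_{1,1}]_P$ from Lemma \ref{Q^+pqX}, gives the anchor $q^+_{1,p_--1}=X[\X^+_{1,p_-}]_P=(U_{p_-}(X)+U_{p_--2}(X))[\mathcal{K}_{1,1}]_P$, and item (6) with $r=1$ produces the adjacent seed $q^+_{1,p_--2}=(U_{p_-+1}(X)+U_{p_--3}(X))[\mathcal{K}_{1,1}]_P$. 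Feeding these two seeds into the downward recurrence $q^+_{1,s-1}=Xq^+_{1,s}-q^+_{1,s+1}$ coming from item (2), a straightforward induction using $XU_n(X)=U_{n+1}(X)+U_{n-1}(X)$ gives $q^+_{1,s}=(U_{2p_--s-1}(X)+U_{s-1}(X))[\mathcal{K}_{1,1}]_P$ for all $1\le s\le p_--1$; combining with the $r$-reduction yields the $\X^+$-formula, and applying $Z$ via Proposition \ref{QX-} (which states $\X^-_{1,1}\boxtimes\mathcal{Q}(\X^+_{r,s})_{r,s^\vee}=\mathcal{Q}(\X^-_{r,s})_{r,s^\vee}$) gives the $\X^-$-formula. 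Part (2) is the mirror-image argument: reduce the $s$-dependence of $\tilde q^+_{r,s}$ using items (5) and (1) to get $\tilde q^+_{r,s}=U_{s-1}(X)\tilde q^+_{r,1}$, then solve the $Y$-recurrence from item (4) anchored by Proposition \ref{fusionQ^+}(2) and item (8).

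Finally, the relations (\ref{relationori}) fall out of the one boundary rule not used above, namely (\ref{relQ0621}). Having established $q^+_{1,1}=(U_{2p_--2}(X)+1)[\mathcal{K}_{1,1}]_P$ and $q^+_{1,2}=(U_{2p_--3}(X)+X)[\mathcal{K}_{1,1}]_P$, I would apply $\K_{1,2}\boxtimes\mathcal{Q}(\X^+_{1,1})_{1,p_--1}=2\X^-_{1,p_-}\oplus\mathcal{Q}(\X^+_{1,2})_{1,p_--2}$ together with $[\X^-_{1,p_-}]_P=ZU_{p_--1}(X)[\mathcal{K}_{1,1}]_P$ (Lemma \ref{Q^+pqX}); expanding $Xq^+_{1,1}$ and cancelling $q^+_{1,2}$ leaves exactly $(U_{2p_--1}(X)-2ZU_{p_--1}(X))[\mathcal{K}_{1,1}]_P=0$, and the second equation of (\ref{relQ0621}) gives the $Y$-relation symmetrically. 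Because this boundary rule played no part in fixing the formulas, there is no circularity. I expect the main obstacle to be bookkeeping rather than conceptual: one must keep the two $\vee$-conventions straight, confirm that each Chebyshev recurrence is anchored at genuine endpoints, and check the small-rank degenerations (for instance $p_+=2$, where item (3) is vacuous and the $r$-reduction uses only item (7), and small $p_-$, where the seeds $s=p_--1,p_--2$ may abut $s=1$) so that every recurrence remains seeded by an actual fusion rule.
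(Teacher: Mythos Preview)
Your proposal is correct and follows the same approach as the paper. The paper's proof is a one-line citation of exactly the inputs you use (Propositions \ref{fusionQ^+}, \ref{fusionQ^+12}, \ref{QX-} and Lemma \ref{Q^+pqX}); you have simply written out in full the Chebyshev induction that the paper leaves implicit, including the careful handling of the boundary seeds and the derivation of (\ref{relationori}) from the unused rule (\ref{relQ0621}).
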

By Propositions \ref{fusionP^+}, \ref{fusionP^+12}, \ref{X^+11P} and Lemma \ref{Q^pqQ}, we obtain the following lemma.
\begin{lem}
\label{Q^pqP}
For $1\leq r\leq p_+-1$, $1\leq s\leq p_--1$, we have
\begin{align*}
&[\PP^+_{r,s}]_P=\bigl(U_{2p_+-r-1}(Y)+U_{r-1}(Y)\bigr)\bigl(U_{2p_--s-1}(X)+U_{s-1}(X)\bigr)[\mathcal{K}_{1,1}]_P,\\
&[\PP^-_{r,s}]_P=Z\bigl(U_{2p_+-r-1}(Y)+U_{r-1}(Y)\bigr)\bigl(U_{2p_--s-1}(X)+U_{s-1}(X)\bigr)[\mathcal{K}_{1,1}]_P.\\
\end{align*}
\end{lem}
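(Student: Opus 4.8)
The plan is to derive both formulas by feeding the already-established fusion recursions into the Chebyshev product rule $AU_n(A)=U_{n+1}(A)+U_{n-1}(A)$ (valid for $n\ge0$ with $U_{-1}=0$). Since the classes $[\mathcal{Q}(\X^\pm_{r,p_-})_{r^\vee,p_-}]_P$ supplied by Lemma \ref{Q^pqQ}(2) already carry the full $Y$-dependence, no induction on $r$ is needed: it suffices to prove the $X$-part of the first formula by a downward induction on $s$, carrying the factor $C:=U_{2p_+-r-1}(Y)+U_{r-1}(Y)$ along inertly. Throughout I work in $P(\mathbb{I}_{p_+,p_-})$ as a $\Z[X,Y,Z]$-module with $X=\K_{1,2}\boxtimes-$, $Y=\K_{2,1}\boxtimes-$ and $Z=\X^-_{1,1}\boxtimes-$.

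First I would settle the two top values $s=p_--1$ and $s=p_--2$. By Proposition \ref{fusionP^+}(1), $\PP^+_{r,p_--1}=\K_{1,2}\boxtimes\mathcal{Q}(\X^+_{r,p_-})_{r^\vee,p_-}$, so $[\PP^+_{r,p_--1}]_P=X[\mathcal{Q}(\X^+_{r,p_-})_{r^\vee,p_-}]_P=C\,XU_{p_--1}(X)[\mathcal{K}_{1,1}]_P=C\bigl(U_{p_-}(X)+U_{p_--2}(X)\bigr)[\mathcal{K}_{1,1}]_P$, which is the claim for $s=p_--1$. Next, Proposition \ref{fusionP^+12}(3) gives $\K_{1,2}\boxtimes\PP^+_{r,p_--1}=2\mathcal{Q}(\X^+_{r,p_-})_{r^\vee,p_-}\oplus\PP^+_{r,p_--2}$, so after the two copies of $U_{p_--1}(X)$ cancel one gets $[\PP^+_{r,p_--2}]_P=X[\PP^+_{r,p_--1}]_P-2[\mathcal{Q}(\X^+_{r,p_-})_{r^\vee,p_-}]_P=C\bigl(U_{p_-+1}(X)+U_{p_--3}(X)\bigr)[\mathcal{K}_{1,1}]_P$, the claim for $s=p_--2$. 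For $1\le s\le p_--3$ I then induct downward: Proposition \ref{fusionP^+12}(1) yields $\K_{1,2}\boxtimes\PP^+_{r,s+1}=\PP^+_{r,s}\oplus\PP^+_{r,s+2}$, hence $[\PP^+_{r,s}]_P=X[\PP^+_{r,s+1}]_P-[\PP^+_{r,s+2}]_P$, and the inductive hypothesis passes to step $s$ via
\begin{align*}
X\bigl(U_{2p_--s-2}(X)+U_s(X)\bigr)-\bigl(U_{2p_--s-3}(X)+U_{s+1}(X)\bigr)=U_{2p_--s-1}(X)+U_{s-1}(X),
\end{align*}
which is immediate from the Chebyshev recursion after cancellation. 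This establishes the first formula for all $1\le r\le p_+-1$, $1\le s\le p_--1$; as a sanity check one verifies that the boundary relation of Proposition \ref{fusionP^+12}(5) is then automatically consistent, using $[\mathcal{Q}(\X^-_{r,p_-})_{p_+-r,p_-}]_P$ and the relation $\bigl(U_{2p_--1}(X)-2ZU_{p_--1}(X)\bigr)[\mathcal{K}_{1,1}]_P=0$ of Lemma \ref{Q^pqQ}(3).

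The second formula then follows at once: applying $Z$ and Proposition \ref{X^+11P} (which says $\X^-_{1,1}\boxtimes\PP^+_{r,s}=\PP^-_{r,s}$) gives $[\PP^-_{r,s}]_P=Z[\PP^+_{r,s}]_P$. The main obstacle is purely organizational. One must run the induction over the correct range of $s$: the middle recursion of Proposition \ref{fusionP^+12}(1) only applies when $2\le s+1\le p_--2$, so the two boundary values $s=p_--1$ and $s=p_--2$ genuinely require the separate arguments above, and the degenerate subcases with $p_-$ small (for instance $p_-=3$, where the downward range is empty) must be checked to be covered by the first two steps. One must also keep careful track of the doubled summands in Proposition \ref{fusionP^+12}(3),(5) so that the spurious simple-module classes cancel. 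Once this is arranged, each individual step is a single application of $AU_n(A)=U_{n+1}(A)+U_{n-1}(A)$.
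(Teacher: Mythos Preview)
Your proof is correct and follows essentially the same approach as the paper: the paper simply cites Propositions \ref{fusionP^+}, \ref{fusionP^+12}, \ref{X^+11P} and Lemma \ref{Q^pqQ} without spelling anything out, and what you have written is precisely the downward induction on $s$ (with base cases $s=p_--1,p_--2$) that those results encode, together with the application of $Z$ via Proposition \ref{X^+11P} for the minus case. Your handling of the boundary cases and the small-$p_-$ degeneracies is accurate.
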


\begin{proof}[Proof of Theorem \ref{ideal}]
By Lemmas \ref{Q^+pqX}, \ref{Q^+pq}, \ref{Q^pqQ} and \ref{Q^pqP}, we can see that $\psi$ is surjective. We define the following ideal of $\Z[X,Y,Z]$
\begin{align*}
I=
&\langle (Z^2-1)U_{p_--1}(X),(Z^2-1)U_{p_+-1}(Y),\\
&\ \ U_{2p_--1}(X)-2ZU_{p_--1}(X),U_{2p_+-1}(Y)-2ZU_{p_+-1}(Y) \rangle.
\end{align*}
Then, from the relations (\ref{rel0705}) and (\ref{relationori}), we see that $I$ is contained in ${\rm ker}\psi$. It is easy to see that the dimension of the quotient ring ${\Z[X,Y,Z]}/{I}$
is $9p_+p_--5p_+-5p_-+3$. Therefore we have ${\rm ker}\psi=I$.
\end{proof}

From the structure of $P(\mathbb{I}_{p_+,p_-})$, we can rederive the non-semisimple fusion rules given by \cite{GRW0,GRW,R,W}. For example, by using Propositions \ref{12*}, \ref{W^*W} and \ref{12*sai} and the multiplication formula for Chebyshev polynomials
\begin{align*}
U_k(x)U_l(x)=\sum_{i=|k-l|;2}^{k+l}U_i(x),
\end{align*}
we obtain
\begin{equation}
\label{simple0820}
\begin{split}
\mathcal{X}^\pm_{r,s}\boxtimes\mathcal{X}^\pm_{r',s'}
&=\bigoplus_{i=|r-r|+1;2}^{p_+-|p_+-r-r'|-1}\bigoplus_{j=|s-s'|+1;2}^{p_--|p_--s-s'|-1}\mathcal{K}^*_{i,j}\\
&\oplus\bigoplus_{i=m(r,r';p_+);2}^{r+r'-p_+-1}\bigoplus_{j=|s-s'|+1;2}^{p_--|p_--s-s'|-1}\mathcal{Q}(\mathcal{X}^+_{p_+-i,j})_{i,j}\\
&\oplus\bigoplus_{i=|r-r'|+1;2}^{p_+-|p_+-r-r'|-1}\bigoplus_{j=m(s,s';p_-);2}^{s+s'-p_--1}\mathcal{Q}(\mathcal{X}^+_{i,p_--j})_{i,j}\\
&\oplus \bigoplus_{i=m(r,r';p_+);2}^{r+r'-p_+-1}\bigoplus_{j=m(s,s';p_-);2}^{s+s'-p_--1}\mathcal{P}^+_{p_+-i,p_--j},
\end{split}
\end{equation}
where $m(a,b;c)=a+b-c-1\ {\rm mod}\ 2$ and we use the following notation (cf. (C.1) in \cite{W})
\begin{align*}
\mathcal{Q}(\mathcal{X}^+_{p_+,j})_{0,j}&=\mathcal{X}^+_{p_+,j},&\mathcal{Q}(\mathcal{X}^+_{i,p_-})_{i,0}&=\mathcal{X}^+_{i,p_-},\\
\mathcal{P}^+_{p_+,p_--j}&=\mathcal{Q}(\mathcal{X}^+_{p_+,p_--j})_{p_+,j},&\mathcal{P}^+_{p_+-i,p_-}&=\mathcal{Q}(\mathcal{X}^+_{p_+-i,p_-})_{i,p_-},\\
\mathcal{P}^+_{p_+,p_-}&=\mathcal{X}^+_{p_+,p_-}.
\end{align*}

\begin{remark}
Let us consider the quotient ring
\begin{align*}
P'(\mathbb{I}_{p_+,p_-})=P(\mathbb{I}_{p_+,p_-})/\langle[\mathcal{K}_{1,1}]_P-[\mathcal{K}^*_{1,1}]_P\rangle,
\end{align*}
where $\langle[\mathcal{K}_{1,1}]_P-[\mathcal{K}^*_{1,1}]_P\rangle$ is the ideal generated by $[\mathcal{K}_{1,1}]_P-[\mathcal{K}^*_{1,1}]_P$. Let $\pi$ be the surjection from $P(\mathbb{I}_{p_+,p_-})$ to $P'(\mathbb{I}_{p_+,p_-})$. Note that $\pi([\mathcal{K}_{r,s}]_P)=\pi([\mathcal{K}^*_{r,s}]_P)$ for all $r,s\geq 1$.
From Theorem \ref{ideal}, as the $\Z[X,Y,Z]$-module, $P'(\mathbb{I}_{p_+,p_-})$ is isomorphic to $\mathbb{Z}[X,Y,Z]/I'$, where 
\begin{align*}
&I'=\langle Z^2-1,U_{2p_--1}(X)-2ZU_{p_--1}(X),U_{2p_+-1}(Y)-2ZU_{p_+-1}(Y)\rangle.
\end{align*}
The quotient ring $P'(\mathbb{I}_{p_+,p_-})$ seems to correspond to the non-semisimple fusion ring of the Serre quotient $\mathcal{C}_{p_+,p_-}/\mathcal{S}$, where $\mathcal{S}$ is the Serre subcategory consisting of all minimal simple modules $L(h_{r,s})\ ((r,s)\in\mathcal{T})$. 
In this Serre quotient $\mathcal{C}_{p_+,p_-}/\mathcal{S}$, we have $L(h_{r,s})=0\ ((r,s)\in\mathcal{T})$ and
$
\mathcal{K}_{r,s}\simeq \mathcal{K}^*_{r,s}\simeq \mathcal{X}^+_{r,s}
$
for $r,s\geq 1$. From Propositions \ref{minimal fusion} and \ref{null11rs}, we can conjecture that $\mathcal{C}_{p_+,p_-}/\mathcal{S}$ has the structure of a braided tensor category.

The Serre quotient $\mathcal{C}_{p_+,p_-}/\mathcal{S}$ has similarities with the representation category of the quantum group $\mathfrak{g}_{p_+,p_-}$ \cite{Arike,FF2}.
For example, by setting $\pi([\mathcal{K}_{1,2}]_P)=0$ or $\pi([\mathcal{K}_{2,1}]_P)=0$ in $P'(\mathbb{I}_{p_+,p_-})$, we obtain two fusion rings $P(\mathcal{W}_{p_+})$ and $P(\mathcal{W}_{p_-})$ given by \cite{TWFusion}:
\begin{align*}
&P'(\mathbb{I}_{p_+,p_-})|_{\pi([\mathcal{K}_{1,2}]_P)=0}\simeq P(\mathcal{W}_{p_+}),
&P'(\mathbb{I}_{p_+,p_-})|_{\pi([\mathcal{K}_{2,1}]_P)=0}\simeq P(\mathcal{W}_{p_-}).
\end{align*}
These isomorphisms seem to correspond to two embeddings $\overline{U}_{q_+}(sl_2)\hookrightarrow \mathfrak{g}_{p_+,p_-}$ and $\overline{U}_{q_-}(sl_2)\hookrightarrow \mathfrak{g}_{p_+,p_-}$ \cite{FF2}, where $\overline{U}_{q_\pm}(sl_2)$ is the restricted quantum group at $q_\pm={\rm exp}(\frac{2\pi i p_\mp}{p_\pm})$.
\end{remark}
\subsection{The ring structure of $K(\mathbb{S}_{p_+,p_-})$}
\label{secK}
In this subsection, we introduce a certain Grothendieck fusion ring $K(\mathbb{S}_{p_+,p_-})$ and review the structure of this ring in our setting. The structure of the Grothendieck ring $K(\mathbb{S}_{p_+,p_-})$ is described in \cite{FF2} using quantum groups and determined in \cite{RW} using the Verlinde ring of the singlet $W$-algebra.


Let $\widetilde{K}(\mathcal{C}_{p_+,p_-})$ be the Grothendieck group of $\mathcal{C}_{p_+,p_-}$. The rank of $\widetilde{K}(\mathcal{C}_{p_+,p_-})$ is $2p_+p_-+\frac{(p_+-1)(p_--1)}{2}$ and generated by all simple module:
\begin{align*}
\widetilde{K}(\mathcal{C}_{p_+,p_-})=\bigoplus_{r=1}^{p_+}\bigoplus_{s=1}^{p_-}\bigoplus_{\epsilon=\pm}\Z[\X^\epsilon_{r,s}]_{\widetilde{K}}\oplus \bigoplus_{(r,s)\in \mathcal{T}}\Z[L(h_{r,s})]_{\widetilde{K}}.
\end{align*}
We set
\begin{align*}
&\mathbb{S}_{p_+,p_-}=\bigl\{\mathcal{X}^\epsilon_{r,s}\ |\ 1\leq r\leq p_+,1\leq s\leq p_-,\epsilon=\pm\bigr\}.
\end{align*}
Let $K(\mathbb{S}_{p_+,p_-})$ be the quotient
\begin{align*}
K(\mathbb{S}_{p_+,p_-}):=\widetilde{K}(\mathcal{C}_{p_+,p_-})\big{/}\bigl(\bigoplus_{(r,s)\in \mathcal{T}}\Z[L(h_{r,s})]_{\widetilde{K}}\bigr),
\end{align*}
and let $[\X^\epsilon_{r,s}]_K\in K(\mathbb{S}_{p_+,p_-})$ be the image of $[\X^\epsilon_{r,s}]_{\widetilde{K}}$. Then
\begin{align*}
K(\mathbb{S}_{p_+,p_-})=\bigoplus_{M\in \mathbb{S}_{p_+,p_-}}\mathbb{Z}[M]_K=\bigoplus_{r=1}^{p_+}\bigoplus_{s=1}^{p_-}\bigoplus_{\epsilon=\pm}\Z[\X^\epsilon_{r,s}]_K.
\end{align*}
We can define the structure of a commutative ring on $K(\mathbb{S}_{p_+,p_-})$ such that the product as a ring is given by
\begin{align*}
[M_1]_K\cdot[M_2]_K=[M_1\boxtimes M_2]_K,
\end{align*}
where $M_1,M_2\in \mathbb{S}_{p_+,p_-}$.
\begin{remark}
In \cite{GRW0,GRW,W}, from the context of the boundary conformal field theory, it is shown that a certain subgroup of $\widetilde{K}(\mathcal{C}_{p_+,p_-})$ has the structure of a ring. The structure of this subgroup is more complex than that of $K(\mathbb{S}_{p_+,p_-})$. It is an interesting problem to investigate whether or not this subgroup has an explicit form such as $K(\mathbb{S}_{p_+,p_-})$.
\end{remark}
From the results in Section \ref{FusionRules}, we see that the three operators
\begin{align*}
X=\X^+_{1,2}\boxtimes -,\ \ \ \ \ \ \ \ Y=\X^+_{2,1}\boxtimes -,\ \ \ \ \ \ \ \ Z=\X^-_{1,1}\boxtimes -
\end{align*}
define $\Z$-linear endomorphism of $K(\mathbb{S}_{p_+,p_-})$. Thus $K(\mathbb{S}_{p_+,p_-})$ is a module over $\Z[X,Y,Z]$.
We define the following $\Z[X,Y,Z]$-module map
\begin{align*}
\phi:\Z[X,Y,Z]&\rightarrow K(\mathbb{S}_{p_+,p_-}),\\
f(X,Y,Z)&\mapsto f(X,Y,Z)\cdot[\X^+_{1,1}]_K.
\end{align*}
\begin{thm}[\cite{RW}]
\label{ideal2}
The $\Z[X,Y,Z]$-module map $\phi$ is surjective and the kernel of $\phi$ is given by the following ideal
\begin{align*}
{\rm ker}\phi=\langle Z^2-1,U_{p_-}(X)-U_{p_--2}(X)-2Z,U_{p_+}(Y)-U_{p_+-2}(Y)-2Z \rangle.
\end{align*}
\begin{proof}
Similar to the case of $P(\mathbb{I}_{p_+,p_-})$, from the fusion rules given in Section \ref{FusionRules}, we can see that $\phi$ is surjective and
\begin{align}
\label{fg}
&[\X^+_{r,s}]_K=U_{s-1}(X)U_{r-1}(Y)[\X^+_{1,1}]_K,
&[\X^-_{r,s}]_K=ZU_{s-1}(X)U_{r-1}(Y)[\X^+_{1,1}]_K
\end{align}
for $r,s\geq 1$.
From Proposition \ref{Q^+pq} and (\ref{fg}), we have 
\begin{equation}
\label{relationori2}
\begin{aligned}
&\bigl(U_{p_-}(X)-U_{p_--2}(X)-2Z\bigr)[\X^+_{1,1}]_K=0,\\
&\bigl(U_{p_+}(Y)-U_{p_+-2}(Y)-2Z\bigr)[\X^+_{1,1}]_K=0.
\end{aligned}
\end{equation}
We define the following ideal of $\Z[X,Y,Z]$
\begin{align*}
J=\langle Z^2-1,U_{p_-}(X)-U_{p_--2}(X)-2Z,U_{p_+}(Y)-U_{p_+-2}(Y)-2Z \rangle.
\end{align*}
Then, by the relations (\ref{relationori2}) and by Proposition \ref{change}, we see that $J$ is contained in ${\rm ker}\phi$. It is easy to see that the dimension of the quotient ring ${\Z[X,Y,Z]}/{J}$
is $2p_+p_-$. Therefore we have ${\rm ker}\phi=J$.
\end{proof}
\end{thm}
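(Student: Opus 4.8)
The plan is to run, inside the smaller ring $K(\mathbb{S}_{p_+,p_-})$, the argument already used for $P(\mathbb{I}_{p_+,p_-})$ in Theorem~\ref{ideal}. Passing to $K(\mathbb{S}_{p_+,p_-})$ kills every minimal simple module, so $[\K_{r,s}]_K=[\K^*_{r,s}]_K=[\X^+_{r,s}]_K$ for all $r,s\geq1$ and $[\X^+_{1,1}]_K$ represents the tensor unit $\K_{1,1}=\W_{p_+,p_-}$. Since $\boxtimes$ descends to the commutative ring structure on $K(\mathbb{S}_{p_+,p_-})$, the $\Z$-linear map $\phi$ is a unital ring homomorphism, so $\ker\phi$ is an ideal. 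It therefore suffices to (i) prove $\phi$ surjective, (ii) show the three listed polynomials lie in $\ker\phi$, and (iii) show $\mathrm{rank}_\Z\,\Z[X,Y,Z]/J\leq 2p_+p_-$; for then the surjection $\Z[X,Y,Z]/J\twoheadrightarrow K(\mathbb{S}_{p_+,p_-})\cong\Z^{2p_+p_-}$ coming from (ii) and the first isomorphism theorem is forced to be an isomorphism, because a surjection between free $\Z$-modules of equal finite rank is injective.

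For (i) I would establish, by induction on $r$ and $s$, the Chebyshev formulas
\begin{align*}
[\X^+_{r,s}]_K=U_{s-1}(X)\,U_{r-1}(Y)\,[\X^+_{1,1}]_K,\qquad [\X^-_{r,s}]_K=Z\,U_{s-1}(X)\,U_{r-1}(Y)\,[\X^+_{1,1}]_K .
\end{align*}
The second follows from the first via $\X^-_{1,1}\boxtimes\X^+_{r,s}=\X^-_{r,s}$ (Proposition~\ref{change}); the first is the recursion $A\,U_n(A)=U_{n+1}(A)+U_{n-1}(A)$ applied to $\K_{1,2}\boxtimes\X^+_{r,s}=\X^+_{r,s-1}\oplus\X^+_{r,s+1}$ and $\K_{2,1}\boxtimes\X^+_{r,s}=\X^+_{r-1,s}\oplus\X^+_{r+1,s}$ (Lemma~\ref{Fusion12Wkai0simple} and its analogue for $\K_{2,1}$), read in $K(\mathbb{S}_{p_+,p_-})$ with $[\K_{1,2}]_K=X[\X^+_{1,1}]_K$, $[\K_{2,1}]_K=Y[\X^+_{1,1}]_K$ and the convention $U_{-1}=0$ for the boundary cases $s=1$, $r=1$. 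In particular every basis vector $[\X^\pm_{r,s}]_K$ lies in $\mathrm{im}\,\phi$, so $\phi$ is surjective.

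For (ii): from $\X^-_{1,1}\boxtimes\X^-_{1,1}=\K^*_{1,1}$ (Proposition~\ref{--}) and $[\K^*_{1,1}]_K=[\X^+_{1,1}]_K$ we get $Z^2-1\in\ker\phi$. For the relation in $X$, apply $\K_{1,2}\boxtimes\X^+_{1,p_-}=\mathcal{Q}(\X^+_{1,p_--1})_{1,1}$ (Proposition~\ref{fusionQ^+}, case $r=1$); by Proposition~\ref{logarithmic3} this rank-two module has composition factors $\X^+_{1,p_--1}$ (twice), $\X^-_{1,1}$ (twice) and $L(h_{1,p_--1})$, so in $K(\mathbb{S}_{p_+,p_-})$
\begin{align*}
X\,[\X^+_{1,p_-}]_K=2\,[\X^+_{1,p_--1}]_K+2\,[\X^-_{1,1}]_K .
\end{align*}
Rewriting both sides with the Chebyshev formulas and $X\,U_{p_--1}(X)=U_{p_-}(X)+U_{p_--2}(X)$ turns this into $\big(U_{p_-}(X)-U_{p_--2}(X)-2Z\big)[\X^+_{1,1}]_K=0$, and the identical computation with $\K_{2,1}\boxtimes\X^+_{p_+,1}=\mathcal{Q}(\X^+_{p_+-1,1})_{1,1}$ gives $\big(U_{p_+}(Y)-U_{p_+-2}(Y)-2Z\big)[\X^+_{1,1}]_K=0$. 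Hence $J\subseteq\ker\phi$.

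For (iii), $\Z[X,Y,Z]/J$ is $\Z$-spanned by the $2p_+p_-$ monomials $X^aY^bZ^c$ with $0\leq a\leq p_--1$, $0\leq b\leq p_+-1$, $c\in\{0,1\}$: $Z^2-1$ reduces the $Z$-degree to at most $1$, while $U_{p_-}(X)-U_{p_--2}(X)-2Z$ and $U_{p_+}(Y)-U_{p_+-2}(Y)-2Z$ — monic of degree $p_-$ in $X$ and $p_+$ in $Y$ — reduce $\deg_X$ to $\leq p_--1$ and $\deg_Y$ to $\leq p_+-1$, the extra factors of $Z$ produced being reabsorbed via $Z^2=1$. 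Hence $\mathrm{rank}_\Z\,\Z[X,Y,Z]/J\leq 2p_+p_-$, which together with the surjection onto $K(\mathbb{S}_{p_+,p_-})\cong\Z^{2p_+p_-}$ forces $\Z[X,Y,Z]/J\to K(\mathbb{S}_{p_+,p_-})$ to be an isomorphism; that is, $\ker\phi=J$. The one delicate point is (ii): everything hinges on reading the indecomposable fusion products $\K_{1,2}\boxtimes\X^+_{1,p_-}$ and $\K_{2,1}\boxtimes\X^+_{p_+,1}$ together with the precise composition series of the rank-two modules $\mathcal{Q}(\X^+_{\bullet,\bullet})_{\bullet,\bullet}$, so that the term $-2Z$ — arising from the two copies of $\X^-_{1,1}$ in those modules after the factor $L(h_{\bullet,\bullet})$ is quotiented away — appears with the correct sign and multiplicity; surjectivity and the rank count are then routine.
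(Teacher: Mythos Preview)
Your proposal is correct and follows essentially the same strategy as the paper's proof: establish the Chebyshev formulas for $[\X^\pm_{r,s}]_K$ to get surjectivity, verify that the three generators of $J$ annihilate the unit, and finish with the rank count $\operatorname{rank}_{\Z}\Z[X,Y,Z]/J=2p_+p_-$. The only minor difference is in step~(ii): the paper points to Lemma~\ref{Q^+pq} (hence implicitly to $\K_{1,2}\boxtimes\X^+_{p_+,p_-}$ and the thin-block module $\mathcal{Q}(\X^+_{p_+,p_--1})_{p_+,1}$), whereas you use Proposition~\ref{fusionQ^+} at $r=1$ (hence $\K_{1,2}\boxtimes\X^+_{1,p_-}$ and the thick-block module $\mathcal{Q}(\X^+_{1,p_--1})_{1,1}$); your choice avoids carrying an extra factor $U_{p_+-1}(Y)$ and reads off the relation directly.
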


\section*{Acknowledgement}
We are grateful Simon Wood for discussing about logarithmic conformal field theory and vertex tensor category in detail.
We would also like to thank Simon Lentner for stimulating disscussion.


\vspace{10mm}
\ \ \ \ H.~Nakano, \textsc{Advanced Mathematical Institute, Osaka Metropolitan University, Osaka 558-8585, Japan}\par\nopagebreak
  \textit{E-mail address} : \texttt{hiromutakati@gmail.com}


\begin{thebibliography}{999}
\bibitem{AM}
     D. Adamovi\'{c} and A. Milas,
     ``On the triplet vertex algebra $W(p)$'',
     \textit{Advances in Mathematics} $\bold{217}$ (2008), 2664-2699.
\bibitem{AMW2p}
     D. Adamovi\'{c} and A. Milas,
     ``On $\mathcal{W}$-algebras associated to $(2,p)$ minimal models for certain vertex algebras'',
     \textit{International Mathematics Research Notices} 2010 (2010) 20 : 3896-3934, arXiv:0908.4053.
\bibitem{AMW3p}
     D. Adamovi\'{c} and A. Milas,
     ``On W-algebra extensions of (2, p) minimal models: p $>$3'',
     \textit{Journal of Algebra} \textbf{344} (2011) 313-332. arXiv:1101.0803.
\bibitem{AK}
     D. Adamovi\'{c} and A. Milas,
     ``An explicit realization of logarithmic modules for the vertex operator algebra $W_{p_+,p_-}$'',
     \textit{J. Math. Phys}. \textbf{53}, (2012), 16pp.
\bibitem{AL}
R. Allen, S. Lentner, C. Schweigert and S. Wood, 
``Duality structures for module categories of vertex operator algebras and the Feigin Fuchs boson'',
 arXiv:2107.05718 (2021).     
\bibitem{AW}     
R. Allen and S. Wood, ``Bosonic ghostbusting: the bosonic ghost vertex algebra admits a logarithmic module category with rigid fusion'', \textit{Communications in Mathematical Physics}, 390(2), 959-1015 (2022).          
\bibitem{Arike}
       Y. Arike,
       ``A matrix realization of the quantum group $\mathfrak{g}_{p,q}$'', 
        \textit{International Journal of Mathematics}, 22.03 (2011): 345-398.
\bibitem{BPZ}        
A. A. Belavin, A. M. Polyakov, and A. B. Zamolodchikov,
 ``Infinite conformal symmetry in two-dimensional quantum field theory'',
  \textit{Nuclear Physics B}, 241(2), 333-380 (1984).
\bibitem{BNW}
     B. Boe, D. Nakano, E. Wiesner,
     \textit{Category $\mathcal{O}$ for the Virasoro algebra: cohomology and Koszulity}.
     Pacific J. Math. \textbf{234} (2008), no. 1, 1-21.
\bibitem{CMY}
     T. Creutzig, R. McRae. and J. Yang, 
     ``On ribbon categories for singlet vertex algebras'', 
     \textit{Communications in Mathematical Physics}, 387(2), 865-925, arXiv:2007.12735.
\bibitem{D} 
\textit{NIST Digital Library of Mathematical Functions}, Release 1.0.27 of 2020-
06-15, F. Olver, A. Olde Daalhuis, D. Lozier, B. Schneider, R. Boisvert, C. Clark, B. Miller, B.
Saunders, H. Cohl and M. McClain, eds, http://dlmf.nist.gov/.
\bibitem{Etingof}
    P. Etingof, G. Shlomo, D. Nikshych, and V. Ostrik. 
     \textit{Tensor Categories}. 
      Number volume 205 in Mathematical Surveys and Monographs. American
Mathematical Society, 2015.
\bibitem{FF}
     B. Feigin and D.B. Fuchs.
     ``Representations of the Virasoro algebra'',
     \textit{in Representations of infinite-dimensional Lie groups and Lie algebras, Gordon andd Breach, New York}(1989).
\bibitem{FF2}
     B. L. Feigin, A.M. Gainutdinov, A.M. Semikhatov, and I. Yu Tipunin,
     ``Logarithmic extensions of minimal models: characters and modular transformation'',
     \textit{Nuclear Phys. B} 757(2006),303-343.
\bibitem{FF22}
     B.L. Feigin, A.M. Gainutdinov, A.M. Semikhatov, and I. Yu Tipunin, 
     ``Kazhdan-Lusztig-dual quantum group for logarithmic extensions of Virasoro minimal models'',
     \textit{J. Math. Phys}. 48:032303, 2007.
\bibitem{FF3}
     B. L. Feigin, A.M. Ga{i}nutdinov, A.M. Semikhatov, and I. Yu Tipunin, 
     ``Modular group representations and fusion in logarithmic conformal field theories and in the quantum group center'',
     \textit{Comm. Math. Phys}. 265 (2006), 47–93.
\bibitem{FF4}
     B. L. Feigin, A.M. Ga{i}nutdinov, A.M. Semikhatov, and I. Yu Tipunin, 
     ``Kazhdan-Lusztig correspondence for the representation category of the triplet W-algebra in logarithmic CFT'',
     \textit{Theor. Math. Phys}. 148 (2006) 1210-1235; \textit{Teor. Mat. Fiz}. \textbf{148} (2006) 398-427.
\bibitem{Felder}
     G. Felder,
     ``BRST approach to minimal models'',
     \textit{Nuc. Phy. B} \textbf{317} (1989) 215-236.     
\bibitem{GK}
     M. R. Gaberdiel and H.G. Kausch, 
     ``Indecomposable fusion products'', 
     \textit{Nucl. Phys}. B \textbf{477} (1996) 293 [hep-th/9604026].
\bibitem{GRW0}
     M. Gaberdiel, I. Runkel, and S. Wood, 
     ``Fusion rules and boundary conditions in the $c=0$ triplet model'',
     \textit{J.Phys}. \textbf{A42} (2009) 325403, arXiv:0905.0916 [hep-th].
\bibitem{GRW}
     M. Gaberdiel, I. Runkel, and S. Wood, 
     ``A modular invariant bulk theory for the $c=0$ triplet model'',
     \textit{J.Phys. A:math. Theor}. 44 (2011) 015204, arXiv:1008.0082v1.
\bibitem{H}
     Y. Z. Huang,
     ``Cofiniteness conditions, projective covers and the logarithmic tensor product theory'',
     \textit{J. Pure Appl. Algebra}, 213(4):458-475, 2009.
\bibitem{HLZ1}
     Y. Z. Huang, J. Lepowsky, and L. Zhang, 
     ``Logarithmic tensor category theory for
generalized modules for a conformal vertex algebra, I: Introduction and strongly
graded algebras and their generalized modules'', 
\textit{Conformal Field Theories and
Tensor Categories}, 169-248, \textit{Math. Lect. Peking Univ., Springer, Heidelberg},
2014.   
\bibitem{HLZ2}
     Y. Z. Huang, J. Lepowsky, and L. Zhang,
     ``Logarithmic tensor category theory for generalized modules for a conformal vertex algebra, I\hspace{-1pt}I: Logarithmic formal      calculus and properties of logarithmic intertwining operators'', arXiv:1012.4196 (2010). 
\bibitem{HLZ3}
     Y. Z. Huang, J. Lepowsky, and L. Zhang, 
     ``Logarithmic tensor category theory
for generalized modules for a conformal vertex algebra, III: Intertwining maps
and tensor product bifunctors'', arXiv:1012.4197 (2010).
\bibitem{HLZ4}
     Y. Z. Huang, J. Lepowsky, and L. Zhang, 
     ``Logarithmic tensor category theory for
generalized modules for a conformal vertex algebra, IV: Constructions of tensor
product bifunctors and the compatibility conditions'', 
arXiv:1012.4198 (2010).
\bibitem{HLZ5}
     Y. Z. Huang, J. Lepowsky, and L. Zhang, 
``Logarithmic tensor category theory
for generalized modules for a conformal vertex algebra, V: Convergence condition for intertwining maps and the corresponding compatibility condition'',
arXiv:1012.4199 (2010).
\bibitem{HLZ6}
    Y. Z. Huang, J. Lepowsky, and L. Zhang, 
     ``Logarithmic tensor category theory
for generalized modules for a conformal vertex algebra, VI: Expansion condition, associativity of logarithmic intertwining operators, and the associativity isomorphisms'', arXiv:1012.4202 (2010).
\bibitem{HLZ7}
     Y. Z. Huang, J. Lepowsky, and L. Zhang, 
     ``Logarithmic tensor category theory for generalized modules for a conformal vertex algebra, VII: Convergence
and extension properties and applications to expansion for intertwining maps'',
arXiv:1110.1929 (2011).
\bibitem{HLZ8}
     Y. Z. Huang, J. Lepowsky, and L. Zhang,
      ``Logarithmic tensor category theory
for generalized modules for a conformal vertex algebra, VIII: Braided tensor
category structure on categories of generalized modules for a conformal vertex
algebra'', arXiv:1110.1931 (2011).
\bibitem{IK}
     K. Iohara, Y. Koga.
     \textit{Representation Theory of the Virasoro Algebra}.
     Springer Monographs in Mathematics, Berlin, Springer 2011.
\bibitem{JS}
     A. Joyal and R. Street,
     ``Braided tensor categories'',
     \textit{Adv. Math}. \textbf{102} (1993) 20-78. 
\bibitem{Kanade}
     S. Kanade and D. Ridout, 
     ``NGK and HLZ: Fusion for physicists and mathematicians'', 
      \textit{In D Adamovic and P Papi, editors, Affine, Vertex and
W-algebras}, volume 37 of \textit{Springer INdAM}, pages 135-181, \textit{Cham}, 2019. \textit{Springer}. arXiv:1812.10713 [math-ph].
\bibitem{Ka}
     H. G. Kausch,
     ``Extended conformal algebras generated by multiplet of primary fields'',
     \textit{Phys. Lett. B}, \textbf{259} (1991), 448-455.
\bibitem{KL}
     D. Kazhdan and G. Lusztig, ``Tensor structures arising from affine Lie algebras I\hspace{-1.2pt}V'', \textit{J. Amer. Math. Soc.} \textbf{7} (1994) 383-453.    
\bibitem{MS}      
R. McRae and V. Sopin, 
``Fusion and (non)-rigidity of Virasoro Kac modules in logarithmic minimal models at $(p, q) $-central charge'', arXiv:2302.08907 (2023).        
\bibitem{McRae}
      R. McRae and J. Yang,
      ``Structure of Virasoro tensor categories at central charge $13-6p-6p^{-1}$ for integers $p>1$'',
      arXiv:2011.02170 (2020).
\bibitem{Milas}
       A. Milas, 
      ``Fusion rings for degenerate minimal models'', 
       \textit{J. Algebra} \textbf{254} (2002), no. 2, 300-335.
\bibitem{NT}
      K. Nagatomo and A. Tsuchiya, 
      ``The Triplet Vertex Operator Algebra $W(p)$ and Restricted Quantum Group at Root of Unity'', 
      \textit{Adv. Stdu. in Pure Math., Exploring new Structures and Natural Constructions in Mathematical Physics, Amer. Math. Soc}. \textbf{61} (2011)
1–49, arXiv:0902.4607. 
\bibitem{Nakano}
H. Nakano, ``Projective covers of the simple modules for the triplet $W$-algebra $\mathcal {W}_{p_+, p_-} $." arXiv:2305.12448 (2023).
\bibitem{Lin}
      L. Xianzu, ``Fusion rules of Virasoro vertex operator algebras'', 
      \textit{Proceedings of the American Mathematical Society} 143.9 (2015):3765-3776.
\bibitem{R}
      J. Rasmussen, 
      ``W-extended logarithmic minimal models'', 
      \textit{Nucl. Phys. B} \textbf{807} (2009) 495 [0805.2991 [hep-th]].
\bibitem{RW}
D. Ridout and S. Wood,
``Modular transformations and Verlinde formulae for logarithmic $(p_+, p_-)$-models'', 
  \textit{Nuclear Physics B} 880 (2014): 175-202.        
\bibitem{TK}
      A. Tsuchiya and Y. Kanie, 
     ``Fock space representations of the Virasoro algebra - Intertwining operators'',
     \textit{Publ. RIMS, Kyoto Univ}. 22(1986) 259-327. 
\bibitem{TWFusion}
      A. Tsuchiya and S. Wood,
      ``The tensor structure on the representation category of the $\mathcal{W}_{p}$ triplet algebra'',
      \textit{J. Phys. A} \textbf{46} (2013), no. 44, 445203, 40 pp.
\bibitem{TW}
		A. Tsuchiya and S. Wood,
		``On the extended W-algebra of type $sl_2$ at positive rational level''
		\textit{International Mathematics Research Notices}, Volume 2015, Issue 14, 1 January 2015, Pages 5357-5435.
\bibitem{W}
        S. Wood, ``Fusion Rules of the $\mathcal{W}_{p,q}$ Triplet Models'',
        \textit{J. Phys. A} \textbf{43} (2010) 045212.
\end{thebibliography}
\end{document}